\nonstopmode
\documentclass[reqno,10pt]{amsart}
\usepackage{latexsym}
\usepackage{fancyhdr}
\usepackage{amsmath, amssymb}
\usepackage[all]{xy}
\usepackage{pdflscape}
\usepackage{longtable}
\usepackage{rotating}
\usepackage{verbatim}
\usepackage{hyperref}
\usepackage{subfigure}
\usepackage{mathrsfs}
\usepackage{cleveref}
\usepackage{mdwlist}
\usepackage{color}

\newcounter{mnotecount}[section]

\newcommand{\rmnote}[1]{}



\DeclareFontFamily{U}{mathb}{\hyphenchar\font45}
\DeclareFontShape{U}{mathb}{m}{n}{
      <5> <6> <7> <8> <9> <10> gen * mathb
      <10.95> mathb10 <12> <14.4> <17.28> <20.74> <24.88> mathb12
      }{}
\DeclareSymbolFont{mathb}{U}{mathb}{m}{n}
\DeclareFontSubstitution{U}{mathb}{m}{n}

\let\dot\relax
\DeclareMathAccent{\dot}{0}{mathb}{"39}
\let\ddot\relax
\DeclareMathAccent{\ddot}{0}{mathb}{"3A}
\let\dddot\relax
\DeclareMathAccent{\dddot}{0}{mathb}{"3B}
\let\ddddot\relax
\DeclareMathAccent{\ddddot}{0}{mathb}{"3C}

\theoremstyle{plain}
\newtheorem*{theorem*}{Theorem}
\newtheorem{theorem}{Theorem}[section]
\newtheorem*{lemma*}{Lemma}
\newtheorem{lemma}[theorem]{Lemma}
\newtheorem*{proposition*}{Proposition}
\newtheorem{proposition}[theorem]{Proposition}
\newtheorem*{corollary*}{Corollary}
\newtheorem{corollary}[theorem]{Corollary}
\newtheorem*{claim*}{Claim}

\newtheorem*{conjecture*}{Conjecture}

\newtheorem*{question*}{Question}
\theoremstyle{definition}
\newtheorem*{definition*}{Definition}
\newtheorem{definition}[theorem]{Definition}
\newtheorem*{example*}{Example}
\newtheorem{example}[theorem]{Example}

\newtheorem*{algorithm*}{Algorithm}
\newtheorem*{remark*}{Remark}
\newtheorem*{remarks*}{Remarks}
\newtheorem{remark}[theorem]{Remark}

\newtheorem*{convention*}{Convention}


\numberwithin{equation}{section}

\setcounter{tocdepth}{1}

\sloppy
\allowdisplaybreaks[1]

\newcommand{\al}{\alpha}
\newcommand{\be}{\beta}
\newcommand{\ga}{\gamma}
\newcommand{\de}{\delta}
\newcommand{\ep}{\epsilon}
\newcommand{\ve}{\varepsilon}
\newcommand{\ze}{\zeta}
\newcommand{\et}{\eta}
\newcommand{\vt}{\vartheta}

\newcommand{\la}{\lambda}

\newcommand{\rh}{\rho}

\newcommand{\si}{\sigma}

\newcommand{\ta}{\tau}

\newcommand{\vh}{\varphi}
\newcommand{\ch}{\chi}
\newcommand{\ps}{\psi}
\newcommand{\om}{\omega}

\newcommand{\Ga}{\Gamma}

\newcommand{\Ps}{\Psi}
\newcommand{\Om}{\Omega}

\newcommand{\C}{\mathbb{C}}

\newcommand{\N}{\mathbb{N}}

\newcommand{\R}{\mathbb{R}}

\newcommand{\cB}{\mathcal{B}}

\newcommand{\cD}{\mathcal{D}}
\newcommand{\cE}{\mathcal{E}}

\newcommand{\cG}{\mathcal{G}}

\newcommand{\cJ}{\mathcal{J}}

\newcommand{\cL}{\mathcal{L}}

\newcommand{\cR}{\mathcal{R}}

\newcommand{\fF}{\mathfrak{F}}
\newcommand{\fG}{\mathfrak{G}}

\newcommand{\fL}{\mathfrak{L}}
\newcommand{\fM}{\mathfrak{M}}
\newcommand{\fN}{\mathfrak{N}}

\newcommand{\fS}{\mathfrak{S}}

\newcommand{\fW}{\mathfrak{W}}

\newcommand{\p}{\partial}
\newcommand{\db}{\overline{\p}}

\renewcommand{\Im}{\mathrm{Im}}
\renewcommand{\o}{\circ}

\newcommand{\seq}{\mathbf}

\newcommand{\supp}{\on{supp}}

\newcommand{\on}{\operatorname}

\newcommand{\sr}[1]%
{\ifmmode{}^\dagger\else${}^\dagger$\fi\ifvmode
\vbox to 0pt{\vss
 \hbox to 0pt{\hskip\hsize\hskip1em
 \vbox{\hsize3cm\raggedright\pretolerance10000
 \noindent #1\hfill}\hss}\vss}\else
 \vadjust{\vbox to0pt{\vss%
 \hbox to 0pt{\hskip\hsize\hskip1em%
 \vbox{\hsize3cm\raggedright\pretolerance10000%
 \noindent #1\hfill}\hss}\vss}}\fi%
}

\newcommand{\A}{\;\forall}
\newcommand{\E}{\;\exists}

\newcommand{\ol}{\overline}
\newcommand{\ul}{\underline}


\def\D{\mathcal D}
\def\CC{\mathcal C}
\def\O{\mathcal O}

\def\S{\mathcal S}

\def\O{\mathcal O}

\def\ep{\varepsilon}

\newcommand{\COT}{T^\ast\Omega \setminus \{0\}}

\DeclareMathOperator{\WF}{WF}
\DeclareMathOperator{\real}{Re}
\DeclareMathOperator{\imag}{Im}
\DeclareMathOperator{\singsupp}{sing\, supp}
\DeclareMathOperator{\Char}{Char}

\DeclareMathSymbol{\Mu}{\mathalpha}{operators}{"4D}

\title[Almost analytic extensions and microlocal analysis]
{Almost analytic extensions of ultradifferentiable functions with applications to microlocal analysis}

\author[S.~F\"urd\"os]{Stefan F\"urd\"os}

\address{S.~F\"urd\"os: Department of Mathematics and Statistics, Masaryk University,  
Kotlarska 2, 611 37 Brno, Czech Republic}
\email{furdos@math.muni.cz}

\author[D.N.~Nenning]{David Nicolas Nenning}

\address{D.N.~Nenning: 
Fakult\"at f\"ur Mathematik, Universit\"at Wien, 
Oskar-Morgenstern-Platz~1, A-1090 Wien, Austria}
\email{david.nicolas.nenning@univie.ac.at}

\author[A.~Rainer]{Armin Rainer}

\address{A.~Rainer: 
University of Education Lower Austria,
Campus Baden M\"uhlgasse 67, A-2500 Baden \&
Fakult\"at f\"ur Mathematik, Universit\"at Wien, 
Oskar-Morgenstern-Platz~1, A-1090 Wien, Austria}
\email{armin.rainer@univie.ac.at}

\author[G.~Schindl]{Gerhard Schindl}

\address{G.~Schindl: Fakult\"at f\"ur Mathematik, Universit\"at Wien, 
Oskar-Morgenstern-Platz~1, A-1090 Wien, Austria}
\email{gerhard.schindl@univie.ac.at}

\begin{document}

\begin{abstract}
	We review and extend the description of ultradifferentiable functions by their 
   almost analytic extensions, i.e., extensions to the complex domain with specific vanishing rate 
   of the $\ol \p$-derivative near the real domain. 
   We work in a general uniform framework which comprises the main classical ultradifferentiable classes 
   but also allows to treat unions and intersections of such.
   The second part of the paper is devoted to applications in microlocal analysis. 
   The ultradifferentiable wave front set is defined in this general setting and characterized in  
   terms of almost analytic extensions and of the FBI transform. This allows to extend its definition to 
   ultradifferentiable manifolds. 
   We also discuss ultradifferentiable versions of the elliptic regularity 
   theorem and obtain a general quasianalytic Holmgren uniqueness theorem.
\end{abstract}

\thanks{S.\ F\"urd\"os was supported by GACR grant 17-19437S, D.N.\ Nenning and A.\ Rainer by FWF Project P~26735-N25, and 
G.\ Schindl by FWF Project J 3948-N35.}
\keywords{Almost analytic extensions, characterization of ultradifferentiable classes, stability properties, 
ultradifferentiable wave front set, boundary values, FBI transform, elliptic regularity, uniqueness of distributions}
\subjclass[2010]{26E10, 30D60, 35A18, 46E10, 46F05, 46F20, 58C25}
\date{\today}

\maketitle

\tableofcontents

\section{Introduction}

An almost analytic extension of a real function $f$ is an extension $F$ to the complex domain 
such that $\ol \p F(z)$ has a certain growth rate as $z$ approaches the real domain. 
It is well-known that this growth rate encodes regularity properties of $f$.

In this article we review and extend the characterization of ultradifferentiable function classes 
by their almost analytic extensions. 
The almost analytic description of Denjoy--Carleman classes goes back to Dynkin \cite{Dynkin80}.   
For the non-quasianalytic classes introduced by Beurling \cite{Beurling61} and 
Bj\"orck \cite{Bjoerck66} the characterization was proved by Petzsche and Vogt \cite{PetzscheVogt84}. 

We introduce a uniform approach which generalizes all mentioned results. 
Our characterization theorems work under very weak conditions, in particular, we need not assume 
non-quasianalyticity. This is achieved by refining the extension method of Dynkin 
following the ideas of \cite{RainerSchindl16a,RainerSchindl17}
and combining it with the description of ultradifferentiable classes by weight matrices 
which was introduced in \cite{RainerSchindl12}.     

In the special case of Beurling--Bj\"orck classes we even obtain a complete characterization of the 
classes which admit a description by almost analytic extension: these are precisely the classes 
that are stable by composition.  

In the second part of the paper we apply these results to microlocal analysis.
More precisely, we deal with the ultradifferentiable wave front set.
	The wave front set was introduced in the smooth case by H\"ormander 
	and in the analytic category by Sato as a refinement of the singular support.
In \cite{H_rmander_1971} H\"ormander introduced the ultradifferentiable wave front set with respect to
 Denjoy--Carleman classes given by weight sequences.
In  particular he gave an alternative
 definition of the analytic wave front set by the Fourier transform, in contrast to Sato's
 approach using holomorphic extensions.
 Bony \cite{MR0650834} showed that the definitions of Sato, H\"ormander and the one of Bros--Iagolnitzer
 \cite{MR0399494} 
 using the FBI transform describe the same set.
 	The first author \cite{Fuerdoes1} showed that the theorem of Dynkin can be used to prove a version
 	of Bony's Theorem for the ultradifferentiable wave front set in the case of Denjoy--Carleman classes.
 	
 	On the other hand Albanese--Jornet--Oliaro \cite{Albanese:2010vj} defined the ultradifferentiable wave front
 	 set for Beurling--Bj\"ork classes and proved a microlocal elliptic regularity theorem. Our aim is to unify and generalize these results.

We begin by recalling and extending the definition of the
 ultradifferentiable wave front set to classes given by weight matrices.
 We characterize it in  
terms of almost analytic extensions as well as in terms of the FBI transform.
In the last section of the article we discuss ultradifferentiable versions of the elliptic regularity 
theorem and obtain a general quasianalytic Holmgren uniqueness theorem.

\subsection{Almost analytic extensions}

Let $h : (0,\infty) \to (0,1]$ be an increasing continuous function which tends to $0$ as $t \to 0$. 
Let $\rh>0$. Let $U \subseteq \R^n$ be a bounded open set.
  We say that a function $f : U \to \R$ admits an \emph{$(h,\rh)$-almost analytic extension} if 
  there is a function $F \in C^1_c(\C^{n})$ and a constant $C\ge 1$ such that $F|_U = f$ and 
  \[
      |\ol \p F(z)| \le C\, h ( \rh d(z,\ol U)), \quad \text{ for } z \in \C^n.
  \] 
  Here $d(z,\ol U) := \inf_{x \in \ol U} |x-z|$ denotes the distance of $z$ to $\ol U$.
  A vector valued function $f=(f_1,\ldots,f_m) : U \to \R^m$ admits an $(h,\rh)$-almost analytic extension 
  if each component $f_j$ does. 

We wish to emphasize that functions that admit almost analytic extension have good stability properties: 

\begin{proposition} \label{prop:composition}
  Suppose that $f : U \to \R$ has an $(h,\rh)$-almost analytic extension and 
  $g : V \to U$ has a $(k,\si)$-almost analytic extension.  
  Then $f \o g$ admits a $(\max\{h,k\},\max\{C\rh,\si\})$-almost analytic extension, where the 
  constant $C$ equals the Lipschitz constant of the extension of $g$.  
\end{proposition}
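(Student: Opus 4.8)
The plan is to build the extension of $f\o g$ around the composition $F\o G$, where $F\colon\C^n\to\C$ denotes the given $(h,\rh)$-almost analytic extension of $f$---so that $|\db F|\le C_f\,h(\rh\,d(\cdot,\ol U))$ for some $C_f\ge1$---and $G=(G_1,\dots,G_n)\colon\C^p\to\C^n$ denotes the $(k,\si)$-almost analytic extension of $g$ (with $V\subseteq\R^p$), so that $|\db G_j|\le C_g\,k(\si\,d(\cdot,\ol V))$ for some $C_g\ge1$; afterwards one has to restore the compact support by a cutoff. Observe first that $\max\{h,k\}$ is again an increasing continuous function $(0,\infty)\to(0,1]$ tending to $0$ at $0$, so the statement makes sense; I extend $h$, $k$, and $\max\{h,k\}$ continuously by the value $0$ at $t=0$. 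Since $F\in C^1_c(\C^n)$ and each $G_j\in C^1_c(\C^p)$, their differentials are bounded, and, $\C^p$ being convex, $G$ is globally Lipschitz; write $C$ for its Lipschitz constant with respect to the Euclidean norms, in accordance with the statement.

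The core of the argument is the chain rule for the Wirtinger operator,
\[
  \p_{\ol w_l}(F\o G)=\sum_{j=1}^n\Big((\p_{z_j}F)\o G\;\cdot\;\p_{\ol w_l}G_j+(\p_{\ol z_j}F)\o G\;\cdot\;\ol{\p_{w_l}G_j}\Big),\qquad l=1,\dots,p,
\]
which displays $\db(F\o G)$ as a sum of terms of two kinds: those carrying the factor $\p_{\ol w_l}G_j$, and those carrying the factor $(\p_{\ol z_j}F)\o G=(\db F)\o G$. In a term of the first kind I bound $|(\p_{z_j}F)\o G|\le\|DF\|_\infty$ and use $|\p_{\ol w_l}G_j|\le|\db G_j|\le C_g\,k(\si\,d(\cdot,\ol V))$; in a term of the second kind I bound $|\ol{\p_{w_l}G_j}|\le\|DG\|_\infty$ and use $|(\db F)(G(w))|\le C_f\,h(\rh\,d(G(w),\ol U))$. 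To turn the last quantity into one involving $d(w,\ol V)$ I invoke the elementary estimate
\[
  d(G(w),\ol U)\le C\,d(w,\ol V),\qquad w\in\C^p,
\]
which is exactly where the Lipschitz constant enters: $\ol V$ is compact, so one may choose $x\in\ol V$ with $|w-x|=d(w,\ol V)$, and then $G(x)\in\ol U$, because $G|_V=g$ maps into $U$ and $G$ is continuous; hence $d(G(w),\ol U)\le|G(w)-G(x)|\le C\,|w-x|$. Since $h$ and $k$ are increasing with $h,k\le\max\{h,k\}$, and $\rh C,\si\le\max\{\rh C,\si\}$, both kinds of terms are then dominated by a constant multiple of $\max\{h,k\}\big(\max\{\rh C,\si\}\,d(w,\ol V)\big)$.

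The point I expect to demand the most care is the compact support: $F\o G$ need \emph{not} be compactly supported, since off $\supp G$ it equals the constant $F(0)$. To remedy this I would fix $\chi\in C^\infty_c(\C^p)$ with $\chi\equiv1$ on $\{d(\cdot,\ol V)\le\de\}$ for some $\de>0$ and put $H:=\chi\cdot(F\o G)$; then $H\in C^1_c(\C^p)$, and $H|_V=f\o g$ because $\chi\equiv1$ near $\ol V$, $G|_V=g$, and $F|_U=f$. Differentiating the product contributes to $\db H$ only the additional term $(\db\chi)\,(F\o G)$, of modulus $\le\|D\chi\|_\infty\|F\|_\infty$ and with support inside $\{d(\cdot,\ol V)\ge\de\}$; there $\max\{h,k\}\big(\max\{\rh C,\si\}\,d(w,\ol V)\big)\ge\max\{h,k\}\big(\max\{\rh C,\si\}\,\de\big)>0$ is bounded below, so this term too is absorbed into the right-hand side. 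Assembling the three estimates and enlarging the constant so that it is $\ge1$ then exhibits $H$ as a $(\max\{h,k\},\max\{\rh C,\si\})$-almost analytic extension of $f\o g$. Nothing here is hard apart from the modest bookkeeping the cutoff forces; neither the chain rule nor the Lipschitz inequality poses a real obstacle.
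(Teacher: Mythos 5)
Your proof is correct and follows essentially the same route as the paper's: the Wirtinger chain rule splits $\db(F\o G)$ into terms controlled by $\db G$ and terms controlled by $(\db F)\o G$, the latter converted via the Lipschitz estimate $d(G(w),\ol U)\le \on{Lip}(G)\,d(w,\ol V)$. Your additional cutoff step to restore compact support addresses a point the paper's one-line "the assertion follows" leaves implicit, and is handled correctly.
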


\begin{proof}
  Let $F$ and $G$ denote the respective extensions. 
  Then 
  \[
    \p_{\ol z_i} (F\o G) = \sum_{j=1}^n \p_{z_j} F(G) \p_{\ol z_i} G_j  + \sum_{j=1}^n \p_{\ol z_j} F(G) \p_{\ol z_i} \ol G_j.
  \]
  Since $G \in C^1_c$, we have  
  $d(G(z),G(\ol V)) \le \on{Lip}(G)\, d(z,\ol V)$. 
  The assertion follows.
\end{proof}

Notice that stability under inverse/implicit functions and solving ODEs follows in a similar way; we refer to \cite{Dynkin80}.

Let $\seq M=(M_k)$ be a positive sequence. 
For $\rh>0$ we consider
the Banach space $\cB^{\seq M}_\rh(U) := \{f \in C^\infty (U) : \|f\|^{\seq M}_{\rh}< \infty\}$, where 
\[
  \|f\|^{\seq M}_{\rh} := \sup_{x \in U,\, \al \in \N^n} \frac{|\p^\al f(x)|}{\rh^{|\al|} M_{|\al|}},
\]
and the limits 
\begin{equation*} 
  \cB^{\{\seq M\}}(U) := \on{ind}_{\rh \in \N} \cB^{\seq M}_\rh(U) \quad \text{ and } \quad 
  \cB^{(\seq M)}(U) := \on{proj}_{\rh \in \N} \cB^{\seq M}_{1/\rh}(U).   
\end{equation*}
Then $\cB^{\{\seq M\}}(U)$ and $\cB^{(\seq M)}(U)$ are called \emph{Denjoy--Carleman classes of 
Roumieu and Beurling type}, respectively. 
We shall also need the \emph{local} classes 
\[
	\cE^{[\seq M]}(U) := \on{proj}_{V \Subset U} \cB^{[\seq M]}(V),  
\]
where $V$ ranges over the relatively compact open subsets of $U$; we write $[\seq M]$ if we mean either 
$\{\seq M\}$ or $(\seq M)$.

Let $\seq m = (m_k)$ be the sequence defined by $m_k : = M_k/k!$ and let us assume that $m_k^{1/k} \to \infty$ 
as $k \to \infty$. We define 
\begin{equation} \label{h}
	 h_{\seq m}(t) := \inf_{k \in \N} m_k t^k, \quad \text{ for } t > 0, \quad \text{ and } \quad h_{\seq m}(0):=0.
\end{equation}
The following theorem is due to Dynkin \cite{Dynkin80}.
	
\begin{theorem} \label{Dynkin}
	Assume that $\seq m$ is logarithmically convex, $m_k^{1/k} \to \infty$, and $(M_{k+1}/M_k)^{1/k}$ is bounded. 
	Let $U \subseteq \R^n$ be open. 
	Then $f \in \cE^{\{\seq M\}}(U)$ if and only if for each ball $B \Subset U$ the restriction 
	$f|_B$ has an $(h_{\seq m},\rh)$-almost analytic extension for some $\rh>0$. 	
\end{theorem}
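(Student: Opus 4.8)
The plan is to prove the two implications separately, in both cases reducing to a one-dimensional statement about the function $h_{\seq m}$ and then using the classical one-variable Dynkin construction together with a Whitney-type gluing. Throughout, the hypotheses that $\seq m$ is log-convex, $m_k^{1/k}\to\infty$, and $(M_{k+1}/M_k)^{1/k}$ is bounded will be used to control the growth of the Taylor coefficients and to ensure the construction stays inside the class.

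For the easy direction, suppose $f\in\cE^{\{\seq M\}}(U)$ and fix a ball $B\Subset U$. First I would recall the one-variable construction: to a $C^\infty$ function on an interval whose derivatives satisfy $|f^{(k)}|\le C\rh^k M_k$ one associates the extension $F(x+iy):=\sum_{k=0}^{\infty}\frac{f^{(k)}(x)}{k!}(iy)^k\chi(\la_k y)$ for a suitable cutoff $\chi$ and a rapidly increasing sequence $\la_k$ chosen using $m_k^{1/k}\to\infty$; differentiating in $\ol z$ kills the holomorphic part and leaves a tail that one estimates term by term against $h_{\seq m}(\rh'|y|)$ for a slightly larger $\rh'$. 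In $n$ variables one performs this one coordinate at a time (or directly with a multi-index version of the same series), multiplies by a compactly supported cutoff equal to $1$ near $\ol B$, and checks that the $\ol\p$-bound degrades only by a constant and a dilation of $\rh$. The log-convexity of $\seq m$ is what makes the termwise estimates telescope, and boundedness of $(M_{k+1}/M_k)^{1/k}$ guarantees that multiplying by the cutoff (which mixes derivatives of different orders) keeps us in the same Denjoy--Carleman scale.

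For the converse, suppose that for each ball $B\Subset U$ the restriction $f|_B$ has an $(h_{\seq m},\rh_B)$-almost analytic extension $F_B\in C^1_c(\C^n)$. The key analytic input is the integral representation: since $F_B$ is compactly supported, the Cauchy--Pompeiu formula in each variable (or its $n$-dimensional analogue, convolution with the fundamental solution of $\ol\p$) expresses $f(x)=F_B(x)$ as an integral of $\ol\p F_B$ against an explicit kernel. Differentiating under the integral sign and using the pointwise bound $|\ol\p F_B(z)|\le C\,h_{\seq m}(\rh_B d(z,\ol B))$ together with the definition $h_{\seq m}(t)=\inf_k m_k t^k$, one extracts, for each $\al$, an estimate $|\p^\al f(x)|\le C'\,\rh'^{|\al|}M_{|\al|}$ on a slightly smaller ball. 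Covering $U$ by such balls gives $f\in\cE^{\{\seq M\}}(U)$. Here the bound $h_{\seq m}(t)\le m_k t^k$ for every $k$ is exactly the flexibility needed to match the order of differentiation, and one uses $m_k=M_k/k!$ to convert back to $M_k$; the condition $m_k^{1/k}\to\infty$ ensures $h_{\seq m}(t)\to 0$ fast enough that the integrals converge.

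The main obstacle is the converse direction in several variables: the one-dimensional Cauchy transform argument is clean, but in $\C^n$ one must either iterate the one-variable formula coordinatewise — keeping track of how the $\ol\p$-bound in one variable interacts with the others — or work with the $\ol\p$-fundamental solution directly and estimate an $n$-dimensional singular integral of $h_{\seq m}(\rh d(z,\ol B))$ against derivatives of the Newtonian-type kernel. Getting the polynomial loss in $|\al|$ to be absorbed into a single dilation $\rh\mapsto\rh'$, uniformly in $\al$, is the delicate point and is precisely where log-convexity of $\seq m$ and boundedness of $(M_{k+1}/M_k)^{1/k}$ re-enter. I expect the cleanest route is the coordinatewise iteration, since then each step is the verified one-variable estimate and only the bookkeeping of constants must be controlled.
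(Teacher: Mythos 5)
Your converse direction is, in substance, the paper's own argument: this implication is exactly \Cref{prop:restriction}, proved by writing $f(x)=-\int_{\C^n}\ol\p F(\zeta)\wedge\om(\zeta,x)$ via the Bochner--Martinelli formula (the $n$-dimensional Cauchy--Pompeiu formula you mention), putting \emph{all} the $x$-derivatives on the explicit kernel, and using $h_{\seq m}(\rho d)\le m_{|\al|}(\rho d)^{|\al|}$ together with $d(\zeta,\ol U)\le|x-\zeta|$; the factor $|\al|!$ produced by differentiating the kernel recombines with $m_{|\al|}$ to give $M_{|\al|}$, and neither log-convexity nor derivation closedness is needed for this half. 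However, the route you say you prefer --- ``coordinatewise iteration'' of the one-variable formula --- does not work: after applying Cauchy--Pompeiu in $\zeta_1$ the integrand is $\p_{\ol\zeta_1}F(\zeta_1,x_2,\dots,x_n)$, and producing a mixed derivative $\p^\al f$ would require differentiating this in $x_2,\dots,x_n$ under the integral, i.e.\ controlling derivatives of $\ol\p F$, on which the hypothesis gives no information. The only available estimate is the pointwise one on $\ol\p F$ itself, so all derivatives must fall on the kernel; that forces the genuinely $n$-dimensional formula.

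The extension direction is where you genuinely diverge from the paper. Your truncated Taylor series $\sum_\al\frac{\p^\al f(x)}{\al!}(iy)^\al\chi(\la_{|\al|}y)$ (in its multi-index form; the literal one-coordinate-at-a-time iteration again founders on controlling derivatives of the $\ol\p$-error of the first stage) is a classical construction that can be made to work here: log-convexity of $\seq m$ matches the surviving tail against $h_{\seq m}$, derivation closedness absorbs the factor $\la_k$ coming from $\chi'$, and since $h_{\seq m}$ is increasing a bound in terms of $|\Im z|$ implies the required bound in terms of $d(z,\ol B)$, the outer cutoff contributing only where $d(z,\ol B)$ is bounded below. The paper instead follows Dynkin (\Cref{prop:existenceofext}): it forms $G(z)=T^{p(z)}_{\hat z}f(z)$, the Taylor polynomial at a nearest point $\hat z\in\ol B$ truncated at the distance-dependent order $p(z)=\ul\Ga_{\seq m}(c\,d(z))$, and mollifies $G$ at the scale of the regularized distance of \Cref{prop:regdist}, deriving the $\ol\p$-estimate from the Whitney-type remainder bounds of \Cref{lem:taylorest}. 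What the paper's construction buys is generality: it extends jets off arbitrary compact sets and decouples the counting functions $\ul\Ga_{\seq m}$ and $\ol\Ga_{\seq n}$ of two different sequences, which is what makes \Cref{thm:Rchar} and \Cref{thm:Bchar} work for weight matrices without assuming each $\seq m$ log-convex. Your construction is shorter for the single strongly log-convex sequence of \Cref{Dynkin}, but it is tied to extending off a full-dimensional real domain and does not generalize in that direction.
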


Our goal is to extend this result to the Beurling case and to the classes 
of Beurling and Bj\"orck which were equivalently described by Braun, Meise, and Taylor \cite{BMT90}. 
These classes are defined in terms of a weight function $\om$. 
By a \emph{weight function} we mean a continuous increasing function $\om : [0,\infty) \to [0,\infty)$ with $\om(0) =0$ 
that satisfies
\begin{align}
   & \om(2t) = O(\om(t)) \quad\text{ as } t \to \infty, \label{om1}\\
   & \om(t) = O(t) \quad\text{ as } t \to \infty, \label{om2}\\
   & \log t = o(\om(t)) \quad\text{ as } t \to \infty, \label{om3}\\
   & \vh(t) := \om(e^t) \text{ is convex}.  \label{om4}
\end{align}
Note that \eqref{om3} implies  
$\lim_{t \to \infty} \om(t) = \infty$.

For $\rh >0$ we consider the Banach space $\cB^{\om}_\rh(U) := \{f \in C^\infty (U) : \|f\|^\om_{\rh}< \infty\}$, 
where, for $\vh^*(t) := \sup_{s \ge 0} (s t - \vh(s))$, 
\[
  \|f\|^\om_{\rh} := \sup_{x \in U,\,\al \in \N^n} |\p^\al f(x)| \exp(-\tfrac{1}{\rh} \vh^*(\rh |\al|)),
\]
and the limits 
\begin{equation*}
  \cB^{\{\om\}}(U) := \on{ind}_{\rh \in \N} \cB^{\om}_\rh(U) \quad \text{ and  } \quad 
  \cB^{(\om)}(U) := \on{proj}_{\rh \in \N} \cB^{\om}_{1/\rh}(U).
\end{equation*}
The corresponding \emph{local} classes are defined by 
\[
	\cE^{[\om]}(U) := \on{proj}_{V \Subset U} \cB^{[\om]}(V);  
\]
we write $[\om]$ if we mean either $\{\om\}$ or $(\om)$. 
We recall that $\cE^{[\om]}(U)$ contains non-trivial functions with compact support in $U$ if and only if 
\[
	\int_1^\infty \frac{\om(t)}{t^2} \, dt < \infty;
\]
cf.\ \cite{BMT90} or \cite{RainerSchindl12}.
In that case we say that $\om$ is \emph{non-quasianalytic} and it makes sense to set 
\[
\cD^{[\om]}(U) := \cE^{[\om]}(U) \cap \cD(U),
\]
where $\cD(U)$ denotes the space of smooth functions with compact support in $U$.

In \cite{PetzscheVogt84} the authors prove the following result.

\begin{theorem} \label{PetzscheVogt}
	Let $\om$ be a concave non-quasianalytic weight function. 
	Let $U \subseteq \R$ be open and $f \in \cD(U)$. Then:
	\begin{enumerate}
		\item $f \in \cD^{\{\om\}}(U)$ if and only if there exist $\rh>0$ and $F \in \cD(\widetilde U)$ such that $F|_\R = f$ and 
		\begin{equation} \label{eq:PetzscheVogt}
			\sup_{z \in \C \setminus \R} |\ol \p F(z)| \exp( \rh \om^\star (|y|/\rh)) < \infty.
		\end{equation}
		\item $f \in \cD^{(\om)}(U)$ if and only if for each $\rh>0$ there exists $F \in \cD(\widetilde U)$ such that $F|_\R = f$ and 
		\eqref{eq:PetzscheVogt}.
	\end{enumerate}
	Here $\widetilde U$ is an open subset of $\C$ such that $U = \widetilde U \cap \R$ and 
	$\om^\star(t) = \sup_{s\ge 0 } (\om(s) - s t)$. 
\end{theorem}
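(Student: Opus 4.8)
The plan is to prove both implications directly, uniformly in the Roumieu and Beurling cases. The ``only if'' half of the Roumieu statement can alternatively be deduced from Theorem~\ref{Dynkin} applied to the weight sequence $\seq M^{\rh}$ attached to $\om$ by $M^{\rh}_k:=\exp(\tfrac1\rh\vh^*(\rh k))$; because $\om$ is concave this $\seq M^{\rh}$ meets Dynkin's hypotheses ($\seq m^{\rh}:=(M^{\rh}_k/k!)$ is log-convex, $(M^{\rh}_{k+1}/M^{\rh}_k)^{1/k}$ is bounded by \eqref{om1}, and $(m^{\rh}_k)^{1/k}\to\infty$ by \eqref{om3}), but the Beurling half and the converses are not covered, so I would argue directly throughout. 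The computation behind the whole equivalence is a Legendre-conjugate comparison: convexity of $\vh$ gives $\inf_k M^{\rh}_k t^k\asymp\exp(-\tfrac1\rh\om(1/t))$, and dividing by $k!$ turns this, using concavity of $\om$, into
\[
   h_{\seq m^{\rh}}(\sigma t)\le C\exp\!\big(-\tau\,\om^\star(t/\tau)\big)\qquad\text{and}\qquad\exp\!\big(-\tau\,\om^\star(\sigma t)\big)\le C\, h_{\seq m^{\rh}}(t)\qquad(0<t\le 1)
\]
for suitable $C,\sigma,\tau>0$, with $\tau\to0$ as $\rh\to0$. Granting this, the statement reduces to two extension constructions and the bookkeeping of constants.

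For ``$f\in\cD^{[\om]}(U)$ admits the extension'', fix $\rh_0>0$ with $\|f\|^\om_{\rh_0}<\infty$ — any $\rh_0$ in the Beurling case — so that $|f^{(k)}(x)|\le C\exp(\tfrac1{\rh_0}\vh^*(\rh_0 k))$, and form the Dynkin-type series
\[
   F(x+iy):=\sum_{k=0}^\infty\frac{f^{(k)}(x)}{k!}\,(iy)^k\,\chi(\la_k y),
\]
where $\chi\in\cD(\R)$ equals $1$ near $0$, is supported in $(-1,1)$, and $(\la_k)\nearrow\infty$ is chosen, following Dynkin, so fast that $F$ and its first derivatives converge locally uniformly and that on the layer $\{|y|\sim\la_k^{-1}\}$ the index $j=k$ realizes, up to a bounded shift, the infimum defining $h_{\seq m^{\rh_0}}(|y|)$. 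As $f$ has compact support in $U$, $F$ has compact support in a complex neighbourhood $\widetilde U$ of $U$ and $F|_\R=f$. The telescoping identity $\p_x\big(\tfrac{f^{(k)}}{k!}(iy)^k\big)+i\,\p_y\big(\tfrac{f^{(k+1)}}{(k+1)!}(iy)^{k+1}\big)=0$ confines $\supp\ol\p F$ to $\bigcup_k\{|y|\sim\la_k^{-1}\}$, where only $O(1)$ blocks survive and
\[
   |\ol\p F(x+iy)|\lesssim\frac{|f^{(k+1)}(x)|}{k!}|y|^k+\frac{|f^{(k)}(x)|}{k!}|y|^{k-1}\lesssim\frac1{|y|}\,h_{\seq m^{\rh_0}}(|y|)\le C\, h_{\seq m^{\rh_0}}(c|y|);
\]
the first $\lesssim$ uses the bounds on $\chi,\chi'$ and on $f^{(k)}$, the second is the choice of $(\la_k)$, and the third absorbs the power of $|y|$ (licit since $(m^{\rh_0}_k)^{1/k}\to\infty$). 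By the comparison this is $\le C'\exp(-\tau\,\om^\star(|y|/\tau))$; in the Beurling case choosing $\rh_0$ small makes $\tau$ as small as desired, giving the second assertion.

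For the converse, let $F\in\cD(\widetilde U)$ with $F|_\R=f$ and $|\ol\p F(z)|\le C\exp(-\rh\,\om^\star(|\imag z|/\rh))$ — for one $\rh$, resp.\ for all $\rh$ — extended by $0$ to a function in $C^1_c(\C)$. By \eqref{om3} and concavity, $s\mapsto\exp(-\rh\,\om^\star(s/\rh))$ decays faster than every power of $s$ as $s\to0^+$, so $\ol\p F$ vanishes to infinite order on $\R$ and the Cauchy--Pompeiu formula on $D(x,r)$ may be differentiated $k$ times in $x$:
\[
   f^{(k)}(x)=\frac{k!}{2\pi i}\int_{|\zeta-x|=r}\frac{F(\zeta)}{(\zeta-x)^{k+1}}\,d\zeta-\frac{k!}{\pi}\int_{|\zeta-x|<r}\frac{\ol\p F(\zeta)}{(\zeta-x)^{k+1}}\,dA(\zeta).
\]
The boundary term is $\le\|F\|_\infty\,k!\,r^{-k}$; since $\om^\star$ is decreasing and $|\imag\zeta|\le|\zeta-x|$, the area term is $\le 2C\,k!\int_0^r t^{-k}\exp(-\rh\,\om^\star(t/\rh))\,dt$. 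Choosing $r=r_k$ to balance the two — by the comparison $r_k$ is governed by $\vh^*$ and the common value is $\lesssim\exp(\tfrac1{\rh'}\vh^*(\rh' k))$ — yields $|f^{(k)}(x)|\le C'\exp(\tfrac1{\rh'}\vh^*(\rh' k))$, i.e.\ $\|f\|^\om_{\rh'}<\infty$, with $\rh'$ depending only on $\rh$ (and arbitrarily small when $\rh$ is). Hence $f\in\cD^{\{\om\}}(U)$, resp.\ $f\in\cD^{(\om)}(U)$.

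The delicate point is the two-sided Legendre comparison together with tracking $\rh,\rh_0,\rh',\sigma,\tau$ and the sequence $(\la_k)$, so that ``there exists $\rh$'' (Roumieu) and ``for all $\rh$'' (Beurling) both come out right; concavity of $\om$ enters precisely here, via $\vh^{**}=\vh$ on the relevant range and the stability of $\om^\star$ under the dilations hidden in the implied constants. The infinite-order vanishing of $\ol\p F$, the convergence of the extension series, and the absorption of powers of $|y|$ into $h_{\seq m^{\rh_0}}$ are routine given \eqref{om3}. Non-quasianalyticity of $\om$ plays no role in the estimates; it only ensures that $\cD^{[\om]}(U)$ and $\cD(\widetilde U)$ contain nonzero functions, so that the statement is not vacuous.
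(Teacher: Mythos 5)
The paper itself does not prove \Cref{PetzscheVogt}; it quotes it from \cite{PetzscheVogt84} and explicitly records that the original proof obtains the extension from ``an explicit formula suggested by Mather based on the Fourier transform''. Your route --- a truncated Dynkin series $\sum_k\frac{f^{(k)}(x)}{k!}(iy)^k\chi(\la_k y)$ for the extension, the Cauchy--Pompeiu formula for the converse, and a two-sided Legendre comparison between $h_{\seq m^{\rh}}$ and $\exp(-\ta\,\om^\star(\cdot/\ta))$ to translate between the two scales --- is therefore genuinely different from the cited source, but it closely parallels what the paper does for its own generalizations: \Cref{thm:Rchar} and \Cref{thm:Bchar} are proved by a multidimensional Whitney-jet refinement of exactly this Dynkin construction, \Cref{prop:restriction} is your Cauchy--Pompeiu step, and \Cref{PetzscheVogtnew} passes to the $\om^\star$-formulation via the comparison \eqref{eq:staromega} from \cite[Corollary 3.11]{RainerSchindl17}. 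What your one-variable argument buys is a self-contained proof with the $|y|$-decay of the original statement (rather than decay in $d(z,\ol U)$); what it loses is the uniformity in the dimension and the quasianalytic generality the paper gets from its machinery.

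Two of your intermediate claims are wrong as stated and must be repaired. First, concavity of $\om$ does \emph{not} imply that $\seq m^{\rh}=(W^{\rh}_k/k!)_k$ is log-convex; it only implies that the associated matrix is equivalent to one consisting of strongly log-convex sequences after a shift of the parameter, cf.\ \eqref{eq:strong} in \Cref{prop:strongmatrix} and \Cref{thm:omegachar}. Your choice of layers $\la_k^{-1}\sim m^{\rh_0}_k/m^{\rh_0}_{k+1}$ needs $m_{k+1}/m_k$ increasing for the layers to be nested and for the index $k$ to realize the infimum defining $h_{\seq m^{\rh_0}}$ there; without first replacing $\seq w^{\rh_0}$ by its log-convex minorant $\ul{\seq w}^{\rh_0}$ (and absorbing the resulting geometric factors by moving to the parameter $B\rh_0$) the ``only $O(1)$ blocks survive'' step is unjustified. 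The shift is harmless for both quantifiers, but it is exactly where concavity enters and it cannot be suppressed. Second, the absorption $\tfrac1{|y|}h_{\seq m}(|y|)\le C\,h_{\seq s}(c|y|)$ is \emph{not} a consequence of $m_k^{1/k}\to\infty$: it requires derivation closedness $m_{j+1}\le C^{j+1}s_j$ as in \eqref{ass2}, which here is supplied by \Cref{lemma4} at the cost of another parameter shift. Finally, in the converse your balancing of $r=r_k$ is genuinely needed for the Beurling half (a fixed radius leaves a boundary term $k!\,r^{-k}$ whose geometric factor cannot be made arbitrarily small relative to a fixed $W^x_k$), so that step should be carried out rather than asserted. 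With these corrections the argument is sound.
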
 

In \cite{PetzscheVogt84}
the almost analytic extensions were obtained by an explicit formula suggested by Mather based on 
  the Fourier transform. That proof does not work for quasianalytic classes. 

\begin{remark}
  In \cite{PetzscheVogt84} the assumption \eqref{om4} is not made. 
  This condition is important for the equivalence of the classes $\cE^{[\om]}$ with the classes originally 
  introduced by Beurling and Bj\"orck using the Fourier transform; cf.\ \cite{BMT90}. 
\end{remark}

We will prove results which generalize both \Cref{Dynkin} and \Cref{PetzscheVogt} and which work also in the quasianalytic setting. 
Our most general results are formulated and proved for ultradifferentiable classes defined by weight matrices; see 
\Cref{thm:Rchar} and \Cref{thm:Bchar}. 
We give full details in the proofs, since Dynkin's papers seem not to be widely known.

For classes described by weight functions we obtain a complete characterization:

\begin{theorem}
	Let $\om$ be a weight function satisfying $\om(t) =o(t)$ as $t\to \infty$. 
	The following are equivalent.
	\begin{enumerate}
		\item $\cE^{\{\om\}}$ can be described by almost analytic extensions.
		\item $\cE^{(\om)}$ can be described by almost analytic extensions.
		\item $\cE^{\{\om\}}$ is stable under composition.
		\item $\cE^{(\om)}$ is stable under composition.
		\item $\om$ is equivalent to a concave weight function.
	\end{enumerate}
\end{theorem}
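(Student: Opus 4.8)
The plan is to establish the cyclic chain of implications $(5)\Rightarrow(1)\Rightarrow(3)$, $(5)\Rightarrow(2)\Rightarrow(4)$, and then $(3)\Rightarrow(5)$ and $(4)\Rightarrow(5)$, so that all five statements are linked through concavity of $\om$. The implications $(1)\Rightarrow(3)$ and $(2)\Rightarrow(4)$ are essentially free: if the class is described by almost analytic extensions, then \Cref{prop:composition} applies, since for $f,g$ in the class one gets almost analytic extensions of $f$ and $g$ with controllable loss, and the proposition produces an almost analytic extension of $f\o g$ of the same qualitative type (with $\max$ of the weight functions and a rescaled $\rh$); one must check that for weight-function classes the relevant weights $h$ attached to $\om$ are comparable after taking maxima, which is where the regularity hypotheses \eqref{om1}--\eqref{om4} enter. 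The implications $(5)\Rightarrow(1)$ and $(5)\Rightarrow(2)$ are exactly the content of the general characterization theorems announced as \Cref{thm:Rchar} and \Cref{thm:Bchar}: when $\om$ is concave, the associated weight matrix has the properties needed to run the refined Dynkin extension, and the almost analytic description holds. So the real work is the reverse direction $(3),(4)\Rightarrow(5)$.

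For $(3)\Rightarrow(5)$ (and similarly $(4)\Rightarrow(5)$) I would argue by contraposition: assume $\om$ is \emph{not} concave and produce a composition $f\o g$ with $f,g\in\cE^{[\om]}$ but $f\o g\notin\cE^{[\om]}$. The standard device here is that stability under composition of $\cE^{[\om]}$ is known to be equivalent to a growth condition on $\om$, namely that $\om$ is equivalent to a concave weight — this is the moderate-growth/subadditivity circle of conditions; concretely, $\cE^{[\om]}$ is closed under composition iff $\exists\, C>0$ with $\om(t^2)\le C\,\om(t)+C$ for large $t$ (equivalently $\om(t)=O(\om(\sqrt t))$), and for weight functions satisfying $\om(t)=o(t)$ this is in turn equivalent to $\om$ being concave (up to equivalence of weights, which does not change the class). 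I would invoke, or reprove in a couple of lines, the characterization: $\cE^{[\om]}$ stable under composition $\iff$ $\om\circ\om = O(\om)$ $\iff$ $\om$ equivalent to a concave weight. The forward direction of that characterization (composition-stability forces the growth condition) is the substantive step: one tests stability on explicit functions, e.g.\ $g(x)=x^2$ near $0$ composed with a function $f$ whose $\om$-type derivative growth is as bad as allowed, and reads off that the derivatives of $f\o g$ can only stay in the class if $\om(t^2)\lesssim\om(t)$.

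The main obstacle I expect is precisely pinning down this equivalence between composition-stability and concavity cleanly in the present generality, i.e.\ for weight functions with $\om(t)=o(t)$ but without assuming non-quasianalyticity. In the non-quasianalytic case one has test functions in $\cD^{[\om]}$ and the classical arguments (as in the literature on inverse-closedness and composition for Braun--Meise--Taylor classes) apply directly; in the quasianalytic case one cannot use compactly supported bump functions, so the construction of a bad pair $f,g$ must be done with genuinely quasianalytic witnesses — for instance using the almost analytic extension machinery itself, or functions defined via an $h_{\seq m}$-type infimum, to manufacture a real-analytic-away-from-a-point example whose composition escapes the class. A clean way to sidestep the quasianalytic difficulty is to note that composition-stability is a statement purely about the weight $\om$ (via the associated weight matrix and the submultiplicativity-type conditions on it), so one can transfer the known equivalence from the weight-sequence or non-quasianalytic setting by the weight-matrix formalism of \cite{RainerSchindl12}: $\cE^{[\om]}$ corresponds to a matrix $\{\seq M^{(\rh)}\}$, composition-stability corresponds to a uniform moderate-growth/ramification condition on that matrix, and one checks this condition holds iff $\om$ is (equivalent to) concave. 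I would then close the loop by quoting $(5)\Rightarrow(1),(2)$ from \Cref{thm:Rchar}, \Cref{thm:Bchar} and $(1)\Rightarrow(3)$, $(2)\Rightarrow(4)$ from \Cref{prop:composition}, completing the cycle. \qed
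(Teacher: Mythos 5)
Your overall architecture coincides with the paper's: (1)$\Rightarrow$(3) and (2)$\Rightarrow$(4) via \Cref{prop:composition}, and (5)$\Rightarrow$(1),(2) by passing to the associated weight matrix, showing it is equivalent to a regular one, and invoking \Cref{thm:Rchar} and \Cref{thm:Bchar}. The gap is in your treatment of (3)$\Rightarrow$(5) and (4)$\Rightarrow$(5), which is precisely the step the paper does not reprove but quotes from \cite{Rainer:aa} (with partial results in \cite{Peetre70}, \cite{FernandezGalbis06}, \cite{RainerSchindl14}). The quantitative characterization you propose to ``invoke or reprove in a couple of lines'' is wrong: stability of $\cE^{[\om]}$ under composition is \emph{not} equivalent to $\om(t^2)\le C\om(t)+C$, nor to $\om\o\om=O(\om)$. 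The correct condition is item (6) of \Cref{thm:omegachar}, namely $\om(\la t)\le C\la\,\om(t)$ for all $\la\ge 1$ and all large $t$. The Gevrey weight $\om(t)=t^{1/s}$ shows your first condition is strictly stronger than composition stability: $\om(t^2)=\om(t)^2$ is not $O(\om(t))$, yet $\om$ is concave and $\cG^s$ is composition closed. Conversely, under your standing hypothesis $\om(t)=o(t)$ every increasing $\om$ satisfies $\om(\om(t))\le\om(t)$ for large $t$, so $\om\o\om=O(\om)$ is vacuous here and cannot imply concavity. Neither condition can serve as the pivot of the contrapositive.

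Moreover, the witness you propose cannot work even in principle: composing with $g(x)=x^2$, or with any real analytic $g$, never leaves $\cE^{[\om]}$, since by \Cref{AnalyticClosed} these classes are stable under real analytic pullback irrespective of concavity. A genuine counterexample to composition stability requires two non-analytic witnesses, and manufacturing them in the quasianalytic case is exactly the nontrivial content of the cited equivalence (3)$\iff$(4)$\iff$(5). Your fallback of ``transferring the known equivalence via the weight-matrix formalism'' is in effect what the paper does by citation, but as written your reduction of (3),(4)$\Rightarrow$(5) to an explicit growth condition is incorrect and the proposed test functions cannot detect the failure of concavity. Either cite the equivalence of (3), (4) and (5) as the paper does, or replace your growth condition by $\om(\la t)\le C\la\,\om(t)$ and supply the (nontrivial) construction that composition stability forces it.
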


This follows from the much more comprehensive \Cref{thm:omegachar} 
in which 
also
the precise meaning of the phrase ``$\cE^{[\om]}$ can be described by almost analytic extensions''  
is 
explained.
See also \Cref{PetzscheVogtnew} for our new version of \Cref{PetzscheVogt}.

A widely used family of ultradifferentiable classes which falls into this framework is the scale of \emph{Gevrey classes}
\[
  \cG^s = \cE^{\{(k!^s)_k\}} = \cE^{\{t \mapsto t^{1/s}\}} ,\quad s>1; 
\]
note that $\cG^1= C^\om$.

\subsection{Applications to microlocal analysis}

The uniform approach to ultradifferentiable classes by imposing growth conditions in terms of weight matrices 
provides us with a general framework to treat the ultradifferentiable wave front sets for distributions $u \in \cD'$.  
Our setting comprises and generalizes the wave front sets $\WF_{[\seq M]}$
of H\"ormander \cite{Hoermander83I} for weight sequences $\seq M$ and $\WF_{[\om]}$
of Albanese, Jornet, and Oliaro \cite{Albanese:2010vj} for weight functions $\om$. 

In \Cref{sec:WF} we develop the basic properties trying to impose minimal assumptions on the weights.

As an application of the description of ultradifferentiable classes by almost analytic extensions
we obtain in \Cref{sec:WFreg} a characterization of the ultradifferentiable wave front set by 
almost analytic extensions; see \Cref{RegWFLocalChar2}.
This description allows us to  
show that the ultradifferentiable wave front set is compatible with pullbacks by mappings of the 
corresponding ultradifferentiable class and hence the definition of the wave front set can be extended to 
ultradifferentiable manifolds; see \Cref{thm:pullback}. 
Furthermore, we obtain a general ultradifferentiable version of Bony's theorem, that is a characterization 
of the ultradifferentiable wave front set  not only by almost analytic extensions but also in terms of the FBI transform; see \Cref{Bony}. 

In the particular case of a weight function the latter takes the following form.

\begin{theorem} \label{Bony:intro}
Let $\om$ be a concave weight function satisfying $\om(t) =o(t)$ as $t\to \infty$.
Let $u\in\D^\prime(\Omega)$ and $(x_0,\xi_0)\in T^\ast\Omega \setminus \{0\}$. 
Then 
\begin{enumerate} 
\item $(x_0,\xi_0)\notin\WF_{\{\om\}} u$ if and only if there exist a test function $\psi\in\D(\Omega)$ with $\psi\equiv 1$ 
near $x_0$, a conic neighborhood $U\times\Gamma$ of $(x_0,\xi_0)$, and a constant $\gamma>0$ such that 
\begin{equation}\label{M-FBIestimate:intro}
\sup_{(t,\xi)\in U\times\Gamma}e^{\ga \omega(|\xi|)}
\bigl|\mathfrak{F}(\psi u)(t,\xi) \bigr|<\infty.
\end{equation}
\item $(x_0,\xi_0)\notin\WF_{(\om)} u$ if and only if there exist a test function $\psi\in\D(\Omega)$ with $\psi\equiv 1$ near $x_0$ and a conic neighborhood $U\times\Gamma$ of $(x_0,\xi_0)$ such that
\eqref{M-FBIestimate:intro} is satisfied for all $\gamma>0$.
\end{enumerate}
\end{theorem}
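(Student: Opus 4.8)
The plan is to deduce \Cref{Bony:intro} from the general FBI-transform characterization of the wave front set, \Cref{Bony}, formulated for weight matrices. The first step is to translate the weight function $\om$ into the weight matrix $\fM_\om = \{W^{(\la)}\}_{\la>0}$ associated to it via the Legendre conjugate $\vh^*$, in the standard way of \cite{RainerSchindl12}: set $W^{(\la)}_k := \exp(\tfrac 1\la \vh^*(\la k))$. Then $\cE^{[\om]} = \cE^{[\fM_\om]}$ with the Roumieu case corresponding to taking the union over $\la$ and the Beurling case to the intersection, and likewise $\WF_{[\om]} = \WF_{[\fM_\om]}$ by the very definition of the latter recalled in \Cref{sec:WF}. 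Thus it suffices to specialize the weight-matrix statement of \Cref{Bony} to $\fM = \fM_\om$ and check that the hypotheses there are met: concavity of $\om$ together with $\om(t) = o(t)$ guarantees, by the equivalence in \Cref{thm:omegachar}, that $\cE^{[\om]}$ is stable under composition and admits description by almost analytic extensions, which is exactly the running assumption needed to invoke \Cref{Bony}.

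The second step is to rewrite the matrix-type FBI estimate of \Cref{Bony} as the single clean estimate \eqref{M-FBIestimate:intro}. In the weight-matrix formulation the condition reads: the FBI transform $\mathfrak F(\psi u)(t,\xi)$ decays like $\exp(-\tfrac 1\la \vh^*(\la|\xi|))$ on a conic neighborhood, for \emph{some} $\la>0$ in the Roumieu case and for \emph{every} $\la>0$ in the Beurling case. One then uses the Young-type duality between $\vh^*$ and $\vh$ — concretely, the elementary inequality $\tfrac 1\la \vh^*(\la s) \ge c\,\om(s) - C$ valid for a suitable $c = c(\la)>0$, and conversely $\om(s) \le \tfrac 1\la \vh^*(\la s) + C_\la$ — to convert the family of bounds $\exp(-\tfrac 1\la \vh^*(\la|\xi|))$ into bounds of the form $e^{-\ga\om(|\xi|)}$. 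In the Roumieu case a fixed $\la$ yields a fixed $\ga>0$; in the Beurling case, letting $\la\to\infty$ makes $\ga$ arbitrarily large, so the two quantifier structures match. This passage is purely a matter of manipulating Legendre conjugates of convex functions and uses \eqref{om1}--\eqref{om4} only through the standard facts about $\vh^*$; I would state the two inequalities as a short lemma (or cite the corresponding estimate in \cite{BMT90} or \cite{RainerSchindl12}) and then the equivalence is immediate.

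The remaining point is bookkeeping: one must ensure that the neighborhoods $U\times\Ga$ and the cutoff $\psi$ produced by \Cref{Bony} can be taken in the form required in \eqref{M-FBIestimate:intro} — a conic neighborhood of $(x_0,\xi_0)$ and $\psi\in\cD(\Om)$ with $\psi\equiv 1$ near $x_0$ — but this is already the shape of the conclusion in \Cref{Bony}, so nothing new is needed. One should also note that the hypothesis $\om(t)=o(t)$ is what makes $\fM_\om$ a genuine (non-real-analytic-trivial) weight matrix and is implicitly needed for \eqref{om3}-type arguments and for the FBI-transform machinery to apply; I would simply remark that it is part of the standing assumptions.

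\textbf{Main obstacle.} The only genuinely nontrivial input is \Cref{Bony} itself (the weight-matrix Bony theorem), whose proof goes through the almost-analytic-extension characterization of $\WF_{[\fM]}$ (\Cref{RegWFLocalChar2}) together with the FBI side; once that is in hand, the present theorem is a specialization plus a Legendre-conjugate translation, and I expect no real difficulty beyond correctly aligning the Roumieu/Beurling quantifiers with the "some $\la$ / all $\la$" structure of the weight matrix $\fM_\om$.
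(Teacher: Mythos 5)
Your proposal follows the paper's own route exactly: the paper derives \Cref{Bony:intro} as a direct consequence of \Cref{Bony} applied to the associated weight matrix $\fW=\{\seq W^x\}_{x>0}$, using the inequalities $x\,\om_{\seq W^x}(t)\le\om(t)\le 2x\,\om_{\seq W^x}(t)+C_x$ together with \eqref{om1} to pass between the bounds $e^{-\om_{\seq W^x}(\ga|\xi|)}$ and $e^{-\ga\om(|\xi|)}$ with the matching some/all quantifiers, and concavity (via \Cref{thm:omegachar} and \Cref{cor:strongmatrix}) to ensure the matrix is [regular] so that \Cref{Bony} applies. The one imprecision is your intermediate inequality comparing $\tfrac1\la\vh^*(\la s)$ directly with $\om(s)$ (these live on different sides of the duality; the correct comparison is between the associated function $\om_{\seq W^\la}$ and $\om$), but the lemma you propose to cite from \cite{RainerSchindl12} is precisely that correct statement, so nothing essential is missing.
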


We refer to \Cref{sec:Bony} for the definition of the generalized FBI transform $\fF$.

In the last \Cref{sec:elliptic} we investigate ultradifferentiable versions of the elliptic regularity theorem. 
Our most general result is \Cref{elliptic-regThm} which is formulated for classes defined by weight matrices.
It comprises the versions of  
H\"ormander \cite{Hoermander83II} for weight sequences $\seq M$ and
of Albanese, Jornet, and Oliaro \cite{Albanese:2010vj} for weight functions $\om$ as special cases. 
The proof follows closely the approach of H\"ormander. 
As a corollary we obtain a general version of Holmgren's uniqueness theorem; see \Cref{thm:Holmgren}.

Notice that in the Beurling case we must in general assume that the coefficients of the linear operator 
are \emph{strictly more regular} than the wave front set in question, just as in \cite{Albanese:2010vj}; 
H\"ormander only considers operators with analytic coefficients. 
There are however circumstances when the operator can be as regular as the wave front set
(both in the case of a single weight sequence and of a weight function); see 
\Cref{sec:openproblem}. 
In particular, this occurs in the setting considered in \cite{Albanese:2010vj}, whence our result 
\Cref{cor:strong} actually strengthens \cite[Theorem 4.1]{Albanese:2010vj}.

A further interesting corollary of \Cref{elliptic-regThm} is the following.
We are interested in the intersection of all non-quasianalytic Gevrey classes
\[
  \cE^{(\fG)} := \bigcap_{s>1} \cG^s;
\]
this is a non-quasianalytic function class,  
cf.\ \cite{RainerSchindl12}.

\begin{theorem}\label{Gevrey}
Let $P(x,D) = \sum_{|\al| \le m} a_\al(x) D^\al$ be a linear partial differential operator with $\cE^{(\fG)}(\Omega)$-coefficients.
Then
\begin{equation}
\WF_{(\fG)} u \subseteq \WF_{(\fG)} Pu\cup \Char P
\end{equation}
for all $u\in\D^\prime(\Omega)$. If $P$ is elliptic, then $\WF_{(\fG)}u = \WF_{(\fG)} Pu$.
\end{theorem}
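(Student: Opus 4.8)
The strategy is to deduce \Cref{Gevrey} from the general weight-matrix elliptic regularity theorem \Cref{elliptic-regThm}, applied to the weight matrix describing the class $\cE^{(\fG)} = \bigcap_{s>1}\cG^s$. First I would identify $\cE^{(\fG)}$ as a Beurling-type class $\cE^{(\fM)}$ for a suitable weight matrix $\fM = \{M^{(s)} : s > 1\}$, where $M^{(s)}_k = k!^{s}$ (or a cofinal countable subfamily $s = 1 + 1/j$, $j \in \N$); this is exactly the identification recalled from \cite{RainerSchindl12} in the excerpt. The point of working in the Beurling/intersection regime is that $\cE^{(\fG)}$ is closed under the operations needed (it is a non-quasianalytic class, as noted), so that multiplication by coefficients $a_\al \in \cE^{(\fG)}$ preserves the relevant spaces and the wave front set $\WF_{(\fG)}$ is well-defined with the expected microlocal calculus.

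The crucial structural input is that here the coefficients lie in \emph{exactly} the same class $\cE^{(\fG)}$ as the wave front set under consideration, rather than in a strictly smaller class. As the introduction warns, \Cref{elliptic-regThm} for a general Beurling class $\cE^{(\fM)}$ requires the coefficients to be strictly more regular than $\WF_{(\fM)}$; so the real content is to verify that $\cE^{(\fG)}$ falls into the exceptional situation described in \Cref{sec:openproblem} where the operator may be taken as regular as the wave front set. I would check the condition isolated there: the key is that $\cE^{(\fG)}$, being the intersection over \emph{all} $s>1$, has the property that for each $s$ there is $s' \in (1,s)$ with $\cG^{s'} \subseteq \cG^{s}$ and the inclusion is suitably ``tame'' (the Gevrey scale is totally ordered and the defining sequences satisfy $k!^{s'} \le k!^{s}$ with ratios controllable), so that a coefficient in $\cE^{(\fG)} \subseteq \cG^{s'}$ is automatically strictly more regular than the $\cG^{s}$-wave front set, for every $s$; intersecting over $s$ then gives the statement for $\WF_{(\fG)}$. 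This is the step I expect to be the main obstacle: making precise, in the language of the weight matrix $\fM$, the mixed condition relating the ``coefficient index'' and the ``wave front index'' that permits equality of regularity, and confirming that the Gevrey matrix satisfies it. Once that is in place, the inclusion $\WF_{(\fG)} u \subseteq \WF_{(\fG)} Pu \cup \Char P$ is immediate from \Cref{elliptic-regThm}.

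For the final assertion, if $P$ is elliptic then $\Char P = \emptyset$, so the inclusion becomes $\WF_{(\fG)} u \subseteq \WF_{(\fG)} Pu$. The reverse inclusion $\WF_{(\fG)} Pu \subseteq \WF_{(\fG)} u$ holds for any linear partial differential operator with $\cE^{(\fG)}$-coefficients, since applying $P$ cannot create new singularities: differentiation does not enlarge the ($\cE^{(\fG)}$-)wave front set and multiplication by a function in $\cE^{(\fG)}$ does not either (this is part of the basic calculus of $\WF_{[\seq M]}$, resp. $\WF_{(\fM)}$, developed in \Cref{sec:WF}). Combining the two inclusions yields $\WF_{(\fG)} u = \WF_{(\fG)} Pu$, completing the proof.
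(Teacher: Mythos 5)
Your overall strategy --- reduce to the individual Gevrey scales, apply \Cref{elliptic-regThm} there, and recombine --- is the same as the paper's, and the reduction itself is sound: since the coefficients lie in $\cE^{(\fG)}=\bigcap_{t>1}\cG^t$, for each fixed $s>1$ they belong to $\cE^{\{\seq G^{s'}\}}$ for any $s'\in(1,s)$, with $\seq G^{s'}\lhd\seq G^s$, so \Cref{elliptic-regThm}(2) applies with $\fM=\{\seq G^{s'}\}$ and $\fL=\{\seq G^s\}$ (or, even more simply, \Cref{elliptic-regThm}(1) applies with $\fM=\fL=\{\seq G^s\}$ for the Roumieu wave front set, which is what the paper does). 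In particular, none of the ``equal regularity'' machinery of \Cref{sec:openproblem} (\Cref{cor:elliptic1}, \Cref{Komatsu}) is actually required for this corollary: the intersection structure of $\cE^{(\fG)}$ lets you dodge the Beurling coefficient problem at every single scale, which is precisely why \Cref{Gevrey} can be stated with coefficients in the same class. So the step you flag as ``the main obstacle'' is in fact the easy part.

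The genuine gap is the recombination. You write that ``intersecting over $s$ then gives the statement for $\WF_{(\fG)}$'', but for the Beurling class of a weight matrix the wave front set is \emph{not} the intersection of the scale-wise wave front sets; by \Cref{WF-Intersection} it is the \emph{closed union}, $\WF_{(\fG)}u=\overline{\bigcup_{s>1}\WF_{(\seq G^s)}u}=\overline{\bigcup_{s>1}\WF_{\{\seq G^s\}}u}$ (the intersection formula is the one for the Roumieu matrix class). Intersecting the scale-wise inclusions only controls $\bigcap_{s>1}\WF_{(\seq G^s)}u$, which is contained in, and in general smaller than, $\WF_{(\fG)}u$, so the argument as stated does not reach the asserted conclusion. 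The correct finish is: for each $s$, $\WF_{\{\seq G^s\}}u\subseteq\WF_{\{\seq G^s\}}Pu\cup\Char P\subseteq\WF_{(\fG)}Pu\cup\Char P$; take the union over $s$ and close up, and since the right-hand side is closed, \Cref{WF-Intersection}(3) yields the inclusion for $\WF_{(\fG)}u$. Note also that the nontrivial inclusion in \Cref{WF-Intersection} is itself a proposition requiring proof (via \Cref{WF-M Charakterisierung}), so it must be cited, not merely asserted. Your treatment of the elliptic case via $\WF_{(\fG)}Pu\subseteq\WF_{(\fG)}u$ (\Cref{WFproperties}(7), $\fG$ being semiregular) is fine.
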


That \Cref{Gevrey} follows from \Cref{elliptic-regThm} will be proved in \Cref{sec:openproblem}.

\begin{remark}
   It is clearly possible to define ultradistributions and their wave front sets 
   based on non-quasianalytic weight matrices (as dual spaces of the 
   respective spaces of ultradifferentiable test functions). 
   For weight sequences and weight functions there exists a comprehensive theory of ultradistributions,
   see e.g.\ \cite{Komatsu73,Komatsu77,Komatsu82}.
   One can expect that results similar to those obtained in this paper hold in that situation. 
   For instance, an elliptic regularity theorem for ultradistributions of Braun--Meise--Taylor type 
   is proved in \cite{AlbaneseJornetOliaro12}. 
   However, it seems that different techniques will be required, since the growth of the Fourier--Laplace 
   transform of compactly supported ultradistributions quite differs from the one of classical distributions 
   (cf.\ \cite{Komatsu77}). 
   In \cite{AlbaneseJornetOliaro12}, for instance, tools from the theory of ultradifferentiable pseudodifferential 
   operators of infinite order are used. These tools are not yet developed in the framework of general 
   weight matrices.       
\end{remark}

\subsection*{Acknowledgment}
We wish to thank the anonymous referee for valuable suggestions that improved the presentation of the paper.

\section{Weights and ultradifferentiable classes} \label{sec:spaces}

\subsection{Weight sequences} \label{weights}

Let $\mu = (\mu_k)$ be a positive increasing sequence,  
$1 = \mu_0 \le \mu_1 \le \mu_2 \le \cdots$.
We associate the sequences $\seq M=(M_k)$ and $\seq m = (m_k)$ defined by 
\begin{equation} \label{def:M}
  \mu_0 \mu_1 \mu_2 \cdots \mu_k = M_k = k!\, m_k, 
\end{equation}
for all $k \in \N$. 
We call $\seq M$ a \emph{weight sequence} if $M_k^{1/k} \to \infty$.    
A weight sequence $\seq M$ is called \emph{non-quasianalytic} if 
\begin{equation}
 \sum_{k} \frac{1}{\mu_k} < \infty. 
\end{equation}
We say that $\seq M$ has \emph{moderate growth} if there exists $C>0$ such that $M_{j+k} \le C^{j+k} M_j M_k$ for all
$j,k \in \N$, or equivalently,
\begin{equation}\label{Char:ModGrowth}
  \mu_k \lesssim M_k^{1/k};
\end{equation}
we refer to \cite[Lemma 2.2]{RainerSchindl16a} for a proof and further equivalent conditions.
(For real valued functions $f$ and $g$ we write $f \lesssim g$ if $f \le C g$ for some positive constant $C$.)

Two weight sequences $\seq M$ and $\seq N$ are said to be \emph{equivalent} if there is a constant $C>0$ such that 
$1/C \le M_k^{1/k}/N_k^{1/k} \le C$ for all $k$.
We write $\seq M \preceq \seq N$ (resp.\ $\seq M \lhd \seq N$) if $M_k^{1/k}/N_k^{1/k}$ is bounded (resp.\ tends to $0$).  

\begin{remark}
Note that $\mu$ uniquely determines $\seq M$ and $\seq m$, and vice versa. In analogy we shall use 
$\nu \leftrightarrow \seq N \leftrightarrow \seq n$, $\si \leftrightarrow \seq S \leftrightarrow \seq s$, etc.
That $\mu$ is increasing means precisely that $\seq M$ is logarithmically convex (\emph{log-convex} for short). 
Log-convexity of $\seq m$ is a stronger condition: if $\seq m$ is log-convex we shall say that $\seq M$ is 
\emph{strongly log-convex}. 
\end{remark}

The results contained in the next lemma are easy to check; the proof is left to the reader.

\begin{lemma}[Properties of weight sequences] \label{lem:basicM}
Let $1 = \mu_0 \le \mu_1 \le \mu_2 \le \cdots$. Then:
  \begin{enumerate}
      \item $M_k^{1/k}$ is increasing, equivalently,
      \begin{equation}\label{mucompare}
        \A k\in \N_{>0} : M_k^{1/k} \le \mu_k.
      \end{equation}
      \item $M_jM_k\le M_{j+k}$ for all $k,j$. 
      \item If $M_k^{1/k} \to \infty$, then $\mu_k \to \infty$.   
      \item If $m_k^{1/k} \to \infty$, then $m_{k}/m_{k-1} = \mu_k/k \to \infty$.
      \item The condition $m_k^{1/k} \to \infty$ implies
\begin{equation} \label{strictInclusion2}
  \A \rh>0 \E C >0 \A k \in \N : k^k \le C \rh^k M_k. 
\end{equation}
  \end{enumerate}  
\end{lemma}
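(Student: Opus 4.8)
\textbf{Plan of proof for \Cref{lem:basicM}.}
The plan is to prove the five assertions in order, exploiting the defining relation $M_k = \mu_0\mu_1\cdots\mu_k$ and the monotonicity $1=\mu_0\le\mu_1\le\mu_2\le\cdots$ throughout. For (1), I would argue that $M_k^{1/k}\le\mu_k$ is immediate from $M_k=\mu_1\cdots\mu_k$ (the product of $k$ factors each $\le\mu_k$, noting $\mu_0=1$), and that this is equivalent to $M_k^{1/k}$ being increasing: from $M_k^{1/k}\le\mu_k$ one gets $M_{k+1}=M_k\mu_{k+1}\ge M_k\cdot M_k^{1/k}=M_k^{(k+1)/k}$, hence $M_{k+1}^{1/(k+1)}\ge M_k^{1/k}$; the converse direction follows since $M_k^{1/k}$ increasing forces $\mu_{k+1}=M_{k+1}/M_k\ge M_{k+1}/M_{k+1}^{k/(k+1)}=M_{k+1}^{1/(k+1)}\ge M_k^{1/k}$, and more directly $\mu_{k+1}=M_{k+1}/M_k$ with $M_{k+1}^{1/(k+1)}\ge M_k^{1/k}$ rearranges to $\mu_{k+1}\ge M_k^{1/k}$, while monotonicity of $\mu$ is equivalent to log-convexity of $\seq M$.

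For (2), $M_{j+k}=\mu_0\cdots\mu_{j+k}$, and I would split this product as $(\mu_0\cdots\mu_j)\cdot(\mu_{j+1}\cdots\mu_{j+k})$; since each factor $\mu_{j+i}$ with $1\le i\le k$ satisfies $\mu_{j+i}\ge\mu_i$ by monotonicity, the second product is $\ge\mu_1\cdots\mu_k=M_k$, giving $M_{j+k}\ge M_jM_k$. For (3), assuming $M_k^{1/k}\to\infty$: by part (1), $\mu_k\ge M_k^{1/k}\to\infty$. For (4), $m_k/m_{k-1}=(M_k/k!)/(M_{k-1}/(k-1)!)=\mu_k/k$; assuming $m_k^{1/k}\to\infty$, the same inequality as in (1) applied to the sequence $\seq m$ (which need not be log-convex, but the elementary estimate $m_k^{1/k}\le\max_{1\le j\le k} m_j/m_{j-1}$ still needs care) — more safely, I would observe that $m_k^{1/k}\to\infty$ is equivalent to $(M_k/k!)^{1/k}\to\infty$, and by Stirling $k!^{1/k}\sim k/e$, so $M_k^{1/k}/k\to\infty$; combined with $\mu_k\ge M_k^{1/k}$ from (1) this does not immediately give $\mu_k/k\to\infty$, so the honest route is: $m_k^{1/k}\to\infty$ and $\seq m$ associated to $\mu_k/k$ via the analogue of \eqref{def:M} (i.e.\ $m_k=\prod_{j\le k}(\mu_j/j)$ with the convention $\mu_0/0$ replaced appropriately), whence applying part (3)'s reasoning to the auxiliary increasing-after-reindexing sequence yields $\mu_k/k\to\infty$. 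Since the lemma statement only asserts the conclusion, I would reduce (4) to (3) applied to a suitably normalized weight sequence.

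For (5), I want: for all $\rh>0$ there is $C>0$ with $k^k\le C\rh^k M_k$ for all $k$, i.e.\ $M_k\ge C^{-1}(k/\rh^{1/1})^k\cdot$, equivalently $M_k^{1/k}\gtrsim k/\rh$ up to the constant $C^{1/k}$ which is harmless. From (4) we have $\mu_k/k\to\infty$, hence for any $\rh>0$ there is $k_0$ with $\mu_k/k\ge 2/\rh$ for $k\ge k_0$; then for $k\ge k_0$, $M_k=M_{k_0}\prod_{j=k_0+1}^k\mu_j\ge M_{k_0}\prod_{j=k_0+1}^k (2j/\rh)\ge M_{k_0}(2/\rh)^{k-k_0}\frac{k!}{k_0!}$, and using $k!\ge (k/e)^k$ this gives $M_k\ge c\,(2/(e\rh))^k k^k$ for a constant $c$ depending on $k_0,\rh$; absorbing the finitely many small $k<k_0$ into $C$ and noting $2/e>1$ yields the claim. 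The main obstacle I anticipate is (4): one must be careful that log-convexity of $\seq m$ is \emph{not} assumed, so the clean "$m_k^{1/k}\le$ ratio" trick from part (1) is unavailable, and the cleanest fix is to recast $m_k^{1/k}\to\infty$ as $\mu_k/k\to\infty$ directly via the telescoping product $m_k=\prod_{j=1}^k(\mu_j/j)$ together with the observation that $\mu_j/j$ need not be monotone but its geometric mean tends to infinity forces (by a standard Cesàro-type argument, or by Stirling as sketched) the factors themselves to tend to infinity — here the monotonicity of $\mu_j$ is what saves us, since $\mu_j/j$ decreasing in $j$ only where $\mu$ grows slower than linearly.
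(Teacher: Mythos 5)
Parts (1)--(3) of your argument are correct (and the paper itself offers no proof beyond ``straightforward to check'', so there is no method to compare against), but (4) and (5) do not close as written.

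In (4) you assemble exactly the right ingredients and then reject them. From $k!\ge(k/e)^k$ you get $m_k^{1/k}=M_k^{1/k}/(k!)^{1/k}\le e\,M_k^{1/k}/k$, so $m_k^{1/k}\to\infty$ forces $M_k^{1/k}/k\to\infty$; combined with $\mu_k\ge M_k^{1/k}$ from (1), dividing by $k$ gives $\mu_k/k\ge M_k^{1/k}/k\to\infty$ \emph{immediately}, contrary to your claim that this combination ``does not immediately give'' the conclusion. Having discarded this, you fall back on the assertion that divergence of the geometric mean of the factors $\mu_j/j$ forces the factors themselves to diverge ``by a standard Ces\`aro-type argument''; that implication is false for general positive sequences (let a sparse subsequence of factors equal $1$), and you never spell out how the monotonicity of $\mu_j$ is supposed to repair it, so as written (4) has no proof. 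The fix is simply to keep the two-line argument you dismissed.

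In (5) the final numerical step fails: you choose $k_0$ with $\mu_k/k\ge 2/\rho$ for $k\ge k_0$, arrive at $M_k\ge c\,(2/(e\rho))^k k^k$, and claim to conclude by ``noting $2/e>1$''. But $2/e<1$, so this bound only gives $k^k\le c^{-1}(e/2)^k\rho^k M_k$ with an exponentially growing factor, not $k^k\le C\rho^k M_k$. Replacing the threshold $2/\rho$ by $2e/\rho$ (possible since $\mu_k/k\to\infty$) the same computation yields $M_k\ge c\,(2/\rho)^k k^k$, which does suffice. Alternatively, \eqref{strictInclusion2} follows directly from the hypothesis without using (4) at all: $k^k/(\rho^k M_k)=k^k/(\rho^k k!\,m_k)\le \bigl(e/(\rho\, m_k^{1/k})\bigr)^k\le 1$ once $m_k^{1/k}\ge e/\rho$, and the finitely many remaining $k$ are absorbed into $C$.
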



\subsection{Functions associated with weight sequences} \label{hGaSi}

There are a few functions which one naturally associates with a weight sequence; cf.\ 
\cite{Mandelbrojt52}, \cite{Komatsu73}, \cite{ChaumatChollet94}. 

Let $\seq m =(m_k)$ be a positive sequence satisfying $m_0 = 1$ and $m_k^{1/k} \to \infty$. 
We have already introduced the function $h_{\seq m}$ in \eqref{h}.
Furthermore, we need 
\begin{align} 
  \label{counting2}
  \ol \Ga_{\seq m}(t) &:= \min\{k : h_{\seq m}(t) =  m_k t^k\}, \quad t > 0,  
\end{align}  
and, provided that $m_{k+1}/m_{k} \to \infty$,
\begin{align}
\ul \Ga_{\seq m} (t) &:=  \min\Big\{k : \frac{m_{k+1}}{m_k}  \ge \frac{1}{t} \Big\}, \quad t > 0.
\end{align}
The next lemma is immediate from the definitions, cf.\ \cite[Lemma 3.2]{RainerSchindl17}.

\begin{lemma} \label{basic}
Let $\seq m =(m_k)$ be a positive sequence satisfying $m_0 = 1$, $m_k^{1/k} \to \infty$, and $m_{k+1}/m_k \to \infty$. Then: 
\begin{enumerate}
  \item $h_{\seq m}$ is increasing, continuous, and positive for $t>0$. For large $t$ we have $h_{\seq m}(t) = 1$. 
  \item $\ul \Ga_{\seq m}$ is decreasing 
  and $\ul \Ga_{\seq m}(t) \to \infty$ as $t\to 0$.
  \item $k \mapsto m_k t^k$ is decreasing for $k \le \ul \Ga_{\seq m}(t)$. \label{eq:ulGa3}
  \item $\ul \Ga_{\seq m}  
  \le \ol \Ga_{\seq m}$. If $\seq m$ is log-convex then $\ul \Ga_{\seq m} = \ol \Ga_{\seq m}$.
\end{enumerate}  
\end{lemma}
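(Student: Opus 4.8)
The plan is to verify the four assertions of \Cref{basic} directly from the definitions of $h_{\seq m}$, $\ul\Ga_{\seq m}$, and $\ol\Ga_{\seq m}$, using only the hypotheses $m_0=1$, $m_k^{1/k}\to\infty$, and $m_{k+1}/m_k\to\infty$. None of these requires any deep machinery; the whole lemma is bookkeeping about the infimum of the family of monomials $t\mapsto m_k t^k$.

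First, for part (1): $h_{\seq m}$ is an infimum of the continuous increasing functions $t\mapsto m_k t^k$ (each increasing on $(0,\infty)$ since $m_k>0$), hence $h_{\seq m}$ is increasing; positivity and continuity for $t>0$ follow because $m_k^{1/k}\to\infty$ forces $m_k t^k\to\infty$ for fixed $t$, so the infimum is attained and is a minimum over finitely many indices on any compact $t$-interval bounded away from $0$ — this gives a locally finite minimum of continuous functions, which is continuous and positive. For large $t$ the index $k=0$ gives $m_0 t^0 = 1$, and since all monomials with $k\ge 1$ exceed $1$ once $t \ge m_k^{-1/k}$ — and in fact one checks $m_k t^k \ge 1$ for all $k\ge 1$ as soon as $t\ge \sup_{k\ge 1} m_k^{-1/k}$, which is finite because $m_k^{1/k}\to\infty$ — we get $h_{\seq m}(t)=1$ eventually. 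I would also note the normalization $h_{\seq m}(t)\le 1$ for all $t>0$, again by the $k=0$ term.

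For part (2): $\ul\Ga_{\seq m}(t)=\min\{k: m_{k+1}/m_k \ge 1/t\}$. Since $m_{k+1}/m_k\to\infty$ the set is nonempty for every $t>0$, so $\ul\Ga_{\seq m}$ is well-defined. It is (weakly) decreasing in $t$ because enlarging $t$ shrinks $1/t$, hence enlarges the set of admissible $k$ and can only lower the minimum. As $t\to 0$, $1/t\to\infty$, and since each fixed ratio $m_{k+1}/m_k$ is finite, for any $N$ we have $m_{j+1}/m_j < 1/t$ for all $j<N$ once $t$ is small enough, forcing $\ul\Ga_{\seq m}(t)\ge N$; hence $\ul\Ga_{\seq m}(t)\to\infty$. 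Part (3) is the observation that $m_k t^k/(m_{k-1}t^{k-1}) = (m_k/m_{k-1})\,t$, which is $\le 1$ precisely when $m_k/m_{k-1}\le 1/t$, and by definition of $\ul\Ga_{\seq m}$ and monotonicity of $k\mapsto m_k/m_{k-1}$ (no monotonicity of the ratio is even needed — one just uses that for $k\le\ul\Ga_{\seq m}(t)$ the defining inequality $m_k/m_{k-1}\le 1/t$ holds, which follows since $k-1 < \ul\Ga_{\seq m}(t)$ so $k-1$ is not in the defining set) the successive quotients are $\le 1$, i.e. $k\mapsto m_k t^k$ is non-increasing up to $\ul\Ga_{\seq m}(t)$.

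For part (4): the inequality $\ul\Ga_{\seq m}\le\ol\Ga_{\seq m}$ follows from part (3): if $k=\ol\Ga_{\seq m}(t)$ realizes the minimum defining $h_{\seq m}(t)$, then in particular $m_k t^k \le m_{k-1}t^{k-1}$ so $m_k/m_{k-1}\le 1/t$, which — together with the fact that the sequence $k\mapsto m_{k}/m_{k-1}$ is eventually $\ge 1/t$ but here is $\le 1/t$ at $k$ — places $\ol\Ga_{\seq m}(t)$ at or beyond the first index where the ratio reaches $1/t$; more carefully, since $k\mapsto m_k t^k$ decreases on $[0,\ul\Ga_{\seq m}(t)]$ by (3), the minimum cannot be attained before $\ul\Ga_{\seq m}(t)$, giving $\ol\Ga_{\seq m}(t)\ge\ul\Ga_{\seq m}(t)$. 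If moreover $\seq m$ is log-convex, then $k\mapsto m_{k+1}/m_k$ is non-decreasing, so $k\mapsto m_k t^k$ is unimodal: non-increasing while $m_{k+1}/m_k\le 1/t$ and non-decreasing afterwards; the minimum is then attained exactly at the first index where $m_{k+1}/m_k\ge 1/t$, i.e. at $\ul\Ga_{\seq m}(t)$, so $\ol\Ga_{\seq m}(t)=\ul\Ga_{\seq m}(t)$. I expect the only mildly delicate point to be the unimodality/minimizer argument in part (4), where one must be careful about whether the minimum of a unimodal integer sequence is attained at a unique index or on a plateau — but since $\ol\Ga_{\seq m}$ is defined as the \emph{minimal} index realizing $h_{\seq m}$, the identification with $\ul\Ga_{\seq m}$ is clean. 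Everything else is routine, so I would simply say "this is immediate from the definitions" as the paper does, perhaps expanding the log-convex case of (4) into one or two sentences.
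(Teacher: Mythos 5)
Your proof is correct and takes the same route the paper intends: the paper dismisses the lemma as ``immediate from the definitions'' (citing an external reference), and your write-up simply supplies those routine verifications, including the two points that genuinely need a word --- that $h_{\seq m}$ is locally a minimum of finitely many monomials (giving continuity, positivity, and attainment of the infimum), and that in (4) the strict decrease up to $\ul\Ga_{\seq m}(t)$ from (3) forces $\ol\Ga_{\seq m}(t)\ge\ul\Ga_{\seq m}(t)$, with log-convexity pinning down equality. The only slightly loose phrase is the first half of your part (4) sentence about the ratio being ``eventually $\ge 1/t$'', which by itself would not suffice without log-convexity, but your subsequent ``more carefully'' argument via (3) is the correct and complete one.
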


It will be crucial to also have an ``upper bound for $\ol \Ga$ in terms of $\ul \Ga$''. 
The next lemma provides a sufficient condition for this. 

\begin{lemma}[{\cite{RainerSchindl17}}] \label{lem:m1}
  Let $\seq M$ and $\seq N$ be weight sequences satisfying $m_k^{1/k} \to \infty$ and $n_k^{1/k} \to \infty$.
  Assume that 
  \begin{equation} \label{qai}
    \E C\ge 1 \A 1 \le j \le k :  \frac{\mu_j}j \le C \frac{\nu_k}k.
  \end{equation}
  Then, for all $t >0$,
  \begin{equation} \label{eq:compare} 
    \ol \Ga_{\seq n} (Ct) \le \ul \Ga_{\seq m}(t). 
  \end{equation}
\end{lemma}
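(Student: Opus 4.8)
The plan is to unravel both definitions and reduce the inequality $\ol\Ga_{\seq n}(Ct) \le \ul\Ga_{\seq m}(t)$ to the hypothesis \eqref{qai}. Write $j := \ul\Ga_{\seq m}(t)$, so by definition $j$ is the smallest index with $m_{j+1}/m_j \ge 1/t$, equivalently $\mu_{j+1}/(j+1) \ge 1/t$ (using $m_{k+1}/m_k = \mu_{k+1}/(k+1)$ as recorded in \Cref{lem:basicM}(4)); in particular, for all $1\le i \le j$ we have $\mu_i/i < 1/t$, i.e.\ $m_i/m_{i-1} < 1/t$. By \Cref{basic}(3), $k\mapsto m_k t^k$ is decreasing on $\{0,1,\dots,j\}$. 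The goal is to show $h_{\seq n}(Ct) = n_k (Ct)^k$ for some $k \le j$, which by definition of $\ol\Ga_{\seq n}$ gives $\ol\Ga_{\seq n}(Ct)\le j = \ul\Ga_{\seq m}(t)$, as desired.

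First I would show that the map $k \mapsto n_k (Ct)^k$ is also decreasing on $\{0,1,\dots,j\}$ — this is the heart of the matter. It suffices to check that $n_k (Ct)^k \le n_{k-1}(Ct)^{k-1}$ for $1\le k\le j$, i.e.\ $n_k/n_{k-1} \le 1/(Ct)$, i.e.\ $\nu_k/k \le 1/(Ct)$. But hypothesis \eqref{qai} applied with $j$ replaced by $k$ (valid since $1\le k\le j$) gives $\mu_k/k \le C\,\nu_k/k$... wait — that is the wrong direction. Let me instead read \eqref{qai} correctly: it says $\mu_i/i \le C\,\nu_k/k$ for all $1\le i\le k$. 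Taking $i=k$ is unhelpful; the useful choice is different. Actually the right way: I want an \emph{upper} bound on $\nu_k/k$, so I should use \eqref{qai} with the roles that produce it. Take any $1\le k \le j$; I want $\nu_k/k < 1/(Ct)$. Hmm, \eqref{qai} bounds $\mu_i/i$ from above by $C\nu_k/k$, not $\nu_k/k$ from above. So the correct deduction must go the other way: since $\nu$ is increasing, $\nu_k/k \le \nu_j/j \cdot (\text{something})$ — no. Let me reconsider: the statement \eqref{eq:compare} should follow because $\ol\Ga_{\seq n}(Ct)$ counts where $n_\bullet(Ct)^\bullet$ bottoms out, and \eqref{qai} forces the sequence $\nu_k/k$ to not grow faster than $\mu_k/k$ up to a constant. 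Concretely: by \eqref{qai} with $i=k$, $\mu_k \le C\nu_k$ is false in general; rather \eqref{qai} says $\max_{1\le i\le k}\mu_i/i \le C\nu_k/k$. Since $\mu_{j+1}/(j+1)\ge 1/t$ and... I would carefully set $k = \ol\Ga_{\seq n}(Ct)$, so $n_{k}(Ct)^{k} \le n_{k-1}(Ct)^{k-1}$ and $n_{k}(Ct)^{k} \le n_{k+1}(Ct)^{k+1}$ (using log-convexity / \Cref{basic}(4), $\ul\Ga_{\seq n}=\ol\Ga_{\seq n}$), the second giving $\nu_{k+1}/(k+1) \ge 1/(Ct)$. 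Apply \eqref{qai} with the pair $(i,k+1)$... this yields $\mu_i/i \le C\nu_{k+1}/(k+1)$ for all $i\le k+1$ — again an upper bound on $\mu_i/i$, not a lower one. So to get a contradiction with $k > j$, I need: if $k\ge j+1$, then... Actually the clean route: suppose for contradiction $k := \ol\Ga_{\seq n}(Ct) > \ul\Ga_{\seq m}(t) =: j$. Then $k \ge j+1$, so in particular $\nu_{j+1}/(j+1) \le \nu_k/k < 1/(Ct)$ would need to be derived — and indeed since $k=\ol\Ga_{\seq n}(Ct)=\ul\Ga_{\seq n}(Ct)$ is the smallest index with $\nu_{k+1}/(k+1)\ge 1/(Ct)$, for all $i\le k$ we have $\nu_i/i < 1/(Ct)$, hence $\nu_{j+1}/(j+1) < 1/(Ct)$ since $j+1\le k$. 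On the other hand, $\ul\Ga_{\seq m}(t)=j$ means $\mu_{j+1}/(j+1)\ge 1/t$. Now invoke \eqref{qai} with $i=j+1$ and the index $k' := j+1$: that gives $\mu_{j+1}/(j+1)\le C\nu_{j+1}/(j+1) < C\cdot 1/(Ct) = 1/t$, contradicting $\mu_{j+1}/(j+1)\ge 1/t$. That closes it.

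So the skeleton is: (1) translate $\ul\Ga$ and $\ol\Ga$ into the index where the respective shifted ratios $\mu_{\bullet}/\bullet$, $\nu_\bullet/\bullet$ cross the threshold $1/t$ resp.\ $1/(Ct)$, using \Cref{lem:basicM}(4), \Cref{basic}(4), and the log-convexity of $\seq m, \seq n$ (which holds since $\seq M,\seq N$ are weight sequences with $m_k^{1/k},n_k^{1/k}\to\infty$ — I should double-check the excerpt grants log-convexity of the lower-case sequences; if only $\seq M,\seq N$ are log-convex I would instead work directly with $h_{\seq n}$ and the "decreasing on an initial segment" statement of \Cref{basic}(3), which does not need $\ul\Ga=\ol\Ga$); (2) assume $\ol\Ga_{\seq n}(Ct) > \ul\Ga_{\seq m}(t)$ and derive, from the definition of $\ol\Ga_{\seq n}$ as the crossing index, the strict inequality $\nu_{j+1}/(j+1) < 1/(Ct)$ where $j := \ul\Ga_{\seq m}(t)$; (3) apply \eqref{qai} with indices $i = k = j+1$ to get $\mu_{j+1}/(j+1) \le C\,\nu_{j+1}/(j+1) < 1/t$, contradicting the definition of $\ul\Ga_{\seq m}(t) = j$.

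**Main obstacle.** The only delicate point is getting the direction of the inequalities right and handling the threshold crossing cleanly — in particular whether $\ol\Ga_{\seq n} = \ul\Ga_{\seq n}$ is available (needs log-convexity of the lower-case sequence $\seq n$, not just $\seq N$). If that is not granted, the fallback is to avoid $\ol\Ga_{\seq n}$ entirely in favor of the monotonicity statement \Cref{basic}(3): show that $k\mapsto n_k(Ct)^k$ is non-increasing for $k \le j := \ul\Ga_{\seq m}(t)$ by checking $\nu_k/k \le 1/(Ct)$ for $1\le k\le j$ directly — but as noted above \eqref{qai} gives upper bounds on $\mu_i/i$, so this direct route actually requires the contradiction argument anyway. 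Hence I expect the proof to be short (a few lines) once the bookkeeping of which index satisfies which threshold inequality is pinned down; the risk is purely an off-by-one or a reversed inequality, not any real analytic difficulty.
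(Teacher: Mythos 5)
Your final contradiction argument has a genuine gap at the step where you derive $\nu_{j+1}/(j+1) < 1/(Ct)$: you write ``since $k=\ol\Ga_{\seq n}(Ct)=\ul\Ga_{\seq n}(Ct)$ is the smallest index with $\nu_{k+1}/(k+1)\ge 1/(Ct)$, for all $i\le k$ we have $\nu_i/i<1/(Ct)$''. The identity $\ol\Ga_{\seq n}=\ul\Ga_{\seq n}$ requires $\seq n$ to be log-convex (\Cref{basic}(4)), i.e.\ $\seq N$ strongly log-convex, and the lemma only assumes that $\seq N$ is a weight sequence with $n_k^{1/k}\to\infty$; so $\nu_k/k$ need not be increasing and $\ol\Ga_{\seq n}(Ct)$ may be strictly larger than $\ul\Ga_{\seq n}(Ct)$. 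You flag exactly this risk yourself, but the fallback you offer does not repair it: proving that $k\mapsto n_k(Ct)^k$ is decreasing on the \emph{initial} segment $\{0,\dots,j\}$ says nothing about indices $k>j$ and hence cannot yield $\ol\Ga_{\seq n}(Ct)\le j$; what is needed is control on the \emph{tail}. (As a side remark, the lemma is quoted in this paper from \cite{RainerSchindl17} without proof, so there is no in-paper argument to compare against.)

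The repair is short and removes the contradiction entirely. Set $j:=\ul\Ga_{\seq m}(t)$, so $\mu_{j+1}/(j+1)=m_{j+1}/m_j\ge 1/t$ by definition (using \Cref{lem:basicM}(4)). For every $i\ge j+1$, apply \eqref{qai} to the pair $(j+1,i)$ to get $\mu_{j+1}/(j+1)\le C\,\nu_i/i$, whence $\nu_i/i\ge 1/(Ct)$ and therefore $n_i(Ct)^i\ge n_{i-1}(Ct)^{i-1}$. Thus $k\mapsto n_k(Ct)^k$ is non-decreasing for $k\ge j$, so the infimum $h_{\seq n}(Ct)=\inf_k n_k(Ct)^k$ is attained at some index $\le j$, giving $\ol\Ga_{\seq n}(Ct)\le j=\ul\Ga_{\seq m}(t)$. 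This is precisely the inequality you kept circling (you correctly isolate the key input $\mu_{j+1}/(j+1)\ge 1/t$ and the correct instance of \eqref{qai}), but note that \eqref{qai} gives a \emph{lower} bound on $\nu_i/i$ for $i$ \emph{above} the threshold index, not an upper bound below it — once the direction is fixed, no log-convexity of $\seq n$ and no appeal to $\ol\Ga_{\seq n}=\ul\Ga_{\seq n}$ is needed. Alternatively, your contradiction route can be saved by observing that $\ol\Ga_{\seq n}(Ct)=k>j$ forces $n_k(Ct)^k<n_j(Ct)^j$ and hence $\nu_i/i<1/(Ct)$ for \emph{some} $i\in\{j+1,\dots,k\}$ (not necessarily $i=j+1$), and \eqref{qai} applied to $(j+1,i)$ then yields $\mu_{j+1}/(j+1)<1/t$, the desired contradiction.
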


We also consider the function 
\begin{equation} \label{ep:omegam}
   \om_{\seq m} (t) = - \log h_{\seq m}(1/t) = \sup_{k \in \N}  \log \Big(\frac{t^k}{m_k}\Big), \quad t>0,
\end{equation} 
which is increasing, convex in $\log t$, and zero for sufficiently small $t>0$.
The \emph{log-convex minorant} $\ul {\seq m}$ of $\seq m$ is given by  
\[
  \ul m_k := \sup_{t>0} \frac{t^k}{\exp(\om_{\seq m}(t))}, \quad k \in \N.
\]
In particular, $\seq m$ is log-convex if and only if $\seq m=\ul {\seq m}$.

\subsection{Basic properties of Denjoy--Carleman classes} 

For weight sequences $\seq M$ and $\seq N$ we have $\cB^{[\seq M]} \subseteq \cB^{[\seq N]}$ if and only if 
$\seq M \preceq \seq N$ 
and $\cB^{\{\seq M\}} \subseteq \cB^{(\seq N)}$ if and only if 
$\seq M \lhd \seq N$.
In particular, $\seq M$ and $\seq N$ are equivalent if and only if the corresponding classes $\cB^{[\seq M]}$ and $\cB^{[\seq N]}$ 
coincide.
By the Denjoy--Carleman theorem 
(e.g.\ \cite[Theorem 1.3.8]{Hoermander83I}),
$\cB^{[\seq M]}(U)$ contains non-trivial elements 
with compact support if and only if $\seq M$ is non-quasianalytic.

\subsection{Weight matrices and corresponding spaces of functions}

A \emph{weight matrix} is a  
family $\fM$ of weight sequences which is totally ordered with respect to the pointwise order relation on sequences, i.e., 
\begin{enumerate}
	\item $\fM \subseteq \R^\N$,
	\item each $\seq M \in \fM$ is a weight sequence in the sense of \Cref{weights},
	\item for all $\seq M,\seq N \in \fM$ we have $\seq M \le \seq N$ or $\seq M \ge \seq N$. 
\end{enumerate} 

Let $\fM$ and $\fN$ be two weight matrices. We define 
\begin{align*}
\fM \{\preceq\} \fN \quad &:\Leftrightarrow  \quad  \A \seq M\in \fM \E \seq N \in \fN : \seq M \preceq \seq N,
\\
\fM (\preceq) \fN \quad &:\Leftrightarrow  \quad  \A \seq N\in \fN \E \seq M \in \fM : \seq M \preceq \seq N.
\\
\fM \{\lhd) \fN \quad &:\Leftrightarrow  \quad  \A \seq  M\in \fM \A \seq N \in \fN : \seq M \lhd \seq N.
\end{align*} 
We say that $\fM$ and $\fN$ are \emph{R-equivalent} (resp.\ \emph{B-equivalent}) if $\fM \{\preceq\} \fN \{\preceq\} \fM$ 
(resp.\ $\fM (\preceq) \fN (\preceq) \fM$) and simply \emph{equivalent} if they are both $R$- and $B$-equivalent.

For a weight matrix $\fM$  
we consider the corresponding Roumieu class
\begin{align} \label{def:BRM}
\cB^{\{\fM\}}(U) &:= \on{ind}_{\seq M \in \fM} \cB^{\{\seq M\}}(U),
\end{align}
and Beurling class
\begin{align} \label{def:BBM}
	\cB^{(\fM)}(U) &:= \on{proj}_{\seq M \in \fM} \cB^{(\seq M)}(U).
\end{align} 
For weight matrices $\fM$, $\fN$ we have $\cB^{[\fM]} \subseteq \cB^{[\fN]}$ 
if and only if $\fM [\preceq] \fN$
and 
$\cB^{\{\fM\}} \subseteq \cB^{(\fN)}$ 
if and only if $\fM \{\lhd) \fN$; 
cf.\ \cite{RainerSchindl12}.

The limits in the definitions \eqref{def:BRM} and \eqref{def:BBM} can always be assumed countable 
as is shown in the next lemma. 

\begin{lemma} \label{rem:matrixass}
  Let $\fM$ be a weight matrix. 
  There exists a countable weight matrix $\fL \subseteq \fM$ such that $\cB^{[\fL]}(U) = \cB^{[\fM]}(U)$ 
  algebraically and topologically.
\end{lemma}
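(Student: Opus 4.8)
The statement is about reducing the directed system (for Roumieu) or inverse system (for Beurling) defining $\cB^{[\fM]}(U)$ to a countable cofinal (resp.\ coinitial) subsystem. The key tool is that the underlying index set $\fM$, being totally ordered by the pointwise order on sequences, carries enough structure to admit a countable cofinal/coinitial subset \emph{after passing to equivalence classes}. More precisely, what matters is not $\fM$ itself but the partially ordered set of the Banach spaces $\cB^{\seq M}_\rh(U)$ (with $\seq M \in \fM$ and $\rh \in \N$) under continuous inclusion, and the relevant order there is governed by $\preceq$ together with the scaling parameter $\rh$. The plan is: first show that the doubly-indexed family of Banach spaces $\{\cB^{\seq M}_\rh(U) : \seq M \in \fM,\ \rh \in \N\}$ ordered by inclusion has a countable cofinal subset (Roumieu) resp.\ countable coinitial subset (Beurling); then extract from that a countable subfamily $\fL \subseteq \fM$; finally check that the inductive/projective limit over $\fL$ agrees with that over $\fM$ both as a vector space and topologically, using that cofinal/coinitial subsystems induce the same locally convex limit.

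\textbf{Key steps.}
First I would treat the Roumieu case. Fix an exhaustion of $U$ by relatively compact open sets is not needed here since the $\cB$-classes are on a fixed $U$; instead, the point is that $\cB^{\{\fM\}}(U) = \bigcup_{\seq M \in \fM} \bigcup_{\rh \in \N} \cB^{\seq M}_\rh(U)$. I would observe that $\cB^{\seq M}_\rh(U) \subseteq \cB^{\seq N}_\si(U)$ continuously whenever $\rh^{|\al|} M_{|\al|} \le C\, \si^{|\al|} N_{|\al|}$ for all $\al$ and some $C$, which is implied by $\seq M \preceq \seq N$ together with a suitable choice of $\si$ relative to $\rh$. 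Now use the totally ordered structure of $\fM$: I claim one can choose a countable chain $\seq M^{(1)} \le \seq M^{(2)} \le \cdots$ in $\fM$ (or cofinal sequence with respect to $\preceq$) together with parameters $\rh_j \nearrow \infty$ such that for every $\seq M \in \fM$ and every $\rh$ there is $j$ with $\cB^{\seq M}_\rh(U) \subseteq \cB^{\seq M^{(j)}}_{\rh_j}(U)$ continuously. If $\fM$ already has a countable cofinal subset with respect to $\preceq$, this is immediate; if not, total order plus a standard argument (the ``$\preceq$-cofinality of a chain of sequences is realized countably'' lemma, using that $\seq M_k^{1/k}$ for varying $\seq M$ either stabilizes or one can diagonalize) gives it. Set $\fL := \{\seq M^{(j)} : j \in \N\}$. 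Then $\cB^{\{\fL\}}(U) = \cB^{\{\fM\}}(U)$ as sets, and since both are regular inductive limits of Banach spaces over cofinal systems, the LF-space topologies coincide.

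For the Beurling case I dualize: $\cB^{(\fM)}(U) = \bigcap_{\seq M \in \fM} \bigcap_{\rh \in \N} \cB^{\seq M}_{1/\rh}(U)$, and I need a countable \emph{coinitial} subfamily with respect to $\preceq$, i.e.\ a sequence $\seq M^{(j)}$ with $\seq M^{(j+1)} \preceq \seq M^{(j)}$ such that every $\seq M \in \fM$ dominates some $\seq M^{(j)}$ up to equivalence; again total order of $\fM$ supplies this. Setting $\fL$ to be this sequence, $\cB^{(\fL)}(U) = \cB^{(\fM)}(U)$ as sets, and the projective limit topology over a coinitial subsystem equals that over the full system, so the identity is a topological isomorphism of Fréchet spaces. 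Finally, both cases are recorded uniformly by writing $[\fL]$ and $[\fM]$.

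\textbf{Main obstacle.}
The crux is the combinatorial lemma that a totally ordered family of weight sequences admits a \emph{countable} subfamily that is cofinal (Roumieu) resp.\ coinitial (Beurling) with respect to $\preceq$ — together with the matching of scaling parameters $\rh$ so that the doubly-indexed Banach system is genuinely countably cofinal/coinitial. One must be a little careful that passing to the weaker preorder $\preceq$ (rather than the pointwise order $\le$ used to define ``weight matrix'') does not destroy the total-order structure one exploits, and that the chosen countable chain can be taken inside $\fM$ rather than merely among abstract sequences; here one invokes that each member of the relevant cofinal/coinitial family can be replaced by an actual element of $\fM$ that is $\preceq$-equivalent, using totality. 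The verification that cofinality/coinitiality of the index system implies equality of the locally convex limits (algebraically and topologically) is standard and I would cite it or dispatch it in one line.
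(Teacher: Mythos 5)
Your overall reduction is the right one: produce a countable subfamily of $\fM$ over which the inductive (resp.\ projective) system is cofinal (resp.\ coinitial), then invoke the standard fact that such a subsystem yields the same locally convex limit. But the entire content of the lemma sits in the step you defer to ``a standard argument'': the existence of the countable cofinal/coinitial subfamily. This is not automatic --- an abstract totally ordered set can have uncountable cofinality (think of $\omega_1$) --- and your sketch (``$M_k^{1/k}$ either stabilizes or one can diagonalize'') does not identify which property of $\fM$ rules this out. Working with the preorder $\preceq$ and matching the scaling parameters $\rho$ moreover adds complications that obscure the point rather than help.

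The observation that closes the gap, and which the paper's proof is built on, is that $\fM$ is totally ordered for the \emph{pointwise} order $\le$, and for this order strict domination at a \emph{single} coordinate already forces domination everywhere: if $M_{k_0}<L_{k_0}$ then $\seq L\le\seq M$ is impossible, so $\seq M\le\seq L$ by totality. Hence it suffices to ``beat'' each $\seq M\in\fM$ at one coordinate. Concretely, set $\fM_k:=\{M_k:\seq M\in\fM\}$; if the coordinatewise supremum $(\sup\fM_k)_k$ belongs to $\fM$, a single element suffices, and otherwise one picks, for each $k$, countably many elements of $\fM$ whose $k$-th entries realize or approach $\sup\fM_k$. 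Every $\seq M\in\fM$ other than the coordinatewise supremum satisfies $M_{k_0}<\sup\fM_{k_0}$ for some $k_0$ and is therefore pointwise dominated by one of the chosen sequences. With pointwise domination $\seq M\le\seq L$ the inclusions $\cB^{\seq M}_{\rho}(U)\subseteq\cB^{\seq L}_{\rho}(U)$ are norm-decreasing for each fixed $\rho$, so no juggling of $\rho$'s and no passage to $\preceq$ is needed; the Beurling case is the mirror image with $\inf$ in place of $\sup$. Without this one-coordinate argument your plan does not close.
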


\begin{proof}
  Let us prove the Roumieu case.
  For every $k \in \N$ let $\fM_k := \{M_k : \seq M \in \fM\}$ which is a subset of $\R_{>0}$. 

Case 1: If $\ol{\seq  M} := (\sup \fM_k)_k \in \fM$ then $\ol {\seq M} \ge \fM$ and hence 
$\cB^{\{\ol {\seq M}\}}(U) = \cB^{\{\fM\}}(U)$. 

Case 2: Assume $\ol {\seq M} \not\in \fM$ but $\sup \fM_k \in \fM_k$ for all $k$.
For each $k$ there exists $\seq M^k \in \fM$ such that $M^k_k = \sup \fM_k$. 
Then $\fL := \{ \seq M^k : k \in \N\}$ is a countable totally ordered subfamily of $\fM$.
Moreover, $\cB^{[\fL]}(U) = \cB^{[\fM]}(U)$ follows from the claim that 
for each $\seq M \in \fM$ there exists $\seq L \in \fL$ such that $\seq M \le \seq L$. 
Since $\seq M \ne \ol {\seq M}$, there is a $k_0$ such that $M_{k_0} < \sup  \fM_{k_0} = M_{k_0}^{k_0}$. 
Since $\fM$ is totally ordered, $\seq M \le \seq M^{k_0} =: \seq L$ and
the claim is proved. 

Case 3: Assume 
$\sup \fM_{k_0} \not\in \fM_{k_0}$ for some $k_0$.
For each $k$ choose a strictly increasing sequence $M_k^n$ in $\fM_k$ such that 
$M_k^n \to \sup \fM_k$ as $n \to \infty$.
For each $k$ and each $n$ choose $\seq L=\seq L(k,n) \in \fM$ such that $L_k = M_k^n$.
This gives a countable subfamily $\fL \subseteq \fM$. 
By construction, for given $k_0$ we clearly find $\seq L \in \fL$ such that $M_{k_0} < L_{k_0}$ 
which implies  $\cB^{[\fL]}(U) = \cB^{[\fM]}(U)$ as in Case 2. 

The Beurling case is analogous (replacing $\sup$ by $\inf$).
\end{proof}

The corresponding \emph{local} classes are defined by 
\[
	\cE^{[\fM]}(U) := \on{proj}_{V \Subset U} \cB^{[\fM]}(V).  
\]

We say that a weight matrix $\fM$ is \emph{quasianalytic} if each $\seq M \in \fM$ is quasianalytic. 
For a quasianalytic $\fM$ the class $\cB^{[\fM]}(U)$ is quasianalytic in the sense that it cannot contain
non-trivial elements with compact support. 
It is easy to see that in the Roumieu case $\cB^{\{\fM\}}(U)$ also the converse is true. 
In the Beurling case the class $\cB^{(\fM)}(U)$ is quasianalytic if and only if there exists a quasianalytic 
$\seq M \in \fM$; 
this follows from \cite[Proposition 4.7]{Schindl15}. 
In that case we may remove all non-quasianalytic sequences from $\fM$ without altering the class (thanks to the
total order, see (3)).

\begin{definition}[Regular weight matrix] \label{def:Rregular}
  A weight matrix $\fM$ satisfying 
  \begin{enumerate}
    \item[(0)] $m_k^{1/k} \to \infty$ as $k \to \infty$ for all $\seq M \in \fM$ \label{strictInclusion}
  \end{enumerate}
  is called \emph{R-regular} (for \textbf{R}oumieu) if 
  \begin{enumerate}
    \item $\forall  \seq M \in \fM \E \seq N \in \fM \E C\ge 1 \A j \in \N : M_{j+1} \le C^{j+1} N_j$, \label{R-Derivclosed1}
    \item 
    $\forall \seq M \in \fM \E \seq N \in \fM \E C\ge 1 
    \A t>0 : \ol \Ga_{\seq n}(Ct) \le \ul \Ga_{\seq m}(t)$, \label{goodR}
  \end{enumerate}
  and \emph{B-regular} (for \textbf{B}eurling) if
  \begin{enumerate} \setcounter{enumi}{2}
    \item $\forall \seq M \in \fM \E \seq N \in \fM \E C \ge 1 \A j \in \N : N_{j+1} \le C^j M_j$, \label{B-Derivclosed1}
    \item $\forall \seq M \in \fM \E \seq N \in \fM \E C \ge 1 
    \A t>0 : \ol \Ga_{\seq m}(Ct) \le \ul \Ga_{\seq n}(t)$. \label{goodB}
  \end{enumerate} 
  Moreover, $\fM$ is called \emph{regular} if it is both R- and B-regular. 
  We say that a weight matrix $\fM$ is \emph{R-semiregular} (resp.\ \emph{B-semiregular}) 
  if it satisfies (0) and (1) (resp.\ (3)), and $\fM$ is called \emph{semiregular} if 
  it is both R- and B-semiregular.   
  Occasionally, we will also use [semiregular] (or [regular]) and mean that the weight matrix in question is 
  assumed to be R- or B-semiregular (R- or B-regular) depending on the case that is considered.
\end{definition}

Let us discuss the relations among the conditions in this definition.

\begin{remark} \label{rem:regular}
   We have the following equivalences; see \cite[Proposition 4.6]{RainerSchindl12}:
   \begin{itemize}
     \item $C^\om \subseteq \cE^{(\fM)}$ if and only if $\fM$ satisfies (0).
     \item $\cB^{\{\fM\}}$ (equiv.\ $\cE^{\{\fM\}}$) is stable under derivation if and only if $\fM$ satisfies (1).
     \item $\cB^{(\fM)}$ (equiv.\ $\cE^{(\fM)}$) is stable under derivation if and only if $\fM$ satisfies (3). 
   \end{itemize}

   Suppose that $\fM$ is an R-semiregular weight matrix. Then the following three conditions are gradually 
   weaker:
   \begin{enumerate}
     \item $\forall \seq M \in \fM \E \seq N \in \fM \E C\ge 1 \A j\le k  : \frac{\mu_j}{j} \le C  \frac{\nu_k}{k}$
     \item $\fM$ satisfies \Cref{def:Rregular}\eqref{goodR}.
     \item $\forall \seq M \in \fM \E \seq N \in \fM \E C>0 \A j \le k : m_j^{1/j} \le C n_k^{1/k}$ \label{aiR}
   \end{enumerate}  
   Indeed, 
   that (1) implies (2) follows from \Cref{lem:m1}; in \Cref{example} we shall see that (1) is 
   strictly stronger than (2).  
   And that (2) implies (3) follows from \Cref{prop:composition} and \Cref{thm:Rchar}, since 
   (3) holds if and only if the class $\cB^{\{\fM\}}$ (equiv.\ $\cE^{\{\fM\}}$) is stable 
  under composition; cf.\ \cite{RainerSchindl14}. 

  Similarly, if $\fM$ is a B-semiregular weight matrix, then the following  
  three conditions are gradually 
   weaker:
   \begin{enumerate}
     \item[(4)] $\forall \seq M \in \fM \E \seq N \in \fM \E C\ge 1 \A j\le k  : \frac{\nu_j}{j} \le C  \frac{\mu_k}{k}$
     \item[(5)] $\fM$ satisfies \Cref{def:Rregular}\eqref{goodB}.
     \item[(6)] $\forall \seq M \in \fM \E \seq N \in \fM \E C>0 \A j \le k : n_j^{1/j} \le C m_k^{1/k}$ \label{aiB}
   \end{enumerate}
   This follows from \Cref{lem:m1}, \Cref{prop:composition}, \Cref{thm:Bchar}, and since (6)   
   holds if and only if the class $\cB^{(\fM)}$ (equiv.\ $\cE^{(\fM)}$) is stable 
  under composition; cf.\ \cite{RainerSchindl14}. 

  The conditions \Cref{def:Rregular}\eqref{goodR} and \Cref{def:Rregular}\eqref{goodB} 
  are a minimal requirement (aside from semiregularity) 
  for our proofs of \Cref{thm:Rchar} and \Cref{thm:Bchar} to work.

  Additionally, we wish to emphasize that 
  (1) holds if and only if $\fM$ is R-equivalent to a weight matrix which consists of nothing 
  but strongly log-convex 
  weight sequences. 
  In the same way (4) holds if and only if $\fM$ is B-equivalent to a weight matrix which consists of nothing 
  but strongly log-convex 
  weight sequences. See \cite[Corollaries 9 and 10]{Rainer:aa}.
\end{remark}

\begin{example} \label{example}
   There exist two positive sequences $\seq M\le \seq N$ such that:
   \begin{enumerate}
     \item They satisfy \eqref{eq:compare}.
     \item If two sequences $\seq M'$ and $\seq N'$ satisfy \eqref{qai} (with a possibly different constant), 
     then either $\seq M$ is not equivalent to $\seq M'$ or $\seq N$ is not equivalent to $\seq N'$. 
     \item $\mu_k/k \to \infty$, $\nu_k/k \to \infty$, $m_k^{1/k} \to \infty$, and $n_k^{1/k} \to \infty$ as $k \to \infty$. 
   \end{enumerate}

\begin{proof}
  Let $a_j$, $j\ge 1$, be integers satisfying
  \begin{equation*}
    a_1 := 1, \quad a_{j+1} \ge \max\{a_j^2, a_j + 3\} \quad \text{ for all } j \ge 1,  
  \end{equation*}
  and $b_j$, $j \ge 1$, positive numbers such that
  \begin{equation*}
     b_1 := 1, \quad b_{j+1} > b_j \ge j^{a_{j+1}} \quad \text{ for all } j \ge 1. 
   \end{equation*} 
   We define $\mu_0 := 1$ and for $k \ge 1$ 
   \[
    \mu_k := \begin{cases}
      a_j b_j & \text{ if }  a_j \le k <a_{j+1} -1 
      \\
      j^{-a_{j+1} +1} (a_{j+1} -1) b_j  & \text{ if } k = a_{j+1} -1.
    \end{cases}
  \]
  Let $c_j$, $j \ge 1$, be positive numbers such that 
  \begin{equation*}
      c_{j+1} > c_j \ge \frac{b_j}{a_j} \max\Big\{a_{j+2}-2, 
      \Big(j^{a_{j+1} -1} \prod_{\ell = a_j}^{a_{j+1}-2} \ell \Big)^\frac{1}{a_{j+1} -a_j-1}\Big\}.
    \end{equation*} 
  Define $\nu_0:= 1$ and for $k\ge 1$ 
  \[
    \nu_k := \begin{cases}
        a_j c_j & \text{ if }  a_j \le k <a_{j+1} -1 
      \\
      \mu_k  & \text{ if } k = a_{j+1} -1.
    \end{cases}
  \] 

  (1) The various definitions imply that
  \begin{equation*}
    \A j \ge 0 \A k \ge a_{j+1} : b_{j+1}^{k-a_{j+1}+1} \le \frac{\nu_{a_{j+1}}}{a_{j+1}} \cdots \frac{\nu_k}{k}.
  \end{equation*}
  In particular, if $t>0$ is such that $b_{j} = \frac{\mu_{a_j}}{a_j} < \frac{1}t \le b_{j+1} =  \frac{\mu_{a_{j+1}}}{a_{j+1}}$, then 
  \[
    t^{a_{j+1}- k-1} \le \frac{\nu_{a_{j+1}}}{a_{j+1}} \cdots \frac{\nu_k}{k} = \frac{n_{k}}{n_{a_{j+1}-1}}
  \]
  i.e.\ $n_k t^k \ge n_{a_{j+1}-1} t^{a_{j+1}-1}$. Since by construction $\frac{\mu_{k+1}}{k+1} < \frac{1}{t}$ for all $k< a_{j+1}-1$, 
  we have $\ul \Ga_{\seq m} (t) = a_{j+1}-1$. Hence $n_k t^k \ge n_{\ul \Ga_{\seq m} (t)} t^{\ul \Ga_{\seq m} (t)}$ 
  for all $k \ge \ul \Ga_{\seq m}(t)$ and, 
  consequently, $\ol \Ga_{\seq n}(t) \le \ul \Ga_{\seq m}(t)$.

  (2) If $\seq M'$ and $\seq N'$ satisfy \eqref{qai} and $D^{-1} \le ( M'_k/M_k)^{1/k} \le D$
  as well as $D^{-1} \le ( N'_k/N_k)^{1/k} \le D$ for a positive constant $D$, then 
  \[
    \E C,H\ge 1 \A 1 \le j \le k : \frac{\mu_j}{j} \le H C^k \frac{\nu_k}{k}. 
  \]
  Clearly, this property is violated by the constructed sequences (to see this replace $j$ by $a_j$ and $k$ by $a_{j+1} -1$).

  (3) It is easy to see that $\mu_k/k \le \nu_k/k$ for all $k$. 
  That $\mu_k/k \to \infty$ as $k\to \infty$ follows from $b_j \ge j^{a_{j+1}}$. This shows all assertions 
  since $\mu_k/k \to \infty$ implies $m_k^{1/k} \to \infty$;
  cf.\ the arguments given in \cite{RainerSchindl12} before Lemma 2.13.
\end{proof}

 The constructed sequences $\seq M$ and $\seq N$ are not log-convex, 
 but since $m_k^{1/k}$ and $n_k^{1/k}$ tend to $\infty$ as $k \to \infty$,
 we have $\cE^{[\seq M]} = \cE^{[\ul {\seq M}]}$ and 
 $\cE^{[\seq N]} = \cE^{[\ul {\seq N}]}$, where $\ul {\seq M}$ denotes the log-convex minorant of $\seq M$; 
 see \cite[Theorem 2.15]{RainerSchindl12}.
\end{example}

For later use we also show the following.

\begin{theorem} \label{AnalyticClosed}
Let $\fM$ be a weight matrix satisfying $m_k^{1/k} \to \infty$ for all $\seq M \in \fM$.   
If $\vh : \Omega_1\rightarrow \Omega_2$ is a real analytic mapping between open sets
$\Omega_j\subseteq\R^{n_j}$, $j=1,2$,  
then the pullback $\vh^\ast:\cE^{[\fM]}(\Omega_2)\rightarrow\cE^{[\fM]}(\Omega_1)$ of $\vh$ is well defined.
\end{theorem}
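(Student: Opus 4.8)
The plan is to prove that the pullback $F^\ast$ is well defined by reducing the statement to a local, one-weight-sequence assertion and then invoking the composition stability property that holds for strongly log-convex weight sequences. First I would observe that, since $\cE^{[\fM]}(\Omega_1) = \on{proj}_{V \Subset \Omega_1} \cB^{[\fM]}(V)$, it suffices to show that for every relatively compact open $V \Subset \Omega_1$ with $F(V) \Subset \Omega_2$ (which we may arrange by shrinking) and every $g \in \cE^{[\fM]}(\Omega_2)$, the composition $g \o F$ lies in $\cB^{[\fM]}(V)$. In the Roumieu case this means: given $\seq M \in \fM$ (after passing to the countable matrix of \Cref{rem:matrixass}, if convenient), we must find $\seq N \in \fM$ and $\rh>0$ such that $g \o F \in \cB^{\seq N}_\rh(V)$ whenever $g \in \cB^{\seq M}_{\rh'}(W)$ for suitable $W \Subset \Omega_2$ and $\rh'>0$; in the Beurling case the quantifiers on $\seq N$ and $\rh$ are reversed in the usual way.

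The key step is a Faà di Bruno estimate. Writing $h = g \o F$, the higher derivatives $\p^\al h$ are sums over set partitions of $\{1,\dots,|\al|\}$ of products $(\p^\be g)(F) \cdot \prod (\p^{\ga^{(i)}} F)$, where $F$ is real analytic hence satisfies $|\p^\ga F(x)| \le C_0 \, R^{|\ga|} |\ga|!$ on $\ol V$ for some $C_0, R$. Since $g \in \cB^{\seq M}_{\rh'}(W)$ we have $|\p^\be g| \le \rh'^{|\be|} M_{|\be|}$, and $M_{|\be|} = |\be|! \, m_{|\be|}$. Because $m_k^{1/k} \to \infty$, we may replace $\seq M$ by its log-convex minorant $\ul{\seq M}$ without changing the class (cf.\ the remark after \Cref{example} and \cite[Theorem 2.15]{RainerSchindl12}), so we may assume $\seq m$ is nondecreasing and log-convex, i.e.\ $\seq M$ is strongly log-convex. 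The combinatorics of Faà di Bruno together with log-convexity of $\seq m$ then yields the standard bound $|\p^\al h(x)| \le C \, H^{|\al|}\, M_{|\al|}$ on $V$ for appropriate constants, because composition of strongly log-convex Denjoy--Carleman classes with real analytic maps (in fact with maps of the class itself) is the classical closedness-under-composition statement; the real analytic factors contribute only geometric growth $R^{|\ga|}|\ga|!$, which is absorbed since $|\ga|! \le M_{|\ga|}$ (as $m_k \ge m_0 = 1$, using $M_k^{1/k} \to \infty$, so in particular $k! \le M_k$). This produces the required seminorm bound with a new radius $H$ depending on $\rh', R$, but with the \emph{same} weight sequence $\seq M$; hence we may take $\seq N = \seq M$ and no condition like \ref{def:Rregular}\eqref{R-Derivclosed1} on the matrix is needed.

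Finally I would assemble the pieces: in the Roumieu case, $g \in \cB^{\{\fM\}}(W)$ means $g \in \cB^{\seq M}_{\rh'}(W)$ for some $\seq M \in \fM$ and $\rh'$, and the estimate above gives $g \o F \in \cB^{\seq M}_{H}(V) \subseteq \cB^{\{\fM\}}(V)$, with $H$ depending only on $\rh'$, $R$, $C_0$; taking the projective limit over $V \Subset \Omega_1$ shows $F^\ast g \in \cE^{\{\fM\}}(\Omega_1)$, and $F^\ast$ is clearly linear and multiplicative. In the Beurling case, $g \in \cB^{(\fM)}(W)$ gives, for every $\seq M \in \fM$ and every $\rh'>0$, membership $g \in \cB^{\seq M}_{\rh'}(W)$; the same estimate yields $g \o F \in \cB^{\seq M}_{H(\rh')}(V)$ with $H(\rh') \to 0$ as $\rh' \to 0$ (for fixed $R$), so $g \o F$ lies in every $\cB^{\seq M}_{1/j}(V)$ and hence in $\cB^{(\fM)}(V)$. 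The main obstacle is purely bookkeeping: tracking how the radius transforms under composition and verifying that the log-convex-minorant reduction is compatible with the weight-matrix structure (one must minorize each sequence in the totally ordered family and check the order is preserved, or simply work sequence-by-sequence as above, which sidesteps the issue since the new weight sequence equals the old one). No genuinely new analytic input beyond Faà di Bruno and log-convexity is required — this is why the theorem holds under the sole hypothesis $m_k^{1/k} \to \infty$.
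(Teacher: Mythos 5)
Your Roumieu argument is essentially the paper's: the paper likewise reduces to a single weight sequence and invokes the Fa\`a di Bruno estimate from H\"ormander's Proposition 8.4.1, which indeed needs nothing beyond $m_k^{1/k}\to\infty$. Two caveats there: the log-convex minorant of $\seq M$ makes $\mu_k$ increasing, not $\mu_k/k$, so this regularization does not give strong log-convexity; and a general $\seq M$ with $m_k^{1/k}\to\infty$ need not be equivalent to \emph{any} strongly log-convex sequence (cf.\ \Cref{rem:regular} and \Cref{example}). Fortunately strong log-convexity is not needed here: since the inner map is analytic, the Fa\`a di Bruno sum only requires $m_j\le C^k m_k$ for $j\le k$, which follows from log-convexity of $\seq M$ together with $\mu_k/k\to\infty$ (\Cref{lem:basicM}(4)).

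The genuine gap is in the Beurling case, at the assertion that ``the same estimate yields $g\o F\in\cB^{\seq M}_{H(\rh')}(V)$ with $H(\rh')\to 0$ as $\rh'\to 0$.'' The estimate you derived does not have this property. Tracking constants, the Fa\`a di Bruno term with $j$ blocks and $|\al|=k$ is of order $(\rh' C_0)^{j}M_j\cdot R^{k}k!\cdot(\text{geometric in }k)$; bounding $m_j\le Cm_k$ uniformly, as the Roumieu step does, yields a radius $H\asymp R\max\{1,\rh' C_0\}$, which tends to a positive multiple of $R$, not to $0$ --- the analytic inner map contributes $k$ factors of $R$ while the small parameter $\rh'$ enters only to the power $j$, and $j$ ranges down to $1$. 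The conclusion $H(\rh')\to 0$ is in fact true, but proving it requires an interpolation exploiting that $m_j\ll m_k$ for $j\ll k$ (e.g.\ splitting the sum at $j=k/2$ and using $m_{k/2}\lesssim 2^{k/2}\sqrt{m_k}$ together with $m_k^{1/k}\to\infty$). This is precisely the step the paper supplies, in a cleaner form: given $u\in\cE^{(\seq M)}$ it sets $L_k:=\max\{k!,\max_{|\al|=k}\sup_{F(K)}|\p^\al u|\}$, notes $\seq L\lhd\seq M$, interpolates with $N_k:=\sqrt{L_kM_k}$ so that $\seq L\lhd\seq N\lhd\seq M$ and $n_k^{1/k}\to\infty$, applies the already-proved Roumieu case to $\seq N$, and concludes via $\cB^{\{\seq N\}}\subseteq\cB^{(\seq M)}$. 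Without some such device your Beurling case does not close.
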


\begin{proof}
  Let us first assume that $\fM$ consists of a single weight sequence $\seq M$.
  In the Roumieu case the statement follows easily from the proof of \cite[Proposition 8.4.1]{Hoermander83I}; it is enough 
  that $\seq M$ is a positive sequence with $m_k^{1/k} \to \infty$. 

  Suppose that $u\in\cE^{(\seq M)}(\Omega_2)$ and 
  $K \subseteq \Om_1$ is compact. For each $\rh>0$ there exists $C >0$ such that 
  $L_k:= \max\{k!, \max_{|\al| = k} \sup_{x \in \vh(K)} |\p^\al u(x)|\} \le C \rh^{k} M_{k}$ for all
  $k$. Then the sequence $N_k:= \sqrt{L_k M_k}$ satisfies $\seq L \lhd \seq N \lhd \seq M$ and $n_k^{1/k} \to \infty$.
  So $u$ belongs to $\cB^{\{\seq N\}}(\vh(K))$ and, by the Roumieu case, 
  $\vh^* u \in \cB^{\{\seq N\}}(K) \subseteq \cB^{(\seq M)}(K)$.

  The general case follows immediately. 
\end{proof}

\subsection{Whitney ultrajets} \label{def:ultrajets}

Let $E$ be a compact subset of $\R^n$. 
We denote by $\cJ^\infty(E)$ the vector space of all jets $F= (F^\al)_{\al\in \N^n} \in C^0(E,\R)^{\N^n}$ on $E$. 
For $a \in E$ and $p \in \N$ we associate the Taylor polynomial
\begin{gather*}
  T^p_a : \cJ^\infty(E) \to C^\infty (\R^n,\R), ~ F \mapsto T^p_a F(x) := \sum_{|\al|\le p} \frac{(x-a)^\al}{\al!} F^\al(a),    
\end{gather*}
and the remainder $R^p_a F = ((R^p_a F)^\al)_{|\al| \le p}$ with
\begin{gather*}
  (R^p_a F)^\al (x) := F^\al(x) - \sum_{|\be| \le p-|\al|} \frac{(x-a)^\be}{\be!} F^{\al+\be}(a), \quad a,x \in E.      
\end{gather*}
Let us denote by 
$j^\infty_E$ the mapping which assigns to a $C^\infty$-function $f$ on $\R^n$
the jet $j^\infty_E(f) := (\p^\al f|_E)_\al$. 
By Taylor's formula, $F  =j^\infty_E(f)$ satisfies
\begin{equation*}
  (R^p_a F)^\al (x) = o(|x-a|^{p-|\al|}) \quad \text{ for $a,x \in E$, $p\in \N$, $|\al| \le p$ as $|x-a|\to 0$.}
\end{equation*}
Conversely, if a jet $F \in \cJ^\infty(E)$ has this property, then it admits a 
$C^\infty$-extension to $\R^n$,
by Whitney's extension theorem \cite{Whitney34a} (for modern accounts see e.g.\ 
\cite[Ch.~1]{Malgrange67}, \cite[IV.3]{Tougeron72}, or 
\cite[Theorem 2.3.6]{Hoermander83I}).

Let $\seq M=(M_k)$ be a weight sequence.
For fixed $\rh>0$ we denote by $\cB^{\seq M}_\rh(E)$ the set of all jets $F$ such that  
there exists $C>0$ with 
\begin{gather*}
  |F^\al(a)| \le C \rh^{|\al|} \,  M_{|\al|}, \quad \al \in \N^n,~ a \in E,
  \\
  |(R^p_a F)^\al(b)| \le C \rh^{p+1} \, M_{p+1}\,  \frac{|b-a|^{p+1-|\al|}}{(p+1-|\al|)!}, \quad p \in \N,\, |\al| \le p,~ a,b \in E.  
\end{gather*}
The smallest constant $C$ defines a complete norm on $\cB^{\seq M}_\rh(E)$. We define the Roumieu class
$$\cB^{\{\seq M\}}(E) := \on{ind}_{\rh \in \N} \cB^{\seq M}_\rh(E),$$
and the Beurling class
$$ \cB^{(\seq M)}(E) := \on{proj}_{\rh > 0} \cB^{\seq M}_\rh(E).$$
An element of $\cB^{[\seq M]}(E)$ is called a \emph{Whitney ultrajet of class $\cB^{[\seq M]}$ on $E$}.

If $\fM$ is a weight matrix we set 
\[
	\cB^{\{\fM\}}(E) := \on{ind}_{\seq M \in \fM} \cB^{\{\seq M\}}(E) \quad \text{ and  }
	\quad 
	\cB^{(\fM)}(E) := \on{proj}_{\seq M \in \fM} \cB^{(\seq M)}(E).
\]

\begin{remark}
   If $U$ is an open subset of $\R^n$ and $F \in \cJ^\infty(U)$ satisfies 
   \begin{equation*}
  (R^p_a F)^\al (x) = o(|x-a|^{p-|\al|}) \quad \text{ for $a,x \in U$, $p\in \N$, $|\al| \le p$ as $|x-a|\to 0$},
\end{equation*}
then there exists $f \in C^\infty(U)$ with $F = j^\infty_U(f)$. 
It follows that the space of functions and the space of jets that were both denoted by $\cB^{[\seq M]}(U)$ coincide, 
which justifies the consistent use of the notation.
\end{remark}

\subsection{Quasiconvex domains} \label{lem:metext} 

A subset $X$ of $\R^n$ is called \emph{quasiconvex} if any two points $x,y \in X$ can be joined by a rectifiable path 
      in $X$ of length $\le C|x-y|$, for some constant $C$ independent of $x,y$.
By a \emph{quasiconvex domain} in $\R^n$ we mean a non-empty open subset $U \subseteq \R^n$ that is quasiconvex.       
	
It follows easily that the closure of any quasiconvex domain $U$ is quasiconvex as well, in fact, any two points $x,y$ in 
the boundary of $U$ 
can be joined by a rectifiable path of length $\le C|x-y|$ (with possibly a larger constant) which lies in $U$ except the endpoints.

\begin{lemma}
	\label{prop:whitext}
	Let $U \subseteq \R^n$ be a bounded quasiconvex domain and $f \in \cB^{[\seq M]}(U)$. Then each partial derivative $f^{(\al)}$ 
  admits a unique continuous extension $f^\al$ to $\ol U$ such that $(f^\al)_{\al \in \N^n}\in \cB^{[\seq M]}(\ol U)$.
\end{lemma}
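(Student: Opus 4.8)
The plan is to reduce the statement to the classical Whitney jet situation by exploiting the quasiconvexity of $U$ (and of $\ol U$). First I would fix $f \in \cB^{[\seq M]}(U)$; this means $f \in C^\infty(U)$ with $\|f\|^{\seq M}_\rh < \infty$ for some $\rh>0$ (Roumieu) or all $\rh>0$ (Beurling). The first step is to show that every partial derivative $f^{(\al)}$ extends continuously to $\ol U$. Since $U$ is quasiconvex, for $x,y \in U$ one can join them by a path in $U$ of length $\le C|x-y|$, so the mean value inequality along that path gives $|f^{(\al)}(x) - f^{(\al)}(y)| \le C|x-y| \sup_{U}|\nabla f^{(\al)}| \le C|x-y| \cdot (\text{const})\, \rh^{|\al|+1} M_{|\al|+1}$. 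Hence each $f^{(\al)}$ is Lipschitz, in particular uniformly continuous on $U$, so it extends uniquely and continuously to $\ol U$; call the extension $f^\al$. Taking limits in the inequality $|f^{(\al)}(x)| \le C \rh^{|\al|}M_{|\al|}$ gives the same bound for $f^\al$ on $\ol U$, with the same $\rh$.

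The second step is to verify the Whitney-type remainder estimates for the jet $F := (f^\al)_{\al}$ on $\ol U$. For $a, b \in U$, the remainder $(R^p_a F)^\al(b) = f^{(\al)}(b) - \sum_{|\be|\le p-|\al|} \frac{(b-a)^\be}{\be!} f^{(\al+\be)}(a)$ is exactly the Taylor remainder of the $C^\infty$ function $f^{(\al)}$; integrating along a quasiconvex path $\gamma$ from $a$ to $b$ of length $\le C|a-b|$ and using the integral form of the remainder, one bounds it by $\le (\text{const})\, (C|a-b|)^{p+1-|\al|}/(p+1-|\al|)! \cdot \sup_{U}|\p^{(p+1)} f| \le C'\, \rh^{p+1} M_{p+1}\, |b-a|^{p+1-|\al|}/(p+1-|\al|)!$ after absorbing $C$ into a rescaled $\rh$ (in the Roumieu case) or noting the estimate holds for every $\rh$ (in the Beurling case). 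Then one passes to the limit for $a,b \in \ol U$: given $a,b$ in the boundary, the enlarged quasiconvexity of $\ol U$ lets us approximate by interior points along a path, and the continuity of all $f^\al$ together with the uniform bounds already obtained lets us take limits term by term in the finite sum defining $R^p_a F$. This establishes $F \in \cB^{\seq M}_{\rh'}(\ol U)$ for a suitable $\rh'$ (Roumieu) or all $\rh'$ (Beurling), hence $F \in \cB^{[\seq M]}(\ol U)$. Uniqueness of the extension is automatic since $U$ is dense in $\ol U$ and the $f^\al$ are continuous.

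Finally, the weight matrix case $\cB^{[\fM]}$ follows formally: in the Roumieu case $f \in \cB^{\{\seq M\}}(U)$ for some $\seq M \in \fM$ and the above produces an extension in $\cB^{\{\seq M\}}(\ol U) \subseteq \cB^{\{\fM\}}(\ol U)$; in the Beurling case $f \in \cB^{(\seq M)}(U)$ for every $\seq M \in \fM$ and the extensions are consistent (by uniqueness), giving an element of $\bigcap_{\seq M \in \fM} \cB^{(\seq M)}(\ol U) = \cB^{(\fM)}(\ol U)$. The main obstacle I anticipate is the bookkeeping at the boundary: making sure that the path-integral remainder estimates, initially proved only for interior points, survive the limit to $\ol U$ with the correct combinatorial factor $\frac{|b-a|^{p+1-|\al|}}{(p+1-|\al|)!}$ and without degradation of the constant $\rh$ beyond a harmless rescaling — this is where the precise formulation of quasiconvexity for $\ol U$ (paths of controlled length lying in $U$ except at the endpoints) is essential and must be invoked carefully.
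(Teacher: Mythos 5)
Your proposal is correct and follows essentially the same route as the paper: extend each $f^{(\al)}$ by uniform continuity (mean value theorem plus quasiconvexity), then verify the Whitney remainder bounds via Taylor's formula along quasiconvex paths and pass to the limit on $\ol U$. The paper simply outsources the quantitative jet estimates to \cite[Proposition 1.10 and Lemma 10.1]{Rainer18}, whereas you sketch them directly (correctly noting that the factor $p+1-|\al|$ arising from the factorial mismatch is absorbed by rescaling $\rh$).
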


\begin{proof}
	That the extension $f^\al$ exists (and is unique) follows from the mean value theorem, 
	since all first order derivatives of $f^{(\al)}$ are uniformly bounded on $U$.
	Since $E=\ol U$ is quasiconvex, $(f^\al)$ is a Whitney jet of class $C^\infty$ and hence extends to a 
  smooth function on $\R^n$; cf.\ \cite[Proposition 1.10]{Rainer18}. 
	That $(f^\al)\in \cB^{[\seq M]}(E)$ follows from 
	 \cite[Lemma 10.1]{Rainer18} (which is only formulated for Roumieu classes, but its proof also shows the Beurling case). 
\end{proof}

\section{Ultradifferentiable classes by almost analytic extensions} \label{sec:analyticextension}

\subsection{Characterization theorems}

Before we formulate the main theorems of this section, we need one additional definition.

\begin{definition}
	Let $\fM$ be a weight matrix.
  \begin{enumerate}
    \item A function $f : U \to \R$ is called \emph{$\{\fM\}$-almost analytically extendable} if it has an  
  $(h_{\seq m},\rh)$-almost analytic extension for some $\seq M \in \fM$ and some $\rh>0$.
    \item A function $f : U \to \R$ is called \emph{$(\fM)$-almost analytic extendable} 
    if, for all $\seq M \in \fM$ and all $\rh>0$, there is an $(h_{\seq m},\rh)$-almost analytic extension of $f$. 
  \end{enumerate}
\end{definition}

\begin{theorem}[Roumieu case]
	\label{thm:Rchar}
	Let $\fM$ be an R-regular weight matrix and $U \subseteq \R^n$ a bounded quasiconvex domain. 
	Then $f \in \cB^{\{\fM\}}(U)$ if and only if $f$ is $\{\fM\}$-almost analytically extendable.
\end{theorem}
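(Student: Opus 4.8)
The plan is to prove the two implications separately, relying on the weight-matrix reduction and the single-sequence case as the technical core. First I would reduce to a single weight sequence: by \Cref{rem:matrixass} the inductive limit defining $\cB^{\{\fM\}}(U)$ may be taken over a countable subfamily, and since $f \in \cB^{\{\fM\}}(U)$ means $f \in \cB^{\{\seq M\}}(U)$ for some $\seq M \in \fM$, while $\{\fM\}$-almost analytic extendability means $f$ has an $(h_{\seq m},\rh)$-almost analytic extension for some $\seq M \in \fM$, it suffices to show, for each $\seq M \in \fM$ (using R-regularity to pass to a larger $\seq N \in \fM$ when needed), the equivalence between membership in $\cB^{\{\seq M\}}(U)$ and the existence of an $(h_{\seq n},\rh)$-almost analytic extension for suitable $\seq N$ and $\rh$. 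Note that the sequences in $\fM$ need not be log-convex, but condition (0) gives $m_k^{1/k} \to \infty$, so one may replace $\seq M$ by its log-convex minorant $\ul{\seq M}$ without changing the local class (as in \Cref{example}); however since we work on $U \in \sD(\R^n)$ with global estimates, I would instead use \Cref{lem:m1} together with \ref{def:Rregular}\eqref{goodR} to absorb log-convexity issues into the choice of the next sequence.

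For the direction ``$f \in \cB^{\{\seq M\}}(U) \Rightarrow$ almost analytically extendable'': this is the heart of the matter and is where I expect the main obstacle. The strategy, following Dynkin \cite{Dynkin80} as refined in \cite{RainerSchindl16a,RainerSchindl17}, is to build the extension $F$ explicitly. By \Cref{prop:whitext} the jet $(f^\al)_{\al}$ extends to a Whitney ultrajet of class $\cB^{[\seq M]}$ on $\ol U$. One then patches together local Taylor polynomials: cover a neighborhood of $\ol U$ in $\C^n$ by cubes whose size shrinks like the distance to $\ol U$ (a Whitney-type decomposition), on each cube take the Taylor polynomial $T^{p}_{a} F$ of $f$ at a nearby real point $a$ of degree $p$ chosen as a function of the distance $d = d(z,\ol U)$ — the correct choice turns out to be $p \approx \ul\Ga_{\seq m}(\rh d)$ or $\ol\Ga_{\seq m}(\rh d)$ — and glue with a smooth partition of unity subordinate to the cover. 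Differentiating, the holomorphic part of each polynomial is killed by $\ol\p$, and $\ol\p F$ picks up (i) the error from the partition of unity applied to differences of consecutive Taylor polynomials, controlled by the Whitney remainder estimate $|(R^p_a F)^\al(b)| \lesssim \rh^{p+1} M_{p+1} |b-a|^{p+1-|\al|}/(p+1-|\al|)!$, and (ii) the $\ol\p$ of the polynomial itself which vanishes. The key computation is that, with the degree tied to the distance via $\ol\Ga_{\seq m}$, the remainder bound becomes $\lesssim C (\rh' d)^{p+1} m_{p+1} \approx h_{\seq m'}(\rh' d)$ for an enlarged sequence and constant; here \Cref{lem:basic} (monotonicity of $k \mapsto m_k t^k$ up to $\ul\Ga_{\seq m}(t)$) and \Cref{lem:m1}/\ref{def:Rregular}\eqref{goodR} (upper bound for $\ol\Ga$ in terms of $\ul\Ga$) are exactly what make the two sides match. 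One must also arrange that $F$ has compact support — multiply by a fixed cutoff equal to $1$ near $\ol U$, which only affects $\ol\p F$ away from $\ol U$ where $h_{\seq m}$ is already bounded below. Checking that $F \in C^1_c(\C^n)$ (not just continuous) requires a little care in the gluing, using that derivatives of the partition of unity on a cube of size $\sim d$ grow like $d^{-1}$ and this is compensated by one extra order in the remainder.

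For the converse ``almost analytically extendable $\Rightarrow f \in \cB^{\{\seq M\}}(U)$'': given $F \in C^1_c(\C^n)$ with $F|_U = f$ and $|\ol\p F(z)| \le C h_{\seq m}(\rh d(z,\ol U))$, I would recover the derivatives of $f$ by the Cauchy–Pompeiu (inhomogeneous Cauchy) formula: for $x \in U$ and a small polydisc or ball $B(x,2r) $ with $\ol{B(x,2r)} \subset \C^n$ (we may even take $r$ comparable to $d(x,\partial U)$ but since we want estimates uniform on all of $U$ and $F$ has compact support, any fixed $r$ works, integrating over a large ball),
\[
  \p^\al f(x) = \p_z^\al F(x) = \frac{\al!}{2\pi i}\Big(\int_{|\zeta - x| = r} \frac{F(\zeta)}{(\zeta-x)^{\al+1}}\,d\zeta - \frac1\pi \int_{|\zeta-x|\le r} \frac{\ol\p F(\zeta)}{(\zeta-x)^{\al+1}}\,dA(\zeta)\Big)
\]
in the one-variable case, with the obvious iterated version in $\C^n$. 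The boundary term is bounded by $\al!\, r^{-|\al|}\|F\|_\infty \lesssim \al!\, r^{-|\al|} \le \rh_1^{|\al|} M_{|\al|}$ using \Cref{lem:basicM}\eqref{strictInclusion2} (which needs $m_k^{1/k}\to\infty$). For the area term, split the integral according to $|\zeta - x| \lessgtr$ some radius; using $d(\zeta,\ol U) \le |\zeta - x|$ and the definition $h_{\seq m}(t) \le m_k t^k$ for every $k$, choosing $k = |\al|+2$ gives $|\ol\p F(\zeta)| \lesssim m_{|\al|+2}(\rh|\zeta-x|)^{|\al|+2}$, so the integrand is $\lesssim m_{|\al|+2}\rh^{|\al|+2}|\zeta-x|^{1-|\al|}$, which is integrable near $x$ and yields a bound $\lesssim \rh_2^{|\al|} M_{|\al|}$ after converting $k!\, m_{|\al|+2}$ back to $M$'s (using $M_{j+k}\le C^{j+k}M_jM_k$-type estimates, or more simply the crude bound $m_{|\al|+2} \le$ something controlled via log-convexity of the minorant). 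Summing the contributions gives $\|f\|^{\seq N}_{\rh'} < \infty$ for some $\seq N \in \fM$ with $\seq M \preceq \seq N$ and some $\rh'$, hence $f \in \cB^{\{\fM\}}(U)$. This direction is essentially soft once the Cauchy–Pompeiu representation is in place; the only mild subtlety is keeping all constants independent of $\al$ and of $x \in U$, which is why we take the radius $r$ fixed and exploit that $F$ is compactly supported. The main obstacle overall is the forward direction's quantitative gluing estimate, and specifically verifying that the choice of polynomial degree as $\ol\Ga_{\seq m}$ of the scaled distance makes the Whitney remainder collapse to $h_{\seq m}$ — this is precisely where \ref{def:Rregular}\eqref{goodR} enters and cannot be dispensed with.
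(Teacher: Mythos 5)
Your plan is correct and follows essentially the same route as the paper: Dynkin's method of distance-dependent Taylor polynomials of degree $\ul\Ga_{\seq m}(\rho\, d(z,\ol U))$ for the forward direction, with \Cref{prop:whitext}, \Cref{lem:taylorest}, \Cref{basic}(3) and condition \ref{def:Rregular}\eqref{goodR} doing exactly the work you assign them, and an inhomogeneous Cauchy-type integral representation for the converse. The only differences are implementation-level: the paper smooths the Borel-measurable field of Taylor polynomials $G(z)=T^{p(z)}_{\hat z}f(z)$ by convolving against a bump scaled by the regularized distance of \Cref{prop:regdist} (\Cref{prop:existenceofext}) rather than gluing over a Whitney cube decomposition with a partition of unity, and it uses the Bochner--Martinelli formula (\Cref{thm:Bochner}) rather than an iterated one-variable Cauchy--Pompeiu formula, which avoids the mixed boundary/area terms of the iterated version; the estimates are the same either way, and in both constructions the loss of one power of $d$ from differentiating the smoothing kernel is absorbed by \ref{def:Rregular}\eqref{R-Derivclosed1} via $h_{\seq n}(t)/t\le C h_{\seq s}(t)$. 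One small bookkeeping point in your converse: taking $k=|\al|$ in $h_{\seq m}(t)\le m_k t^k$ and using $d(\ze,\ol U)\le |x-\ze|$ cancels the distance powers against the kernel and leaves the integrable weight $|x-\ze|^{-(2n-1)}$ with the constant $\rho^{|\al|}M_{|\al|}$ already in the right form, so the step where you convert $|\al|!\,m_{|\al|+2}$ back to $M_{|\al|}$ (for which moderate growth is not available, only derivation-closedness) can be avoided entirely, as in \Cref{prop:restriction}.
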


Since any open subset of $\R^n$ can be exhausted by relatively compact quasiconvex domains (e.g., connected finite unions of balls) 
we immediately get a characterization of local classes.

\begin{corollary}
	\label{cor:Rcharacterization}
		Let $\fM$ be an R-regular weight matrix.
		Let $U \subseteq \R^n$ be open. 
		Then $f \in \cE^{\{\fM\}}(U)$ if and only if  
		$f|_V$ is $\{\fM\}$-almost analytically extendable for each quasiconvex domain $V$ relatively compact in $U$.
\end{corollary}

\begin{theorem}[Beurling case]
  \label{thm:Bchar}
  Let $\fM$ be a B-regular weight matrix and $U \subseteq \R^n$ a bounded quasiconvex domain. Then $f \in \cB^{(\fM)}(U)$ if and only if $f$ is 
  $(\fM)$-almost analytically extendable.
\end{theorem}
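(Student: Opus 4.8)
The strategy is to mirror the Roumieu proof (Theorem~\ref{thm:Rchar}) but track the quantifiers in the Beurling direction. Recall that by Lemma~\ref{rem:matrixass} we may assume $\fM$ is countable, say $\fM = \{\seq M^{(j)}\}_{j \in \N}$ with $\seq M^{(1)} \ge \seq M^{(2)} \ge \cdots$ (after reindexing using the total order), so that $f \in \cB^{(\fM)}(U)$ means $f \in \cB^{(\seq M)}(U)$ for every $\seq M \in \fM$, i.e.\ $\|f\|^{\seq M}_{1/\rh} < \infty$ for all $\seq M \in \fM$ and all $\rh > 0$.

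\emph{Sufficiency ($(\fM)$-almost analytically extendable $\Rightarrow$ $f \in \cB^{(\fM)}(U)$).} Fix $\seq M \in \fM$ and $\rh>0$; we must bound the derivatives of $f$ by $\rh^{|\al|} M_{|\al|}$ (up to a constant). By B-regularity \ref{def:Rregular}\eqref{B-Derivclosed1} there is $\seq N \in \fM$ with $N_{j+1} \le C^j M_j$, and by \ref{def:Rregular}\eqref{goodB} (after possibly passing to a further sequence) we control $\ol\Ga_{\seq m}$ by $\ul\Ga_{\seq n}$. By hypothesis $f$ has an $(h_{\seq n},\si)$-almost analytic extension $F \in C^1_c(\C^n)$ for every $\si>0$, in particular for a suitably small $\si$ depending on $\rh$, $C$, and the constant in \ref{def:Rregular}\eqref{goodB}. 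The derivatives of $f$ at an interior point $x$ are recovered from $F$ via a Cauchy--Pompeiu (Cauchy--Green) integral formula: differentiating the representation
\[
  f(x) = \frac{1}{(2\pi i)^n}\int_{\p P} \frac{F(\zeta)}{\prod_j (\zeta_j - x_j)}\, d\zeta
  - \frac{1}{(2\pi i)^n}\int_{P} \frac{\ol\p F(\zeta)}{\prod_j(\zeta_j-x_j)}\, d\zeta \wedge d\bar\zeta
\]
over a polydisc $P$ of radius $\asymp 1/\|\al\|$ (the standard Dynkin radius) and using the quasiconvexity of $U$ to keep $P$ inside a fixed neighborhood, one estimates $|\p^\al f(x)|$ by the $\ol\p$-bound $C h_{\seq n}(\si d(\zeta,\ol U))$ integrated against $|\zeta - x|^{-|\al|-1}$. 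Optimizing the polydisc radius (this is where $\ol\Ga$, $\ul\Ga$ and $h_{\seq n}$ enter) gives $|\p^\al f(x)| \lesssim \si^{-|\al|} N_{|\al|+1}$, and then \ref{def:Rregular}\eqref{B-Derivclosed1} turns this into $\lesssim \rh^{|\al|} M_{|\al|}$, as required; the passage to $\ol U$ uses Lemma~\ref{prop:whitext}.

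\emph{Necessity ($f \in \cB^{(\fM)}(U)$ $\Rightarrow$ $(\fM)$-almost analytically extendable).} Fix $\seq M \in \fM$ and $\rh>0$; we must produce an $(h_{\seq m},\rh)$-almost analytic extension. Choose $\seq N \in \fM$ dominated suitably by $\seq M$ via \ref{def:Rregular}\eqref{B-Derivclosed1} and \ref{def:Rregular}\eqref{goodB}; since $f \in \cB^{(\fM)}(U)$ it lies in $\cB^{(\seq N)}(U)$, hence by Lemma~\ref{prop:whitext} extends to a Whitney ultrajet of class $\cB^{(\seq N)}$ on $\ol U$. Now apply Dynkin's extension construction: one builds $F$ as a sum over a Whitney-type decomposition of $\C^n \setminus \ol U$ into cubes $Q$ of sidelength $\asymp d(Q,\ol U)$, gluing truncated Taylor polynomials $T^{p(Q)}_{x_Q}$ of $f$ at nearby boundary points $x_Q$, with the degree $p(Q) \asymp \ul\Ga_{\seq n}(\text{const}\cdot d(Q,\ol U))$ chosen to balance the two competing error terms. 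The $\ol\p$-derivative of $F$ is estimated, on each $Q$, by the Whitney remainder of the jet and by the jump between adjacent Taylor polynomials; the uniform (Beurling) bounds on the jet, together with \ref{def:Rregular}\eqref{goodB} converting $\ul\Ga_{\seq n}$-information into an $h_{\seq m}$-bound, yield $|\ol\p F(z)| \le C\, h_{\seq m}(\rh\, d(z,\ol U))$. Multiplying by a cutoff gives $F \in C^1_c$.

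\textbf{Main obstacle.} The delicate point, exactly as in the Roumieu case, is the bookkeeping in the extension step: choosing the cube-dependent Taylor degree $p(Q)$ so that \emph{both} the Whitney remainder term and the neighbor-mismatch term are simultaneously $\le C h_{\seq m}(\rh d(z,\ol U))$, and verifying that the summation over the infinitely many cubes abutting a given $z$ does not destroy the bound. This is where conditions \ref{def:Rregular}\eqref{B-Derivclosed1} and \ref{def:Rregular}\eqref{goodB} are used in an essential way, via Lemma~\ref{lem:m1} and the relation $\ul\Ga \le \ol\Ga$ from Lemma~\ref{basic}; one must be careful that in the Beurling case the constants are allowed to depend on the chosen sequences but the estimate must hold for \emph{all} of them, which forces the extension to be constructed with respect to the dominated sequence $\seq N$ rather than $\seq M$ itself. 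I expect the proof to share most of its technical core with that of Theorem~\ref{thm:Rchar}, differing only in the direction of the quantifier manipulations and in appealing to the Beurling halves \ref{def:Rregular}\eqref{B-Derivclosed1}, \ref{def:Rregular}\eqref{goodB} of regularity.
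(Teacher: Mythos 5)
Your proposal follows essentially the same route as the paper: necessity is Dynkin's extension method applied to the Whitney ultrajet on $\ol U$ (via \Cref{prop:whitext} and \Cref{lem:taylorest}), with B\nobreakdash-regularity supplying the auxiliary sequences and the Beurling quantifier on the jet bounds making the constant $C_0$ as small as needed; sufficiency is a $\ol\p$-integral representation. Two implementation differences are worth noting. For the extension, the paper does not use a Whitney cube decomposition with gluing: \Cref{prop:existenceofext} mollifies the measurable field $G(z)=T^{p(z)}_{\hat z}f(z)$, $p(z)=\ul\Ga_{\seq m}(8nC_0\,d(z))$, at the variable scale $\de(z)$ given by Stein's regularized distance; the two error terms you identify (Whitney remainder and base-point mismatch) are exactly the ones estimated there, but \emph{three} sequences are needed, since dividing by $d(z)$ costs one derivative, which is where \eqref{ass2}, i.e.\ \ref{def:Rregular}\eqref{B-Derivclosed1}, enters. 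For the restriction, the paper uses the Bochner--Martinelli formula (\Cref{prop:restriction}), which yields $|\p^\al f|\le C(C(n)\rh)^{|\al|}M_{|\al|}$ directly from an $(h_{\seq m},\rh)$-extension, sequence by sequence, with no loss of derivatives and no regularity hypothesis; your detour through \ref{def:Rregular}\eqref{B-Derivclosed1} and \ref{def:Rregular}\eqref{goodB} in that direction is therefore unnecessary, and your stated bound $\si^{-|\al|}N_{|\al|+1}$ has the exponent inverted --- a small extension parameter must produce a small derivative bound, since otherwise it could not be absorbed into $\rh^{|\al|}M_{|\al|}$ for arbitrary $\rh>0$. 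These are implementation variants rather than a different proof; the quantifier bookkeeping you describe for the necessity direction is exactly what the paper does.
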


Again the following is immediate.

\begin{corollary}
  \label{cor:Bcharacterization}
    Let $\fM$ be a B-regular weight matrix.
    Let $U \subseteq \R^n$ be open. 
    Then $f \in \cE^{(\fM)}(U)$ if and only if  
    $f|_V$ is $(\fM)$-almost analytically extendable for each quasiconvex domain $V$ relatively compact in $U$.
\end{corollary}

\begin{remark}
    In the case that $\fM$ consists only of a single weight sequence, \Cref{thm:Rchar} reduces to a 
    slight generalization of Dynkin's original result \cite{Dynkin80}. In fact, Dynkin's assumption 
    that $\mu_k/k$ is increasing implies \Cref{def:Rregular}\eqref{goodR} with $\seq n = \seq m$. 

    If the assumption \Cref{def:Rregular}\eqref{goodR} is replaced by \Cref{rem:regular}(1) 
    which is strictly stronger, by \Cref{example}, then one 
    can use \cite[Corollary 9]{Rainer:aa} and the result of Dynkin to get \Cref{thm:Rchar}.   
\end{remark}

\subsection{Proofs of \texorpdfstring{\Cref{thm:Rchar}}{} and \texorpdfstring{\Cref{thm:Bchar}}{}}

The arguments in this section are essentially due to Dynkin \cite{Dynkin80}. 
First we recall the Bochner-Martinelli formula.
In the standard Wirtinger notation 
\[
\frac{1}{(2i)^n} (d\ol z_1\wedge dz_1)\wedge \dots \wedge (d\ol z_n\wedge dz_n) = d \cL^{2n}(z)
\]
is the usual volume element of $\R^{2n} \cong \C^n$ and 
\[
\db F(z) := \sum_{j = 1}^n \frac{\p F (z)}{\p \ol z_j}d\ol z_j.
\] 

\begin{theorem}[Bochner-Martinelli formula]
	\label{thm:Bochner}
	Let $V \subseteq \C^n$ be a bounded domain with $C^1$ boundary and $F \in C^1(\ol{V})$. Then 
	\[
	F(z) = \int_{\p V} F(\ze) \om(\ze,z) - \int_{V} \db F(\ze)\wedge \om(\ze,z), 
	\]
	where $\om$ is the $(n,n-1)$-form ($\widehat{d\ol \ze_j}$ means that $d\ol \ze_j$ is omitted) 
	\[
	\om(\ze,z) = \frac{(n-1)!}{(2\pi i)^n} \frac{1}{|z  - \ze|^{2n}} \sum_{j = 1}^n (\ol \ze_j - \ol z_j)\, 
  d\ol \ze_1\wedge d\ze_1 \wedge \dots \wedge \widehat{ d\ol \ze_j}\wedge \dots \wedge d\ol \ze_n \wedge d\ze_n.
	\]
\end{theorem}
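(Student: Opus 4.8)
The plan is to reduce the identity to Stokes' theorem applied to $V$ with a small ball around $z$ excised, followed by two limit computations. The first step is to record the key property of the kernel: for fixed $z$, the $(n,n-1)$-form $\om(\ze,z)$ in $\ze$ is $d$-closed on $\C^n \setminus \{z\}$. Since $\om$ already contains $d\ze_1 \wedge \dots \wedge d\ze_n$, the holomorphic part of $dF$ drops out for any $C^1$ function $F$, so that $dF(\ze) \wedge \om(\ze,z) = \db F(\ze) \wedge \om(\ze,z)$ and in particular $d_\ze \om = \db_\ze \om$. Writing $r^2 := |\ze - z|^2$ and using $\p r^2 / \p \ol\ze_j = \ze_j - z_j$, the reordering $d\ol\ze_j \wedge \bigl(d\ol\ze_1 \wedge d\ze_1 \wedge \dots \wedge \widehat{d\ol\ze_j} \wedge \dots \wedge d\ol\ze_n \wedge d\ze_n\bigr)$ picks up the sign $(-1)^{2(j-1)} = 1$, so that
\[
 d_\ze \om(\ze,z) = \frac{(n-1)!}{(2\pi i)^n} \Bigl( \sum_{j=1}^n \frac{\p}{\p \ol\ze_j}\bigl((\ol\ze_j - \ol z_j)\, r^{-2n}\bigr) \Bigr) \bigwedge_{k=1}^n \bigl(d\ol\ze_k \wedge d\ze_k\bigr),
\]
and the scalar factor equals $n r^{-2n} - n r^{-2n-2} r^2 = 0$. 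Hence $d_\ze\bigl(F(\ze)\,\om(\ze,z)\bigr) = \db F(\ze) \wedge \om(\ze,z)$ for $\ze \ne z$.

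Next I would fix $\ve > 0$ with $\ol{B(z,\ve)} \subseteq V$ and apply Stokes' theorem on $V_\ve := V \setminus \ol{B(z,\ve)}$, whose oriented boundary consists of $\p V$ together with $\p B(z,\ve)$ carrying the opposite orientation. This gives
\[
 \int_{\p V} F\, \om(\,\cdot\,,z) \;-\; \int_{\p B(z,\ve)} F\, \om(\,\cdot\,,z) \;=\; \int_{V_\ve} \db F \wedge \om(\,\cdot\,,z).
\]
It remains to let $\ve \to 0$. The coefficients of $\om(\ze,z)$ are $O(|\ze - z|^{1-2n})$, hence locally integrable on $\C^n = \R^{2n}$; since $\db F$ is bounded on $\ol V$, the form $\db F \wedge \om(\,\cdot\,,z)$ is integrable over $V$ and $\int_{V_\ve} \db F \wedge \om \to \int_V \db F \wedge \om$ by dominated convergence.

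For the remaining boundary integral I would write
\[
 \int_{\p B(z,\ve)} F(\ze)\, \om(\ze,z) = \int_{\p B(z,\ve)} \bigl(F(\ze) - F(z)\bigr)\, \om(\ze,z) + F(z) \int_{\p B(z,\ve)} \om(\ze,z).
\]
On $\p B(z,\ve)$ one has $|\om(\ze,z)| \lesssim \ve^{1-2n}$ while the surface measure of the sphere is $\lesssim \ve^{2n-1}$, so the first term is $O\bigl(\sup_{|\ze - z| = \ve} |F(\ze) - F(z)|\bigr) \to 0$ by continuity of $F$. By the closedness established above, $\int_{\p B(z,\ve)} \om(\ze,z)$ is independent of $\ve$, and a direct computation in real coordinates — using the stated normalization $\frac{1}{(2i)^n}(d\ol z_1 \wedge dz_1) \wedge \dots \wedge (d\ol z_n \wedge dz_n) = d\cL^{2n}(z)$ and the area $2\pi^n/(n-1)!$ of the unit sphere in $\R^{2n}$ — identifies it with the total normalized solid angle and yields the value $1$ (for $n = 1$ this is just $\frac{1}{2\pi i}\oint_{|\ze - z| = \ve} \frac{d\ze}{\ze - z} = 1$). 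Thus the boundary integral over $\p B(z,\ve)$ tends to $F(z)$, and rearranging the Stokes identity gives the claimed formula. The step requiring the most care is this last one: pinning down the induced orientation on the inner sphere in Stokes' theorem and checking that the normalization constant $(n-1)!/(2\pi i)^n$ matches the real solid-angle form exactly, so that the sphere integral comes out as $1$ and not some stray power of $2$ or $i$.
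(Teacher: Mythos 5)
The paper states the Bochner--Martinelli formula as a recalled classical fact and gives no proof, so there is nothing internal to compare against; your argument is the standard textbook proof and it is correct. The key computations all check out: the sign $(-1)^{2(j-1)}=1$ when sliding $d\ol\ze_j$ into place, the cancellation $n r^{-2n}-n r^{-2n-2}r^2=0$ giving $d_\ze\om=0$ off the singularity, the local integrability of the $O(|\ze-z|^{1-2n})$ kernel in $\R^{2n}$, and the normalization $\int_{\p B(z,\ve)}\om=\frac{(n-1)!}{(2\pi i)^n\ve^{2n}}\cdot n(2i)^n\cdot\frac{\pi^n\ve^{2n}}{n!}=1$ using the stated volume-form convention. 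The orientation bookkeeping you flag is handled correctly by writing $\int_{\p V_\ve}=\int_{\p V}-\int_{\p B(z,\ve)}$ with the sphere in its standard orientation.
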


\begin{proposition}
	\label{prop:restriction}
	Let $\seq M$ be a positive sequence with $m_k^{1/k} \to \infty$, $\rh>0$, and $U \subseteq \R^n$ bounded open. 
  Any $f : U \to \R$ with an $(h_{\seq m},\rh)$-almost analytic extension belongs to 
  $\cB^{\{\seq M\}}(U)$. 
  If for every $\rh>0$ there is an $(h_{\seq m},\rh)$-almost analytic extension of $f$, then $f$ belongs to 
  $\cB^{(\seq M)}(U)$. 
\end{proposition}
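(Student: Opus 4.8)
The idea is to recover the Taylor coefficients of $f$ at a point $x\in U$ from the almost analytic extension $F$ by a Cauchy-type integral, and to estimate them using the decay of $\ol\p F$. Fix $x \in U$ and a polydisc (or ball) $B = B(x,r) \subseteq \C^n$; since $U$ is bounded we may take $r$ to be a fixed small radius depending only on $U$. Apply the Bochner--Martinelli formula (\Cref{thm:Bochner}) to $F$ on $B$. Differentiating the Bochner--Martinelli representation in $z$ under the integral sign at $z = x$ gives, for each multi-index $\al$,
\[
  \p^\al f(x) = \p^\al_z\Big(\int_{\p B} F(\ze)\,\om(\ze,z)\Big)\Big|_{z=x} - \p^\al_z\Big(\int_{B}\db F(\ze)\wedge \om(\ze,z)\Big)\Big|_{z=x}.
\]
The boundary term is harmless: the kernel $\om(\ze,z)$ is real-analytic in $z$ for $z$ away from $\p B$, so $|\p^\al_z \om(\ze,x)| \le C_0 A^{|\al|}\,|\al|!$ uniformly, and together with $\sup|F| \le \|F\|_\infty < \infty$ this contributes a term bounded by $C A^{|\al|}|\al|!$, which is certainly $\le C'\rh'^{|\al|} M_{|\al|}$ since $M_k \ge k!$ (log-convexity of $\seq M$, equivalently $\mu_k \ge 1$, is not assumed here, but $m_k^{1/k}\to\infty$ forces $M_k \ge k!$ eventually after passing to a larger $\rh$, or one just absorbs $k!$ into $M_k$ using \Cref{lem:basicM}). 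So the whole point is to estimate the volume integral.

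For the volume term, the differentiated kernel satisfies a bound of the form $|\p^\al_z\,\om(\ze,z)|_{z=x}| \le C_n \dfrac{|\al|!\,A^{|\al|}}{|\ze - x|^{2n-1+|\al|}}$ (differentiating the Bochner--Martinelli kernel, which is homogeneous of degree $-(2n-1)$ in $\ze - z$, raises the order of the singularity by one per derivative). Using $|\db F(\ze)| \le C\,h_{\seq m}(\rh\, d(\ze,\ol U))$ and the elementary estimate $d(\ze,\ol U) \le |\ze - x|$, together with $h_{\seq m}(t) \le m_j t^j$ for every $j$ (definition \eqref{h}), we get for the integrand
\[
  |\db F(\ze)|\,|\p^\al_z \om(\ze,x)| \le C C_n\,|\al|!\,A^{|\al|}\,m_j\,\rh^j\,\frac{|\ze-x|^{j}}{|\ze-x|^{2n-1+|\al|}}.
\]
Now integrate in polar coordinates around $x$ over $B$ (radius $\le 2r$, say): the radial integral $\int_0^{2r} s^{\,j - |\al| - 2n+1}\cdot s^{2n-1}\,ds = \int_0^{2r} s^{\,j-|\al|}\,ds$ converges provided $j > |\al| - 1$, i.e.\ $j \ge |\al|$, and then equals $(2r)^{j-|\al|+1}/(j-|\al|+1)$. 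Choosing $j = |\al|$ yields the bound $C''\,|\al|!\,A^{|\al|}\,m_{|\al|}\,\rh^{|\al|}\,(2r) = C''(2r)\,A^{|\al|}\rh^{|\al|}\,M_{|\al|}$ since $M_{|\al|} = |\al|!\,m_{|\al|}$. Thus $|\p^\al f(x)| \le C'''\,(A\rh)^{|\al|}\,M_{|\al|}$ with $C'''$ and $A$ depending only on $n$ and $U$, which is exactly the statement $f \in \cB^{\{\seq M\}}(U)$ with new constant $\rh' = A\rh$.

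For the Beurling conclusion: if for every $\rh>0$ there is an $(h_{\seq m},\rh)$-almost analytic extension, then the computation above (whose constants $C''', A$ do not depend on $\rh$) gives $\|f\|^{\seq M}_{A\rh} \le C'''$ for every $\rh>0$; since $A$ is fixed and $\rh>0$ is arbitrary this means $f \in \cB^{\seq M}_{\sigma}(U)$ for every $\sigma>0$, hence $f\in\cB^{(\seq M)}(U)$. The main obstacle — and the one point requiring care rather than routine bookkeeping — is the bound on the derivatives of the Bochner--Martinelli kernel $\p^\al_z\om(\ze,z)$ and checking that the resulting singularity $|\ze-x|^{-(2n-1+|\al|)}$ is exactly compensated by the factor $|\ze-x|^{|\al|}$ coming from $h_{\seq m}$ after the volume measure $|\ze-x|^{2n-1}\,ds\,d\sigma$ contributes $2n-1$ powers, leaving an integrable (indeed, uniformly bounded) radial integral; this is the place where the choice $j=|\al|$ in the minorization of $h_{\seq m}$ is forced and where one must take $F$ compactly supported so that the integral is over a bounded region. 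A secondary point is that one should first fix a cutoff to reduce to a polydisc or ball centered at $x$ and contained in a fixed neighborhood of $\ol U$, which is possible because $F\in C^1_c(\C^n)$; the dependence of all constants on $U$ (only through its diameter and the fixed radius $r$) should be tracked so that the estimate is uniform in $x\in U$.
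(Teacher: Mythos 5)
Your proof is correct and follows essentially the same route as the paper: represent $f$ via the Bochner--Martinelli formula, bound $\p_z^\al\om(\ze,z)$ by $C(n)^{|\al|}|\al|!\,|x-\ze|^{-(2n+|\al|-1)}$, and use $h_{\seq m}(\rh\,d(\ze,\ol U))\le m_{|\al|}\rh^{|\al|}|x-\ze|^{|\al|}$ to make the volume integral uniformly convergent. The only (harmless) deviation is that the paper applies the formula on a large ball containing $\supp F$, so the boundary term vanishes outright, whereas you work on $B(x,r)$ and must absorb the boundary contribution $A^{|\al|}|\al|!$ via \eqref{strictInclusion2} --- which works, in both the Roumieu and the Beurling case.
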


\begin{proof}
	Let $F$ be an $(h_{\seq m},\rh)$-almost analytic extension of $f$. Since $F$ has compact support,
	\Cref{thm:Bochner} implies 
	\[
	f(x) = F(x) = - \int_{\C^n}  \ol \p F(\ze) \wedge \om(\ze,x), \quad x \in U.
	\]
	By differentiating under the integral sign it is easy to check that 
	$f := F|_U$ is of class $C^\infty$ on $U$ with 
	\[
	\p^\al f(x) = - \int_{\C^n}  \ol \p F(\ze) \wedge \p^\al\om(\ze,x), \quad x \in U.
	\]
	By Fa\`a di Bruno's formula and the Leibniz rule, we get
	\[
	\bigg|\p^\al \bigg(\frac{1}{|x-\ze|^{2n}} \sum_{j=1}^n (\ol \ze_j - x_j)\bigg) \bigg| \le \frac{C(n)^{|\al|} |\al|! }{|x-\ze|^{2n+|\al|-1}}.
	\]
  Choose $R>0$ large enough such that $U\cup\supp(F) \subseteq B(0,R)$. Writing $D=\frac{(n-1)!}{\pi^n}$, 
  we get for $x \in U$, 
	\begin{align*}
	\frac{|\p^\al f(x)|}{(C(n)\rh)^{|\al|} M_{|\al|}} &\le D \int_{B(0,R)} 
  \frac{A h_{\seq m} ( \rh d(\ze,\ol U))}{\rh^{|\al|} m_{|\al|} |x-\ze|^{2n+|\al|-1}} \, d\cL^{2n}(\ze)
	\\
	&\le A D   \int_{B(0,R)} \frac{d(\ze,\ol U)^{|\al|}}{|x-\ze|^{2n+|\al|-1}} \, d\cL^{2n}(\ze)
	\\
	&\le A D  \int_{B(0,R)} \frac{1}{|x-\ze|^{2n-1}} \, d\cL^{2n}(\ze) < \infty. 
	\end{align*}
  The assertions follow.
\end{proof}

\begin{lemma}
	\label{lem:taylorest}
	Let $E \subseteq \R^n$ be compact and $f=(f^\al) \in \cB^{\{\seq M\}}(E)$ (resp.\ $f \in \cB^{(\seq M)}(E)$). 
	Then there exist $C,D > 0$ (resp.\ for each $D$ there exists $C$) such that for all $a_1,a_2 \in E$, $z \in \C^n$, 
  and $\al \in \N^n$ with $|\al|\le j$,
		\begin{equation*} 
		|\p_z^\al T^j_{a_1}f(z)- \p_z^\al T^j_{a_2}f(z)|\le  C D^{j+1}  |\al|!\, m_{j+1}(|a_1-z|+|a_1-a_2|)^{j-|\al|+1}.
		\end{equation*}
\end{lemma}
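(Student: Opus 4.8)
The plan is to estimate the difference of the two Taylor polynomials by writing it as a telescoping combination of the remainder jets $R^j_{a_i} f$, for which the ultrajet norm bounds on $\cB^{[\seq M]}(E)$ provide exactly the required decay. First I would recall that, since $f = (f^\al) \in \cB^{[\seq M]}(E)$, there are constants (in the Roumieu case) $C_0, D_0$ with $|f^\al(a)| \le C_0 D_0^{|\al|} M_{|\al|}$ and
\[
  |(R^p_{a} f)^\be(b)| \le C_0 D_0^{p+1} M_{p+1} \frac{|b-a|^{p+1-|\be|}}{(p+1-|\be|)!}, \qquad |\be| \le p,\ a,b \in E;
\]
in the Beurling case $D_0$ may be taken arbitrarily small at the cost of enlarging $C_0$. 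Writing $M_{p+1} = (p+1)!\, m_{p+1}$ and noting $(p+1)!/(p+1-|\be|)! \le (p+1)^{|\be|} \le \text{(const)}^{p+1}|\be|!$ turns these into bounds of the form $|(R^p_{a} f)^\be(b)| \le C D^{p+1} |\be|!\, m_{p+1} |b-a|^{p+1-|\be|}$, which is the shape appearing on the right-hand side of the claim.

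Next I would carry out the actual comparison. Fix $a_1, a_2 \in E$ and $z \in \C^n$. The key algebraic identity is that differentiating the Taylor polynomial $\p_z^\al T^j_{a} f(z) = \sum_{|\be| \le j - |\al|} \frac{(z-a)^\be}{\be!} f^{\al+\be}(a)$ and regrouping gives, essentially,
\[
  \p_z^\al T^j_{a_1} f(z) - T^{j-|\al|}_{a_2}\big( \p^\al T^j_{a_1} f \big)(z) \ \text{is controlled by the remainder } R^j_{a_2} f \text{ evaluated appropriately,}
\]
so that $\p_z^\al T^j_{a_1} f(z) - \p_z^\al T^j_{a_2} f(z)$ is a finite $\R$-linear combination, with binomial coefficients, of terms $(z - a_1)^\ga (a_1 - a_2)^{\de} (R^{j-|\al|-|\ga|}_{a_2} f)^{\al+\ga+\de}$-type expressions (one expands $(z-a_2)^\be = ((z-a_1)+(a_1-a_2))^\be$ and uses the jet relations). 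Each such term is estimated by the converted remainder bound above by $C D^{j+1} |\al|!\, m_{j+1} (|a_1 - z| + |a_1 - a_2|)^{j - |\al| + 1}$, after absorbing the combinatorial factors (the number of multi-indices $\ga, \de$ with $|\ga| + |\de| \le j - |\al|$, together with the binomial coefficients, is bounded by $\text{(const)}^{j+1}$, which is harmless since $D$ is at our disposal in the Roumieu case and enlarging the ambient constant in the Beurling case). Summing the finitely many terms yields the asserted inequality.

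The main obstacle I expect is purely bookkeeping: organizing the expansion of $\p_z^\al T^j_{a_1} f(z) - \p_z^\al T^j_{a_2} f(z)$ so that it manifestly reduces to the Whitney remainder jets rather than to uncontrolled differences of Taylor coefficients $f^\al(a_1) - f^\al(a_2)$, and then confirming that all powers of $|a_1 - z|$ and $|a_1 - a_2|$ combine into the single factor $(|a_1 - z| + |a_1 - a_2|)^{j - |\al| + 1}$ with the correct total degree. The log-convexity/growth hypotheses on $\seq M$ are not really needed here beyond what is implicit in $\cB^{[\seq M]}(E)$ being well defined; the estimate is a soft consequence of the definition of a Whitney ultrajet together with elementary multinomial combinatorics. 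One should also note that the roles of $a_1$ and $a_2$ are not symmetric in the final bound — the distance $|a_1 - z|$ (to the first base point) is the one that appears — which is why the expansion should be organized around $a_1$, expanding $z - a_2$ in terms of $z - a_1$ and $a_1 - a_2$.
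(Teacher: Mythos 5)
Your plan is correct and is essentially the paper's own argument: the paper proves the single identity $T^j_{a_1}f(z)-T^j_{a_2}f(z)=\sum_{|\be|\le j}(R^j_{a_2}f)^\be(a_1)\,(z-a_1)^\be/\be!$ and then applies the Whitney ultrajet remainder bounds together with the same multinomial bookkeeping (citing Chaumat--Chollet), which is exactly the reduction to remainder jets and the combinatorial absorption $(j+1)!/(j+1-|\al|)!\le e^{j+1}|\al|!$ that you describe. No gap; your version just spells out the regrouping more explicitly.
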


\begin{proof}
	For fixed $a_1,a_2 \in E$ and $j \in \N$, the function $z \mapsto T^j_{a_1}f(z)-T^j_{a_2}f(z)$ is a 
	polynomial in $z$ of degree $j$ satisfying   
	\[
	T^j_{a_1}f(z)-T^j_{a_2}f(z)= \sum_{|\be|=0}^{j} (R^j_{a_2}f)^\be(a_1) \frac{(z-a_1)^\be}{\be!}.
	\]
	From this the assertion follows easily; cf.\ \cite[Proposition 10]{ChaumatChollet94}.
\end{proof}

A crucial ingredient in the subsequent construction consists of the so-called \emph{regularized distance}. 
Given a closed set $E \subseteq \R^n$, the distance function $z \mapsto d(z,E)$ is far from being smooth. 
But it is possible to construct a smoothened version of the distance, having essentially the same properties. 

\begin{proposition}[{\cite[VI 2.1 Theorem 2]{Stein70}}]
	\label{prop:regdist}
	Let $E \subseteq \R^n$ be closed. There is a $C^\infty$-function $\de : \R^n \setminus E \to \R$ such that 
	\begin{enumerate}
		\item $c_1 d(z,E) \le \de(z) \le c_2d(z,E)$ for all $z \notin E$,
		\item for all $\al \in \N^n$ and $z \in \R^n \setminus E$,
		\[
		\big| \p^\al \de(z) \big| \le B_\al d(z,E)^{1-|\al|},
		\]
	\end{enumerate}
	where the constants $B_\al, c_1,c_2$ are independent of $E$.
\end{proposition}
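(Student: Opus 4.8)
The plan is to build $\de$ from a Whitney cube decomposition of the open set $\Om:=\R^n\setminus E$ together with a subordinate partition of unity, the classical construction. First I would invoke the Whitney decomposition: $\Om$ is the union of a family $\{Q_k\}_k$ of closed dyadic cubes with pairwise disjoint interiors such that $\operatorname{diam}Q_k\le d(Q_k,E)\le 4\operatorname{diam}Q_k$. Writing $Q_k^*$ for the cube concentric with $Q_k$ and dilated by the factor $\tfrac{9}{8}$, one has $Q_k^*\subseteq\Om$, the family $\{Q_k^*\}_k$ has bounded overlap (no point of $\Om$ lies in more than $N=N(n)$ of the $Q_k^*$), and $Q_j^*\cap Q_k^*\ne\emptyset$ forces $\tfrac{1}{4}\operatorname{diam}Q_j\le\operatorname{diam}Q_k\le 4\operatorname{diam}Q_j$. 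Subordinate to $\{Q_k^*\}_k$ I would take functions $\vh_k\in\cD(Q_k^*)$ with $0\le\vh_k\le 1$, $\sum_k\vh_k\equiv 1$ on $\Om$, and $|\p^\al\vh_k|\le A_\al(\operatorname{diam}Q_k)^{-|\al|}$ for every $\al$; such $\vh_k$ arise by rescaling and translating finitely many fixed model bumps and then normalizing, so the $A_\al$ depend only on $n$ and $\al$. Then I set
\[
  \de(z):=\sum_k(\operatorname{diam}Q_k)\,\vh_k(z),\qquad z\in\Om,
\]
a locally finite sum, so that $\de\in C^\infty(\Om)$.

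For (1), fix $z\in\Om$ and choose $j$ with $z\in Q_j$. Every $k$ with $\vh_k(z)\ne 0$ satisfies $z\in Q_k^*$, hence $Q_j^*\cap Q_k^*\ne\emptyset$, so $\operatorname{diam}Q_k$ is comparable to $\operatorname{diam}Q_j$; and $\operatorname{diam}Q_j$ is comparable to $d(z,E)$ because $\operatorname{diam}Q_j\le d(Q_j,E)\le d(z,E)\le d(Q_j,E)+\operatorname{diam}Q_j\le 5\operatorname{diam}Q_j$. Since $\sum_k\vh_k(z)=1$ with $\vh_k(z)\ge 0$, the value $\de(z)$ is a convex combination of numbers each comparable to $d(z,E)$, whence $c_1 d(z,E)\le\de(z)\le c_2 d(z,E)$ with $c_1,c_2$ depending only on $n$. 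For (2), I differentiate termwise, $\p^\al\de(z)=\sum_k(\operatorname{diam}Q_k)\,\p^\al\vh_k(z)$; at a given $z$ at most $N$ terms are nonzero, and for each such $k$ the argument of (1) gives that $\operatorname{diam}Q_k$ is comparable to $d(z,E)$, while $|\p^\al\vh_k(z)|\le A_\al(\operatorname{diam}Q_k)^{-|\al|}$, so $|(\operatorname{diam}Q_k)\,\p^\al\vh_k(z)|\le A_\al(\operatorname{diam}Q_k)^{1-|\al|}$, which is $\lesssim d(z,E)^{1-|\al|}$ with a constant depending only on $n$ and $\al$; summing the at most $N$ nonzero terms gives $|\p^\al\de(z)|\le B_\al d(z,E)^{1-|\al|}$. (When $|\al|\ge 1$ one may alternatively use $\sum_k\p^\al\vh_k\equiv 0$ to subtract a reference diameter before estimating, but this refinement is not needed for the stated bound.)

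The only delicate ingredient is the Whitney decomposition itself — the existence of dyadic cubes with the comparable-size and bounded-overlap properties, and of a subordinate partition of unity whose derivatives scale like $(\operatorname{diam}Q_k)^{-|\al|}$ with constants that are independent of $E$. Granted these standard facts, the two estimates above are immediate, and every constant produced is manifestly independent of the closed set $E$, which is the last assertion of the proposition. Since all of this is classical, in the write-up I would simply refer to \cite[Ch.~VI]{Stein70} for the decomposition and the partition of unity and keep the verification of (1) and (2) brief.
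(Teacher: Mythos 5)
The paper gives no proof of this proposition---it is quoted directly from Stein \cite[VI 2.1 Theorem 2]{Stein70}---and your argument is precisely the classical construction given there: a Whitney cube decomposition of $\R^n\setminus E$, a subordinate partition of unity with derivative bounds scaling like $(\operatorname{diam}Q_k)^{-|\al|}$, and $\de=\sum_k(\operatorname{diam}Q_k)\vh_k$. The verification of (1) and (2) via comparability of $\operatorname{diam}Q_k$ with $d(z,E)$ and bounded overlap is correct, and all constants visibly depend only on $n$ and $\al$, so nothing further is needed.
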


The following lemma is well-known.

\begin{lemma} \label{meas}
  Let $E \subseteq \R^n$ be compact. Let $\al>1$. 
  There exists a Borel measurable map $b : \R^n\setminus E \to E$ such that $|x - b(x)| < \al d(x,E)$ for all 
  $x \in \R^n\setminus E$.
\end{lemma}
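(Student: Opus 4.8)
\textbf{Proof plan for \Cref{meas}.}
The plan is to construct $b$ by a countable patching argument using a measurable partition of $\R^n\setminus E$ into Borel pieces on each of which a single point of $E$ works. Fix $\al>1$ and pick $\beta$ with $1<\beta<\al$. Since $E$ is compact and thus separable, choose a countable dense subset $\{e_1,e_2,\ldots\}$ of $E$. For each $x\in\R^n\setminus E$ we have $d(x,E)>0$, and by density there is some $e_j$ with $|x-e_j|<\beta\, d(x,E)<\al\, d(x,E)$. Hence the Borel sets
\[
  A_j := \{x \in \R^n\setminus E : |x-e_j| < \beta\, d(x,E)\}, \qquad j \in \N,
\]
cover $\R^n\setminus E$; note each $A_j$ is Borel (indeed open) because $x \mapsto |x-e_j|$ and $x \mapsto d(x,E)$ are continuous.

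Now disjointify: set $B_1 := A_1$ and $B_j := A_j \setminus (A_1 \cup \dots \cup A_{j-1})$ for $j \ge 2$. The $B_j$ are pairwise disjoint Borel sets whose union is $\R^n\setminus E$. Define $b : \R^n\setminus E \to E$ by $b(x) := e_j$ whenever $x \in B_j$. This map is well defined since the $B_j$ partition the domain, and it is Borel measurable because the preimage of any set is a countable union of the $B_j$'s (more precisely, $b^{-1}(S) = \bigcup_{j : e_j \in S} B_j$ is Borel for every $S \subseteq E$). Finally, for $x \in B_j \subseteq A_j$ we have $|x - b(x)| = |x - e_j| < \beta\, d(x,E) < \al\, d(x,E)$, as required.

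There is essentially no obstacle here; the only point requiring a little care is ensuring that the chosen $e_j$ can be taken to satisfy the \emph{strict} inequality with a uniform slack, which is why I introduce the intermediate constant $\beta$: strict inequality $|x-e_j|<\al\,d(x,E)$ on an open neighborhood follows automatically once $|x-e_j|<\beta\,d(x,E)$ at $x$, but in fact we do not even need that—the pointwise bound on each $B_j$ suffices. One could alternatively invoke a measurable selection theorem for the closed-valued multifunction $x \mapsto \{y \in E : |x-y| \le \al\, d(x,E)\}$, but the explicit countable construction above is elementary and self-contained.
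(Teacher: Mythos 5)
Your proof is correct and is essentially the same as the paper's: the paper defines $m(x):=\min\{k : |x-x_k|<\al\, d(x,E)\}$ for a countable dense subset $\{x_k\}$ of $E$ and sets $b(x):=x_{m(x)}$, which is precisely your disjointification of the open sets $A_j$ into the $B_j$. The intermediate constant $\beta$ is superfluous (the pointwise strict inequality with $\al$ already suffices, as you note), but harmless.
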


\begin{proof}
  Let $\{x_k\}_{k \in \N}$ be a dense subset of $E$. Define 
  $m : \R^n \setminus E \to \N$ by $m(x) := \min\{k : |x-x_k| < \al d(x,E)\}$ and $x : \N  \to E$ by 
  $x(k) = x_k$. Then both $m$ and $x$ are Borel measurable, hence so is $b := x \o m$.  
\end{proof}

\begin{proposition}
  \label{prop:existenceofext}
  Let $\seq M\le  \seq N$ and $\seq S$ be positive sequences such that  $m_k^{1/k}$, $n_k^{1/k}$, and $s_k^{1/k}$ tend to 
  $\infty$ and  
  \begin{gather}
    \E C_1\ge 1 \A t>0 : \ol \Ga_{\seq n} (C_1 t) \le \ul \Ga_{\seq m}(t), \label{eq:countcomp}
    \\
    \E C_2\ge 1 \A j \in \N : n_{j+1} \le C_2^{j+1}s_j. \label{ass2}
  \end{gather}  
  Let $E \subseteq \R^n$ be compact. Assume that $f=(f^\al)_\al \in \cB^{\seq M}_{C_0}(E)$ satisfies
  \begin{align}
     \label{eq:growth1}
     &\A \al \in  \N^n \A x \in E : |f^{\al}(x)|\le C C_0^{|\al|}M_{|\al|},\\
     \label{eq:growth2}
     \begin{split}
     &\A j \in \N \A a_1, a_2 \in E \A z \in \C^n:\\ 
     &\qquad |T^j_{a_1} f(z)-T^j_{a_2} f(z)|\le C C_0^{j+1}  m_{j+1}(|a_1-z|+|a_1-a_2|)^{j+1},
     \end{split}
   \end{align}
   for suitable constants $C,C_0>0$.
  Then there exists an extension $F \in C^\infty_c(\C^n)$ of $f$ such that 
  \begin{equation} \label{toshow}
    \A z \in \C^n : |\ol \p F(z)| \le A h_{\seq s} ( 12n C_0 C_1 d(z,E)),
  \end{equation}
  where $A= A(C,C_0,C_1,C_2,n)$.
\end{proposition}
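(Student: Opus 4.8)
The idea is to construct $F$ by a Whitney-type patching of the Taylor polynomials $T^j_{b(z)}f$, where the patching uses a partition of unity on $\C^n \setminus E$ built from dyadic cubes adapted to the distance $d(z,E)$, and where the degree $j = j(z)$ of the local Taylor polynomial is chosen to grow as $z$ approaches $E$ at a rate governed by $\ul\Ga_{\seq m}$. This is precisely Dynkin's construction, refined as in \cite{RainerSchindl16a,RainerSchindl17}. First I would fix a Whitney decomposition $\{Q\}$ of $\C^n \setminus E$ into dyadic cubes with $\diam(Q) \sim d(Q,E)$, together with a subordinate smooth partition of unity $\{\vh_Q\}$ satisfying $|\p^\al \vh_Q| \lesssim \diam(Q)^{-|\al|}$; alternatively one works directly with the regularized distance $\de$ from \Cref{prop:regdist}. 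Pick a point $x_Q \in E$ with $|x - x_Q| < 2\sqrt n\, d(x,E)$ for $x \in Q$ (using \Cref{meas}), choose an integer-valued ``degree function'' $N(t)$ roughly equal to $\ul\Ga_{\seq m}(t)$ — monotone, tending to $\infty$ as $t \to 0$ — and set $j(Q) := N(\diam Q)$. Define
\[
  F(z) := \sum_Q \vh_Q(z)\, T^{j(Q)}_{x_Q} f(z), \qquad F|_E := f^0.
\]
Since locally only finitely many $\vh_Q$ are nonzero and each $T^{j(Q)}_{x_Q}f$ is a polynomial, $F$ is smooth on $\C^n \setminus E$; that $F \in C^\infty_c$ with $F|_E = f$ (as a jet) follows from the Whitney-jet estimate \eqref{eq:growth2} together with Taylor's theorem, exactly as in Whitney's extension theorem, after one arranges compact support by multiplying with a fixed cutoff equal to $1$ near $E$.

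\textbf{The $\ol\p$-estimate.} The heart of the matter is \eqref{toshow}. Since $\sum_Q \vh_Q \equiv 1$ away from $E$, for $z \notin E$ we have $\sum_Q \ol\p \vh_Q(z) = 0$, so
\[
  \ol\p F(z) = \sum_Q \ol\p\vh_Q(z)\,\bigl(T^{j(Q)}_{x_Q}f(z) - T^{j(Q_0)}_{x_{Q_0}}f(z)\bigr),
\]
where $Q_0$ is a fixed cube containing $z$; one also has a contribution $\sum_Q \vh_Q(z)\,\ol\p\,T^{j(Q)}_{x_Q}f(z)$ coming from the fact that the $T^j_{x_Q}f$ are holomorphic polynomials only in the sense of formal Taylor expansion of $f$, i.e.\ $\ol\p\, T^j_{x_Q}f = T^{j}_{x_Q}(\db f)$-type terms vanish because the jet components $f^\al$ are indexed by $\N^n$ and the polynomial is genuinely holomorphic — so this second sum actually vanishes and only the difference term survives. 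For the neighbouring cubes $Q$ in the support of $\ol\p\vh_Q$ near $z$ one has $\diam Q \sim \diam Q_0 \sim d(z,E)$ and $|x_Q - x_{Q_0}| \lesssim d(z,E)$. Now I would estimate the difference of two Taylor polynomials at possibly different base points and of possibly different degrees: split
\[
  T^{j(Q)}_{x_Q}f - T^{j(Q_0)}_{x_{Q_0}}f = \bigl(T^{j}_{x_Q}f - T^{j}_{x_{Q_0}}f\bigr) + \bigl(T^{j}_{x_{Q_0}}f - T^{j'}_{x_{Q_0}}f\bigr)
\]
(with $j = \max$, $j' = \min$ of the two degrees). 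The first bracket is controlled by \Cref{lem:taylorest} / hypothesis \eqref{eq:growth2}, giving a bound $\lesssim C C_0^{j+1} m_{j+1} d(z,E)^{j+1}$ for the $\al=0$ term, and the factor $\diam(Q)^{-1}$ from $\ol\p\vh_Q$ is absorbed into one power of $d(z,E)$; the second bracket is a sum of monomials of degrees between $j'$ and $j$ whose coefficients are bounded by \eqref{eq:growth1}, i.e.\ by $\lesssim C C_0^{k} M_k d(z,E)^k / k! = C C_0^k m_k d(z,E)^k$ — and here one uses that the chosen degrees differ by a bounded amount (both are $\sim \ul\Ga_{\seq m}(d(z,E))$), and that $k \mapsto m_k t^k$ is decreasing for $k \le \ul\Ga_{\seq m}(t)$ by \Cref{basic}\eqref{eq:ulGa3}, so all these terms are dominated by the top one $\sim m_{N(t)} t^{N(t)}$. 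Putting $t = d(z,E)$, this shows $|\ol\p F(z)| \lesssim C C_0^{N+1} m_{N+1} t^{N}$ with $N \sim \ul\Ga_{\seq m}(C_0 t)$.

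\textbf{Converting to $h_{\seq s}$.} The final step is to recognize the right-hand side as $\lesssim h_{\seq s}(\text{const}\cdot d(z,E))$. By definition $h_{\seq s}(u) = \inf_k s_k u^k \le s_N u^N$ for any $N$, so I need $C_0^{N+1} m_{N+1} t^N \lesssim s_{\ol\Ga_{\seq s}(u)}\, u^{\ol\Ga_{\seq s}(u)}$ for a suitable $u = c\, t$. Here the hypotheses enter decisively: \eqref{ass2} lets us replace $n_{j+1}$ by $C_2^{j+1} s_j$, and the comparison \eqref{eq:countcomp}, $\ol\Ga_{\seq n}(C_1 t) \le \ul\Ga_{\seq m}(t)$, is exactly what guarantees that the index $N \sim \ul\Ga_{\seq m}(t)$ produced by the construction is at least $\ol\Ga_{\seq n}(C_1 t)$, so that $n_N (C_1 t)^N \le h_{\seq n}(C_1 t)$, i.e.\ $N$ is past the point where $k \mapsto n_k (C_1 t)^k$ stops decreasing; then $m_{N+1} t^{N} \le m_{N+1} t^{N+1}/t$ and one trades the $\seq m$ at index $N+1$ for $\seq n$ using $M \le N$ and $k!$-factors, and then $\seq n$ for $\seq s$ using \eqref{ass2}, landing on $\lesssim h_{\seq s}(12 n C_0 C_1\, t)$ after collecting all the geometric constants ($12n$ accounting for $2\sqrt n$ from \Cref{meas}, the neighbour-cube overlap, and the $C(n)$ in the derivative bounds). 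Tracking the exact constant $12n C_0 C_1$ and checking that the bounded shifts in the index (from $j$ to $j+1$, from $N$ to $N\pm O(1)$) only cost a factor absorbed into $A$ is the routine-but-delicate bookkeeping; the genuine conceptual obstacle is the passage in the last paragraph, i.e.\ choosing $N(t)$ and using \eqref{eq:countcomp} so that the two one-sided counting functions $\ul\Ga_{\seq m}$ and $\ol\Ga_{\seq n}$ clamp the construction's output exactly where $h_{\seq s}$ can absorb it. I'd expect that to be where most of the work goes.
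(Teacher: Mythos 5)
Your construction (Whitney cubes with a subordinate partition of unity, local Taylor polynomials of degree $\approx \ul\Ga_{\seq m}(d(z,E))$) is a legitimate variant of what the paper does; the paper instead mollifies the measurable pre-extension $G(z)=T^{p(z)}_{\hat z}f(z)$ against a bump scaled by the regularized distance, which buys the exact polynomial-reproduction identity but is otherwise the same Dynkin-type argument. The identity $\ol\p F=\sum_Q(\ol\p\vh_Q)(T^{j(Q)}_{x_Q}f-T^{j(Q_0)}_{x_{Q_0}}f)$, the splitting into a base-point change and a degree change, and the use of \Cref{basic}(3) are all as in the paper.

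However, the final conversion step contains a genuine error of direction. You claim that $N\ge\ol\Ga_{\seq n}(C_1t)$ implies $n_N(C_1t)^N\le h_{\seq n}(C_1t)$. Since $h_{\seq n}(u)=\inf_k n_ku^k$, one always has $n_N u^N\ge h_{\seq n}(u)$; being past the minimizer gives a \emph{lower} bound by the infimum, not an upper bound. So the chain $m_{N+1}t^{N+1}\le n_{N+1}(C_1t)^{N+1}\le h_{\seq n}(C_1t)$ breaks at the last inequality. The correct mechanism -- and the actual role of \eqref{eq:countcomp} -- is the reverse of what you wrote: one must run the estimate at the auxiliary index $j+1:=\ol\Ga_{\seq n}(12nC_0C_1\,d(z))$, where by \eqref{counting2} the identity $n_{j+1}(12nC_0C_1 d(z))^{j+1}=h_{\seq n}(12nC_0C_1 d(z))$ holds \emph{exactly}; hypothesis \eqref{eq:countcomp} then guarantees $j+1\le\ul\Ga_{\seq m}(8nC_0 d(\ze))$, i.e.\ the auxiliary index sits \emph{below} the construction's truncation degree, so that (i) the base-point-change term at degree $j$ is bounded via \eqref{eq:growth2} by $m_{j+1}(\cdots)^{j+1}\le n_{j+1}(\cdots)^{j+1}=h_{\seq n}(\cdots)$, and (ii) the degree-truncation tail $\sum_{j<k\le p}$ is dominated by its \emph{lowest}-degree term $m_{j+1}(\cdots)^{j+1}$ using the decrease of $k\mapsto m_k u^k$ on $k\le\ul\Ga_{\seq m}(u)$ (note also that a decreasing sequence is dominated by its first term, not the ``top'' one as you wrote). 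Your architecture is salvageable by inserting this auxiliary index, but as written the key inequality is false. A secondary point: the statement requires $F\in C^\infty_c(\C^n)$, so the smoothness of $F$ across $E$ (the Whitney-jet verification, claim (2) in the paper's proof) cannot be dismissed with a single sentence, though it is indeed standard.
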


\begin{proof}
    By \Cref{meas}, there is a Borel measurable map $z \mapsto \hat z$ such that 
    \begin{equation}
      d(z) \le |z-\hat z| < 2 d(z),
    \end{equation}
    where $d(z)=d(z,E)$.
    Then 
  \[
    G(z) := T_{\hat z}^{p(z)}  f(z), \quad z \in \C^n \setminus E,
  \]
  where
  \[
    p(z) := \ul \Ga_{\seq m} (8nC_0 \, d(z)),
  \]
  is Borel measurable and locally bounded. Indeed,
  \begin{equation} \label{eq:dzedz}
    d(z) \le 2d(\ze) \le 3 d(z) \quad \text{  for  } \ze \in B(z,d(z)/2),
  \end{equation} 
  and hence $p(\ze) = \ul \Ga_{\seq m}( 8n C_0 \, d(\ze)) 
  \le \ul \Ga_{\seq m}(4n C_0 \, d(z))$.

  Let $\ps \in C^\infty(\C)$ be a non-negative, rotationally invariant function satisfying 
  $\int \ps \, d \cL^{2} = 1$ such that  
  $\Ps(z) := \ps(z_1) \cdots \ps(z_n)$ has support in the unit ball in $\C^n$.
  Define 
  \[
    F(z) := 
      \frac{(2c_2)^{2n}}{\de(z)^{2n}} \int \Ps\Big(\frac{2c_2(\ze-z)}{\de(z)}\Big) G(\ze) \, d \cL^{2n}(\ze)  
      \quad \text{ for  } z \in \C^n \setminus E.
  \]   
  Here $\de$ is the regularized distance for $E \subseteq \C^n \cong \R^{2n}$ from \Cref{prop:regdist} and $c_2$ 
  is chosen as in \Cref{prop:regdist}(1). If we do not specify the domain of integration, as above, 
  it should be understood as $\C^n$.
  It is not hard to see that $F$ is $C^\infty$ on $\C^n \setminus E$.

    For each $\al = (\al_1,\al_2) \in \N^n \times \N^n$ we define $F^\al : \C^n \to \C$ by setting 
    \begin{align*}
      F^\al(a) := \p_z^{\al_1} \p_{\ol z}^{\al_2} F(a) \quad \text{ if } a \not\in E         
    \end{align*}   
    and if $a \in E$ then $F^\al(a)$ is uniquely determined by the identity 
    \begin{align*}
      \sum_{\al = (\al_1,\al_2)\in \N^n \times \N^n} \frac{F^\al(a)}{\al!} Z^{\al_1} \ol Z^{\al_2} = 
      \sum_{\be \in \N^n}  \frac{f^\be(a)}{\be!} Z^\be \quad \text{ in } \C[[Z,\ol Z]].  
    \end{align*}
    Then for all $j \in \N$, $a \in E$, and $z = x+iy \in \C^n$,
    \begin{equation}  \label{jets}
      T^j_a F(z) := \sum_{\substack{\al = (\al_1,\al_2)\\ |\al|\le j}} \frac{F^\al(a)}{\al!} (z-a)^{\al_1} (\ol z -a )^{\al_2}
      = T^j_a f(z).
    \end{equation}
    We will write $F=(F^\al)_\al$; this should not cause too much confusion with the \emph{function} $F$. 
  We will prove the following two claims from which the theorem follows easily:
  \begin{enumerate}
    \item \eqref{toshow} holds for all $z \in \C^n \setminus E$. 
    \item $F^0$ is $C^\infty$ on $\C^n$ and $F^\al = \p_z^{\al_1} \p_{\ol z}^{\al_2} F^0$ for all 
    $\al=(\al_1,\al_2) \in \N^n \times \N^n$.
  \end{enumerate}

  Let us first show (1).
  Using \Cref{prop:regdist}, it is not hard to see that   
  \begin{equation} \label{eq:kernel}
    \Big|\p_z^{\al_1} \p_{\ol z}^{\al_2} \Big(\frac{1}{\de(z)^{2n}} 
    \Ps\Big(\frac{2c_2(\ze-z)}{\de(z)}\Big)\Big)\Big| \le \frac{K_\al}{d(z)^{2n+|\al|}}, 
  \end{equation}
  for all $\ze, z \notin E$ and $\al=(\al_1,\al_2) \in \N^{n} \times \N^n$.
  For any polynomial $P\in \C[z]$, we have 
  \[
    \frac{(2c_2)^{2n}}{\de(z)^{2n}} \int \Ps\Big(\frac{2c_2(\ze-z)}{\de(z)}\Big) P(\ze) \, d \cL^{2n}(\ze) = P(z), 
  \]
  which follows from the Cauchy integral formula,
  \begin{align*}
    &\frac{(2c_2)^{2n}}{\de(z)^{2n}} \int \Ps\Big(\frac{2c_2(\ze-z)}{\de(z)}\Big) \ze^\al \, d \cL^{2n}(\ze) =
    \int_{B(0,1)^n} \Ps(\ze) \Big(\tfrac{\de(z)}{2c_2} \ze + z\Big)^\al  \, d \cL^{2n}(\ze) 
    \\
    &=\prod_{j=1}^n \int_{B(0,1)} \ps(\ze_j) \Big(\tfrac{\de(z)}{2c_2} \ze_j + z_j\Big)^{\al_j}  \, d \cL^{2}(\ze_j)
    = z^\al.
  \end{align*}
  Thus, if $z \in \C^n \setminus E$, $z_0 \in E$, and $j \in \N$, we get 
  \begin{equation}
  \label{eq:taylorapp}
    F(z) = T_{z_0}^j  f(z) + \frac{(2c_2)^{2n}}{\de(z)^{2n}} \int \Ps\Big(\frac{2c_2(\ze-z)}{\de(z)}\Big) 
    \big(G(\ze) - T_{z_0}^j  f(\ze)\big)  
    \, d \cL^{2n}(\ze).
  \end{equation}
  Hence, by choosing $z_0 = \hat z$,
  \[
    \ol \p F(z) =   \int \ol \p \Big(\frac{(2c_2)^{2n}}{\de(z)^{2n}} \Ps\Big(\frac{2c_2(\ze-z)}{\de(z)}\Big)\Big) 
    \big(G(\ze) - T_{\hat z}^j  f(\ze)\big)  
    \, d \cL^{2n}(\ze).
  \]
  By \eqref{eq:kernel}, for all $j \in \N$,
  \begin{align}
  \label{eq:keyest} \nonumber
    |\ol \p F(z)| 
    &\le \frac{K}{d(z)^{2n+1}}   \int_{B(z,\frac{\de(z)}{2c_2})}  
    |G(\ze) - T_{\hat z}^j  f(\ze)|  
    \, d \cL^{2n}(\ze) 
    \\
    &\le  \frac{K}{d(z)}  \sup_{\ze \in B(z, d(z)/2)}  
    |G(\ze) - T_{\hat z}^j  f(\ze)|,
  \end{align}
  where $K$ denotes a generic constant.
  Now
  \[
    |G(\ze) - T_{\hat z}^j  f(\ze)| \le |T_{\hat z}^j  f(\ze) - T_{\hat \ze}^j  f(\ze)| 
    + |T_{\hat \ze}^j  f(\ze) - T_{\hat \ze}^{p(\ze)}  f(\ze)|.
  \]
  We estimate the summands separately. 
  So fix some arbitrary $z \in \C^n \setminus E$, take $\ze \in B(z,d(z)/2)$ and set 
  \[
  j+1 : =\ol \Ga_{\seq n}(12n C_0 C_1 \, d(z)).
  \]
  Since 
  $|\hat{z}-\ze| +|\hat{z}-\hat{\ze}|\le 9d(z)$,
  \eqref{eq:growth2}  
  and the definition of $j+1$ give 
  \begin{align*}
  |T_{\hat z}^j f(\ze) - T_{\hat \ze}^j f(\ze)| 
  &\le C (9 C_0 \, d(z))^{j+1} \, m_{j+1}\\
  &\le C(12 nC_0 C_1\, d(z))^{j+1} \, n_{j+1}
  = C h_{\seq n}(12 nC_0 C_1\, d(z)).
  \end{align*}
  By \eqref{eq:countcomp}, \eqref{eq:dzedz}, and \Cref{basic}(2), $j+1\le \ul \Ga_{\seq m}(12n C_0  \, d(z)) 
  \le \ul \Ga_{\seq m}(8n  C_0 \, d(\ze)) = p(\ze)$.
  Thus (using that there are $\binom{k+n-1}{n-1} \le 2^{k+n-1}$ many $\be \in \N^n$ such that $|\be| =k$) 
  \begin{align*}
    |T_{\hat \ze}^j  f(\ze) - T_{\hat \ze}^{p(\ze)}  f(\ze)|
    & = \Big| \sum_{j<|\be| \le p(\ze)} 
    f^{\be}(\hat \ze) \frac{(\ze - \hat \ze)^\be}{\be!} \Big|  
    \\ 
    &\le C  \sum_{j<|\be| \le p(\ze)} 
    (2n C_0  d(\ze))^{|\be|} m_{|\be|} \quad \text{ by } \eqref{eq:growth1} 
    \\ 
    &\le 2^{n-1} C \sum_{k = j+1}^{p(\ze)} 
    (8 n C_0 \, d(\ze))^{k} m_{k} 2^{-k} 
    \\
    &\le 2^{n-1} C (8nC_0 \, d(\ze))^{j+1} m_{j+1} \qquad \text{ by  \Cref{basic}\eqref{eq:ulGa3}} 
    \\ 
    &\le 2^{n-1} C (8nC_0 \, d(\ze))^{j+1} n_{j+1}  \qquad \text{ since } \seq m \le \seq n
    \\
    &\le 2^{n-1} C (12 nC_0 C_1 \, d(z))^{j+1} n_{j+1}  
    \\
    &= 2^{n-1} C\,  h_{\seq n}(12 nC_0 C_1 \, d(z)) \qquad \text{ by } \eqref{counting2}.
  \end{align*}
  Combining the estimates, we get 
  \begin{align*}
    |\ol \p F(z)| 
    &\le  \frac{K}{d(z)} h_{\seq n}(12 nC_0C_1 \, d(z)).
  \end{align*}
  By \eqref{ass2} and the definition of $h_{\seq n}$, we have $h_{\seq n}(t)/t \le C_2 h_{\seq s}(t) $, which implies
  \[
  |\db F(z)| \le Kh_{\seq s}(12 nC_0C_1 \, d(z)).
  \]
  Thus claim (1) is proved.

  Let us show (2). To this end we prove that for all $j \in \N$,  $\al=(\al_1,\al_2) \in \N^n \times \N^n$ with $|\al| \le j$, 
  $z \in \C^n$, and $a \in E$,
  \begin{equation} \label{eq:key}
    |F^\al(z) - \p_z^{\al_1} \p_{\ol z}^{\al_2} T^j_{a} F(z)| = o(|z-a|^{j-|\al|}) \quad \text{ as } |z-a| \to 0. 
  \end{equation}  
  This implies (2): First of all it implies that all $F^\al$ are continuous on $\C^n$. 
  If $a\in E$ and $z \in \C^n \setminus E$,  
  then, for $j > |\al|$, where $e_i$ denotes the $i$-th standard unit vector in $\R^n$, 
  \begin{align*}
     &|F^\al(z) - F^\al(a) - \sum_{i=1}^{n} (z_i - a_i) F^{(\al_1+e_i,\al_2)}(a) 
     | = o(|z-a|)  \quad \text{ as } |z-a| \to 0,
  \end{align*}
  by \eqref{eq:key} and the fact that $T^j_{a} F(z) = T^j_a f(z)$ is a polynomial. 
  Notice that, by \eqref{jets}, $\p_z^{\al_1} \p_{\ol z}^{\al_2} T^j_{a} F(z) = F^{(\al_1,\al_2)}(a) = 0$ 
  whenever $\al_2 \ne 0$.
  It follows that $F^\al$ is $C^1$, $\p_{z}^{e_i} F^\al = F^{(\al_1+e_i,\al_2)}$, and 
  $\p_{\ol z}^{e_i} F^\al = F^{(\al_1,\al_2+e_i)}$.
 
  Now \Cref{lem:taylorest} implies, for $a_1, a_2 \in E$,
  \begin{align} \label{eq:taylor2}
  |\p_z^{\al_1} \p_{\ol z}^{\al_2} T^j_{a_1}F(z)-\p_z^{\al_1} \p_{\ol z}^{\al_2} T^j_{a_2}F(z)| = O\big((|a_1-a_2| +|z-a_1|)^{j-|\al|+1}\big).
\end{align}
  In particular, it suffices to show \eqref{eq:key} for $a = \hat z$, since $|\hat z - a| \le 3 |z-a|$. 
  The estimates for $| G(\ze) -  T_{\hat z}^j  f(\ze)|$ above also yield that for $\ze \in B(z,d(z)/2)$ we have  
  \[
    | G(\ze) -  T_{\hat z}^j  f(\ze)| = O(d(z)^{j+1}).
  \]
  Since 
  $T^j_{\hat z} f(z) = T^j_{\hat z} F(z)$ by \eqref{jets}, 
  we may conclude with \eqref{eq:taylorapp} for $z_0 = \hat z$ and \eqref{eq:kernel} that
  \[  
    |\p_z^{\al_1} \p_{\ol z}^{\al_2} \big(F -  T^j_{\hat z} F\big)(z)| = o(|z-\hat z|^{j-|\al|}) \quad \text{ as } |z-\hat z| \to 0,
  \]
  if $z \in \C^n\setminus E$. Thus \eqref{eq:key} is proved.
\end{proof}

\begin{proof}[Proof of \Cref{thm:Rchar}]
The theorem now follows easily from
\Cref{prop:restriction}, \Cref{prop:whitext}, \Cref{lem:taylorest}, and \Cref{prop:existenceofext}.
\end{proof}

\begin{proof}[Proof of \Cref{thm:Bchar}]
  Suppose that $f \in \cB^{(\fM)}(U)$.
  Let $\seq S \in \fM$ and $\rh>0$. Since $\fM$ is B-regular,
  there exist $\seq M,\seq N \in \fM$ such that \eqref{eq:countcomp} and \eqref{ass2} hold. 
  By \Cref{prop:whitext} and \Cref{lem:taylorest},
  we have \eqref{eq:growth1} and \eqref{eq:growth2} for $C_0 = \rh/(12n C_1)$. 
  So \Cref{prop:existenceofext} yields an extension $F\in C^\infty_c(\C^n)$ of $f$ 
  such that 
  \[
   |\ol \p F(z)| \le A h_{\seq s} ( \rh d(z,\ol U)).
  \] 
  Hence $f$ is $(\fM)$-almost analytically extendable.
  The converse follows from \Cref{prop:restriction}. 
\end{proof}

\subsection{A stronger result}

Assume that $\seq M$ is a strongly log-convex (i.e.\ $\mu_k/k$ is increasing) weight sequence such that $m_k^{1/k} \to \infty$. 
Then we can choose the same extension $F$ of $f \in B^{(\seq M)}(U) = \bigcap_{\rh>0} B^{\seq M}_\rh(U)$ for every $\rh$.

\begin{theorem}
   Let $\seq M$ be a strongly log-convex weight sequence with $m_k^{1/k} \to \infty$ and 
   $(M_{k+1}/M_k)^{1/(k+1)}$ bounded. 
   Let $U \subseteq \R^n$ be a bounded quasiconvex domain. 
   Then $f \in \cB^{(\seq M)}(U)$ if and only if $f$ admits an extension $F \in C^1_c(\C^n)$ such that
   \[
      \forall \rh>0 \E C \ge 1 \A z \in \C^n : |\ol \p F(z)| \le C h_{\seq m}(\rh d(z,\ol U)).
   \]
\end{theorem}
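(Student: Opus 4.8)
The ``if'' direction is immediate. Since $\seq m$ is log-convex with $m_k^{1/k}\to\infty$, \Cref{lem:basicM}(4) gives $m_{k+1}/m_k\to\infty$, so by \Cref{basic} the function $h_{\seq m}$ is increasing, continuous, positive on $(0,\infty)$, bounded by $1$, and tends to $0$ at $0$; hence an $F$ as in the statement is, for \emph{every} $\rh>0$, an $(h_{\seq m},\rh)$-almost analytic extension of $f$, and \Cref{prop:restriction} yields $f\in\cB^{(\seq M)}(U)$.

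For the converse the plan is to rerun the construction in the proof of \Cref{prop:existenceofext}, but to let the scale parameter $C_0$ \emph{decay to $0$} as one approaches $E:=\ol U$. First I would use \Cref{prop:whitext} to extend $f$ to a Whitney ultrajet $(f^\al)_\al\in\cB^{(\seq M)}(E)$, and write $c_\ep<\infty$ for its norm in $\cB^{\seq M}_\ep(E)$, a decreasing function of $\ep>0$. Because $c_\ep$ is finite for \emph{every} $\ep>0$, I can choose a continuous increasing function $\ep\colon(0,\infty)\to(0,1]$ with $\ep(t)\to0$ as $t\to0$ and $c_{\ep(t)}\le 1/t$ for all small $t$. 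Strong log-convexity gives $\ul\Ga_{\seq m}=\ol\Ga_{\seq m}=:\Ga$ by \Cref{basic}(4); I then put, for $z\notin E$,
\[
p(z):=\Ga\bigl(c_n\,\ep(d(z))\,d(z)\bigr),\qquad G(z):=T^{p(z)}_{\hat z}f(z),
\]
with $c_n$ a suitable dimensional constant and $\hat z$ a Borel near-point as in \Cref{meas}, mollify $G$ by the regularized distance exactly as in \Cref{prop:existenceofext}, and finally multiply by a cutoff $\chi\in C^\infty_c(\C^n)$ equal to $1$ near $E$ (needed for compact support). That $F\in C^1_c(\C^n)$ and $F|_U=f$ then follows as in the proof of \Cref{prop:existenceofext} — the decaying $\ep$ only strengthens the Taylor estimates used there.

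The main work is the $\ol\p$-estimate. Fix a small $\rh>0$ and a $z$ with $d(z)$ small enough that $c_n\ep(d(\ze))d(\ze)\le\rh d(z)$ for all $\ze\in B(z,d(z)/2)$ (possible since $\ep(t)\to0$). With $j+1:=\Ga(\rh d(z))$ this forces $j+1\le p(\ze)$ there, because $\Ga$ is decreasing. Split $G(\ze)-T^j_{\hat z}f(\ze)=\mathrm I+\mathrm{II}$ with $\mathrm I:=T^{p(\ze)}_{\hat\ze}f(\ze)-T^j_{\hat\ze}f(\ze)$ and $\mathrm{II}:=T^j_{\hat\ze}f(\ze)-T^j_{\hat z}f(\ze)$. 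Using $|\hat\ze-\ze|+|\hat\ze-\hat z|\lesssim d(z)$, \Cref{lem:taylorest} applied with a fixed $D$ proportional to $\rh$ gives $\mathrm{II}\lesssim_\rh(\rh d(z))^{j+1}m_{j+1}=h_{\seq m}(\rh d(z))$, where the equality holds because $j+1=\Ga(\rh d(z))$. For $\mathrm I$ one sums the Taylor tail $\sum_{j<|\be|\le p(\ze)}f^\be(\hat\ze)(\ze-\hat\ze)^\be/\be!$ as in \Cref{prop:existenceofext}; this is legitimate since $p(\ze)$ stays inside the range where $k\mapsto m_k\,t^k$ is decreasing for the relevant $t\asymp\ep(d(\ze))d(\ze)$, giving $\mathrm I\lesssim_n c_{\ep(d(\ze))}\,m_{j+1}\bigl(c_n\ep(d(\ze))d(\ze)\bigr)^{j+1}\le c_{\ep(d(\ze))}\,h_{\seq m}(\rh d(z))\lesssim d(z)^{-1}h_{\seq m}(\rh d(z))$, where I used $c_n\ep(d(\ze))d(\ze)\le\rh d(z)$, again $m_{j+1}(\rh d(z))^{j+1}=h_{\seq m}(\rh d(z))$, and $c_{\ep(d(\ze))}\le 1/d(\ze)\lesssim 1/d(z)$. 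Hence $|\ol\p F(z)|\lesssim d(z)^{-1}(\mathrm I+\mathrm{II})\lesssim_{n,\rh}d(z)^{-2}h_{\seq m}(\rh d(z))$. Now ``$(M_{k+1}/M_k)^{1/(k+1)}$ bounded'' gives $m_{k+1}\le C_2^{k+1}m_k$ (condition \eqref{ass2} for $\seq m$), whence $h_{\seq m}(t)/t\le C_2 h_{\seq m}(C_2 t)$ as in the proof of \Cref{prop:existenceofext}; iterating this twice absorbs the $d(z)^{-2}$ and leaves $|\ol\p F(z)|\lesssim_{n,\rh}h_{\seq m}(C_2^2\rh d(z))$ for all such $z$. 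For $z$ with $d(z)$ bounded below on $\supp F$ one combines boundedness of $\ol\p F$ with positivity of $h_{\seq m}$. Finally, since $h_{\seq m}$ is increasing, a bound $|\ol\p F(z)|\le C\,h_{\seq m}(c\rh d(z))$ valid for all small $\rh$ with a fixed structural constant $c$ immediately yields the asserted estimate for \emph{every} $\rh>0$.

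The place where the hypotheses really matter — and the step I expect to be the main obstacle — is the interplay in the estimate of $\mathrm I$: obtaining a \emph{single} $F$ forces the local approximation scale $\ep(d(z))$ to tend to $0$ near $E$, which makes $c_{\ep(d(z))}$ unbounded; this is survivable only because $\ep$ may be chosen \emph{after} $f$ (so $c_{\ep(d(z))}$ grows no faster than $1/d(z)$ and can be reabsorbed), because strong log-convexity collapses $\ul\Ga_{\seq m}$ and $\ol\Ga_{\seq m}$ into a single $\Ga$ governing both ``$j+1\le p(\ze)$'' and the monotonicity of $k\mapsto m_k\,t^k$, and because moderate growth converts the residual inverse powers of $d(z)$ into a harmless fixed dilation in the argument of $h_{\seq m}$. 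All the remaining ingredients (well-definedness of $p$, the mollification identities, the $C^1$-jet property) are routine and go exactly as in \Cref{prop:existenceofext}.
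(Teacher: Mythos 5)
Your proof is correct, but it is not the route the paper takes: the paper settles the converse in one line by descending to a smaller Roumieu class. Concretely, with $L_k:=\max\{\sup_{x\in U,|\al|\le k}|\p^\al f(x)|,k!\}$ one has $\seq L\lhd\seq M$, and \Cref{Komatsu} (Komatsu's lemma) produces a strongly log-convex $\dot{\seq M}$ with $\dot m_k^{1/k}\to\infty$ and $\seq L\le\dot{\seq M}\lhd\seq M$; hence $f\in\cB^{\{\dot{\seq M}\}}(U)$, the Roumieu construction (\Cref{prop:existenceofext}) yields a \emph{single} extension $F$ with $|\ol\p F(z)|\le A\,h_{\dot{\seq m}}(\rh_0\,d(z,\ol U))$, and $\dot{\seq M}\lhd\seq M$ translates into $h_{\dot{\seq m}}(\rh_0 t)\le C_\rh\,h_{\seq m}(\rh t)$ for every $\rh>0$, which is the asserted estimate. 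You instead rerun the Dynkin construction directly with a truncation order $p(z)=\ol\Ga_{\seq m}(c_n\ep(d(z))d(z))$ whose scale $\ep(d(z))$ decays near $\ol U$, chosen diagonally after $f$ so that the blow-up of the Beurling norms satisfies $c_{\ep(t)}\le 1/t$ and can be reabsorbed. Both arguments hinge on the same two hypotheses in the same way --- strong log-convexity to identify $\ul\Ga_{\seq m}$ with $\ol\Ga_{\seq m}$, and boundedness of $(M_{k+1}/M_k)^{1/(k+1)}$ to trade inverse powers of $d(z)$ for dilations inside $h_{\seq m}$ --- and your diagonal choice of $\ep$ is in effect a concrete substitute for the auxiliary sequence $\dot{\seq M}$ of Komatsu's lemma. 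The paper's route buys brevity and reuses the Roumieu theorem as a black box; yours is self-contained and makes transparent why one $F$ serves all $\rh$. Two points deserve a careful sentence in a written version: first, the existence of a continuous increasing $\ep$ with $\ep(t)\to0$ and $c_{\ep(t)}\le1/t$ (a routine diagonal selection, since $c_\ep<\infty$ for each $\ep$); second, the claim that \eqref{eq:key} and hence $F\in C^1$ survive the modification is slightly glibly stated --- the tail term now carries the unbounded factor $c_{\ep(d(\ze))}\lesssim d(z)^{-1}$, so for fixed $j$ one only gets $o(d(z)^{j})$ rather than $O(d(z)^{j+1})$ (the factor $\ep(d(\ze))^{j+1}\to0$ supplies the little-$o$), which is still exactly enough for \eqref{eq:key} with $|\al|\le j$.
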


\begin{proof}
  Use \cite[Lemma 6]{Komatsu79b} (or \Cref{Komatsu} below) and the Roumieu result.
\end{proof}

We do not know if a similar statement holds in the general case.

\section{Applications to classes defined by weight functions}

In this section we fully characterize \emph{when} the classes $\cB^{\{\om\}}$ and $\cB^{(\om)}$ 
admit a description by almost analytic extensions. It turns out that this feature is equivalent to several other 
pertinent properties of the classes. 

First we recall the description by associated weight matrices.

\subsection{Weight functions and the associated weight matrix} \label{weightfunction}

Two weight functions $\om$ and $\si$ are said to be \emph{equivalent} if $\om(t) = O(\si(t))$ and $\si(t) = O(\om(t))$ 
as $t \to \infty$. 
For each weight function $\om$ there is an equivalent weight function 
$\tilde \om$ such that $\om(t) = \tilde \om(t)$ for large $t>0$ 
and $\tilde \om |_{[0,1]} =0$. It is thus no restriction to assume that $\om |_{[0,1]} =0$ when necessary.

For weight functions $\om$ and $\si$ we have $\cB^{[\om]} \subseteq \cB^{[\si]}$ if and only if
$\si(t) = O(\om(t))$ as $t \to \infty$,
cf.\ \cite{Bjoerck66}, \cite{BMT90}, or \cite[Corollary 5.17]{RainerSchindl12}; 
in particular, $\om$ and $\si$ are equivalent if and only if 
$\cB^{[\om]} = \cB^{[\si]}$.

\begin{definition}[Associated weight matrix]
Following \cite[5.5]{RainerSchindl12} we 
associate with any weight function $\om$ a weight matrix $\fW = \{\seq W^x\}_{x>0}$ by setting 
\begin{equation*}
  W^x_k := \exp(\tfrac{1}{x}\vh^*(x k)), \quad k \in \N.
\end{equation*} 
Moreover, we define
\begin{equation*}
  \vt^x_k := \frac{W^x_k}{W^x_{k-1}}.   
\end{equation*}  
\end{definition}

\begin{lemma}[{\cite[Lemma 2.4]{RainerSchindl17}}] \label{lemma4} 
  We have:
  \begin{enumerate}
     \item Each $\seq W^x$ is a weight sequence (in the sense of \Cref{weights}).
     \item $\vt^x \le \vt^y$ if $x \le y$, which entails $\seq W^x \le \seq W^y$. \label{lemma4(2)}
     \item For all $x>0$ and all $j,k \in \N$,  $W^x_{j+k} \le W^{2x}_{j} W^{2x}_{k}$ and $w^x_{j+k} \le w^{2x}_{j} w^{2x}_{k}$.
     \label{eq:mW}
     \item For all $x>0$ and all $k \in \N_{\ge 2}$, $\vt^x_{2k} \le   \vt^{4x}_{k}$. 
     \item $\A \rh>0 \E H\ge 1 \A x >0 \E C \ge 1 \A k \in \N : \rh^k W^x_k \le C W^{Hx}_k$. \label{5.10}
     \item If $\om(t) = o(t)$ as $t \to \infty$ then $(w^x_k)^{1/k} \to \infty$ and $\vt^{x}_k/k \to \infty$ for all $x>0$.
   \end{enumerate} 
\end{lemma}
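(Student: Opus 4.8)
The plan is to prove Lemma \ref{lemma4} item by item, tracing each statement back to the convexity property \eqref{om4} of $\vh$ and the Young conjugate $\vh^*$. Recall $W^x_k = \exp(\tfrac1x \vh^*(xk))$ with $\vh^*(t) = \sup_{s\ge 0}(st - \vh(s))$; since $\vh$ is convex, $\vh^*$ is convex, increasing, and superadditive on $[0,\infty)$ (being a conjugate of a function vanishing at $0$), and $\vh^*(t)/t$ is increasing. These are the only analytic facts about $\vh^*$ I will need, and I would record them first.

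\textbf{Items (1)--(2).} For (1), set $\vt^x_k = W^x_k/W^x_{k-1} = \exp(\tfrac1x(\vh^*(xk)-\vh^*(x(k-1))))$; convexity of $\vh^*$ makes the difference $\vh^*(xk)-\vh^*(x(k-1))$ nondecreasing in $k$, so $\vt^x$ is increasing, i.e.\ $\seq W^x$ is log-convex, and $\vt^x_0$-normalization gives $W^x_0 = \exp(\tfrac1x\vh^*(0)) = 1$. That $(W^x_k)^{1/k}\to\infty$ follows because $\vh^*(t)/t\to\infty$ as $t\to\infty$ (a consequence of \eqref{om2}, $\om(t)=O(t)$, equivalently $\vh(s)=O(e^s)$, which forces $\vh^*$ to grow superlinearly); I will spell this out using $\tfrac1k\log W^x_k = \tfrac{1}{xk}\vh^*(xk)$. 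For (2), the map $x\mapsto \vh^*(xk)/x$; one checks $\tfrac1y\vh^*(yk) - \tfrac1x\vh^*(xk)\ge 0$ for $x\le y$ directly from the fact that $t\mapsto \vh^*(t)/t$ is increasing applied at $t=xk\le yk$, after rewriting $\tfrac1x\vh^*(xk) = k\cdot\tfrac{\vh^*(xk)}{xk}$. This gives $\seq W^x\le\seq W^y$; the sharper statement $\vt^x\le\vt^y$ needs that $t\mapsto \vh^*(xk)-\vh^*(x(k-1))$, divided by $x$, is monotone in $x$, which again follows from convexity of $\vh^*$ and a difference-quotient argument.

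\textbf{Items (3)--(5).} Item (3) is the submultiplicativity $W^x_{j+k}\le W^{2x}_j W^{2x}_k$: write the left side as $\exp(\tfrac1x\vh^*(x(j+k)))$ and use superadditivity of $\vh^*$ together with $\vh^*(x(j+k)) = \vh^*(2x\cdot\tfrac{j+k}{2})$ and monotonicity — more precisely, $\vh^*(xj+xk)\le \vh^*(2xj) + \vh^*(2xk)$ would need subadditivity, so instead I use that $\vh^*$ is increasing plus $xj+xk \le 2xj$ is false; the correct route is the standard estimate $\tfrac1x\vh^*(x(j+k)) \le \tfrac1{2x}\vh^*(2xj) + \tfrac1{2x}\vh^*(2xk)$, which is convexity of $t\mapsto\vh^*(xt)/x$ evaluated via $j+k = \tfrac12(2j) + \tfrac12(2k)$. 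The statement for $w^x_k = W^x_k/k!$ then follows since $(j+k)!\ge j!\,k!$. Item (4), $\vt^x_{2k}\le\vt^{4x}_k$, is proved the same way by writing both sides as exponentials of normalized second differences of $\vh^*$ and using convexity; this is \cite[Lemma 2.4]{RainerSchindl17} and I will simply invoke the cited argument. Item (5) says $\rh^k W^x_k\le C W^{Hx}_k$ for suitable $H$: since $\rh^k = \exp(k\log\rh)$, I need $k\log\rh + \tfrac1x\vh^*(xk)\le \tfrac{1}{Hx}\vh^*(Hxk) + \tfrac1x\log C$; this is where one uses that $\vh^*(t)/t\to\infty$, so that for $H$ large the term $\tfrac1{Hx}\vh^*(Hxk) - \tfrac1x\vh^*(xk)$ dominates $k\log\rh$; a careful choice of $H$ depending only on $\rh$ (via the growth of $\vh^*$) makes this work uniformly in $x$, which is the content of \cite[Lemma 5.9(5)]{RainerSchindl12} or the cited \cite{RainerSchindl17}.

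\textbf{Item (6) and the main obstacle.} Under the extra hypothesis $\om(t) = o(t)$, equivalently $\vh(s) = o(e^s)$, I must show $(w^x_k)^{1/k}\to\infty$ and $\vt^x_k/k\to\infty$. The first is $\tfrac1k\log w^x_k = \tfrac1{xk}\vh^*(xk) - \tfrac1k\log k!\to\infty$, i.e.\ $\tfrac1{xk}\vh^*(xk) - \log k + 1 + o(1)\to\infty$; this requires $\vh^*(t)/t$ to grow faster than $\log t$, which is exactly the dual form of $\om(t) = o(t)$ (one shows $\om(t)=o(t)$ iff $\vh^*(t)/t - \log t\to\infty$ by a Legendre-transform computation). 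The second, $\vt^x_k/k\to\infty$, is then deduced from $(w^x_k)^{1/k}\to\infty$ via the log-convexity of $\seq w^x$: Lemma \ref{lem:basicM}(4) gives precisely that $w^x_{k+1}/w^x_k = \vt^x_k/k\cdot$(shift) $\to\infty$ once $(w^x_k)^{1/k}\to\infty$, using that $\seq w^x$ is log-convex. I expect item (6) to be the delicate point: it is the only place where the finer asymptotic equivalence "$\om(t)=o(t)$ $\Leftrightarrow$ $\vh^*$ grows superlogarithmically after dividing by $t$" is needed, and getting this equivalence cleanly from the Young-conjugate relations (rather than hand-waving) is the real work; everything else is convexity bookkeeping. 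Since the lemma is quoted from \cite[Lemma 2.4]{RainerSchindl17}, in the write-up I would either reproduce these arguments in the three or four lines above or simply cite that reference, noting that all items are immediate consequences of the convexity of $\vh$ and standard properties of the Young conjugate.
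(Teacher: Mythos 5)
The paper gives no proof of this lemma at all --- it is quoted verbatim from \cite[Lemma 2.4]{RainerSchindl17} --- so the only comparison possible is with that reference, and your reconstruction via convexity of $\vh^*$ and chord--slope monotonicity is indeed the standard route taken there. Items (1)--(4) and (6) are essentially right: the key inequality for (3) is, as you eventually write, $\vh^*(x(j+k))\le\tfrac12\vh^*(2xj)+\tfrac12\vh^*(2xk)$ (your first instinct toward sub/superadditivity would not have worked); (4) is the chord--slope comparison of $\vh^*$ over $[x(2k-1),2xk]$ and $[4x(k-1),4xk]$, and the hypothesis $k\ge 2$ is exactly what makes $x(2k-1)\le 4x(k-1)$ so that both endpoints shift to the right; and for (6) your reduction of $\vt^x_k/k\to\infty$ to $(w^x_k)^{1/k}\to\infty$ via \Cref{lem:basicM}(4) is legitimate because $\seq W^x$ is log-convex. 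Two small points: $W^x_0=1$ requires the normalization $\om|_{[0,1]}=0$ (which the paper explicitly permits), and $(W^x_k)^{1/k}\to\infty$ already follows from $\vh$ being finite-valued (so that $\vh^*(t)/t\to\infty$); \eqref{om2} is not the relevant hypothesis there.

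The genuine flaw is your justification of item (5). The fact that $\vh^*(t)/t\to\infty$ does \emph{not} produce a single $H$ with $\tfrac{\vh^*(Ht)}{Ht}-\tfrac{\vh^*(t)}{t}\ge\log\rh$ uniformly in $t$: take $\vh^*(t)\sim t\log\log t$, which is convex with superlinear growth, yet $\tfrac{\vh^*(Ht)}{Ht}-\tfrac{\vh^*(t)}{t}\to 0$ as $t\to\infty$ for every fixed $H$, so $\rh^kW^x_k\le CW^{Hx}_k$ fails for $\rh>1$. What actually drives (5) is condition \eqref{om1}, $\om(2t)=O(\om(t))$: it dualizes (via $\vh(s+1)\le L\vh(s)+L$) to $\vh^*(Lt)\ge L\vh^*(t)+Lt-L$, hence $\tfrac1{Lx}\vh^*(Lxk)\ge\tfrac1x\vh^*(xk)+k-\tfrac1x$, i.e.\ $e^kW^x_k\le e^{1/x}W^{Lx}_k$, and iterating gives arbitrary $\rh$ with $H=L^m$. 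Since you ultimately fall back on citing \cite{RainerSchindl12} and \cite{RainerSchindl17} this does not sink the write-up, but the mechanism you describe for (5) is the wrong one and should be replaced by the dual form of \eqref{om1}.
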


\begin{theorem}[{\cite[Corollaries 5.8 and 5.15]{RainerSchindl12}}] \label{representation}
  Let $\om$ be a weight function and let $\fW = \{\seq W^x\}_{x>0}$ be the associated weight matrix. 
  Then, as locally convex spaces,
  \[
    \cB^{[\om]}(U) = \cB^{[\fW]}(U) \quad \text{ and } \quad \cE^{[\om]}(U) = \cE^{[\fW]}(U).
  \]
  We have $\cB^{[\om]}(U) = \cB^{[\seq W^x]}(U)$ (or $\cE^{[\om]}(U) = \cE^{[\seq W^x]}(U)$) for all $x>0$ if and only if 
  \begin{align}
     \E H\ge 1 \A t\ge 0 : 
      2\om(t) \le \om(Ht) + H.   \label{om6}
  \end{align} 
  Moreover, \eqref{om6} holds if and only if some (equivalently each) $\seq W^x$ has moderate growth.  
\end{theorem}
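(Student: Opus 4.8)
The final statement is Theorem~\ref{representation}, which I attribute to \cite[Corollaries 5.8 and 5.15]{RainerSchindl12}. Since this is a cited result, I will not reprove it from scratch, but I will indicate how a proof goes so the reader can see the structure.

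\textbf{Plan.} The first assertion --- that $\cB^{[\om]}(U) = \cB^{[\fW]}(U)$ and $\cE^{[\om]}(U) = \cE^{[\fW]}(U)$ as locally convex spaces --- should be proved by unwinding the definitions of both sides and showing the seminorm systems are mutually dominated. On the one hand, for fixed $\rh>0$ the weight in $\|\cdot\|^\om_\rh$ is $\exp(-\tfrac1\rh\vh^*(\rh|\al|))$, which is \emph{exactly} $1/W^\rh_{|\al|}$ by the definition $W^x_k = \exp(\tfrac1x\vh^*(xk))$; hence $\cB^\om_\rh(U) = \cB^{\seq W^\rh}_1(U)$ isometrically. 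Taking the inductive limit over $\rh\in\N$ on the Roumieu side and the projective limit over $1/\rh$ on the Beurling side, and observing that $\{\seq W^x\}_{x>0}$ is cofinal/coinitial in the directed families involved (using \Cref{lemma4}\eqref{lemma4(2)} for monotonicity in $x$), gives $\cB^{[\om]}(U)=\cB^{[\fW]}(U)$; the local statement follows by taking the projective limit over $V\Subset U$.

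\textbf{The equivalence with \eqref{om6}.} For the second part I would argue as follows. The condition $\cB^{[\om]}(U) = \cB^{[\seq W^x]}(U)$ for all $x>0$ means precisely that the one-parameter family $\{\seq W^x\}$ collapses, up to equivalence of the associated classes, to any single member; by the inclusion relations for Denjoy--Carleman classes recalled in \S2.3 this is equivalent to: for all $x,y>0$ the sequences $\seq W^x$ and $\seq W^y$ are equivalent, i.e.\ $(W^y_k)^{1/k}/(W^x_k)^{1/k}$ is bounded above and below. Since $\vh^*$ is the Young conjugate of the convex function $\vh=\om\circ\exp$, one has the explicit identity $\tfrac1x\vh^*(xk) = \sup_{s\ge0}(ks - \tfrac1x\vh(sx)) = \sup_{t>0}(k\log t - \tfrac1x\om(t))$, so $(W^x_k)^{1/k} = \sup_{t>0} t\,\exp(-\tfrac1{xk}\om(t))$. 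Comparing the cases $x$ and $Hx$ and passing between the ``$\sup$ over $t$'' formulation and the original weight $\om$, the boundedness of the ratio translates, via the standard duality between growth conditions on $\om$ and on $W^x$, into an estimate of the form $2\om(t)\le\om(Ht)+H$, which is \eqref{om6}. The last clause --- that \eqref{om6} is equivalent to moderate growth $W^x_{j+k}\le C^{j+k}W^x_jW^x_k$ of some (equivalently each) $\seq W^x$ --- follows by the same duality: moderate growth of $\seq W^x$ is the sequence-side avatar of the $\om(2t)\lesssim\om(t)+$const-type doubling, and \Cref{lemma4}\eqref{eq:mW} already gives the ``mixed'' inequality $W^x_{j+k}\le W^{2x}_jW^{2x}_k$ for free, so the point is exactly to absorb the jump from $2x$ back to $x$, which is \eqref{om6}.

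\textbf{Main obstacle.} The genuinely delicate step is the translation between the growth condition \eqref{om6} on the weight function and the quantitative comparison of the sequences $\seq W^x$ for different $x$: one must carefully run the Legendre--Young duality in both directions, keeping track of the additive constants (the ``$+H$'' in \eqref{om6}), and use convexity of $\vh$ essential\-ly. However, since this is precisely \cite[Corollaries 5.8 and 5.15]{RainerSchindl12}, I will simply cite it and refer the reader there for the detailed duality computation, recording here only the interface we need.
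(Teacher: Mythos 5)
The paper offers no proof of \Cref{representation}: it is imported verbatim from \cite[Corollaries 5.8 and 5.15]{RainerSchindl12}, so your decision to cite rather than reprove is exactly what the paper does, and your structural sketch is consistent with how the cited result is established. Two small corrections to the sketch, should you keep it: the identification of $\on{ind}_{\rh\in\N}\cB^{\seq W^\rh}_1(U)$ with $\on{ind}_{x,\rh}\cB^{\seq W^x}_\rh(U)$ (and its projective analogue) does not follow from the monotonicity in \Cref{lemma4}\eqref{lemma4(2)} alone --- the workhorse is \Cref{lemma4}\eqref{5.10}, i.e.\ $\rh^k W^x_k\le C\,W^{Hx}_k$, which lets you absorb the geometric factor $\rh^{|\al|}$ into a change of the parameter $x$; and the intermediate Young-conjugate identity should read $\tfrac1x\vh^*(xk)=\sup_{s\ge0}\bigl(ks-\tfrac1x\vh(s)\bigr)$ rather than involving $\vh(sx)$, although the formula $(W^x_k)^{1/k}=\sup_{t>0}t\exp\bigl(-\om(t)/(xk)\bigr)$ you derive from it is correct.
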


\begin{remark}\label{rem:Comparision}
  Let us emphasize that the fact that $\cE^{[\om]} = \cE^{[\seq M]}$ for some weight sequence $\seq M$ 
  if and only if $\om$ satisfies \eqref{om6} is due to \cite{BMM07}.
\end{remark}

\subsection{Concave weight functions}

We will see that the classes $\cB^{[\om]}$ that admit description by almost analytic extension 
are precisely those determined by a concave weight function $\om$. 
The proof depends on the following result obtained in \cite{Rainer:aa}.

\begin{proposition} \label{prop:strongmatrix}
  Let $\om$ be a weight function satisfying $\om(t) = o(t)$ as $t \to \infty$ which is equivalent 
  to a concave weight function. For each $x>0$ there exist constants $A,B,C >0$ such that 
  \begin{equation} \label{eq:strong}
      A^{-1} w^{x/B}_k \le \ul w^x_k \le w^x_k \le C^k \ul w^{Bx}_k \quad \text{ for all } k \in \N. 
  \end{equation} 
  The weight matrix
  $\fS := \{\seq S^x = (k! \ul w^x_k)_k : x >0\}$ is regular.
\end{proposition}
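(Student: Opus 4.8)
The statement has two parts: the chain of inequalities \eqref{eq:strong} relating $w^x_k$, its log-convex minorant $\ul w^x_k$, and a rescaled version $w^{x/B}_k$; and the regularity of the weight matrix $\fS$. The plan is to treat these in turn, citing \cite{Rainer:aa} for the heart of \eqref{eq:strong} but carrying out the reduction to that reference and the verification of the four conditions in \Cref{def:Rregular} directly.

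\textbf{Step 1: the inequalities \eqref{eq:strong}.} The two outer inequalities $\ul w^x_k \le w^x_k$ hold by definition of the log-convex minorant (the minorant is pointwise below the sequence), so only the first and last inequalities carry content. The key input is the description of $\cB^{[\om]}$ by the associated weight matrix $\fW$ (\Cref{representation}) together with the concavity hypothesis. Concavity of (a weight function equivalent to) $\om$ is what forces the sequences $\seq W^x$ to be ``almost strongly log-convex'', quantitatively: passing from $\seq W^x$ to its log-convex minorant costs only a geometric factor $C^k$ and a rescaling of the parameter $x$. I would extract this from \cite{Rainer:aa}, stating precisely which lemma there gives \eqref{eq:strong}; the role of the hypothesis $\om(t) = o(t)$ is to ensure (via \Cref{lemma4}(6)) that $(w^x_k)^{1/k} \to \infty$, so that $\ul{\seq W}^x$ is again a genuine weight sequence with $\ul w^x_k{}^{1/k} \to \infty$ — this is needed for $\seq S^x$ to qualify as a weight sequence in the sense of \Cref{weights}.

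\textbf{Step 2: $\fS$ is a weight matrix.} Each $\seq S^x = (k!\,\ul w^x_k)_k$ is log-convex because $\ul{\seq w}^x$ is log-convex by construction, hence $\seq S^x$ is a weight sequence once $\ul w^x_k{}^{1/k}\to\infty$, which follows from Step 1 together with $w^x_k{}^{1/k}\to\infty$. Total ordering of the family $\{\seq S^x\}_{x>0}$ follows from $\vt^x \le \vt^y$ for $x\le y$ (\Cref{lemma4}(2)): this gives $\seq W^x \le \seq W^y$, and monotonicity of the log-convex minorant in the sequence gives $\ul{\seq W}^x \le \ul{\seq W}^y$, hence $\seq S^x \le \seq S^y$. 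Also $\cB^{[\fS]} = \cB^{[\fW]} = \cB^{[\om]}$: by \eqref{eq:strong}, $\seq S^{x/B'} \preceq \seq W^x \preceq \seq S^{B'x}$ for a suitable $B'$ (the $C^k$ factor is absorbed since $\preceq$ only compares $k$-th roots up to a constant, and the rescaling moves between members of the matrix), so $\fS$ and $\fW$ are equivalent.

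\textbf{Step 3: regularity of $\fS$.} I must verify \Cref{def:Rregular}(0)–(4). Condition (0), $\ul w^x_k{}^{1/k}\to\infty$, is Step 1. For the derivation-closedness conditions (1) and (3): since each $\seq S^x$ is strongly log-convex ($\mu^{S^x}_k/k = \vt^{x,\,\ul{}}_k/k$ is increasing, being the ratio sequence of a log-convex $\ul{\seq w}^x$ after multiplication), the pair conditions $M_{j+1} \le C^{j+1} N_j$ etc. reduce to comparing consecutive members $\seq S^x$, $\seq S^{Hx}$; here I would invoke \Cref{lemma4}(3), $w^x_{j+k} \le w^{2x}_j w^{2x}_k$, which transfers to the minorants and yields the moderate-growth-type estimates across the matrix. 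The real work is conditions (2)/(4), the $\ol\Ga$–$\ul\Ga$ comparison. By \Cref{rem:regular}, since $\fS$ is (will be shown) semiregular, it suffices to check the gradually-stronger condition there, namely $\frac{\mu^{S^x}_j}{j} \le C\frac{\mu^{S^y}_k}{k}$ for $j\le k$ and suitable $y = y(x)$; and because $\seq S^x$ is strongly log-convex, $\mu^{S^x}_j/j$ is \emph{increasing} in $j$, so this is immediate with $y=x$, $C=1$ — one gets $\ul\Ga_{\seq s^x}(Ct)\le\ul\Ga_{\seq s^x}(t)$ trivially and \Cref{lem:m1} does the rest. Both the Roumieu and Beurling versions then follow symmetrically.

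\textbf{Main obstacle.} The genuinely hard part is \eqref{eq:strong} itself — i.e., quantifying how concavity of $\om$ controls the gap between $\seq W^x$ and its log-convex minorant — but that is exactly what \cite{Rainer:aa} supplies, so within this paper the main task is the bookkeeping of Step 3: keeping track of how each regularity condition translates through the rescaling $x \mapsto y(x)$ in the matrix and through the geometric factors $C^k$, and confirming that strong log-convexity of the $\seq S^x$ makes conditions (1)–(4) degenerate to essentially trivial comparisons. I expect condition (2)/(4) to look like it needs \Cref{lem:m1} in full strength, but for strongly log-convex sequences within a single member it collapses; the only nontrivial comparisons are \emph{across} members, handled by \Cref{lemma4}(3).
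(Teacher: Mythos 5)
Your proposal is correct and follows essentially the same route as the paper: \eqref{eq:strong} is quoted from \cite{Rainer:aa}, derivation-closedness of $\fS$ is obtained by transferring the corresponding property of $\fW$ to the log-convex minorants (the paper makes this transfer explicit via the identity $\ul{(C^j w^y_j)_j} = (C^j \ul w^y_j)_j$), and conditions (2)/(4) of \Cref{def:Rregular} collapse because each $\seq S^x$ is strongly log-convex. The only cosmetic difference is that the paper gets the $\ol\Ga$--$\ul\Ga$ comparison directly from \Cref{basic}(4) ($\ol\Ga_{\ul{\seq w}^x}=\ul\Ga_{\ul{\seq w}^x}$ for log-convex sequences) rather than routing through \Cref{rem:regular}(1) and \Cref{lem:m1}, which amounts to the same thing.
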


\begin{proof}   
  Only the regularity of $\fS$ was not yet observed in \cite{Rainer:aa}. 
  Notice that $w^x_{j+1} \le C^j w^y_j$ for all $j$ implies  
  $\ul w^x_{j+1} \le C^j \ul w^y_j$ for all $j$  
  which is clear by the properties of the log-convex minorant, since
  $\om_{(C^j w^y_j)_j}(t) = \om_{\seq w^y}(t/C)$ and hence
   $\ul {(C^j  w^y_j)_j} = (C^j \ul w^y_j)_j$.   
  Since $\ul {\seq w}^x$ is log-convex, $\ol \Ga_{\ul {\seq w}^x} = \ul \Ga_{\ul {\seq w}^x}$.  
  Evidently, $(\ul w^x_k)^{1/k} \to \infty$ for all $x>0$, by \Cref{lemma4} and \eqref{eq:strong}.  
\end{proof}

\begin{corollary} \label{cor:strongmatrix}
   Let $\om$ be a weight function satisfying $\om(t) = o(t)$ as $t \to \infty$.
   The weight matrix $\fW$ associated with $\om$
   is always semiregular. 
   If additionally $\om$ is equivalent to a concave weight function, then $\fW$ is equivalent to a regular 
   weight matrix.    
\end{corollary}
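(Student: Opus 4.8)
\textbf{Proof plan for \Cref{cor:strongmatrix}.}

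The plan is to deduce both assertions from \Cref{lemma4} and \Cref{prop:strongmatrix}, so the work is essentially bookkeeping against the definition of (semi)regularity (\Cref{def:Rregular}). First I would establish semiregularity of $\fW$. Condition (0) is \Cref{lemma4}(6): the hypothesis $\om(t) = o(t)$ gives $(w^x_k)^{1/k} \to \infty$ for every $x>0$. For condition (1), given $\seq W^x \in \fW$ I take $\seq N := \seq W^{2x}$; by \Cref{lemma4}\eqref{eq:mW} we have $W^x_{j+1} \le W^x_j W^{2x}_1 \le W^{2x}_j W^{2x}_1$, and since $W^{2x}_1$ is a fixed constant we get $W^x_{j+1} \le C^{j+1} W^{2x}_j$ for a suitable $C\ge 1$. (Alternatively one uses \Cref{lemma4}\eqref{5.10} applied to $\seq W^{2x}$ after a trivial shift of index.) For condition (3), given $\seq W^x$ I take $\seq N := \seq W^{x/2}$; using \Cref{lemma4}\eqref{eq:mW} in the form $W^{x/2}_{j+1} \le W^{x/2}_j W^{x/2}_1$ and then the monotonicity $\seq W^{x/2} \le \seq W^x$ from \Cref{lemma4}\eqref{lemma4(2)}, we obtain $W^{x/2}_{j+1} \le C^{j} W^x_j$ for a suitable constant. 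Hence $\fW$ is both R- and B-semiregular, i.e.\ semiregular.

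Next, assume in addition that $\om$ is equivalent to a concave weight function. Then \Cref{prop:strongmatrix} applies: it furnishes the weight matrix $\fS = \{\seq S^x = (k!\,\ul w^x_k)_k : x>0\}$, asserts that $\fS$ is regular, and provides the two-sided comparison \eqref{eq:strong}. It remains to check that $\fS$ is equivalent to $\fW$ in the sense of \Cref{sec:spaces} (i.e.\ both R- and B-equivalent). From \eqref{eq:strong} we read off $\ul w^x_k \le w^x_k$, which gives $S^x_k \le W^x_k$, and $w^x_k \le C^k \ul w^{Bx}_k$, which gives $W^x_k \le C^k S^{Bx}_k$; taking $k$-th roots, $(S^x_k)^{1/k} \le (W^x_k)^{1/k} \le C\,(S^{Bx}_k)^{1/k}$. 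This says: for every $x$ there is $y=Bx$ with $\seq W^x \preceq \seq S^y$ and $\seq S^x \preceq \seq W^x$, which yields $\fW\{\preceq\}\fS\{\preceq\}\fW$, i.e.\ R-equivalence. For B-equivalence one argues symmetrically, now using the left inequality $A^{-1} w^{x/B}_k \le \ul w^x_k$: it gives $W^{x/B}_k \le A\, S^x_k$, hence $(W^{x/B}_k)^{1/k} \le A^{1/k}(S^x_k)^{1/k}$, so for every $x$ there is $z=x/B$ with $\seq W^z \preceq \seq S^x$; combined with $\seq S^x \preceq \seq W^x$ this gives $\fS(\preceq)\fW(\preceq)\fS$. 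Therefore $\fW$ and $\fS$ are equivalent (in both senses), and since $\fS$ is regular, $\fW$ is equivalent to a regular weight matrix, as claimed.

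There is essentially no serious obstacle here: the first part is a direct unwinding of \Cref{lemma4}, and the second part is a formal consequence of the quantitative comparison \eqref{eq:strong} supplied by \Cref{prop:strongmatrix}. The only point requiring a little care is matching up the one-sided and two-sided estimates in \eqref{eq:strong} with the correct flavour ($\{\preceq\}$ versus $(\preceq)$) of matrix comparison, so that one genuinely obtains \emph{both} R- and B-equivalence and hence "equivalent to a regular weight matrix" and not merely "R-equivalent" or "B-equivalent" to one. I would state explicitly which inequality of \eqref{eq:strong} is used for which direction, as above, to make this transparent.
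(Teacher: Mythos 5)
Your proof is correct and follows exactly the route the paper intends: the corollary is stated without proof as an immediate consequence of \Cref{lemma4} (which gives conditions (0), (1) and (3) of \Cref{def:Rregular} for $\fW$) and of \Cref{prop:strongmatrix} together with \eqref{eq:strong} (which gives both R- and B-equivalence of $\fW$ with the regular matrix $\fS$), and this is precisely what you spell out. The only cosmetic point is that in checking \ref{def:Rregular}(3) your bound $W^{x/2}_{j+1}\le W^{x}_{1}W^{x}_{j}$ matches the stated form $N_{j+1}\le C^{j}M_{j}$ only after absorbing the constant $W^{x}_{1}$ (i.e.\ reading the condition up to a fixed multiplicative constant, as the paper implicitly does), which changes nothing of substance.
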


We will now prove a version of almost analytic extension in the Beurling case $\cB^{(\om)}$ for \emph{strong} 
weight functions $\om$ 
which is stronger than provided by the general \Cref{thm:Bchar}. 
Recall that a weight function $\om$ is called \emph{strong} if 
\begin{equation} \label{eq:strongweight}
    \E C>0 \A t>0 :  \int_1^\infty \frac{\om(tu)}{u^2} \, du \le C\om(t) + C. 
\end{equation}
Evidently, a strong weight function $\om$ is non-quasianalytic.
In fact, \eqref{eq:strongweight} is equivalent to the validity of the Whitney extension theorem in the classes $\cB^{[\om]}$; 
see \cite{BBMT91}.
Moreover, a strong weight function $\om$ is equivalent to a concave weight function, see \cite[Proposition 1.3]{MeiseTaylor88}, 
and satisfies $\om(t) = o(t)$ 
as $t \to \infty$, see \cite[Corollary 1.4]{MeiseTaylor88}; cf.\ also \cite{BBMT91} and \cite[Section 3.5]{RainerSchindl17}.

This stronger results depends on \cite[Lemma 4.4]{BBMT91} which should be compared with 
\Cref{lem:concavedescend} and \Cref{rem:notconvex} below.

\begin{theorem} \label{thm:strongerconcave}
   Let $\om$ be a strong weight function 
   and let $\fW$ be the associated weight matrix.  
   Let $U \subseteq \R^n$ be a bounded quasiconvex domain. 
   Then $f \in \cB^{(\om)}(U)$ if and only if $f$ admits an extension $F \in C^1_c(\C^n)$ such that
   \begin{equation} \label{eq:strongerconcave}
   	\forall \seq M \in \fW \A \rh>0 \E C \ge 1 \A z \in \C^n : |\ol \p F(z)| \le C h_{\seq m}(\rh d(z,\ol U)).
   \end{equation}
\end{theorem}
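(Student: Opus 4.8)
The plan is to upgrade \Cref{thm:Bchar} by making the extension $F$ independent of the chosen $\rh$, imitating the scheme used in the single-sequence case of the preceding subsection but now exploiting the strongness of $\om$. The ``only if'' direction is the substantial one; the converse follows verbatim from \Cref{prop:restriction} applied to each $\seq W^x \in \fW$, since \eqref{eq:strongerconcave} provides, for fixed $\seq M = (k!\,m_k) \in \fW$ and every $\rh$, an $(h_{\seq m},\rh)$-almost analytic extension, whence $f \in \cB^{(\seq W^x)}(U)$ for all $x$, i.e. $f \in \cB^{(\om)}(U)$.

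For the forward direction, suppose $f \in \cB^{(\om)}(U) = \cB^{(\fW)}(U)$. First I would record, using \Cref{prop:whitext} and \Cref{lem:taylorest}, that for every $x>0$ and every $\rh>0$ the Whitney ultrajet of $f$ on $\ol U$ satisfies the growth estimates \eqref{eq:growth1} and \eqref{eq:growth2} relative to $\seq W^x$ with constant $C_0$ proportional to $\rh$. The key analytic input is the substitute for \cite[Lemma 4.4]{BBMT91}: since $\om$ is strong, one can build \emph{a single} auxiliary weight sequence (or a cofinal family) comparing favorably with the whole matrix $\fW$ simultaneously — concretely, using \eqref{eq:strongweight} one produces for each $x$ a sequence $\seq N^x$ with $\seq W^x \lhd \seq N^x$, with $\seq N^x$ still satisfying the counting-function comparison \eqref{eq:countcomp} against a suitable $\seq W^{x'}$, and crucially such that the extension constructed from $\seq N^x$ dominates the one needed for $\seq W^x$ at \emph{every} $\rh$. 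The point of strongness is exactly that the integral condition lets the ``loss'' incurred in \Cref{prop:existenceofext} (the factors $12nC_0C_1$ and the passage $h_{\seq n}(t)/t \le C_2 h_{\seq s}(t)$) be absorbed uniformly, rather than sequence-by-sequence.

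Concretely the construction would go: fix once and for all the Borel map $z\mapsto \hat z$ from \Cref{meas} and define, as in \Cref{prop:existenceofext}, $G(z) := T^{p(z)}_{\hat z} f(z)$ and its mollification $F$, but now with the order $p(z)$ chosen via a \emph{fixed} cofinal sequence extracted from $\fW$ — say $p(z) := \ul\Ga_{\seq w^{x_j}}(c\, d(z))$ on dyadic shells where $d(z)$ lies in a range governed by $x_j \to 0$, the $x_j$ being those furnished by the strong-weight estimate. Then $F$ is a single function in $C^1_c(\C^n)$ (indeed $C^\infty_c$, by claim (2) of \Cref{prop:existenceofext}, whose proof is insensitive to this modification), and for each individual $\seq M = (k!\,m_k) \in \fW$ and each $\rh>0$ the estimate $|\ol\p F(z)| \le C h_{\seq m}(\rh d(z,\ol U))$ is extracted from the shell estimates exactly as in the proof of \Cref{prop:existenceofext}, because on each shell the relevant inequality $h_{\seq n}(t) \le C_2\, t\, h_{\seq s}(t)$ is available with constants controlled by the strongness of $\om$ through \Cref{lemma4}\eqref{5.10} and \eqref{eq:strongweight}.

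The main obstacle is precisely the uniformity: in \Cref{prop:existenceofext} the constant $A$ and the dilation $12nC_0C_1$ depend on the triple $(\seq M,\seq N,\seq S)$, so a naive application gives a different extension for each $\rh$, which is what we want to avoid. Overcoming this requires the analogue of \cite[Lemma 4.4]{BBMT91} (cf.\ \Cref{lem:concavedescend} and \Cref{rem:notconvex}) to produce a \emph{single} system of comparison sequences, indexed by a fixed null sequence $x_j\downarrow 0$, that is cofinal in $\fW$ for the Beurling order \emph{and} along which the counting-function comparisons \eqref{eq:countcomp}, \eqref{ass2} hold with a uniformly controlled loss; strongness of $\om$ is exactly the hypothesis that makes such a system exist. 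Once that lemma is in hand, assembling $F$ on dyadic shells and reading off \eqref{eq:strongerconcave} is routine bookkeeping identical in spirit to the computations already carried out in \Cref{prop:existenceofext} and in the single-sequence \Cref{thm:strongerconcave}-precursor above.
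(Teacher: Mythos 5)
Your converse direction is fine, and you have correctly located the external ingredient (\cite[Lemma 4.4]{BBMT91}) and the role of strongness. But the forward direction as you sketch it has a genuine gap: the entire construction you propose --- a shell-dependent truncation order $p(z)$ built from a ``single system of comparison sequences'' cofinal in $\fW$, glued over dyadic shells --- is never actually carried out, and it is not routine. In particular, claim (2) of \Cref{prop:existenceofext} (that $F$ is smooth with the correct jet on $E$) compares $T^j_{\hat z}f$ with $T^{p(\ze)}_{\hat\ze}f$ using \Cref{basic}\eqref{eq:ulGa3} for a \emph{fixed} pair $(\seq m,\seq n)$; once $p$ jumps between different $\ul\Ga_{\seq w^{x_j}}$ across shells, these comparisons and the boundary matching have to be re-proved, and you give no argument. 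You also have the comparison pointing the wrong way: to read off \eqref{eq:strongerconcave} for \emph{all} $\seq M\in\fW$ and \emph{all} $\rh$ from a single extension, what you need is a single bound $|\ol\p F(z)|\le C h_{\seq s}(\rh_0 d(z,\ol U))$ with $\seq S\lhd \seq W^x$ for every $x$ (then $h_{\seq s}(\rh_0 t)\le C_{x,\rh}h_{\seq w^x}(\rh t)$ for free); producing sequences $\seq N^x$ with $\seq W^x\lhd\seq N^x$ does not deliver this.

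The paper's proof avoids any modification of \Cref{prop:existenceofext}. Starting from $f\in\cB^{(\om)}(U)$, set $L_k:=\max\{\sup_{x\in U,|\al|\le k}|\p^\al f(x)|,k!\}$ and run the argument of \cite[Theorem 4.5]{BBMT91} to get a convex majorant $h_0$ of $k\mapsto\log L_k$ with $\om(t)=o(h(t))$ for $h:=h_0^*(\max\{0,\log t\})$; then \cite[Lemma 4.4]{BBMT91} (this is where strongness enters) interpolates a strong weight function $\si$ with $\om(t)=o(\si(t))$ and $\si(t)=o(h(t))$, whence $f\in\cB^{\{\si\}}(U)$. Since $\si$ is equivalent to a concave weight, \Cref{thm:Rchar} applies to the regular matrix $\fS$ for $\si$ and yields \emph{one} extension $F$ with $|\ol\p F(z)|\le C h_{\seq s}(\rh_0 d(z,\ol U))$ for a single $\seq S\in\fS$ and single $\rh_0$; because $\om=o(\si)$ forces $\seq S\{\lhd)\fW$, this single estimate already implies \eqref{eq:strongerconcave}. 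So the correct strategy is to descend to a strictly smaller \emph{Roumieu} class containing $f$ and invoke the Roumieu theorem once, rather than to make the Beurling construction uniform.
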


\begin{proof}
  If $f$ admits an extension satisfying \eqref{eq:strongerconcave} then $f \in \cB^{(\om)}$, 
  by \Cref{prop:restriction} and \Cref{representation}.
  Conversely, let $f \in \cB^{(\om)}(U)$. Set 
  \[
  	L_k := \max\Big\{ \sup_{x \in U, |\al| \le k} |\p^\al f(x)|, k!\Big\}
  \] 
  Let us proceed as in the proof of \cite[Theorem 4.5]{BBMT91}:
  Define $g: [0,\infty) \to \R$ by 
  \[
  	g(t) := \log L_k, \quad \text{ for } k \le t < k+1. 
  \]
  The arguments in \cite[Theorem 4.5]{BBMT91} show that there exists a convex function
  $h_0 : [0,\infty) \to [0,\infty)$ such that $g\le h_0$ 
  and $h:=h_0^*(\max\{0,\log t\})$ satisfies $\om(t) = o(h(t))$ as $t \to \infty$. 
  We may apply \cite[Lemma 4.4]{BBMT91} which yields a strong weight function $\si$ such that 
  $\om(t) = o(\si(t))$ and $\si(t) = o(h(t))$. 
  Hence $g\le h_0 = h_0^{**} \le (\si(e^t))^* + A$ for some constant $A>0$, whence 
  $f \in \cB^{\{\si\}}(U)$. 
  Since $\si$ is equivalent to a concave weight function, 
  there is a regular weight matrix $\fS$ such that $\cB^{[\si]} = \cB^{[\fS]}$. 
  \Cref{thm:Rchar} implies that there is an extension $F \in C^1_c(\C^n)$ of $f$ and some $\seq S \in \fS$ and $C,\rh>0$ 
  such that
  \[
  	|\ol \p F(z)| \le C h_{\seq s}(\rh d(z,\ol U)), \quad z \in \C^n.
  \]
  Since $\om(t) = o(\si(t))$ as $t \to \infty$ and hence $\seq S \{\lhd) \fW$, cf.\ \cite[Lemma 5.16]{RainerSchindl12}, 
  \eqref{eq:strongerconcave} follows. 
\end{proof}

\subsection{A characterization theorem}

The next theorem characterizes \emph{when}
the classes $\cB^{[\om]}$ 
admit a description by almost analytic extensions.

\begin{theorem} \label{thm:omegachar}
Let $\om$ be a weight function satisfying $\om(t) = o(t)$ as $t \to \infty$. The following are equivalent.
\begin{enumerate}
  \item \label{char9} $\cB^{\{\om\}}$ can be described by almost analytic extensions, i.e., there is an R-regular 
  weight matrix $\fS$ such that  
  $f \in \cB^{\{\om\}}(U)$ if and only if $f$ is $\{\fS\}$-almost analytically extendable, for every bounded quasiconvex 
  domain $U \subseteq \R^n$.
  \item \label{char10} $\cB^{(\om)}$ can be described by almost analytic extensions, i.e., there is a B-regular 
  weight matrix $\fS$ such that  
  $f \in \cB^{(\om)}(U)$ if and only if $f$ is $(\fS)$-almost analytically extendable, for every bounded quasiconvex 
  domain $U \subseteq \R^n$. 
  \item \label{char3} $\cB^{\{\om\}}$ is stable under composition.
  \item \label{char4} $\cB^{(\om)}$ is stable under composition.  
  \item \label{char1} $\om$ is equivalent to a concave weight function. 
  \item \label{char2} $\exists C>0 \E t_0 >0 \A \la \ge 1 \A t \ge t_0 : \om(\la t)\le C \la \, \om(t)$.
  \item \label{char5} There is a weight matrix $\fS$ consisting of strongly log-convex weight sequences such that
  $\cB^{\{\om\}} = \cB^{\{\fS\}}$.
  \item \label{char6} There is a weight matrix $\fS$ consisting of strongly log-convex weight sequences such that
  $\cB^{(\om)} = \cB^{(\fS)}$. 
  \item \label{char11} There is a weight matrix $\fM$  satisfying
  $\forall \seq M \in \fM \E \seq N \in \fM \E C\ge 1 \A 1 \le j \le k : \mu_j/j \le C \nu_k/k$
  and such that
  $\cB^{\{\om\}} = \cB^{\{\fM\}}$. (Recall that $\mu_k:=M_k/M_{k-1}$ and $\nu_k:=N_k/N_{k-1}$.) 
  \item \label{char12} There is a weight matrix $\fM$  satisfying
  $\forall \seq N \in \fM \E \seq M \in \fM \E C\ge 1 \A 1 \le j \le k : \mu_j/j \le C \nu_k/k$
  and such that
  $\cB^{(\om)} = \cB^{(\fM)}$.
  \item \label{char7} There is an R-regular weight matrix $\fM$ such that $\cB^{\{\om\}} = \cB^{\{\fM\}}$.
  \item \label{char8} There is a B-regular weight matrix $\fM$ such that $\cB^{(\om)} = \cB^{(\fM)}$.
\end{enumerate} 
If $\om$ is a strong weight function, then 
the extension of $f \in \cB^{(\om)}(U)$ in \eqref{char10} may be taken independent of $\seq S \in \fS$ and $\rh>0$, 
  as in \Cref{thm:strongerconcave}.
\end{theorem}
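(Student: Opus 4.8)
The plan is to prove \Cref{thm:omegachar} by establishing a cycle (or rather a web) of implications, centered around the equivalence of \eqref{char1} and \eqref{char2} on the one hand and the existence of suitable weight matrices on the other. The whole strategy is to translate everything about $\om$ into statements about the associated weight matrix $\fW$, and to move between the ``strongly log-convex / small $\mu_j/j$'' conditions using the log-convex minorant. First I would deal with the purely weight-function side: \eqref{char1} $\Leftrightarrow$ \eqref{char2} is elementary real analysis — a concave weight function obviously satisfies the subadditivity-type estimate $\om(\la t)\le \la\om(t)$ for $\la\ge 1$ (since $\om(0)=0$), and conversely \eqref{char2} lets one dominate $\om$ by its least concave majorant up to an equivalence; this is the kind of argument already used in \cite{Rainer:aa}.

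Next I would run the weight-matrix core. Assuming \eqref{char1}, \Cref{cor:strongmatrix} (via \Cref{prop:strongmatrix}) directly hands us a \emph{regular} weight matrix $\fS$ equivalent to $\fW$, hence $\cB^{[\om]}=\cB^{[\fS]}$, giving \eqref{char7} and \eqref{char8}; moreover $\fS=\{(k!\,\ul w^x_k)_k\}$ consists of strongly log-convex sequences, which is \eqref{char5} and \eqref{char6}. From regularity and \Cref{thm:Rchar}, \Cref{thm:Bchar} we immediately get \eqref{char9} and \eqref{char10}. For \eqref{char11} and \eqref{char12} I would note that a strongly log-convex matrix, after passing to an $R$- or $B$-equivalent one if necessary, satisfies the $\mu_j/j\le C\nu_k/k$ condition — this is exactly the content of \cite[Corollaries 9 and 10]{Rainer:aa} cited in \Cref{rem:regular}. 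The composition statements \eqref{char3}, \eqref{char4} follow from \Cref{prop:composition} together with the almost-analytic descriptions \eqref{char9}, \eqref{char10}; alternatively one invokes the characterization of composition stability via condition \ref{aiR}/\ref{aiB} of \Cref{rem:regular}, combined with \cite{RainerSchindl14}.

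To close the cycle I would argue that any of the ``weak'' conditions forces \eqref{char1}. The cleanest route: if $\cB^{[\om]}$ is described by almost analytic extensions through some regular matrix $\fS$, or equivalently is stable under composition, then translating back to $\om$ via $\cB^{[\om]}=\cB^{[\fS]}$ and the comparison results of \cite{RainerSchindl12} (together with \Cref{lemma4}) yields precisely the growth estimate \eqref{char2} for $\om$, hence \eqref{char1}. Concretely, composition stability of $\cB^{\{\om\}}$ is known (see \cite{RainerSchindl14}) to be equivalent to $\cB^{\{\om\}}=\cB^{\{\om\o\la\}}$ for the dilation $\la(t)=t$... more precisely to an inequality of the form $\om(\la t)\lesssim \la\,\om(t)$, which is \eqref{char2}. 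This is the step I expect to be the main obstacle: extracting the \emph{sharp} weight-function inequality from a statement phrased only in terms of the abstract matrix or in terms of composition, without losing constants in a way that breaks equivalence. One must be careful that ``$\cB^{[\om]}$ can be described by almost analytic extensions'' — whose precise meaning is pinned down in the statement of \eqref{char9}, \eqref{char10} — genuinely implies the matrix $\fS$ is equivalent to $\fW$, so that comparing $\fS$ with dilated copies of itself via \Cref{lem:m1}-type estimates transfers back to $\om$.

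Finally, for the last sentence of the theorem: once $\om$ is a strong weight function, \Cref{thm:strongerconcave} is exactly the assertion that the extension $F$ in \eqref{char10} can be chosen independently of $\seq S\in\fW$ and of $\rh$, so this part requires no new argument beyond quoting that theorem. Altogether the proof is a bookkeeping exercise in assembling \Cref{prop:strongmatrix}, \Cref{cor:strongmatrix}, \Cref{thm:Rchar}, \Cref{thm:Bchar}, \Cref{prop:composition}, \Cref{rem:regular}, \Cref{thm:strongerconcave}, and the cited results from \cite{Rainer:aa,RainerSchindl12,RainerSchindl14}; the one genuinely delicate point is the equivalence of the analytic/composition conditions with the concavity condition, which is where the contribution of \cite{Rainer:aa} is essential.
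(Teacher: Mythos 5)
Your proposal is correct and follows essentially the same route as the paper: the paper likewise gets \eqref{char9}$\Rightarrow$\eqref{char3} and \eqref{char10}$\Rightarrow$\eqref{char4} from \Cref{prop:composition}, delegates the equivalence of \eqref{char3}--\eqref{char12} wholesale to \cite{Rainer:aa} (with \cite{Peetre70}, \cite{FernandezGalbis06}, \cite{RainerSchindl14} for partial results), derives \eqref{char1}$\Rightarrow$\eqref{char7},\eqref{char8} from \Cref{lem:m1} and \Cref{prop:strongmatrix}, closes the cycle via \Cref{thm:Rchar} and \Cref{thm:Bchar}, and quotes \Cref{thm:strongerconcave} for the supplement. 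The only detail the paper makes explicit that you gloss over is the bookkeeping of the parameters in the Beurling composition step (choosing the extension of $f$ with parameter $\rh/\on{Lip}(G)$ so that $F\o G$ is again an $(h_{\seq s},\rh)$-extension), which is minor.
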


Notice that the conditions in the theorem are furthermore equivalent to stability of the class $\cB^{[\om]}$ 
under inverse/implicit functions and solving ODEs
and, in terms of the associated weight matrix $\fW$, to 
\[
  \A x>0 \E y>0 : (w^x_j)^{1/j} \le C\, (w^y_k)^{1/k} \quad \text{ for } j \le k
\]
as well as
\[
  \A y>0 \E x>0 : (w^x_j)^{1/j} \le C\, (w^y_k)^{1/k} \quad \text{ for } j \le k
\]
see \cite{RainerSchindl14}.

\begin{proof}
   \eqref{char9} $\Rightarrow$ \eqref{char3} and \eqref{char10} $\Rightarrow$ \eqref{char4} follow from \Cref{prop:composition}. 
  Indeed, $h_{\seq s} \le h_{\seq t}$ if $\seq S \le \seq T \in \fS$. 
  For the Beurling case notice that 
  for any given $\seq S \in \fS$ and $\rh>0$ we know that $g$ has an $(h_{\seq s},\rh)$-almost analytic extension 
  $G$ and $f$ has an $(h_{\seq s},\rh/\on{Lip}(G))$-almost analytic extension $F$. Hence, by \Cref{prop:composition}, 
  $F \o G$ is a $(h_{\seq s},\rh)$ -almost analytic extension of $f \o g$.

  The equivalence of the conditions \eqref{char3}--\eqref{char12} was proved in \cite{Rainer:aa};
  for partial results see also \cite[Lemma 1]{Peetre70}, \cite{FernandezGalbis06} and \cite{RainerSchindl14}.

  That \eqref{char1} implies \eqref{char7} and \eqref{char8} is a consequence of 
  \Cref{lem:m1} and \Cref{prop:strongmatrix}.

  The implications \eqref{char7} $\Rightarrow$ \eqref{char9} and \eqref{char8} $\Rightarrow$ \eqref{char10} follow from
  \Cref{thm:Rchar} and \Cref{thm:Bchar}, respectively. 

  The supplement follows from \Cref{thm:strongerconcave}.
\end{proof}

In the next theorem we make the connection to \Cref{PetzscheVogt} which is due to \cite{PetzscheVogt84}. 

\begin{theorem} \label{PetzscheVogtnew}	
Let $\om$ be a concave weight function satisfying $\om(t) = o(t)$ as $t \to \infty$. 
Let $U \subseteq \R^n$ be a bounded quasiconvex domain. 
Then:
	\begin{enumerate}
		\item $f \in \cB^{\{\om\}}(U)$ if and only if there exist $F \in C^1_c(\C^n)$ and $\rh>0$ such that $F|_U = f$ and 
		\begin{equation} \label{eq:condomstar}
			\sup_{z \in \C^n \setminus \ol U} |\ol \p F(z)| \exp( \rh \om^\star (d(z,\ol U)/\rh)) < \infty.
		\end{equation}
		\item $f \in \cB^{(\om)}(U)$ if and only if for all $\rh>0$ there exists 
		$F \in C^1_c(\C^n)$ such that $F|_U = f$ and \eqref{eq:condomstar}. 
	\end{enumerate}
If $\om$ is a strong weight function, then the extension $F$ in (2) may be taken independent of $\rh>0$.   	
\end{theorem}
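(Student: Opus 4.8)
The strategy is to translate the statement into the weight-matrix language and then invoke the characterization theorems already proved. Since $\om$ is a concave weight function with $\om(t)=o(t)$, its associated weight matrix $\fW$ is equivalent to a regular weight matrix by \Cref{cor:strongmatrix}, and by \Cref{representation} we have $\cB^{[\om]}=\cB^{[\fW]}$; so \Cref{thm:Rchar} and \Cref{thm:Bchar} apply and give the characterization by $(h_{\seq m},\rh)$-almost analytic extensions for $\seq M$ running through (a regular matrix equivalent to) $\fW$. Thus everything reduces to comparing the single-function growth bound $|\ol\p F(z)|\le C h_{\seq m}(\rh d(z,\ol U))$ with the bound \eqref{eq:condomstar} involving $\om^\star(d(z,\ol U)/\rh)$, with the indices $\seq M\in\fW$ and the parameters $\rh$ matched appropriately between the two formulations.

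\textbf{Key steps.} First I would record the elementary relation between $h_{\seq m}$ and $\om^\star$ for $\seq M$ a member of $\fW$. Recall $W^x_k=\exp(\tfrac1x\vh^*(xk))$, so $w^x_k=W^x_k/k!$; by definition $h_{\seq w^x}(t)=\inf_k w^x_k t^k$, hence $-\log h_{\seq w^x}(1/t)=\om_{\seq w^x}(t)=\sup_k\log(t^k/w^x_k)$. A direct Legendre-duality computation, using $\om^\star(s)=\sup_{r\ge0}(\om(r)-rs)$ and the fact that $\vh^*$ is the Legendre transform of $\vh(t)=\om(e^t)$, shows that $\om_{\seq w^x}(t)$ and $x\,\om^\star(\cdot/x)$ agree up to additive and multiplicative constants controlled by $x$; concretely one gets inequalities of the shape $c_1 x\,\om^\star(s/(c_2 x))-c_3\le \om_{\seq w^x}(1/s)\le c_4 x\,\om^\star(s/(c_5 x))+c_6$. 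Because $\om$ is concave, $\om^\star$ is particularly well-behaved (it is $0$ for small arguments and the sup is attained), which is what makes these comparisons clean. Consequently, the estimate $|\ol\p F(z)|\le C\,h_{\seq w^x}(\rh\,d(z,\ol U))$ for \emph{some} $x$ and $\rh$ is equivalent to \eqref{eq:condomstar} for \emph{some} $\rh'$, and the ``for all'' quantifier versions match up the same way after absorbing constants.

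\textbf{Assembling the two cases.} For (1): if $f\in\cB^{\{\om\}}(U)=\cB^{\{\fW\}}(U)$, \Cref{thm:Rchar} (applied to the regular matrix equivalent to $\fW$) produces $F\in C^1_c(\C^n)$ with $|\ol\p F|\le C h_{\seq w^x}(\rh\,d(\cdot,\ol U))$ for some $x,\rh$, and the comparison above converts this into \eqref{eq:condomstar} for a suitable $\rh'>0$; conversely \eqref{eq:condomstar} gives the $h_{\seq w^x}$-bound for some $x$, and \Cref{prop:restriction} together with \Cref{representation} puts $f$ back in $\cB^{\{\om\}}(U)$. For (2): the Beurling version of \Cref{thm:Bchar} says $f\in\cB^{(\om)}(U)$ iff for every $\seq M\in\fW$ and every $\rh>0$ there is an $(h_{\seq m},\rh)$-almost analytic extension; quantifying the comparison estimate over $x$ and $\rh$ (noting that the family $\{\seq w^x\}$ is cofinal/coinitial in the right direction and that $\om^\star(\cdot/\rh)$ sweeps out all the needed growth as $\rh\to0$) turns this into ``for all $\rh>0$, \eqref{eq:condomstar} holds for some extension $F$''. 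Finally, the supplement for strong $\om$: here \Cref{thm:strongerconcave} already provides a \emph{single} extension $F\in C^1_c(\C^n)$ satisfying $|\ol\p F(z)|\le C h_{\seq m}(\rh\,d(z,\ol U))$ for all $\seq M\in\fW$ and all $\rh>0$ simultaneously, and feeding this uniform bound through the same $h_{\seq w^x}$-vs-$\om^\star$ comparison yields one $F$ satisfying \eqref{eq:condomstar} for every $\rh>0$.

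\textbf{Main obstacle.} The routine but delicate part is bookkeeping the constants in the $h_{\seq w^x}\leftrightarrow\om^\star$ dictionary: the parameter $x$ of the weight sequence, the dilation parameter $\rh$, and the additive/multiplicative constants in Legendre duality all interact, and one must check that the existential/universal quantifier structure survives the translation in both directions and in both the Roumieu and Beurling cases. The use of concavity of $\om$ is essential precisely to keep $\om^\star$ finite and tame enough that these comparisons hold with the indicated quantifiers; this is the reason the theorem is stated for concave (resp.\ strong) weight functions rather than general ones, and it is the point where the argument would break down without that hypothesis.
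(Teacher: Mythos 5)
Your proposal follows essentially the same route as the paper: the paper likewise reduces to \Cref{thm:omegachar} (hence to \Cref{thm:Rchar}, \Cref{thm:Bchar} and \Cref{thm:strongerconcave}) after passing to a regular weight matrix equivalent to $\fW$ via \Cref{cor:strongmatrix}, the only difference being that the comparison between $\om^\star$ and $\om_{\seq m}=-\log h_{\seq m}(1/\cdot)$, which you propose to derive by a direct Legendre-duality computation, is simply quoted there as \eqref{eq:staromega} from an earlier paper of two of the authors. Your argument is correct modulo the acknowledged constant bookkeeping; only note the small slip that $\om^\star$ vanishes for \emph{large} arguments and tends to $\infty$ at $0$, not the other way around.
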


\begin{proof}
	Let $\fW$ be the associated weight matrix of $\om$.
	For each $\seq M \in \fW$ there exists a constant $C\ge 1$ such that 
   \begin{equation} \label{eq:staromega}
   	  \om^\star(t) \le C \om_{\seq m}\Big(\frac{C}t\Big) \quad \text{ and } \quad
      \om_{\seq m}(t) \le C \om^\star \Big(\frac{1}{e Ct}\Big) + C
   \end{equation}
   for all $t>0$; see \cite[Corollary 3.11]{RainerSchindl17}. Here $\om_{\seq m}(t) = -\log h_{\seq m}(1/t)$, cf.\ \eqref{ep:omegam}. 
   By \Cref{cor:strongmatrix}, there is a regular weight matrix $\fS$ which is equivalent to $\fW$.
   Hence for each $\seq S \in \fS$ there exists $C\ge 1$ such that \eqref{eq:staromega} holds with $\om_{\seq m}$ 
   replaced by $\om_{\seq s}$. 
   In view of \Cref{thm:omegachar} the conclusion follows easily.	
\end{proof}

\section{The ultradifferentiable wave front set} \label{sec:WF}

In this section we define and study the wave front set for ultradifferentiable classes given by weight matrices.
This extends the results of H\"ormander \cite{H_rmander_1971} who considered only Roumieu classes defined by a single weight sequence. 
In particular we observe that our definition coincides with the one of Albanese--Jornet--Oliaro \cite{Albanese:2010vj} in the case that the classes are given by
a weight function.
We will follow primarily the presentation given in \cite[section 8.4-8.6]{Hoermander83I}.

In this section weight matrices are just assumed to be R- or B-semiregular. In \Cref{sec:WFreg}
below we will present stronger results for R- and B-regular matrices.

From now on $\Om$ denotes a non-empty open set in $\R^n$
and we shall write $\cE(\Om):=C^\infty(\Om)$ from time to time. 
We will use $D_j := - i \p_j$.

\subsection{The ultradifferentiable wave front set}

Our first preliminary result is the local characterization of ultradifferentiable functions by the Fourier transform. 

\begin{proposition} \label{FourierChar}
  Let $p_0\in\Omega$ and $u\in\D^\prime(\Omega)$.
\begin{enumerate}
\item If $\fM$ is an R-semiregular weight matrix,
   then $u\in\cE^{\{\fM\}}$ near $p_0$ if and only if for some neighborhood $V$ of $p_0$ there exist a bounded sequence
  $(u_N)_N\subseteq \cE^\prime(\Omega)$ with $u\vert_V=u_N\vert_V$ and
  some $\seq M\in\fM$ and $Q>0$ such that
  \begin{equation}\label{FourierCharacterization}
  \sup_{\substack{\xi\in\R^n\\ N\in\N}}
  \frac{|\xi|^N\bigl|\widehat{u}_N(\xi)\bigr|}{Q^N M_N}<\infty.
  \end{equation}
\item If $\fM$ is a B-semiregular weight matrix, then $u\in\cE^{(\fM)}$ near $p_0$ 
if and only if for some neighborhood $V$ of $p_0$ there exists a bounded sequence
$(u_N)_N\subseteq \cE^\prime(\Omega)$ with $u\vert_V=u_N\vert_V$ and
 such that \eqref{FourierCharacterization} holds for all $\seq M\in\fM$ and $Q>0$.
\end{enumerate}
\end{proposition}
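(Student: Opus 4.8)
This is a standard Paley--Wiener type characterization, and I would prove it by adapting the classical argument for analytic and ultradifferentiable classes as presented in \cite[Theorem 8.4.8, Theorem 8.4.12]{Hoermander83I} and \cite{Albanese:2010vj} to the weight-matrix setting. The key observation is that the estimate \eqref{FourierCharacterization} is exactly the Fourier-side translation of the derivative estimates defining $\cB^{[\seq M]}$, once one has localized; the point of using a \emph{bounded sequence} $(u_N)_N$ of compactly supported distributions agreeing with $u$ near $p_0$ is the usual device of using a different cutoff for each order $N$ so that one does not lose control of the constants when differentiating.

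\textbf{Sufficiency (the Fourier estimate implies ultradifferentiability).} Assume \eqref{FourierCharacterization} holds for some $\seq M \in \fM$ and $Q>0$ (Roumieu) or for all $\seq M$ and $Q$ (Beurling). First I would show that $u|_V \in C^\infty$: since the $\widehat{u_N}$ are all bounded by $C Q^N M_N |\xi|^{-N}$ and the $u_N$ form a bounded set in $\cE'$, standard estimates give that $\widehat{u|_V}$, computed via any fixed $u_N$ on a slightly smaller neighborhood, decays faster than any polynomial, so $u$ is smooth near $p_0$. Then fix a smaller neighborhood $W \Subset V$ of $p_0$ and a fixed cutoff $\chi \in \cD(V)$ with $\chi \equiv 1$ on $W$; for $x \in W$ and a multi-index $\al$ with $|\al| = N$ write $\partial^\al u(x) = \partial^\al (\chi u)(x) = \partial^\al (\chi u_N)(x)$ and invert the Fourier transform:
\[
|\partial^\al u(x)| = \Bigl| (2\pi)^{-n} \int \xi^\al \widehat{\chi u_N}(\xi)\, d\xi \Bigr|.
\]
Splitting the integral at $|\xi| = 1$ and using $|\widehat{\chi u_N}(\xi)| \le C_{\chi} Q^N M_N (1+|\xi|)^{-N-n-1}$ — which follows from \eqref{FourierCharacterization} together with the bound $|\widehat{\chi u_N}(\xi)| \le \widehat{\chi u_N}$-type estimates obtained by convolving with $\widehat{\chi}$ (here one uses that $\seq M$ is a weight sequence and the submultiplicativity-type properties, cf.\ \Cref{lem:basicM}) — one gets $|\partial^\al u(x)| \le C' (HQ)^{N} M_{N}$ for a new constant $H$ depending only on $n$ and $\chi$. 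This is precisely membership of $u|_W$ in $\cB^{\{\seq M\}}(W)$ up to enlarging the constant $Q$, hence $u \in \cE^{\{\fM\}}$ near $p_0$; in the Beurling case the quantifiers propagate since $Q$ was arbitrary and $\fM$ is B-semiregular so the loss from $Q$ to $HQ$ can be absorbed by passing to another sequence of $\fM$ if needed.

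\textbf{Necessity (ultradifferentiability implies the Fourier estimate).} Suppose $u \in \cE^{\{\fM\}}$ near $p_0$, so $u|_V \in \cB^{\{\seq M\}}(V)$ for some $\seq M \in \fM$, $\rho>0$ and some neighborhood $V$ of $p_0$; shrink $V$ so that $\ol V$ is compact in $\Omega$. The standard construction: choose a cutoff $\phi \in \cD(V)$ with $\phi \equiv 1$ near $p_0$, and build for each $N$ a function $\phi_N \in \cD(V)$, $\phi_N \equiv 1$ near $p_0$, supported in a fixed compact set, with $|\partial^\beta \phi_N| \le (C N)^{|\beta|}$ for $|\beta| \le N$ — this is the usual chain of $N$ nested cutoffs, each a mollification at scale $\sim 1/N$. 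Put $u_N := \phi_N u \in \cE'(\Omega)$; these agree with $u$ near $p_0$ and, because $\phi_N$ are uniformly bounded with uniformly bounded supports, $(u_N)_N$ is bounded in $\cE'(\Omega)$. Then for $|\al| = N$,
\[
|\xi^\al \widehat{u_N}(\xi)| = |\widehat{\partial^\al u_N}(\xi)| \le \| \partial^\al(\phi_N u)\|_{L^1} \le \sum_{\beta \le \al} \binom{\al}{\beta} \|\partial^\beta \phi_N\|_{L^\infty} \|\partial^{\al-\beta} u\|_{L^1(V)},
\]
and each term is bounded using $|\partial^\beta \phi_N| \le (CN)^{|\beta|} \le C^{|\beta|} |\beta|! \, e^{|\beta|} \le C'^{N} \beta!$ (Stirling) together with $\|\partial^{\al-\beta}u\|_{L^1(V)} \le C'' \rho^{|\al-\beta|} M_{|\al-\beta|}$ and the logarithmic convexity $M_{|\beta|-|\beta|}$-type inequality $\beta! \, M_{|\al-\beta|} \le M_{|\al|}$ from \Cref{lem:basicM}(2) (using $M_j = j! m_j$ and $m_j \ge m_0 = 1$, or more simply $k! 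M_j \le M_{k+j}$). Summing the $2^N$ terms and absorbing into the constant $Q$ gives \eqref{FourierCharacterization}. For the Beurling case: given $\seq M \in \fM$ and $Q>0$, B-semiregularity \ref{def:Rregular}\eqref{B-Derivclosed1} lets us find $\seq N \in \fM$ with $N_{j+1} \le C^j M_j$; take the cutoffs $\phi_N$ calibrated to $\seq N$ and run the same estimate to land in $Q^N M_N$.

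\textbf{Main obstacle.} The only genuinely delicate point is the construction of the cutoffs $\phi_N$ with derivative bounds $|\partial^\beta \phi_N| \le (CN)^{|\beta|}$ \emph{only for $|\beta|\le N$} and with supports inside a fixed compact set, and verifying that passing from the bound $(CN)^{|\beta|}$ to a bound of the form $C'^N \beta!$ (via Stirling) loses nothing that cannot be reabsorbed — this is where one must be careful that the constants stay uniform in $N$ so that $(u_N)_N$ is genuinely bounded in $\cE'$. Everything else is bookkeeping with the properties of weight sequences collected in \Cref{lem:basicM} and the definition of R-/B-semiregularity; I do not expect any obstruction specific to the weight-matrix generality beyond invoking conditions \ref{def:Rregular}(1),(3) in the respective cases to move between sequences of $\fM$.
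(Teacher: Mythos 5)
Your necessity direction is essentially the paper's argument: the same cutoffs $\chi_N$ with $|D^{\beta}\chi_N|\le (CN)^{|\beta|}$ for $|\beta|\le N$, the same Leibniz expansion, and the same use of \eqref{strictInclusion2} and \Cref{lem:basicM} to absorb $N^{|\beta|}$ and recombine $M_{|\alpha|-|\beta|}$ into $M_N$ (your route via $N^{|\beta|}\le e^{nN}\beta!$ and $k!\,M_j\lesssim \rho^k M_{j+k}$ is equivalent to the paper's $N^{|\beta|}\le C_h^{|\beta|/N}h^{|\beta|}M_N^{|\beta|/N}$). One quantifier slip there: in the Beurling case you must \emph{not} recalibrate the cutoffs to an auxiliary sequence $\seq N\in\fM$, since the statement requires a single bounded sequence $(u_N)_N$ that works simultaneously for all $\seq M\in\fM$ and $Q>0$; the fixed cutoffs already do the job because the hypothesis $u\in\cE^{(\fM)}$ supplies the derivative bounds with arbitrarily small $h$, and condition \ref{def:Rregular}\eqref{B-Derivclosed1} is not needed in this direction.

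The sufficiency direction has a genuine gap. You multiply $u_N$ by a \emph{fixed} cutoff $\chi$ and claim $|\widehat{\chi u_N}(\xi)|\le C_\chi\, Q^N M_N (1+|\xi|)^{-N-n-1}$ with $C_\chi$ uniform in $N$. Writing $\widehat{\chi u_N}=(2\pi)^{-n}\widehat{\chi}\ast\widehat{u_N}$ and splitting at $|\eta|=c|\xi|$, the region $|\eta|\ge c|\xi|$ forces you to use decay of $\widehat\chi$ of order $N+n+1+\mu$, whose constant is governed by derivatives of $\chi$ of that order. For a fixed $\chi\in\cD$ these are not controlled in $N$; in the quasianalytic case they must eventually exceed $Q^N M_N$ for every $Q$, since otherwise $\chi$ would be a nontrivial compactly supported element of $\cB^{\{\seq M\}}$. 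Controlling exactly this convolution is the content of \Cref{WF-M Charakterisierung}, and it requires cutoffs satisfying $N$-dependent bounds of the type \eqref{TestfEst2}, not a single $\chi$. The fix is to discard $\chi$ altogether: since $u=u_N$ on $V$ and $\xi^\alpha\widehat{u_N}$ is integrable for $N=|\alpha|+n+1$, Fourier inversion gives $D^\alpha u(x)=(2\pi)^{-n}\int e^{ix\xi}\xi^\alpha\widehat{u_N}(\xi)\,d\xi$ on $V$ directly; splitting at $|\xi|=Q\sqrt[N]{M_N}$ and using the Banach--Steinhaus bound $|\widehat{u_N}(\xi)|\le C(1+|\xi|)^{\mu}$ yields $|D^\alpha u|\le CQ^{|\alpha|}(\sqrt[N]{M_N})^{|\alpha|+n+\mu}$. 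Note that this lands you at $M_{|\alpha|+n+1}$ rather than $M_{|\alpha|}$, so the derivation-closedness conditions \ref{def:Rregular}\eqref{R-Derivclosed1} resp.\ \ref{def:Rregular}\eqref{B-Derivclosed1} are genuinely needed here — your version silently bypasses them only because of the unjustified convolution bound.
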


\begin{proof}
	It suffices to slightly modify the proof of \cite[Proposition 8.4.2]{Hoermander83I}. 
	Fix $\seq M \in \fM$. Suppose that for some $r>0$ and some constants $C,h >0$
	\[
		|D^\al u(x)| \le C h^{|\al|} M_{|\al|}\quad \text{ for all } \al \text{ and } |x-x_0|<3r. 
	\]	
   There exist smooth cut-off functions $\ch_N$ with support in $|x-x_0|\le 2r$, equal $1$ when $|x-x_0|<r$, 
   and satisfying
   \begin{equation} \label{eq:testchi}
       |D^\al \ch_N| \le (C_1 N)^{|\al|}, \quad \text{ for } |\al| \le N;
    \end{equation} 
   cf.\ the proof of \cite[Proposition 8.4.2]{Hoermander83I}.    
   Then  
	the sequence $u_N := \ch_N u$ is bounded in $\cE'(\Om)$ and, thanks to \eqref{strictInclusion2}
  and \Cref{lem:basicM}(1), satisfies, for $|\al| = N$,
	\begin{align*}
	 	|D^\al u_N| 
  &\leq\sum_{\beta\leq\alpha}\binom{\alpha}{\beta}
  C_1^{|\beta|}N^{|\beta|}Ch^{| \alpha-\beta|}M_{|\alpha-\beta|}\\
  &\leq C\sum_{\beta\leq\alpha}\binom{\alpha}{\beta} C_h^{\tfrac{|\beta|}{N}}\bigl(C_1h\bigr)^{|\beta|}
  M_N^{\tfrac{|\beta|}{N}}h^{|\alpha-\beta|}
  M_N^{\tfrac{|\alpha-\beta|}{N}}
  \leq CC_h(C_2 h)^N M_N,
	 \end{align*} 
	 for some constant $C_2$. This easily implies \eqref{FourierCharacterization}.

	For the converse recall that, 
	since $(u_N)_N$ is bounded in $\cE^\prime(\Om)$, the Banach--Steinhaus theorem implies that there are constants $C,\mu>0$ such that
  \begin{equation} \label{BanachSteinhaus}
  |\widehat{u}_N(\xi)|\leq C\bigl(1+|\xi|\bigr)^\mu \quad \text{ for all } N.
  \end{equation}
  In $V$ we have $D^\alpha u(x)
  =(2\pi)^{-n}\int_{\R^n}\!e^{ix\xi}\xi^\alpha\widehat{u}_N(\xi)\,d\xi$
  for $N=|\alpha| +n+1$, since then \eqref{FourierCharacterization} implies that $\xi^\alpha\widehat{u}_N$ is integrable. 
  Estimating the integrals over $|\xi|\le Q \sqrt[N]{M_N}$ and  $|\xi|\ge Q \sqrt[N]{M_N}$ separately, using 
  \eqref{BanachSteinhaus} and \eqref{FourierCharacterization}, we conclude
  \begin{align*}
  | D^\alpha u(x)|
  &\leq C\Big(\Big(1+Q\sqrt[N]{M_N}\Big)^\mu\Big(Q\sqrt[N]{M_N}\Big)^{|\alpha|+n} 
  +Q^N M_N  \int_{Q\sqrt[N]{M_N}}^\infty\!\negthickspace t^{-2}\,dt\Big)\\
  &\leq CQ^{N-1}\Big(\Big(\sqrt[N]{M_N}\Big)^{|\alpha|+\mu+n}+
   M_N^{(N-1)/N}\Big)\\
&\leq CQ^{|\alpha|}\bigl(\sqrt[N]{M_N}\bigr)^{|\alpha| +n+\mu}
  \end{align*}
  where $C$ is a generic constant independent from $N$. 
  Repeated use of \Cref{def:Rregular}\eqref{R-Derivclosed1} or \Cref{def:Rregular}\eqref{B-Derivclosed1} shows $u \in \cE^{[\fM]}(V)$. 
\end{proof}

\begin{definition}\label{WF-Definition}
Let $\fM$ be a weight matrix.
Let $u\in\D^\prime(\Omega)$ and $(x_0,\xi_0)\in T^\ast\Omega\!\setminus\!\{0\}$.
\begin{enumerate}
  \item We say that $u$ is \emph{microlocally ultradifferentiable of class $\{\fM\}$} 
  at $(x_0,\xi_0)$ iff
  there exist a neighborhood $V$ of $x_0$, a conic neighborhood $\Gamma$ of $\xi_0$, 
  and a bounded sequence $(u_N)_N\subseteq\cE^\prime(\Omega)$ 
  with $u_N\vert_V=u\vert_V$  such that for some $\seq M\in\fM$ and a constant $Q>0$ we have
  \begin{equation}\label{WF-defining}
\sup_{\substack{\xi\in\Gamma\\ N\in\N}}\frac{|\xi|^N\bigl|\widehat{u}_N(\xi)\bigr|}{Q^N M_N}<\infty.
\end{equation}
  \item  $u$ is called \emph{microlocally ultradifferentiable of class $(\fM)$} at $(x_0,\xi_0)$ iff 
  there exist a neighborhood $V$ of $x_0$, a conic neighborhood $\Gamma$ of $\xi_0$, and  
  a bounded sequence $(u_N)_N\subseteq\cE^\prime(\Omega)$ with $u_N\vert_V=u\vert_V$
  such that  \eqref{WF-defining} is satisfied
  for all $\seq M\in\fM$ and all $Q>0$.
\end{enumerate}
The ultradifferentiable \emph{wave front set} $\WF_{[\fM]}u$ of $u$  
is the complement of the set of all $(x,\xi)\in T^\ast\Omega \setminus \{0\}$, where $u$ 
is microlocally ultradifferentiable of class $[\fM]$. 
For a weight function $\om$ and the associated weight matrix $\fW$ we set 
\[
  \WF_{[\om]} u := \WF_{[\fW]} u. 
\]
This coincides with the definition given in \cite{Albanese:2010vj} thanks to \Cref{representation}; see also 
\cite{RainerSchindl12}.
For the weight sequence $(k!)_k$ (resp.\ the weight function $t \mapsto t$) 
we get the analytic wave front set also denoted by $\WF_A u$.
\end{definition}

Notice that, in \Cref{WF-Definition}, $\fM$ is deliberately an \emph{arbitrary} weight matrix, since occasionally we want to compare 
$\WF_{\{\fM\}} u$ with $\WF_{(\fM)} u$. Most of the time we will assume semiregularity of the particular type. 

The distributions $u_N$ in \Cref{WF-Definition} can be chosen of the form
$\ch_Nu$ where $\ch_N$ is a bounded sequence of test functions as shown by the next lemma.

\begin{lemma}\label{WF-M Charakterisierung} 
Let $\fM$ be a weight matrix, $K\subseteq \Omega$ compact,
 $u\in\D^\prime(\Omega)$ of order $\mu$ in $K$,
and $F$ a closed cone.
\begin{enumerate}
	\item Suppose that 
$\WF_{\{\fM\}}u\cap (K\times F)=\emptyset$. If $\chi_N\in\D(K)$ and for each
$\alpha$ there exist $\seq M^\alpha\in\fM$ and $C_\alpha>0$ such that
\begin{equation}\label{TestfEst2}
\bigl| D^{\alpha+\beta}\chi_N\bigr|\leq C_\alpha
\big(C_\alpha \sqrt[N]{M^\alpha_N}\big)^{|\beta|},\qquad \vert\beta|\leq N=1,2,\dotsc,
\end{equation}
then $\chi_N u$ is bounded in $\cE^{\prime,\mu}$ and there are  $\seq M^\prime\in\fM$ and $Q,C>0$ such that
\begin{equation} \label{TestfEst3}
|\xi|^N \left|\widehat{\chi_N u}(\xi)\right|\leq C Q^N M^\prime_N,
\qquad N\in\N,\;\xi\in F.
\end{equation}
\item Suppose that $\WF_{(\fM)}u\cap (K\times F)=\emptyset$.
If $\chi_N\in\D(K)$ satisfies \eqref{TestfEst2} for some
totally ordered collection of positive sequences $\seq M^\al$ such that 
$\{\seq M^\al\} \{\lhd)\fM$,
then $\chi_Nu$ is bounded in $\cE^{\prime,\mu}$ and for all $\seq M^\prime\in\fM$ 
and all $Q>0$ there is a constant $C$ such that \eqref{TestfEst3} holds.
\end{enumerate}
\end{lemma}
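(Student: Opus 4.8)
The plan is to follow closely the proof of the corresponding smooth-category result, namely \cite[Lemma 8.4.4]{Hoermander83I}, and adapt the bookkeeping to the weight-matrix setting. First I would record the easy part: since $\chi_N \in \D(K)$ and $u$ has order $\mu$ on $K$, the bound \eqref{TestfEst2} with $\beta = 0$ shows that $(\chi_N)_N$ is bounded in $C^\mu$, hence $(\chi_N u)_N$ is bounded in $\cE^{\prime,\mu}$; this also yields a polynomial bound $|\widehat{\chi_N u}(\xi)| \le C(1+|\xi|)^\mu$ uniform in $N$ via Banach--Steinhaus, which will be used for large $N$ relative to $|\xi|$. The core of the argument is the estimate \eqref{TestfEst3} on the closed cone $F$. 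Fix $\xi \in F$ and $N$. Because $\WF_{\{\fM\}} u \cap (K \times F) = \emptyset$, a standard compactness argument (covering $K$ by finitely many of the neighborhoods $V$ from \Cref{WF-Definition} and patching with a partition of unity, enlarging $F$ slightly to a conic neighborhood) produces a single $\seq M \in \fM$, a constant $Q_0 > 0$, and a bounded sequence $(v_N)_N \subseteq \cE'(\Omega)$ agreeing with $u$ on a neighborhood of $K$ such that $|\eta|^N |\widehat{v}_N(\eta)| \le C Q_0^N M_N$ for $\eta \in F$. Then I would write $\widehat{\chi_N u}(\xi) = (2\pi)^{-n} \int \widehat{\chi_N}(\xi - \eta)\, \widehat{v_M}(\eta)\, d\eta$ for a suitable index (here one must be slightly careful: $\chi_N$ has a fixed frequency scale $\sqrt[N]{M^0_N}$ and $v$ is indexed separately, so one convolves $\widehat{\chi_N}$ against $\widehat{v_K}$ for an index $K$ to be chosen of size comparable to $N$), and split the integral into the region where $|\eta| \ge \ve |\xi|$ — controlled by the good decay of $\widehat{v_K}$ and the rapid decay of $\widehat{\chi_N}$ coming from \eqref{TestfEst2} — and the region $|\eta| < \ve|\xi|$, where $\xi - \eta$ stays in a cone inside $F$ (after enlarging $F$) so the cone estimate for $\widehat{v_K}$ applies while $\widehat{\chi_N}$ is merely polynomially bounded.

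The main technical obstacle is the matching of scales between the test-function index in \eqref{TestfEst2} and the cutoff index in the decay estimate, which forces a careful choice of the integer parameter $N$ used in the Fourier-transform decay of $\chi_N$ versus the one used for $v$, together with the combinatorial estimates on $\widehat{\chi_N}$: from \eqref{TestfEst2} one derives, by the usual device of optimizing $|\xi|^{|\beta|}|\widehat{\chi_N}(\xi)| \le C_\alpha (C_\alpha \sqrt[N]{M^\alpha_N})^{|\beta|}$ over $|\beta| \le N$, a bound of the form $|\widehat{\chi_N}(\xi)| \lesssim C_\alpha h_{\seq m^\alpha}\!\big(C_\alpha \sqrt[N]{M^\alpha_N}/|\xi|\big)$ type quantity, or more elementarily $|\widehat{\chi_N}(\xi)| \le C (C \sqrt[N]{M^\alpha_N}/|\xi|)^N$ when $|\xi| \ge 2 C \sqrt[N]{M^\alpha_N}$. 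Plugging this into the split integral and summing over $\alpha$ (here the hypothesis that $\{\seq M^\alpha\}$ is totally ordered, and in the Beurling case that $\{\seq M^\alpha\} \{\lhd)\, \fM$, is what lets one absorb the growth of the $C_\alpha$'s and the $\seq M^\alpha$'s into a single $\seq M' \in \fM$) should yield \eqref{TestfEst3}. The derivation-closedness conditions \ref{def:Rregular}\eqref{R-Derivclosed1} / \ref{def:Rregular}\eqref{B-Derivclosed1} enter when one needs to pass from estimates involving $M_{N}$ with shifted indices (produced by the convolution and by the $|\xi|^N$ prefactor) back to an admissible sequence in $\fM$, exactly as in \Cref{FourierChar}.

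For part (2), the structure is identical, with two changes. First, in place of a single pair $(\seq M, Q_0)$ from the Roumieu hypothesis one now uses, for every prescribed $\seq M' \in \fM$ and every $Q > 0$, the fact that $\WF_{(\fM)} u \cap (K \times F) = \emptyset$ to obtain a bounded sequence $(v_N)_N$ with the Beurling-type decay $|\eta|^N |\widehat v_N(\eta)| \le C_{\seq M', Q} Q^N M'_N$ on $F$; one then runs the same two-region split, choosing the free index and the constants in terms of $(\seq M', Q)$. Second, the input on $\chi_N$ is now \eqref{TestfEst2} for a totally ordered family $\{\seq M^\alpha\}$ with $\{\seq M^\alpha\} \{\lhd)\, \fM$, which by the definition of $\{\lhd)$ means every $\seq M^\alpha$ is ``much smaller'' than every sequence in $\fM$; this is precisely the room needed so that the contribution of the test function never dominates the prescribed Beurling bound, uniformly in $\alpha$. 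I expect the Roumieu case to be written in full and the Beurling case to be dispatched by remarking on these substitutions, as is done elsewhere in the paper.
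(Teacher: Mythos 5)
Your plan is essentially the paper's proof: adapt \cite[Lemma 8.4.4]{Hoermander83I} by deriving a pointwise decay estimate for $\widehat{\chi}_N$ from \eqref{TestfEst2}, convolving with the Fourier transform of the localized distribution, splitting the convolution integral at $|\eta|\sim c|\xi|$, and finishing with a covering of $K\times (F\cap S^{n-1})$ and a partition of unity. Two of your worries are non-issues, though. The ``scale-matching'' obstacle does not arise: since $u_N\vert_V=u\vert_V$ and $\supp\chi_N\subseteq V$, one has $\chi_Nu=\chi_Nu_N$ with the \emph{same} index $N$, and the paper simply writes $\widehat{\chi_Nu}=(2\pi)^{-n}\widehat{\chi}_N\ast\widehat{u}_N$. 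Also, you have swapped the roles of the two regions in the splitting: in the region where the argument of the localized distribution's Fourier transform stays in the cone and has modulus $\gtrsim|\xi|$, one uses its cone decay together with only the $L^1$-norm of $\widehat{\chi}_N$; the rapid decay of $\widehat{\chi}_N$ is what controls the complementary region, where the other factor is merely polynomially bounded. That is a bookkeeping slip, not a conceptual one.

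The one substantive problem is your appeal to the derivation-closedness conditions \ref{def:Rregular}\eqref{R-Derivclosed1}/\eqref{B-Derivclosed1} to convert shifted indices back into admissible sequences. The lemma is stated for an \emph{arbitrary} weight matrix; no semiregularity is assumed, so these conditions are simply not available. The paper avoids shifted indices altogether: with $\ell:=\mu+n+1$ fixed, it estimates $|\eta|^{\ell+k}|\widehat{\chi}_N(\eta)|$ for $k\le N$ using \eqref{TestfEst2} with $|\alpha|\le\ell$ (finitely many $\alpha$, hence finitely many $C_\alpha$ and $\seq M^\alpha$, all absorbed into constants) and $|\beta|=k$, arriving at
\[
\bigl|\widehat{\chi}_N(\eta)\bigr|\le C^{N+1}M_N\bigl(|\eta|+\sqrt[N]{M_N}\bigr)^{-N}(1+|\eta|)^{-\mu-n-1},
\]
and the extra factor $(1+|\eta|)^{-\mu-n-1}$ makes the bad region of the convolution integrable against $(1+|\eta|)^{\mu}$ without ever producing an $M_{N+\mathrm{shift}}$. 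If your bookkeeping genuinely required \ref{def:Rregular}\eqref{R-Derivclosed1} or \eqref{B-Derivclosed1}, you would be proving a strictly weaker lemma than the one stated; you should reorganize the derivative count as above so that only boundedly many ``extra'' derivatives (independent of $N$) are ever taken.
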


It is not hard to see that there exist $\ch_N$ which satisfy \eqref{TestfEst2}; cf.\ \eqref{eq:testchi}. 
We emphasize that in (2) the sequences $\seq M^\al$ are not assumed to be weight sequences in the sense of 
\Cref{weights} (and do not belong to $\fM$).  

\begin{proof}
	The proof of (1) follows closely the arguments in \cite[Lemma 8.4.4]{Hoermander83I} with the 
   only difference that here we have to deal with more than just one weight sequence; we provide 
   details for later reference.

	The boundedness of $\chi_N u$ is evident.	
	Let $x_0\in K$, $\xi_0\in F\setminus\{0\}$ and choose $V$, $\Gamma$ and $u_N$ 
according to \Cref{WF-Definition}. 
Obviously, if $\supp\chi_N\subseteq V$, then $\chi_Nu=\chi_Nu_N$. 
By assumption, $u_N$ satisfies \eqref{BanachSteinhaus} and \eqref{WF-defining} in $\Gamma$ for some  $\seq M^\prime\in\fM$ and $Q>0$.
For convenience we set $\ell=\mu+n+1$.
Observe that, for $\et \in \R^n$ and $k \ge 0$,
\[
   |\et|^{\ell+k} \le  \Big(\sum_{j=1}^n |\et_j|\Big)^{\ell+k} = 
    \sum_{|\ga| = \ell +k} \binom{\ell +k}{\ga} |\et^\ga|.  
\]   
Together with \eqref{TestfEst2} we get, for $k \le N$,
\begin{align*}
   |\et|^{\ell+k} |\widehat \ch_N(\et)| 
   &\le 
    \sum_{|\ga| = \ell +k} \binom{\ell +k}{\ga} |\et^\ga \widehat \ch_N(\et)|
   \\
   &\le C(n,\ell)^k \sum_{|\al| \le \ell, |\be| = k} |\widehat {D^{\al+\be}\ch_N}(\et)|
   \\
   &\le C(n,\ell)^k \sum_{|\al| \le \ell, |\be| = k} C_\al (C_\al \sqrt[N]{M_N^\al})^{k}
   \\
   &\le C(n,\ell)^k M_N^{k/N}
\end{align*}
for some $\seq M \in \fM$.
This implies that, for all $N$,
\begin{equation} \label{H8.4.4}
\big|\widehat{\chi}_N(\eta)\big| \le C^{N+1} M_N \left(|\et| + \sqrt[N]{M_N}\right)^{-N} (1+|\eta|)^{-\mu-n-1}
\end{equation}
for some $C>0$.
We have
$\widehat{\chi_N u}(\xi)=(2\pi)^{-n}\int\!\widehat{\chi}_N(\eta)\widehat{u}_N(\xi-\eta)\,d\eta$.
Let $0<c<1$ and consider the integrals over $|\et| \le c |\xi|$ and $|\et| \ge c |\xi|$ separately.
Since $|\et| \ge c |\xi|$ implies
$|\xi-\eta|\leq(1+c^{-1})|\eta|$, we find with \eqref{BanachSteinhaus} (cf.\ \cite[(8.1.3)]{Hoermander83I}) 
\begin{equation} \label{eq:convolution}
|\widehat{\chi_N u}(\xi)| \le \lVert\widehat{\chi}_N\rVert_{L^1}
\sup_{|\xi -\eta|\leq c|\xi|}|\widehat{u}_N(\eta)|
 + 
C\left(1+c^{-1}\right)^\mu\int_{|\eta|\geq c|\xi|}\bigl|
\widehat{\chi}_N(\eta)\bigr|(1+|\eta|)^\mu\,d\eta.
\end{equation}
If $\xi_0\in\Gamma_1\subseteq\Gamma\cup\{0\}$ is a closed cone, then we can choose
$c$ such that $\eta\in\Gamma$ if $\xi\in\Gamma_1$ and $|\xi-\eta|\leq c|\xi|$. 
In this case $|\eta|\geq (1-c)|\xi|$.
Combining all this we obtain 
\begin{align*}
\sup_{\xi\in\Gamma_1}|\xi|^N\bigl|\widehat{\chi_Nu}(\xi)\bigr|
&\leq (1-c)^{-N} \lVert\widehat{\chi}_N\rVert_{L^1}\sup_{\eta\in\Gamma}
\bigl|\widehat{u}_N(\eta)\bigr||\eta|^N 
\\
&\quad +C\left(1+c^{-1}\right)^{\mu+N}\int\!(1+|\eta|)^\mu|\eta|^N
\bigl|\widehat{\chi}_N(\eta)\bigr|\,d\eta. 
\end{align*}
In view of \eqref{WF-defining} and \eqref{H8.4.4}
we have
\begin{equation*}
\sup_{\xi\in\Gamma_1}|\xi|^N\bigl|\widehat{\chi_Nu}(\xi)\bigr|
\leq Ch^NM^{\prime\prime}_N
\end{equation*}
for some $\seq M^{\prime\prime}\in\fM$ and some constants $C,h>0$.
Since $\xi\in F \setminus \{0\}$ was chosen arbitrarily, we see that $F$ can be covered by
a finite number of conic neighborhoods like $\Gamma_1$ and therefore 
\eqref{TestfEst3} is proven for $F$ and $\supp\chi_N\subseteq U$, where $U$ is a small enough neighborhood of $x_0$.
But $K$ is compact and $x_0$ was also chosen arbitrarily. Hence $K$ can be covered by 
finitely many sets $U_j$ in which \eqref{TestfEst3} holds.
Now let $\chi_N\in\D(K)$ satisfy \eqref{TestfEst2}.
 As in the proof \cite[Lemma 8.4.4]{Hoermander83I} we can choose a partition of unity 
 $\chi_{j,N}\in\D(U_j)$ for each $N$ and each $\chi_{j,N}$ satisfies \eqref{TestfEst2} 
 with $\seq M^\al$ independent of $j$. 
 Then \eqref{TestfEst2} holds also for $\lambda_{j,N}=\chi_{j,N}\chi_N$.
 The statement follows since $\sum_{j}\lambda_{j,N}=\chi_N$.

For part (2) observe that the proof of \eqref{H8.4.4} remains unchanged and then the 
condition $\{\seq M^\al\} \{\lhd)\fM$ easily implies the statement.
\end{proof}

The basic features of the ultradifferentiable wave front set are collected 
in the following proposition (cf.\ \cite{Hoermander83I} and \cite{Albanese:2010vj}).

\begin{proposition} \label{WFproperties} 
Let $\fM,\fN$ be weight matrices and $u\in\D^\prime(\Omega)$. Then:
\begin{enumerate}
  \item $\WF_{[\fM]} u$ is a closed and conic subset of $\COT$.
  \item $\WF_{\{\fM\}}u\subseteq\WF_{(\fM)}u$.
  \item $\WF u\subseteq\WF_{[\fN]}u\subseteq \WF_{[\fM]}u$ if $\fM[\preceq]\fN$.
  \item $\WF_{(\mathfrak{N})}u \subseteq \WF_{\{\fM\}}u$ if $\fM \{\lhd)\mathfrak{N}$.
  \item $(x,\xi)\notin\WF_{[\fM]}u\Leftrightarrow (x,-\xi)\notin\WF_{[\fM]}\ol{u}$.
  \item If $\fM$ is $[$semiregular$]$ then $\pi_1(\WF_{[\fM]}u)=\singsupp_{[\fM]}u$.
  \item 
  If $\fM$ is $[$semiregular$]$ then $\WF_{[\fM]}Pu\subseteq\WF_{[\fM]}u$ 
  for all linear partial differential operators $P$ with 
  $\cE^{[\fM]}$-coefficients.
\end{enumerate}
\end{proposition}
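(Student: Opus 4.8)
The plan is to prove the seven items in order, most of which follow by direct inspection of the defining condition \eqref{WF-defining}. For (1), closedness and conicity are immediate: the set of $(x,\xi)$ at which $u$ is microlocally ultradifferentiable of class $[\fM]$ is open (if \eqref{WF-defining} holds on $V \times \Gamma$ then it holds on any smaller conic neighborhood, hence near every point of $V \times (\Gamma \setminus \{0\})$) and stable under positive scalings of $\xi$ (since $|\la\xi|^N = \la^N |\xi|^N$ and $\Gamma$ is a cone, rescaling $Q$ absorbs the $\la$). For (2), if $u$ is microlocally of Beurling class $(\fM)$ at $(x_0,\xi_0)$, then \eqref{WF-defining} holds for \emph{all} $\seq M \in \fM$ and all $Q>0$, in particular for \emph{some} such choice, so $u$ is microlocally of Roumieu class $\{\fM\}$ there. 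For (3), the inclusion $\fM [\preceq] \fN$ gives, for each relevant $\seq M \in \fM$, some $\seq N \in \fN$ with $M_k^{1/k} \le C N_k^{1/k}$, whence $Q^N M_N \le (CQ)^N N_N$; feeding this into \eqref{WF-defining} converts an $\{\fM\}$- (resp.\ $(\fM)$-)estimate into an $\{\fN\}$- (resp.\ $(\fN)$-)estimate, and the inclusion $\WF u \subseteq \WF_{[\fN]} u$ is the analogous comparison with the sequence $(N!)$ or simply with the $C^\infty$-wave front set (a decay faster than every $M_N$-rate in particular beats every polynomial rate). Item (4) is the same bookkeeping: $\fM \{\lhd) \fN$ means $M_k^{1/k}/N_k^{1/k} \to 0$ for all $\seq M \in \fM$, $\seq N \in \fN$, so for any fixed $\seq M$ and $Q$ and any $\seq N$ one has $Q^N M_N \le N_N$ eventually, turning the single-$\seq N$, single-$Q$ Roumieu estimate that witnesses $(x,\xi) \notin \WF_{\{\fM\}}u$ into an estimate valid for all $\seq N \in \fN$ and all $Q>0$, i.e.\ $(x,\xi)\notin \WF_{(\fN)}u$. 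For (5), note $\widehat{\ol{u_N}}(\xi) = \overline{\widehat{u_N}(-\xi)}$, so the sequence $\ol{u_N}$ witnesses microlocal regularity of $\ol u$ at $(x,-\xi)$ exactly when $u_N$ witnesses it for $u$ at $(x,\xi)$, using that $\Gamma$ is a cone iff $-\Gamma$ is.

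For (6), the inclusion $\pi_1(\WF_{[\fM]}u) \subseteq \singsupp_{[\fM]}u$ is trivial from the definitions (if $u$ is $[\fM]$ near $x_0$ then \Cref{FourierChar} produces a bounded sequence $u_N$ with the global estimate \eqref{FourierCharacterization}, which a fortiori gives \eqref{WF-defining} on any cone). For the reverse inclusion one argues contrapositively: if $(x_0,\xi) \notin \WF_{[\fM]}u$ for \emph{every} $\xi \ne 0$, then by compactness of the unit sphere $F = \R^n \setminus \{0\}$ can be covered by finitely many conic neighborhoods on each of which \eqref{WF-defining} holds, and then \Cref{WF-M Charakterisierung} (with $K$ a compact neighborhood of $x_0$, $F$ the whole punctured space) upgrades these to a single global estimate of the form \eqref{FourierCharacterization} — here R- resp.\ B-semiregularity is exactly what is needed to run \Cref{WF-M Charakterisierung}, and then \Cref{FourierChar} gives $u \in \cE^{[\fM]}$ near $x_0$. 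The only mild subtlety is to check that the finitely many sequences $\seq M^\prime \in \fM$ produced on the several cones can be dominated by a single member of $\fM$ in the Roumieu case; this uses that $\fM$ is totally ordered, so one simply takes the largest of them.

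Finally, (7): for $P = \sum_{|\al|\le m} a_\al D^\al$ with $a_\al \in \cE^{[\fM]}$, suppose $(x_0,\xi_0) \notin \WF_{[\fM]}u$ and pick $V$, $\Gamma$, $(u_N)_N$ as in \Cref{WF-Definition}. After shrinking $V$ we may assume the coefficients $a_\al$ lie in $\cB^{[\fM]}(V)$; choosing a cutoff $\psi \in \cD(V)$ with $\psi \equiv 1$ near $x_0$ and setting $v_N := P(\psi u_N) = P(\psi u)$ on a neighborhood of $x_0$, we compute $\widehat{v_N}(\xi)$ by expanding $P(\psi u_N)$ via the Leibniz rule; each term is $b\, D^\al(\psi u_N)$ with $b = D^\be a_{\al+\be}$ again in $\cB^{[\fM]}(V)$ (stability under derivation, \Cref{def:Rregular}(1)/(3) — and this is exactly where \emph{[semiregular]} is invoked), and one uses that $\widehat{D^\al(\psi u_N)}(\xi)$ is $\xi^\al$ times the transform of $\psi u_N$, while multiplication by $b$ is a convolution whose kernel $\widehat{b\psi}$ decays like an $[\fM]$-rate. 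The convolution estimate is handled precisely as in the proof of \Cref{WF-M Charakterisierung} (splitting the $\eta$-integral near and away from $\xi$, using the Paley–Wiener-type bound for $\widehat{\psi b}$ together with \eqref{WF-defining} for $u_N$ on $\Gamma$), and since each application of $D$ contributes only a factor $|\xi|$ absorbed by the $m$ extra derivatives and the passage $\seq M \rightsquigarrow \seq N$ from \Cref{def:Rregular}, one concludes $\sup_{\xi \in \Gamma', N} |\xi|^N |\widehat{v_N}(\xi)| / (Q'^N N_N) < \infty$ for a possibly smaller cone $\Gamma'$, i.e.\ $(x_0,\xi_0) \notin \WF_{[\fM]}Pu$. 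The main obstacle, and the only place real work is needed, is this last convolution/Leibniz estimate in (7) (and the structurally identical one implicit in the hard direction of (6) via \Cref{WF-M Charakterisierung}): keeping track of the constants so that the finitely many derivative-shifts of the $\cB^{[\fM]}$-coefficients, the factor $(1+|\xi|)^m$, and the change of weight sequence all combine into a single estimate of the correct form. Everything else is a matter of unwinding definitions.
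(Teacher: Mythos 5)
Items (1)--(5) are indeed routine and your sketches are correct; your argument for (6) is also essentially the paper's (reduce to \Cref{FourierChar} and \Cref{WF-M Charakterisierung} via a finite conic cover of the sphere, using the total order of $\fM$ to dominate the finitely many sequences that arise). The problem is in (7), and it sits exactly at the point you flag as ``the only place real work is needed''. You fix a \emph{single} cutoff $\psi\in\cD(V)$ and assert that multiplication by a coefficient $b\in\cB^{[\fM]}(V)$ is a convolution ``whose kernel $\widehat{b\psi}$ decays like an $[\fM]$-rate''. This fails whenever $\fM$ is quasianalytic: $b\psi$ is then merely a compactly supported smooth function (it cannot lie in $\cE^{[\fM]}$ unless it vanishes), so $\widehat{b\psi}$ decays only rapidly, not like $Q^N M_N|\eta|^{-N}$ uniformly in $N$. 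Consequently the far region $|\eta|\ge c|\xi|$ in the convolution split cannot be bounded by $CQ^NM_N|\xi|^{-N}$ with $N$-independent constants, and the argument collapses precisely in the quasianalytic setting the paper is designed to cover (e.g.\ for the Holmgren theorem).

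The paper avoids this by never using a fixed cutoff: it works throughout with the $N$-dependent cutoffs $\chi_N$ of \eqref{TestfEst2}, for which the substitute decay \eqref{H8.4.4} holds; it reduces (7) to $\WF_{[\fM]}\p_j u\subseteq\WF_{[\fM]}u$ (immediate from \ref{def:Rregular}(1), resp.\ (3)) together with $\WF_{[\fM]}(au)\subseteq\WF_{[\fM]}u$, and proves the latter by the Leibniz computation showing that the products $\la_N=a\chi_N$ again satisfy \eqref{TestfEst2}, so that \Cref{WF-M Charakterisierung} applies verbatim to $\la_N u=\chi_N(au)$. Any repair of your argument has to take this route. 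Note also that the Beurling case is not symmetric to the Roumieu one: a coefficient $a\in\cE^{(\fM)}$ does not lie in $\cB^{\{\seq M\}}$ for any useful $\seq M\in\fM$, and one needs the auxiliary totally ordered family of sequences $\{\lhd)\,\fM$ constructed in \Cref{lem:Bchi} in order to verify \eqref{TestfEst2} for $a\chi_N$ and invoke part (2) of \Cref{WF-M Charakterisierung}; your uniform treatment of the two cases glosses over this.
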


All these properties also hold for $\WF_{[\om]} u$, in particular,
$\WF_{[\om]} u \subseteq \WF_{[\si]} u$ if $\om(t) = O(\si(t))$ 
and  
$\WF_{(\om)} u \subseteq \WF_{\{\si\}} u$ if $\om(t) = o(\si(t))$ as $t \to \infty$.

\begin{proof}
  The proof of (1)--(5) is straightforward. 

  (6) If we use \Cref{FourierChar} and \Cref{WF-M Charakterisierung}, then 
  this follows along the lines of the proof of \cite[Theorem 8.4.5]{Hoermander83I}.
  
  (7)
  We first prove the Roumieu case.
  If $\fM$ is R-semiregular, then \Cref{def:Rregular}\eqref{R-Derivclosed1} 
  implies $\WF_{\{\fM\}}\partial_j u\subseteq\WF_{\{\fM\}}u$. 
  Hence it suffices to show
  that $\WF_{\{\fM\}} au \subseteq \WF_{\{\fM\}} u$, where $a\in\cE^{\{\fM\}}$. 
  If $(x_0,\xi_0)\notin\WF_{\{\fM\}} u$, then by (1) there are a compact neighborhood
  $K$ of $x_0$ and a closed conic neighborhood $\Ga$ of $\xi_0$ such that
  $(K\times\Gamma)\cap \WF_{\{\fM\}}u=\emptyset$.
  Suppose that $\chi_N\in\D(K)$ satisfies \eqref{TestfEst2}
  and let $\seq M\in\fM$ be such that $a\vert_K\in\cE^{\{\seq M \}}(K)$.   
  Observe that, by \Cref{def:Rregular}\eqref{R-Derivclosed1}, 
  for each $k$ there is $\seq M^{(k)} \in \fM$ such that $M_{k+j} \le C_k^{j+1} M^{(k)}_j$ for all $j$.
  Moreover, for each $\seq M \in \fM$, $M_k^{1/k}$ is increasing. 
  Thus, for $\lvert\beta\rvert\leq N$ and arbitrary $\alpha$, (the constants change from line to line)
  \begin{align*}
  \bigl\lvert D^{\alpha+\beta}\bigl(a\chi_N\bigr)\bigr\rvert
  &\leq 2^{\lvert\alpha\rvert+\lvert\beta\rvert}
  \sum_{\gamma\leq\alpha+\beta}\bigl\lvert  D^{\alpha+\beta-\gamma}
  a\bigr\rvert\bigl\lvert D^\gamma\chi_N\bigr\rvert\\
  &\leq c_\alpha^{\lvert\beta\rvert+1}\sum_{\ga = \ga'+\ga'', \gamma^\prime\leq\alpha,
  \gamma^{\prime\prime}\leq\beta}
\bigl\lvert  D^{\alpha+\beta-\gamma}
a\bigr\rvert\bigl\lvert D^{\gamma^\prime+\ga^{\prime\prime}}\chi_N\bigr\rvert\\
&\leq c_\alpha^{\lvert\beta\rvert+1} \sum_{\substack{\gamma^\prime\leq\alpha\\
		\gamma^{\prime\prime}\leq\beta}}
	h^{\lvert\al\rvert+\lvert\be\rvert-\lvert\ga\rvert}
	M_{\lvert\al\rvert+\lvert\be\rvert-\lvert\ga\rvert}
	\, C_{\ga^\prime}
\left(C_{\ga^\prime}\sqrt[N]{M_N^{\ga^\prime}}\right)^{\lvert\ga^{\prime\prime}\rvert}
\\
	&\leq c_{\alpha}^{\lvert\beta\rvert+1}\sum_{\substack{\gamma^\prime\leq\alpha\\
			\gamma^{\prime\prime}\leq\beta}}
	h_{\alpha}^{\lvert\beta\rvert-\lvert\gamma^{\prime\prime}\rvert+1}
	M^{(|\al|)}_{\lvert\beta-\ga^{\prime\prime}\vert}
	C_{\ga^\prime}\left(C_{\ga^\prime}\sqrt[N]{M_N^{\ga^\prime}}\right)^{\lvert\ga^{\prime\prime}\rvert}\\
	&\leq c_{\alpha}^{\lvert\beta\rvert+1} \left(M_N^\prime\right)^{\tfrac{\lvert\beta\rvert}{N}},
  \end{align*}
  where $\seq M^\prime=\max\{\seq M^{(|\al|)},\seq M^{\gamma'} : \gamma'\leq\alpha\}$.
  Therefore $\lambda_N=a\chi_N\in\D(K)$ also satisfies \eqref{TestfEst2}. 
  Hence \eqref{TestfEst3} holds for $\lambda_N u = \ch_N au$ and some
  $\seq M^{\prime\prime}\in\fM$, by \Cref{WF-M Charakterisierung}, 
  that is, $(x_0,\xi_0)\not\in\WF_{\{\fM\}} au$.

  Let us prove the Beurling case.  
  If $\fM$ is B-semiregular, then \Cref{def:Rregular}\eqref{B-Derivclosed1} 
  implies $\WF_{(\fM)}\partial_j u\subseteq\WF_{(\fM)}u$. 
  We claim that $\WF_{(\fM)} au \subseteq \WF_{(\fM)} u$ if $a\in\cE^{(\fM)}$. 
  If $(x_0,\xi_0)\notin\WF_{(\fM)} u$, then there are a compact neighborhood
  $K$ of $x_0$ and a closed conic neighborhood $\Ga$ of $\xi_0$ such that
  $(K\times\Gamma)\cap \WF_{(\fM)}u=\emptyset$.
  By semiregularity, we have $(N!)_N \{ \lhd )\fM$ and there exist $\ch_N \in \cD(K)$ 
  which satisfy \eqref{TestfEst2} for 
  $\sqrt[N]{M^\al_N}$ replaced by $N$. 
  Since $a \in \cB^{(\fM)}(K)$ we are in the situation of \Cref{lem:Bchi} below 
  which provides a collection of sequences suitable to perform the 
  above computation.  
  It follows that for each $\al$ there is a sequence $\seq L^\al \{\lhd) \fM$ such that 
  $\lambda_N=a\chi_N\in\D(K)$ satisfies
  \eqref{TestfEst2} (with $\seq L^\al$ instead of $\seq M^\al$).
  An analogous statement holds for
  the collection $\{\seq {\tilde L}^m\}_m$, where $\tilde L^m_k:= \max_{|\al| \le m}	L^\al_k$,
  which is totally ordered and satisfies $\seq L^m \{\lhd) \fM$ for all $m$. 
 Thus \Cref{WF-M Charakterisierung}(2) implies 
  the analogue of 
  \eqref{TestfEst3} for $\lambda_N u = \ch_N au$
  for all $\seq M \in \fM$ and $Q>0$.	
  Hence	$(x_0,\xi_0)\not\in\WF_{(\fM)} au$.    
\end{proof}

\begin{lemma} \label{lem:Bchi}
	Let $\fM$ be a B-regular weight matrix, and let $a \in \cB^{(\fM)}(K)$ for some compact $K \subseteq \R^n$.
	Then there exists a collection of positive sequences $\fL$ with the following properties:
	\begin{enumerate}
			\item For each $\seq L \in \fL$ there exists $\seq L' \in \fL$ such that $(L_{k+1}/L'_k)^{1/(k+1)}$ is bounded.
			\item $\fL \{\lhd) \fM$ and $a \in \cB^{\{\fL\}}(K)$.
			\item For each $\seq L \in \fL$ there exists a sequence $\seq {L''} \ge \seq L$ (not necessarily in $\fL$) such that 
			$\seq L'' \{\lhd ) \fM$ and $(L_k'')^{1/k}$ is increasing. Let $\fL'':= \{\seq L'': \seq L \in \fL\}$.
			\item If $\fF \subseteq \fL \cup \fL''$ is finite, then $\seq F := \max \fF$ defined by $F_k := \max_{\seq L \in \fF} L_k$  
			satisfies $\seq F  \{ \lhd ) \fM$. 
		\end{enumerate}	
\end{lemma}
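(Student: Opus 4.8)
The plan is to exhibit $\fL$ explicitly as a countable scale built from the derivative bounds of $a$ interlaced with a sparse cofinal subfamily of $\fM$, after which the ``soft'' items (3) and (4) become formal consequences of elementary facts about the relation $\lhd$. First I would invoke \Cref{rem:matrixass} to assume $\fM$ countable without changing $\cB^{(\fM)}(K)$, and fix a decreasing subfamily $\seq M^{(1)}\ge\seq M^{(2)}\ge\cdots$ of $\fM$ that is cofinal from below, so that checking $\fL\{\lhd)\fM$ reduces to checking $\seq L\lhd\seq M^{(j)}$ for all $\seq L\in\fL$ and $j\in\N$. Then I put $b_k:=\max\{k!,\ \sup_{x\in K,\,|\al|=k}|\p^\al a(x)|\}$ and $\seq B:=(b_k)$. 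Since $a\in\cB^{(\fM)}(K)$, for every $\seq M\in\fM$ and every $h>0$ there is a constant with $b_k\le C h^kM_k$; letting $h\to0$ gives $b_k^{1/k}/M_k^{1/k}\to0$, i.e.\ $\seq B\lhd\seq M$ for all $\seq M\in\fM$.

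\emph{The scale (items (1) and (2)).} Using B-semiregularity, i.e.\ \ref{def:Rregular}\eqref{B-Derivclosed1}, together with cofinality from below, I would pass to a sufficiently sparse subfamily $\seq M^{(j_1)}\ge\seq M^{(j_2)}\ge\cdots$ (still cofinal from below) along which the $\fM$-scale is derivation-closed in the form $M^{(j_{p+1})}_{k+1}\le C_p^{\,k}M^{(j_p)}_k$, and choose $\phi:\N\to\N$ with $\phi\to\infty$ and $b_k\le M^{(j_{\phi(k)})}_k$ eventually. For $p\in\N$ let $\seq L^{(p)}$ be a finite modification of the sequence $k\mapsto\bigl(b_k\,M^{(j_{\max\{p,\phi(k)\}})}_k\bigr)^{1/2}$ and set $\fL:=\{\seq L^{(p)}:p\in\N\}$. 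Then $\seq L^{(p)}\ge\seq B$, so $a\in\cB^{\{\seq L^{(p)}\}}(K)$; and for fixed $\seq M^{(j_q)}$, once $\max\{p,\phi(k)\}\ge q$ (so $M^{(j_{\max\{p,\phi(k)\}})}_k\le M^{(j_q)}_k$),
\[
\bigl(L^{(p)}_k\bigr)^{1/k}/\bigl(M^{(j_q)}_k\bigr)^{1/k}\ \le\ \Bigl(b_k^{1/k}/\bigl(M^{(j_q)}_k\bigr)^{1/k}\Bigr)^{1/2}\ \longrightarrow\ 0,
\]
which gives $\fL\{\lhd)\fM$, that is (2). For (1) one compares $\seq L^{(p)}$ at $k+1$ with a suitable $\seq L^{(q)}$ at $k$, using $b_{k+1}\le M^{(j_{\phi(k+1)})}_{k+1}$ and then the derivation-closedness of the $\fM$-scale to pull the index from $k+1$ back to $k$. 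This is the step I expect to be the main obstacle: the derivation-closedness pushes the scale ``upward'' while $\fL\{\lhd)\fM$ pushes it ``downward'', and one must prevent the constants $C_p$ from \ref{def:Rregular}\eqref{B-Derivclosed1} and the function $\phi$ from accumulating, which is precisely what forces the sparse choice of $\{j_p\}$ and of $\phi$; controlling the derivative shift $b_{k+1}$ against lower-index data may in addition require the Whitney/Taylor-remainder estimates built into $\cB^{(\fM)}(K)$ rather than only the sup-norm bounds.

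\emph{The formal items (3) and (4).} Given $\seq L\in\fL$ (already $\{\lhd)\fM$), put $L''_0:=L_0$ and $L''_k:=\bigl(\sup_{1\le i\le k}L_i^{1/i}\bigr)^{k}$ for $k\ge1$; then $L''_k\ge L_k$ and $(L''_k)^{1/k}=\sup_{1\le i\le k}L_i^{1/i}$ is increasing. To see $\seq L''\{\lhd)\fM$, fix $\seq M\in\fM$ and $\ve>0$, and choose $I$ with $L_i^{1/i}\le\ve M_i^{1/i}$ for $i>I$; since $M_k^{1/k}$ is increasing (\Cref{lem:basicM}(1)), $\sup_{1\le i\le k}L_i^{1/i}\le\max\{\max_{i\le I}L_i^{1/i},\ \ve M_k^{1/k}\}$, whence $(L''_k)^{1/k}/M_k^{1/k}\le\ve$ for all large $k$ (using $M_k^{1/k}\to\infty$). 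Set $\fL'':=\{\seq L'':\seq L\in\fL\}$. Finally, if $\fF\subseteq\fL\cup\fL''$ is finite and $\seq F=\max\fF$, then for any $\seq M\in\fM$ one has $F_k^{1/k}/M_k^{1/k}=\max_{\seq L\in\fF}L_k^{1/k}/M_k^{1/k}\to0$ as a maximum of finitely many null sequences; this is (4). Note that for (1) the witness $\seq L'$ may be chosen among $\fL$ itself, so no compatibility with $\fL''$ beyond (4) is needed.
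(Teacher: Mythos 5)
Your treatment of items (3) and (4) is correct and coincides with the paper's: the running maximum $(L''_k)^{1/k}:=\max_{j\le k}L_j^{1/j}$, the split of the maximum at a threshold $j_0$ together with $M_k^{1/k}\nearrow\infty$, and the observation that a finite maximum of null sequences is null are exactly the arguments used there. The problem is item (1), which you yourself flag as ``the main obstacle'' and then leave unresolved; as it stands this is a genuine gap, not a routine detail. With your geometric-mean construction $L^{(p)}_k=\bigl(b_k\,M^{(j_{\max\{p,\phi(k)\}})}_k\bigr)^{1/2}$, proving $L^{(p)}_{k+1}\le C^{k+1}L^{(q)}_k$ requires controlling $b_{k+1}$ by an expression still containing the factor $b_k$, and there is no reason the sequence $b_k$ of derivative sup-norms of a general $a\in\cB^{(\fM)}(K)$ is derivation-closed with any uniform constant. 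Trading $b_{k+1}$ for $M^{(j_{\phi(k+1)})}_{k+1}$ and then shifting the index down replaces the factor $b_k$ in the target by a full weight sequence from $\fM$, which would destroy $\fL\{\lhd)\fM$. So the obstacle is structural to your choice of $\fL$, not just a matter of choosing $\{j_p\}$ and $\phi$ sparsely enough.

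The paper avoids this entirely by a much simpler device: set $L_k:=\max\{\sup_{x\in K,|\al|\le k}|\p^\al a(x)|,\,k!\}$ and take $\fL:=\{\seq L^\nu\}_{\nu\in\N}$ with $L^\nu_k:=L_{k+\nu}$. Then item (1) is tautological, since $L^\nu_{k+1}=L^{\nu+1}_k$ exactly, so the witness $\seq L'$ is just the next shift. All the work migrates into item (2), where it is the B-regularity of $\fM$ --- condition \ref{def:Rregular}\eqref{B-Derivclosed1} iterated $\nu$ times to get $M'_{k+\nu}\le C^{k+\nu}M_k$ --- that absorbs the index shift:
\[
\frac{(L^\nu_k)^{1/k}}{M_k^{1/k}}=\frac{L_{k+\nu}^{1/k}}{M_k^{1/k}}\le C^{1+\nu/k}\Bigl(\frac{L_{k+\nu}^{1/(k+\nu)}}{(M'_{k+\nu})^{1/(k+\nu)}}\Bigr)^{1+\nu/k}\longrightarrow 0 ,
\]
using only $\seq L\{\lhd)\fM$, which you did establish. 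In other words, the derivation-closedness needed for (1) should be imposed on the weight matrix $\fM$ (where the hypothesis provides it) rather than manufactured for the sequence $b_k$ (where it is unavailable). If you replace your scale by these shifts, the rest of your argument goes through.
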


\begin{proof}
	Let us define $\seq L$ by 
	\[
		L_k := \max \Big\{ \sup_{x \in K, |\al| \le k} |\p^\al a(x)|, k!\Big\}, \quad   k \in \N. 
	\]
	For $\nu \ge 1$ set $L^\nu_k := L_{k +\nu}$ and $\fL := \{\seq L^\nu\}_{\nu \in \N}$ with $\seq L^0:=\seq L$. 
	Then $\fL$ satisfies (1). 
	
	Clearly, $a \in \cB^{\{\seq L^0\}}(K)$. Let $\seq M \in \fM$ and $\nu \in \N$. 
	Since $\fM$ is B-regular, there exists $\seq M' \in \fM$ and $C>0$
	such that $M'_{k+\nu} \le C^{k+\nu} M_k$ for all $k$. Then  
	\[
		\frac{(L^\nu_k)^{1/k}}{M_k^{1/k}} = \frac{L_{k+\nu}^{1/k}}{M_k^{1/k}}  \le C^{1+\nu/k}  \frac{L_{k+\nu}^{1/k}}{(M'_{k+\nu})^{1/k}} 
		= C^{1+\nu/k}  \Big(\frac{L_{k+\nu}^{1/(k+\nu)}}{(M'_{k+\nu})^{1/(k+\nu)}} \Big)^{1 + \nu /k}
	\] 
	tends to $0$ as $k \to \infty$, since $\seq L \{ \lhd ) \fM$ by assumption. This implies (2).

	Given $\seq L \in \fL$ we define $\seq L''$ by setting $L''_0 := 1$ and
	\[
		(L''_k)^{1/k} := \max\{L_j^{1/j} : j \le k\}, \quad k \ge 1.
	\]
	Then $(L''_k)^{1/k}$ is increasing and $\seq L''\ge \seq L$. For $M \in \fM$ and $\ep >0$ there exists $j_0$ such that 
	$\frac{(L_j)^{1/j}}{M_j^{1/j}} < \ep$ for all $j \ge j_0$, since $L \lhd M$. Then,  for $k \ge j_0$,
	\begin{align*}
		\frac{(L''_k)^{1/k}}{M_k^{1/k}} 
		&= \max \Big\{\frac{\max\{L_j^{1/j} : j < j_0\}}{M_k^{1/k}}, \frac{\max\{L_j^{1/j} : j_0 \le j \le k\}}{M_k^{1/k}}\Big\}
		\\
		&\le \max \Big\{\frac{\max\{L_j^{1/j} : j < j_0\}}{M_k^{1/k}}, \max\Big\{\frac{L_j^{1/j}}{M_j^{1/j}} : j_0 \le j \le k\Big\}\Big\}
		\\
		&\le \max \Big\{\frac{\max\{L_j^{1/j} : j < j_0\}}{M_k^{1/k}}, \ep\Big\}
	\end{align*}
	which equals $\ep$ if $k$ is large enough, since $M_k^{1/k} \nearrow \infty$. This shows $\seq L'' \{\lhd) \fM$ and hence (3). 

	(4) follows easily from  $\fL \{\lhd) \fM$ and $\fL'' \{\lhd) \fM$.
\end{proof}

\begin{proposition}\label{WF-Intersection}
	Let $\fM$ be a weight matrix satisfying \Cref{def:Rregular}(0) and $u\in\D^\prime(\Omega)$.
\begin{enumerate}
\item We have 
\[
\WF_{\{\fM\}} u = \bigcap_{\seq M \in \fM} \WF_{\{\seq M\}} u 
\quad \text{ and } \quad
\WF_{(\fM)} u = \overline{\bigcup_{\seq M \in \fM} \WF_{(\seq M)} u}.
\]
\item If for all $\seq M \in \fM$ there is $\seq M' \in \fM$ such that $\seq M \lhd \seq M'$ then
\[
\WF_{\{\fM\}} u = \bigcap_{\seq M \in \fM} \WF_{(\seq M)} u. 
\]
\item If for all $\seq M \in \fM$ there is $\seq M' \in \fM$ such that $\seq M' \lhd \seq M$ then
\[
\WF_{(\fM)} u = \overline{\bigcup_{\seq M \in \fM} \WF_{\{\seq M\}} u}.
\]
\end{enumerate}
\end{proposition}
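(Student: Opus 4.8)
The plan is to reduce everything to the definitions together with \Cref{WFproperties} and \Cref{WF-M Charakterisierung}; only one inclusion in part (1) requires a genuine argument, the rest being bookkeeping with quantifiers.

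For the first identity in (1), observe that in \Cref{WF-Definition}(1) all the quantifiers are existential, so the choice of $\seq M\in\fM$ may be pulled to the front: $(x_0,\xi_0)\notin\WF_{\{\fM\}}u$ if and only if $(x_0,\xi_0)\notin\WF_{\{\seq M\}}u$ for some $\seq M\in\fM$, which is precisely $\WF_{\{\fM\}}u=\bigcap_{\seq M\in\fM}\WF_{\{\seq M\}}u$. For the inclusion ``$\supseteq$'' in the second identity, any data $(V,\Gamma,(u_N)_N)$ witnessing $(x_0,\xi_0)\notin\WF_{(\fM)}u$ also witnesses $(x_0,\xi_0)\notin\WF_{(\seq M)}u$ for each single $\seq M\in\fM$ (equivalently, $\fM(\preceq)\{\seq M\}$ and \Cref{WFproperties}(3)); hence $\bigcup_{\seq M\in\fM}\WF_{(\seq M)}u\subseteq\WF_{(\fM)}u$, and since $\WF_{(\fM)}u$ is closed by \Cref{WFproperties}(1), also $\overline{\bigcup_{\seq M\in\fM}\WF_{(\seq M)}u}\subseteq\WF_{(\fM)}u$.

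The one substantial point — and the main obstacle — is the reverse inclusion $\WF_{(\fM)}u\subseteq\overline{\bigcup_{\seq M\in\fM}\WF_{(\seq M)}u}$, i.e.\ manufacturing a \emph{single} localizing sequence that is good for all $\seq M\in\fM$ at once. Assume $(x_0,\xi_0)$ lies outside the closure; since the $\WF$ sets are closed and conic (\Cref{WFproperties}(1)), there are a compact neighborhood $K$ of $x_0$ and a closed cone $F$ with $\xi_0\in\on{int}F$ such that $(K\times F)\cap\WF_{(\seq M)}u=\emptyset$ for \emph{every} $\seq M\in\fM$. By \ref{def:Rregular}(0) we have $(N!)_N\{\lhd)\fM$, so — exactly as in the Beurling part of the proof of \Cref{WFproperties}(7) — there are cut-offs $\chi_N\in\D(K)$ with $\chi_N\equiv 1$ on a fixed neighborhood $V$ of $x_0$ satisfying \eqref{TestfEst2} with $\sqrt[N]{M^\al_N}$ replaced by $N$. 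Now fix any $\seq M\in\fM$ and apply \Cref{WF-M Charakterisierung}(2) to the one-element weight matrix $\{\seq M\}$; this is legitimate because $(N!)_N\lhd\seq M$, and it yields that $(\chi_N u)_N$ is bounded in $\cE^{\prime,\mu}$ and that \eqref{TestfEst3} holds for this $\seq M$ and all $Q>0$. Since $\chi_N$ does not depend on $\seq M$, the single sequence $u_N:=\chi_N u$ (which coincides with $u$ on $V$) satisfies \eqref{WF-defining} on $V\times\on{int}F$ for every $\seq M\in\fM$ and every $Q>0$, so $(x_0,\xi_0)\notin\WF_{(\fM)}u$. This proves (1).

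Parts (2) and (3) are then formal consequences of (1) and \Cref{WFproperties}(2),(4). For (2): ``$\subseteq$'' follows from the first identity in (1) together with $\WF_{\{\seq M\}}u\subseteq\WF_{(\seq M)}u$ (\Cref{WFproperties}(2) for single sequences); for ``$\supseteq$'', given $(x_0,\xi_0)\in\bigcap_{\seq M\in\fM}\WF_{(\seq M)}u$ and an arbitrary $\seq M\in\fM$, choose $\seq M'\in\fM$ with $\seq M\lhd\seq M'$, so that $(x_0,\xi_0)\in\WF_{(\seq M')}u\subseteq\WF_{\{\seq M\}}u$ by \Cref{WFproperties}(4) applied to the single-element matrices (since $\seq M\lhd\seq M'$), and intersect over $\seq M$. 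For (3): ``$\supseteq$'' follows from $\WF_{\{\seq M\}}u\subseteq\WF_{(\seq M)}u\subseteq\WF_{(\fM)}u$ and closedness of $\WF_{(\fM)}u$; for ``$\subseteq$'', rewrite $\WF_{(\fM)}u=\overline{\bigcup_{\seq M\in\fM}\WF_{(\seq M)}u}$ by (1), pick for each $\seq M$ some $\seq M'\in\fM$ with $\seq M'\lhd\seq M$, use \Cref{WFproperties}(4) to get $\WF_{(\seq M)}u\subseteq\WF_{\{\seq M'\}}u$, and conclude $\bigcup_{\seq M}\WF_{(\seq M)}u\subseteq\bigcup_{\seq M'}\WF_{\{\seq M'\}}u$, whence the claim on passing to closures.
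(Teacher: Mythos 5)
Your proof is correct and follows essentially the same route as the paper: the only substantive step is the inclusion $\WF_{(\fM)}u\subseteq\overline{\bigcup_{\seq M}\WF_{(\seq M)}u}$, which the paper also handles by fixing cut-offs $\chi_N$ satisfying \eqref{TestfEst2} with $\sqrt[N]{M^\al_N}$ replaced by $N$ (using \ref{def:Rregular}(0) to get $(N!)_N\lhd\seq M$ for all $\seq M$) and then invoking \Cref{WF-M Charakterisierung}(2) for each $\seq M$ separately, with (2) and (3) deduced formally from (1) and \Cref{WFproperties}(2)\&(4). Your explicit application of the lemma to the one-element matrices $\{\seq M\}$ is exactly the correct reading of the paper's terser statement.
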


\begin{proof}
(1) The first identity is clear from the definition.
So is the inclusion 
$\overline{\bigcup_{\seq M \in \fM} \WF_{(\seq M)} u} \subseteq \WF_{(\fM)} u$, since the wave front set is closed. 
Now assume that $(x_0,\xi_0) \not\in \overline{\bigcup_{\seq M \in \fM} \WF_{(\seq M)} u}$. 
Then there exist a compact neighborhood $K$ of $x_0$ and a closed conic neighborhood $\Ga$ of $\xi_0$ such 
that 
\[
(K \times \Ga) \cap \overline{\bigcup_{\seq M \in \fM} \WF_{(\seq M)} u} = \emptyset 
\]
and hence $(K \times \Ga) \cap \WF_{(\seq M)} u = \emptyset$ for all $\seq M \in \fM$.
That $\fM$ satisfies \Cref{def:Rregular}(0) guarantees that 
$(N!)_N  \lhd  \seq M$ for all $\seq M \in \fM$.
Let $\ch_N \in \cD(K)$ satisfy \eqref{TestfEst2} for $\sqrt[N]{M^\al_N}$ replaced by $N$.
 Then, by \Cref{WF-M Charakterisierung},  
for all $\seq M \in \fM$ and all $Q>0$
\[
\sup_{\xi \in \Ga, N \in \N} \frac{|\xi|^N |\widehat { \ch_N u} (\xi) | }{Q^N M_N} < \infty,
\]
i.e., $(x_0,\xi_0) \not\in \WF_{(\fM)}u$.
This shows (1). Now (2) and (3) follow easily from (1) and \Cref{WFproperties}(2)\&(4).
\end{proof}

\subsection{Description of the wave front set by boundary values of holomorphic functions}

Let $\Gamma\subseteq\R^n$ be an open convex cone and  
set $\Gamma_r:=\{y\in\Gamma :  | y|<r\}$ for $r>0$.
A function $g\in C^1(\Omega\times\Gamma_r)$ is said to be of \emph{slow growth} if
there exist $c>0$ and $k\geq 0$ such that
\begin{equation*}
| g(x,y)|\leq c| y|^{-k}, \quad \text{ for } y \in \Ga_r. 
\end{equation*} 
If $g$ is of slow growth, then $\lim_{\Gamma_r\ni\ep\rightarrow 0} g(\cdot,\ep)$ exists in the sense 
of distributions.
We call this limit the \emph{boundary value $b_\Gamma g$} of $g$. 

Let us define
\[
  I(\xi) :=\int_{| \omega|=1}\! e^{-\omega\xi}\,d\omega.
\]
For $n = 1$ we have $I(\xi) = 2 \cosh \xi$ and, for $n>1$, $I(\xi) = I_0 (\langle \xi,\xi\rangle^{1/2})$, where
\begin{align*}
I_0(\rho) =c_{n-1}\int_{-1}^{1}\!\bigl(1-t^2\bigr)^{(n-3)/2}e^{t\rho}\,d\rho
\end{align*}
and $c_{n-1}$ denotes the area of $S^{n-2}$.
Finally, set
\[
  K(z):=(2\pi)^{-n}\int e^{iz\xi}/I(\xi)\,d\xi, \quad z \in X := \{z \in \C^n : |\Im z|< 1\}. 
\]
We recall the content of \cite[Lemma 8.4.9 and Lemma 8.4.10]{Hoermander83I}:
$I_0$ is an even entire function such that for every $\ep>0$ we have
\begin{equation}\label{Section1AuxEst}
I_0(\rho)=(2\pi)^{(n-1)/2} e^{\rho}\rho^{-(n-1)/2}\bigl(1+O(1/\rho)\bigr) \quad \text{ as } \rh \to \infty, |\arg \rho|<\pi/2-\ep.  
\end{equation}
There is a constant $C>0$ such that
\begin{equation*}
| I_0(\rho)|\leq C(1+|\rho|)^{-(n-1)/2}e^{|\real\rho|},\quad \text{ for } \rho\in\C.
\end{equation*}
The function $K$ is analytic in the connected open set
\begin{equation*}
 \tilde{X} :=\bigl\{z\in\C^n : \langle z,z\rangle\notin(-\infty,-1]\bigr\} \supseteq X.
\end{equation*}
For any closed cone $\Gamma\subseteq\tilde{X}$ such that $\langle z,z\rangle$ is never $\leq 0$ for $z\in\Gamma\!\setminus\!\{0\}$
there is some $c>0$ such that $K(z)=O(e^{-c| z|})$ as $z\rightarrow\infty$ in $\Gamma$.
We have for real $x$ and $y$
\begin{equation*}
| K(x+iy)|\leq K(iy)=(n-1)!(2\pi)^{-n}(1-| y|)^{-n}(1+O(1-| y|)),\quad \text{ as }  | y| \nearrow 1.
\end{equation*}

The following theorem is a generalization of \cite[Theorem 8.4.11]{Hoermander83I}.

\begin{theorem}\label{BasicDecomposition} \label{WFThmChar1}
If $u\in\S^\prime(\R^n)$ and $U=K\ast u$, then $U$ is analytic in 
$X$ and there exist $C,a,b$ such that
  \begin{equation}\label{convest}
  | U(z)|\leq C\bigl(1+| z|\bigr)^{a}\bigl(1-|\imag z|)^{-b},\quad z\in X.
  \end{equation} 
  The boundary values $U(\cdot+i\omega)$ are continuous functions of $\omega\in S^{n-1}$ with values in $\S^\prime(\R^n)$, and
  \begin{equation}\label{DefConv}
  \langle u,\varphi\rangle=\int_{S^{n-1}}\!\langle U(\cdot+i\omega),\varphi\rangle\,d\omega,\quad \varphi\in \S.
  \end{equation}
On the other hand, if $U$ is given satisfying \eqref{convest}, then the formula \eqref{DefConv} defines a distribution $u\in\S^\prime$ with $U=K\ast u$.

  For all
  $[$semiregular$]$ weight matrices $\fM$ we have
  \begin{equation*}
  \bigl(\R^n\times S^{n-1}\bigr)\cap \WF_{[\fM]} u= \bigl\{(x,\omega) : |\omega|=1,\, 
  U\text{ is not in }\cE^{[\fM]} \text{ at } x-i\omega\bigr\}.
  \end{equation*}
\end{theorem}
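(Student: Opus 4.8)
The statement has three assertions bundled together. The first two paragraphs (analyticity of $U=K*u$ in $X$, the bound \eqref{convest}, continuity of the boundary values, the reconstruction formula \eqref{DefConv}, and the converse) are exactly \cite[Theorem 8.4.11]{Hoermander83I} and require no change; I would simply recall that proof verbatim, using the properties of $K$ listed just above (analyticity in $\widetilde X$, the estimate $|K(x+iy)|\le K(iy)$ as $|y|\nearrow 1$, and the exponential decay $K(z)=O(e^{-c|z|})$ on closed subcones avoiding $\langle z,z\rangle\le 0$). The genuinely new content is the identification of $(\R^n\times S^{n-1})\cap\WF_{[\fM]}u$, and this is where the work lies; I would treat the Roumieu and Beurling cases in parallel, writing $[\fM]$ and pointing out the one place where semiregularity enters differently.

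For the wave front set identity I would follow the scheme of \cite[Theorem 8.4.11]{Hoermander83I} but feed in \Cref{FourierChar} and \Cref{WF-M Charakterisierung} in place of their single-weight-sequence analogues. One inclusion: suppose $U$ is of class $\cE^{[\fM]}$ near $x_0-i\omega_0$ for all $\omega_0$ in a neighborhood of a fixed $\omega_*\in S^{n-1}$; decomposing $u$ via \eqref{DefConv} and using that only the $\omega$ near $\omega_*$ contribute to the singularity in the direction $\omega_*$ (the other pieces are boundary values of functions holomorphic near $x_0+i\omega_*\cdot 0$, hence smooth, in fact real-analytic, microlocally away from $\R\omega_*$), I would produce cutoffs $\chi_N$ satisfying \eqref{TestfEst2} and estimate $\widehat{\chi_N u}$ on a cone around $\omega_*$ by deforming the contour into the complex and using \eqref{convest} together with the $\cE^{[\fM]}$-bounds on $U$; the Paley–Wiener-type estimate that comes out is precisely \eqref{TestfEst3}, so $(x_0,\omega_*)\notin\WF_{[\fM]}u$. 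Conversely, if $(x_0,\omega_*)\notin\WF_{[\fM]}u$, I would use \Cref{WF-M Charakterisierung} to get a bounded sequence $\chi_Nu$ with Fourier transform decaying like $Q^NM_N|\xi|^{-N}$ on a conic neighborhood $\Gamma$ of $\omega_*$, split $u=\chi_Nu+(1-\chi_N)u$ inside the convolution $K*u$, and show that $K*(\chi_Nu)$ extends $\cE^{[\fM]}$-regularly across $x_0-i\omega_*$ by estimating $\int e^{iz\xi}\widehat{\chi_Nu}(\xi)/I(\xi)\,d\xi$: on $\Gamma$ one uses the rapid decay of $\widehat{\chi_Nu}$, off $\Gamma$ one uses $1/I(\xi)=O(e^{-|\xi|})$ (from \eqref{Section1AuxEst}) against the polynomial bound \eqref{BanachSteinhaus}, and the term $K*((1-\chi_N)u)$ is analytic near $x_0-i\omega_*$ because $(1-\chi_N)u$ vanishes near $x_0$ and $K$ decays exponentially. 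Assembling the $N$-dependence via \Cref{FourierChar} (Roumieu) or its Beurling analogue gives $U\in\cE^{[\fM]}$ at $x_0-i\omega_*$.

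The main obstacle is bookkeeping the weight-matrix quantifiers through the contour-deformation estimates: in H\"ormander's single-sequence argument one tracks a single pair $(Q,\seq M)$, whereas here each application of \ref{def:Rregular}\eqref{R-Derivclosed1} (resp.\ \eqref{B-Derivclosed1}) to absorb derivatives of $\chi_N$ or to pass from $\widehat{\chi_Nu}$-estimates back to $C^\infty$-bounds on $U$ changes the sequence in $\fM$, and in the Beurling case one must instead produce, for \emph{every} $\seq M\in\fM$ and $Q>0$, a constant making the estimate work — which forces the use of auxiliary totally ordered collections $\seq M^\alpha$ with $\{\seq M^\alpha\}\{\lhd)\fM$ exactly as in \Cref{WF-M Charakterisierung}(2) and \Cref{lem:Bchi}. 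I expect the Beurling direction (converse inclusion) to be the delicate one: one needs the $\{\lhd)$-slack to convert a family of estimates indexed by sequences strictly below $\fM$ into genuine $\cE^{(\fM)}$-regularity via \Cref{FourierChar}(2), and care is needed that the contour deformation does not cost more than this slack. Everything else — the analyticity statements, the decay of $K$, the slow-growth boundary-value formalism — is routine once \cite[Lemmas 8.4.9, 8.4.10]{Hoermander83I} are cited.
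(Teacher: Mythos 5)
Your proposal follows exactly the route the paper takes: the paper's entire justification for this theorem is that it ``follows from a straightforward modification of the proof of \cite[Theorem 8.4.11]{Hoermander83I} using [semiregularity] of $\fM$'', and your plan is precisely that modification, with \Cref{FourierChar} and \Cref{WF-M Charakterisierung} substituted for their single-weight-sequence analogues and the quantifier bookkeeping for the Beurling case handled as in \Cref{lem:Bchi}. Your write-up is in fact more detailed than the paper's, and I see no gap in it.
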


This follows from a straightforward modification of the proof in \cite{Hoermander83I}
using [semiregularity] of $\fM$.
The same applies to the following corollary.

\begin{corollary}\label{DecompCor}
Let $\Gamma^1,\dotsc\Gamma^N\subseteq\R^n\!\setminus\!\{0\}$ be closed cones such that $\bigcup_j\Gamma^j=\R^n\!\setminus\!\{0\}$.
Any $u\in\S^\prime(\R^n)$ can be written $u=\sum u_j$, where $u_j\in\S^\prime$ and
\begin{equation}\label{WFdecomp}
\WF_{[\fM]}u_j\subseteq\WF_{[\fM]}u\cap\bigl(\R^n\times\Gamma^j\bigr).
\end{equation}
If $u=\sum u^\prime_j$ is another such decomposition, then $u^\prime_j=u_j+\sum_k u_{jk}$, where $u_{jk}\in\S^\prime$, $u_{jk}=-u_{kj}$ and
\begin{equation*}
\WF_{[\fM]}u_{jk}\subseteq \WF_{[\fM]}u\cap\bigl(\R^n\times\bigl(\Gamma^j\cap\Gamma^k\bigr)\bigr).
\end{equation*}
\end{corollary}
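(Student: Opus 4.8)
The plan is to establish first the existence of a decomposition $u=\sum_j u_j$ with the stated wave front bound, and then to obtain the rigidity statement from it by an induction on $N$. For existence I would decompose the boundary-value representation of $u$ furnished by \Cref{WFThmChar1}. Writing $U=K\ast u$, that theorem gives: $U$ analytic in $X$ with the bound \eqref{convest}, the assignment $\omega\mapsto U(\cdot+i\omega)$ continuous from $S^{n-1}$ into $\S^\prime(\R^n)$, the reconstruction formula \eqref{DefConv}, and the identity $(\R^n\times S^{n-1})\cap\WF_{[\fM]}u=\{(x,\omega):|\omega|=1,\ U\notin\cE^{[\fM]}\text{ near }x-i\omega\}$. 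Since the closed cones $\Gamma^j$ cover $\R^n\setminus\{0\}$, so do the closed cones $-\Gamma^j$, and therefore $S_j:=\bigl((-\Gamma^j)\setminus\bigcup_{i<j}(-\Gamma^i)\bigr)\cap S^{n-1}$ defines a Borel partition $S^{n-1}=\bigsqcup_j S_j$ with $\overline{S_j}\subseteq(-\Gamma^j)\cap S^{n-1}$. I set $u_j:=\int_{S_j}U(\cdot+i\omega)\,d\omega$, defined just as in \eqref{DefConv} with $S^{n-1}$ replaced by $S_j$ and hence lying in $\S^\prime$; by \eqref{DefConv}, $\sum_j u_j=u$. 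Only Borel measurability of $S_j$ is used here, since one merely integrates in $\omega$; this avoids the (in general false) existence of a smooth partition of unity on $S^{n-1}$ subordinate to the closed cones $-\Gamma^j$.

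Next I would prove the two inclusions $\WF_{[\fM]}u_j\subseteq\R^n\times\Gamma^j$ and $\WF_{[\fM]}u_j\subseteq\WF_{[\fM]}u$, whose conjunction is the assertion. The first rests on the standard fact from the theory of boundary values of holomorphic functions (cf.\ \cite[Section~8.4]{Hoermander83I}) that for any Borel $S\subseteq S^{n-1}$ the superposition $\int_{S}U(\cdot+i\omega)\,d\omega$ has wave front set contained in $\R^n$ times the closed cone generated by $-\overline S$; in the class $[\fM]$ this is justified exactly as in the proof of \Cref{WFThmChar1} (the $\cE^{[\fM]}$-analogue of the Bochner--Martinelli estimates, using $[$semiregularity$]$). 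Applied to $S_j$ this gives the first inclusion, since $\overline{S_j}\subseteq(-\Gamma^j)\cap S^{n-1}$. For the second, let $(x_0,\xi_0)\notin\WF_{[\fM]}u$; by the identity above and the openness of the complement of $\WF_{[\fM]}u$ there are a neighbourhood $V$ of $x_0$ and an open neighbourhood $W$ of $\xi_0/|\xi_0|$ in $S^{n-1}$ such that $U$ admits an $\cE^{[\fM]}$-extension to a $\C^n$-neighbourhood of every point $x-i\omega$ with $x\in V$, $\omega\in\overline W$, with estimates uniform in $\omega$. Splitting $S_j=(S_j\cap(-W))\sqcup(S_j\setminus(-W))$ induces $u_j=v_j+r_j$. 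On $V$ the term $v_j$ equals $\int_{S_j\cap(-W)}U(x+i\omega)\,d\omega$ with the honest values of that $\cE^{[\fM]}$-extension (for $\omega\in\overline{-W}$ one has $x+i\omega=x-i(-\omega)$ with $-\omega\in\overline W$); differentiating under the integral — legitimate by \Cref{AnalyticClosed}, since $(x,\omega)\mapsto x+i\omega$ is real analytic — and using the uniform estimates shows $v_j\in\cE^{[\fM]}$ near $x_0$, so $(x_0,\xi_0)\notin\WF_{[\fM]}v_j$. By the first inclusion applied to $S_j\setminus(-W)$, the wave front directions of $r_j$ lie in the closed cone generated by $-\overline{S_j\setminus(-W)}$; since $-\overline{S_j\setminus(-W)}\subseteq S^{n-1}\setminus W$ and $\xi_0/|\xi_0|\in W$, this cone does not contain $\xi_0$. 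Hence $(x_0,\xi_0)\notin\WF_{[\fM]}u_j$.

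For the rigidity statement I would prove, by induction on $M$, that whenever $w_1,\dots,w_M\in\S^\prime$ satisfy $\sum_j w_j=0$ and $\WF_{[\fM]}w_j\subseteq\cW\cap(\R^n\times\Gamma^j)$ for a fixed closed conic set $\cW$, there are $w_{jk}\in\S^\prime$ with $w_{jk}=-w_{kj}$, $\WF_{[\fM]}w_{jk}\subseteq\cW\cap(\R^n\times(\Gamma^j\cap\Gamma^k))$, and $w_j=\sum_{k\neq j}w_{jk}$. The case $M=1$ is trivial. For the step one notes $\WF_{[\fM]}w_M\subseteq\cW\cap\bigl(\R^n\times\bigcup_{j<M}(\Gamma^j\cap\Gamma^M)\bigr)$ because $w_M=-\sum_{j<M}w_j$; applying the existence part already proved to $w_M$, relative to the cones $\{\Gamma^j\cap\Gamma^M\}_{j<M}$ completed by one further closed cone disjoint from the directions occurring in $\WF_{[\fM]}w_M$ so as to cover $\R^n\setminus\{0\}$ (the surplus piece is then $\cE^{[\fM]}$ and can be absorbed, say into $w_{M1}$), one gets $w_M=\sum_{j<M}w_{Mj}$ with $\WF_{[\fM]}w_{Mj}\subseteq\cW\cap(\R^n\times(\Gamma^j\cap\Gamma^M))$. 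Put $w_{jM}:=-w_{Mj}$ and $\widetilde w_j:=w_j+w_{Mj}$ for $j<M$; then $\sum_{j<M}\widetilde w_j=0$ and $\WF_{[\fM]}\widetilde w_j\subseteq\cW\cap(\R^n\times\Gamma^j)$, so the inductive hypothesis supplies $w_{jk}$ for $j,k<M$ with $\widetilde w_j=\sum_{k<M,\ k\neq j}w_{jk}$, whence $w_j=\widetilde w_j+w_{jM}=\sum_{k\neq j}w_{jk}$ for every $j\le M$. Applying this with $\cW:=\WF_{[\fM]}u$ and $w_j:=u_j^\prime-u_j$ — which satisfy the hypotheses since $\WF_{[\fM]}u_j,\WF_{[\fM]}u_j^\prime\subseteq\WF_{[\fM]}u$ — and setting $u_{jk}:=w_{jk}$ yields $u_j^\prime=u_j+\sum_k u_{jk}$ with $u_{jk}=-u_{kj}$ and the required wave front bound.

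The one genuinely nonroutine ingredient is the first of the two inclusions above — the ultradifferentiable version of the classical statement that a superposition of boundary values of a holomorphic function has wave front set contained in the dual cone. Once this is secured by carrying the weight matrix through the argument behind \Cref{WFThmChar1} (which is precisely where $[$semiregularity$]$, together with \Cref{AnalyticClosed} and the Fourier-side description of $\cE^{[\fM]}$, enters), everything else is the bookkeeping indicated above: the uniform-in-$\omega$ estimates in the second inclusion and the cosmetic enlargement of the cone family in the induction step are routine.
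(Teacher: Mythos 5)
Your treatment of the existence part is sound and is exactly the route the paper intends: the paper's own ``proof'' of this corollary is a pointer to H\"ormander's Theorems 8.4.12--8.4.13, i.e.\ precisely the representation $U=K\ast u$ from \Cref{WFThmChar1}, the Borel partition $S_j$ of $S^{n-1}$ with $\overline{S_j}\subseteq(-\Gamma^j)\cap S^{n-1}$, and the bound $\WF_{[\fM]}\bigl(\int_{S}U(\cdot+i\omega)\,d\omega\bigr)\subseteq\WF_{[\fM]}u\cap\bigl(\R^n\times C(-\overline{S})\bigr)$, where $C(T)$ denotes the closed cone generated by $T$. Your two separate inclusions together amount to this combined bound.

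The rigidity part, however, has a genuine gap. In the induction step you complete the cones $\Gamma^j\cap\Gamma^M$, $j<M$, by ``one further closed cone disjoint from the directions occurring in $\WF_{[\fM]}w_M$''. Such a cone need not exist: any closed cone completing the covering must contain the closure of the complement of $D:=\bigcup_{j<M}(\Gamma^j\cap\Gamma^M)$, hence contains $\partial D$, and the singular directions of $w_M$, while contained in $D$, may well meet $\partial D$ (e.g.\ $n=2$, $\Gamma^1=\{\xi_2\ge0\}$, $\Gamma^2=\{\xi_2\le0\}$, $\Gamma^3=\{\xi_1\ge 0\}$, and a singularity of $w_3$ in the direction $(0,1)\in\partial D$). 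Then the surplus piece is not in $\cE^{[\fM]}$ and cannot be absorbed. The obstruction is structural: the ``refined decomposition'' along the non-covering cones $\Gamma^j\cap\Gamma^M$ that your induction requires does not follow from the covering version, because the wave front bound for $\int_S U(\cdot+i\omega)\,d\omega$ involves the \emph{closure} of $S$, and the closure of the leftover Borel set unavoidably reaches back into $\partial D$. The intended argument avoids induction entirely. With $w_j:=u_j'-u_j$ (so $\WF_{[\fM]}w_j\subseteq\WF_{[\fM]}u\cap(\R^n\times\Gamma^j)$), put $W_j:=K\ast w_j$, note $\sum_jW_j=K\ast 0=0$, and set, with the \emph{same} Borel partition $\{S_k\}$ as in the existence part,
\[
u_{jk}:=\int_{S_k}W_j(\cdot+i\omega)\,d\omega-\int_{S_j}W_k(\cdot+i\omega)\,d\omega .
\]
Antisymmetry is immediate; $\sum_k u_{jk}=\int_{S^{n-1}}W_j(\cdot+i\omega)\,d\omega-\int_{S_j}\bigl(\sum_kW_k\bigr)(\cdot+i\omega)\,d\omega=w_j-0=w_j$ by \eqref{DefConv}; and the combined bound you already established, applied to $w_j$ and $w_k$ in place of $u$, gives $\WF_{[\fM]}u_{jk}\subseteq\bigl(\WF_{[\fM]}w_j\cap(\R^n\times\Gamma^k)\bigr)\cup\bigl(\WF_{[\fM]}w_k\cap(\R^n\times\Gamma^j)\bigr)\subseteq\WF_{[\fM]}u\cap\bigl(\R^n\times(\Gamma^j\cap\Gamma^k)\bigr)$. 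So you have all the ingredients; only the final assembly needs to be replaced.
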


The next theorem generalizes \cite[Theorem 8.4.15]{Hoermander83I}; it suffices to follow the 
arguments in \cite{Hoermander83I}; recall that $\Ga^\circ := \{\xi \in \R^n : \langle y, \xi \rangle \ge 0 \text{ for all } y \in \Ga\}$ 
denotes the dual cone of $\Ga$.

\begin{theorem}\label{weakBound2}
Let $\fM$ be a $[$semiregular$]$ weight matrix. 
Let $\Gamma\subseteq\R^n\!\setminus\!\{0\}$ be an open convex cone and $u\in\D^\prime(\Omega)$ such that $\WF_{[\fM]}u\subseteq \Omega\setminus\Gamma^\circ$.
If $V\Subset\Omega$ and $\Gamma^\prime \subset \Gamma$ is an open convex cone with closure in $\Ga \cup \{0\}$, 
then there is a function $F$ holomorphic in $V+i\Gamma_\gamma^\prime$ of slow growth and $u\vert_V-b_{\Gamma^\prime}F \in \cE^{[\fM]}(V)$.
\end{theorem}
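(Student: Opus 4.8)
The plan is to adapt H\"ormander's proof of \cite[Theorem 8.4.15]{Hoermander83I}, feeding in the $[\fM]$-versions of the microlocal machinery developed above. \emph{Localization and representation.} Fix $\psi\in\cD(\Omega)$ with $\psi\equiv1$ on a neighbourhood $W$ of $\ol V$ and set $v:=\psi u\in\cE'(\R^n)\subseteq\S'(\R^n)$. Since $v=u$ on $W$ and $\WF_{[\fM]}$ is local in the base variable, $\WF_{[\fM]}v$ and $\WF_{[\fM]}u$ coincide over $\ol V$, so $v$ inherits the wave front hypothesis there, and it suffices to construct $F$ with $v|_V-b_{\Gamma'}F\in\cE^{[\fM]}(V)$. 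Applying \Cref{WFThmChar1} to $v$ (legitimate as $\fM$ is $[$semiregular$]$): $U:=K\ast v$ is holomorphic on $X=\{|\Im z|<1\}$, satisfies the slow-growth bound \eqref{convest}, one has $v=\int_{S^{n-1}}U(\cdot+i\omega)\,d\omega$ in $\S'$, and $U$ is of class $\cE^{[\fM]}$ near $x-i\omega$ exactly when $(x,\omega)\notin\WF_{[\fM]}v$ (for $|\omega|=1$).

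\emph{Splitting the sphere integral.} The geometric input is the inclusion $\Gamma^\circ\setminus\{0\}\subseteq\on{int}\bigl((\Gamma')^\circ\bigr)$: indeed $\overline{\Gamma'}\setminus\{0\}\subseteq\Gamma$ by assumption, and every $\xi\in\Gamma^\circ\setminus\{0\}$ satisfies $\langle\xi,y\rangle>0$ for all $y\in\Gamma$ (if $\langle\xi,y_0\rangle=0$ for some $y_0\in\Gamma$ then $y_0-t\xi\in\Gamma$ for small $t>0$, contradicting $\xi\in\Gamma^\circ$). Hence one may fix a closed set $S_+\subseteq S^{n-1}$ with
\[
	(-\Gamma^\circ)\cap S^{n-1}\ \subseteq\ S_+\ \subseteq\ \on{int}\bigl(-(\Gamma')^\circ\bigr)\cap S^{n-1},
\]
put $S_-:=S^{n-1}\setminus S_+$, and write $v=G_++G_-$ with $G_\pm:=\int_{S_\pm}U(\cdot+i\omega)\,d\omega$. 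The containment $(-\Gamma^\circ)\cap S^{n-1}\subseteq S_+$ forces $S_-$ to avoid $-\Gamma^\circ$, i.e.\ $-\omega\notin\Gamma^\circ$ for every $\omega\in S_-$.

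\emph{The two halves.} For the holomorphic half: by compactness there is $c_0>0$ with $\langle y,\omega\rangle\le-c_0|y|$ whenever $y\in\overline{\Gamma'}$ and $\omega\in S_+$; so for $y\in\Gamma'$ with $0<|y|<\gamma$ ($\gamma\le c_0$ fixed small) and $\omega\in S_+$ we get $|y+\omega|<1$ and $1-|y+\omega|\ge c_0|y|/2$. Thus $F(z):=\int_{S_+}U(z+i\omega)\,d\omega$ is holomorphic on $V+i\Gamma'_\gamma$, and \eqref{convest} with $V\Subset\Omega$ gives $|F(z)|\le C|\Im z|^{-b}$, i.e.\ $F$ is of slow growth; since $\omega\mapsto U(\cdot+i\omega)$ is continuous into $\S'$ one passes to the boundary under the integral, so $b_{\Gamma'}F=G_+$ in $\D'(V)$. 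For the regular half: for $\omega\in S_-$ we have $-\omega\notin\Gamma^\circ$, hence (by the hypothesis on $\WF_{[\fM]}u$) $(x,-\omega)\notin\WF_{[\fM]}v$ for all $x\in\ol V$, so \Cref{WFThmChar1} makes $U$ of class $\cE^{[\fM]}$ near $x+i\omega$; covering the compact set $\ol V\times S_-$ by finitely many patches on which these estimates are uniform yields $G_-\in\cE^{[\fM]}(V)$. Altogether $u|_V-b_{\Gamma'}F=G_-|_V\in\cE^{[\fM]}(V)$, which is the claim.

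I expect the principal obstacle to be the geometric bookkeeping in the choice of $S_+$: it must be large enough that its complement $S_-$ consists entirely of directions in which $U$ is $\cE^{[\fM]}$ (needed for $G_-\in\cE^{[\fM]}$), yet it must make a uniformly positive angle with $\overline{\Gamma'}$ (needed for $F$ to be holomorphic and of slow growth on $V+i\Gamma'_\gamma$) — and the inclusion $\Gamma^\circ\setminus\{0\}\subseteq\on{int}((\Gamma')^\circ)$, which uses $\overline{\Gamma'}\setminus\{0\}\subseteq\Gamma$, is precisely what reconciles the two. A secondary point deserving care is that \Cref{WFThmChar1} furnishes $\cE^{[\fM]}$-regularity of $U$ only pointwise in $\omega$, whereas the conclusion $G_-\in\cE^{[\fM]}(V)$ needs those estimates to be locally uniform in $\omega\in S_-$; this should follow from the proof of \Cref{WFThmChar1}.
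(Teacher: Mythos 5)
Your argument is correct and is essentially the paper's own proof: the paper simply declares that it suffices to follow H\"ormander's proof of Theorem 8.4.15 with \Cref{WFThmChar1} in place of Theorem 8.4.11, and that is precisely what you carry out (splitting the sphere integral in $v=\int_{S^{n-1}}U(\cdot+i\omega)\,d\omega$ into the directions $\omega$ with $-\omega\in\Gamma^\circ$, which produce the holomorphic part on $V+i\Gamma'_\gamma$, and the remaining directions, which produce the $\cE^{[\fM]}$ part). Note that you have implicitly --- and correctly --- read the hypothesis as $\WF_{[\fM]}u\subseteq\Omega\times\Gamma^\circ$; the ``$\Omega\setminus\Gamma^\circ$'' in the statement is evidently a misprint, since your reading is the one that matches H\"ormander's Theorem 8.4.15 and the way \Cref{weakBound2} is applied in \Cref{SemiBVChar} and \Cref{1dimUniq}.
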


Combining \Cref{weakBound2} with \Cref{DecompCor} and \cite[Theorem 8.4.8]{Hoermander83I} yields:

\begin{corollary}\label{SemiBVChar} 
Let $\fM$ be a $[$semiregular$]$ weight matrix.
Let $u\in\D^\prime(\Omega)$ and $(x_0,\xi_0)\in T^\ast\Omega \setminus \{0\}$.
Then $(x_0,\xi_0)\notin\WF_{[\fM]} u$ if and only if there exist a neighborhood $V$ of $x_0$, $f\in\cE^{[\fM]}(V)$,
open cones $\Gamma^1,\dotsc,\Gamma^q$ with the property $\xi_0\Gamma^j<0$ for all $j$, and holomorphic functions
$F_j\in\O(\Omega+i\Gamma^j_{\delta})$ of slow growth such that
\begin{equation*}
u\vert_V=f+\sum_{j=1}^qb_{\Gamma^j}F_j.
\end{equation*}
\end{corollary}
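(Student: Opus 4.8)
The plan is to deduce the corollary by assembling the three ingredients listed just before its statement — \Cref{DecompCor}, \Cref{weakBound2}, and the $[\fM]$--form of the boundary value estimate $\WF_{[\fM]}(b_\Gamma F)\subseteq V\times\Gamma^\circ$ for a slow--growth holomorphic $F$ on a wedge $V+i\Gamma_\gamma$ (the ultradifferentiable analogue of \cite[Theorem 8.4.8]{Hoermander83I}) — exactly as in H\"ormander's proof of the analytic case. The ``if'' direction is immediate: if $u|_V=f+\sum_{j=1}^q b_{\Gamma^j}F_j$, then $\WF_{[\fM]}f$ is empty over $V$ because $f\in\cE^{[\fM]}(V)$ (\Cref{WFproperties}(6)), and $\WF_{[\fM]}(b_{\Gamma^j}F_j)\subseteq V\times(\Gamma^j)^\circ$; since $\xi_0\Gamma^j<0$ means $\langle\xi_0,y\rangle<0$ for all $y\in\Gamma^j\setminus\{0\}$, we have $\xi_0\notin(\Gamma^j)^\circ$, so $(x_0,\xi_0)$ belongs to none of the wave front sets $\WF_{[\fM]}(b_{\Gamma^j}F_j)$, and by subadditivity of $\WF_{[\fM]}$ under finite sums together with its locality, $(x_0,\xi_0)\notin\WF_{[\fM]}u$.

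For the ``only if'' direction I would start from $(x_0,\xi_0)\notin\WF_{[\fM]}u$ and localize. Since $\WF_{[\fM]}u$ is closed and conic (\Cref{WFproperties}(1)), fix a bounded open $V\Subset\Omega$ with $x_0\in V$ and a closed conic neighborhood $C_0$ of $\xi_0$ with $\WF_{[\fM]}u\cap(\overline V\times C_0)=\emptyset$; choose $\chi\in\cD(\Omega)$ with $\chi\equiv1$ on $V$ and put $w:=\chi u\in\cE'(\R^n)\subseteq\S'(\R^n)$, so that $w=u$ on $V$ and hence, by locality, $\WF_{[\fM]}w\cap(V\times C_0)=\emptyset$. (No ultradifferentiability of $\chi$ is required; the localization is carried entirely by \Cref{DecompCor}, which constrains only the directions appearing in the wave front sets of the pieces.) Next, by a standard covering argument, choose closed convex cones $C_0,D_1,\dots,D_q$ covering $\R^n\setminus\{0\}$ such that each $D_j$ is pointed and does not contain $\xi_0$ (take the $D_j$ to be cones over small spherical caps in the complement of $\operatorname{int}(C_0)$, of radius $<\pi/2$); for each such $D_j$ there is then an open convex cone $\Gamma^j$ with $\xi_0\Gamma^j<0$ and $D_j\subseteq(\Gamma^j)^\circ$, because $\xi_0\notin D_j$ forces $D_j^\circ\not\subseteq\{\langle\xi_0,\cdot\rangle\ge0\}$, so $\operatorname{int}(D_j^\circ)$ meets $\{\langle\xi_0,\cdot\rangle<0\}$ and contains a suitable $\Gamma^j$.

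Now apply \Cref{DecompCor} to $w$ with the covering $\{C_0,D_1,\dots,D_q\}$: it yields $w=w_0+\sum_{j=1}^q w_j$ with $w_j\in\S'$, $\WF_{[\fM]}w_0\subseteq\WF_{[\fM]}w\cap(\R^n\times C_0)$ and $\WF_{[\fM]}w_j\subseteq\WF_{[\fM]}w\cap(\R^n\times D_j)$. All directions of $\WF_{[\fM]}w_0$ lie in $C_0$, while $\WF_{[\fM]}w$ has no point over $V$ with direction in $C_0$; hence $\WF_{[\fM]}w_0$ is empty over $V$ and $w_0|_V\in\cE^{[\fM]}(V)$ (\Cref{WFproperties}(6)). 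For $j\ge1$, $\WF_{[\fM]}w_j\subseteq\R^n\times(\Gamma^j)^\circ$, so \Cref{weakBound2} (applied with an ambient wedge slightly larger than $\Gamma^j$) provides a function $F_j$ holomorphic of slow growth on $V+i\Gamma^j_\gamma$ with $g_j:=w_j|_V-b_{\Gamma^j}F_j\in\cE^{[\fM]}(V)$. Setting $f:=w_0|_V+\sum_{j=1}^q g_j\in\cE^{[\fM]}(V)$ and using $u|_V=w|_V$, we obtain $u|_V=f+\sum_{j=1}^q b_{\Gamma^j}F_j$ with $\xi_0\Gamma^j<0$, which is the desired representation.

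The only genuinely delicate step is the cone construction: the $\Gamma^j$ must be chosen so that the pieces handed back by \Cref{DecompCor} each satisfy the hypothesis of \Cref{weakBound2} for a wedge obeying the sign condition $\xi_0\Gamma^j<0$, while the single leftover cone $C_0$ absorbs exactly the $\cE^{[\fM]}$--regular part. Everything else is bookkeeping with results already established, where [semiregularity] of $\fM$ is used only to ensure that \Cref{WFproperties}(6), \Cref{weakBound2}, \Cref{DecompCor} and the boundary value estimate apply uniformly in both the Roumieu and the Beurling case.
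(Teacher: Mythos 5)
Your proof is correct and follows exactly the route the paper indicates: the paper gives no written-out argument but states that the corollary is obtained by combining \Cref{DecompCor}, \Cref{weakBound2}, and the boundary-value estimate of \cite[Theorem 8.4.8]{Hoermander83I}, which is precisely the decomposition-plus-cone-construction you carry out. The only cosmetic discrepancies (the domain $\Omega+i\Gamma^j_\delta$ versus $V+i\Gamma^j_\gamma$, and ensuring the spherical caps are small enough to avoid $\xi_0$) are harmless and standard.
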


Since $\cE^{[\fM]}$ is stable by pullback with real analytic mappings, see \Cref{AnalyticClosed}, 
we can follow the proof of \cite[Theorem 8.5.1]{Hoermander83I} to obtain the following statement.

\begin{theorem}\label{Invar1}
Let $\fM$ be a $[$semiregular$]$ weight matrix.
Let $F:\Omega_1\rightarrow\Omega_2$ be a real analytic mapping, where $\Om_i \in \R^{n_i}$ are open.
If $u\in\D^\prime(\Omega_2)$ and $N_F\cap\WF_{[\fM]}u=\emptyset$, then
\begin{equation*}
\WF_{[\fM]}F^\ast u\subseteq F^\ast \WF_{[\fM]}u. 
\end{equation*}
Here $N_F = \{ (f(x),\et) \in \Om_2 \times \R^{n_2} : F'(x)^T \et = 0 \}$ is the set of normals of $F$.
\end{theorem}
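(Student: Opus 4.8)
The plan is to follow the proof of \cite[Theorem 8.5.1]{Hoermander83I} almost verbatim; what legitimizes this in the present generality is that every boundary-value and regularity tool used there is now available for the class $[\fM]$ under the [semiregularity] hypothesis — namely \Cref{DecompCor}, \Cref{weakBound2}, \Cref{SemiBVChar}, and \Cref{WFproperties} — together with \Cref{AnalyticClosed}, which makes pullbacks of $\cE^{[\fM]}$-functions along real analytic maps meaningful. First I would dispose of well-definedness: since $\WF u\subseteq\WF_{[\fM]}u$, the hypothesis $N_F\cap\WF_{[\fM]}u=\emptyset$ forces $N_F\cap\WF u=\emptyset$, so $F^\ast u$ is a well-defined distribution by \cite[Theorem 8.2.4]{Hoermander83I}. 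The assertion is local, so I fix $(x_0,\xi_0)\notin F^\ast\WF_{[\fM]}u$, put $y_0:=F(x_0)$, and must show $(x_0,\xi_0)\notin\WF_{[\fM]}F^\ast u$. If $y_0\notin\singsupp_{[\fM]}u$ then $u\in\cE^{[\fM]}$ near $y_0$, hence $F^\ast u\in\cE^{[\fM]}$ near $x_0$ by \Cref{AnalyticClosed}, and we are done; so assume the fibre $\Sigma:=\{\eta\neq 0:(y_0,\eta)\in\WF_{[\fM]}u\}$ is nonempty. The two hypotheses then translate into $F'(x_0)^T\eta\neq 0$ for all $\eta\in\Sigma$ (from $N_F$) and $\xi_0\notin\R_{>0}F'(x_0)^T\Sigma$ (from $(x_0,\xi_0)\notin F^\ast\WF_{[\fM]}u$, using that $\Sigma$ is conic).

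Next I would set up the microlocal boundary-value decomposition. Multiplying $u$ by a cut-off that is $\equiv 1$ near $y_0$ — which changes neither $\WF_{[\fM]}u$ near $y_0$ nor $F^\ast u$ near $x_0$ — I may assume $u\in\cE'(\R^{n_2})$, and then \Cref{DecompCor} together with \Cref{weakBound2} (this is the mechanism behind \Cref{SemiBVChar}) provides, on a neighbourhood $W$ of $y_0$,
\[
u=f+\sum_{j=1}^q b_{\Lambda^j}G_j,
\]
with $f\in\cE^{[\fM]}(W)$, open convex cones $\Lambda^j\subseteq\R^{n_2}\setminus\{0\}$, and $G_j$ holomorphic of slow growth on $W+i\Lambda^j_\delta$, where the cover may be chosen so that each dual cone $(\Lambda^j)^\circ$ lies in a prescribed thin conic neighbourhood of a single ray $\R_{>0}\eta_j$; moreover, those $\Lambda^j$ whose dual cone misses a fixed thin conic neighbourhood $\Xi$ of $\Sigma$ can be absorbed into $f$ (because $\WF_{[\fM]}u$, being closed and conic, has fibres inside $\Xi$ throughout a neighbourhood of $y_0$), so that we may take $\eta_j\in\Xi$ for every $j$. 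Now I pull back each term. The clean way, as in Hörmander, is to factor $F$ through the embedding $x\mapsto(x,F(x))$ onto its real analytic graph followed by the projection onto $\R^{n_2}$; the projection case is immediate, and for the embedding one straightens the graph with a real analytic change of coordinates — \Cref{AnalyticClosed} transporting the $\cE^{[\fM]}$-pieces — whereupon restriction of a boundary value $b_{\Lambda^j}G_j$ to the relevant coordinate subspace is again the boundary value of the restricted holomorphic function from a projected cone $\widetilde\Lambda^j=\{y:F'(x_0)y\in\Lambda^j\}$. This projected cone is nonempty, and the restriction legitimate, precisely because $F'(x_0)^T$ does not vanish on $(\Lambda^j)^\circ$ — which is the $N_F$-condition, using $\eta_j\in\Xi$ and the thinness of $\Lambda^j$ — and the resulting function is again slow-growth holomorphic on a wedge $V+i\widetilde\Lambda^j_{\delta'}$ over a small $V\ni x_0$; in short $F^\ast(b_{\Lambda^j}G_j)=b_{\widetilde\Lambda^j}(G_j\circ\widetilde F)$ for the holomorphic extension $\widetilde F$ of $F$.

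Finally I would read off the wave front set. Each $b_{\widetilde\Lambda^j}(G_j\circ\widetilde F)$ is a slow-growth holomorphic boundary value, so $\WF_{[\fM]}\bigl(F^\ast(b_{\Lambda^j}G_j)\bigr)\subseteq V\times(\widetilde\Lambda^j)^\circ$, and by linear duality $(\widetilde\Lambda^j)^\circ=\overline{F'(x_0)^T(\Lambda^j)^\circ}$ lies in an arbitrarily thin conic neighbourhood of $\R_{>0}F'(x_0)^T\eta_j$; after shrinking $V$ the same holds with $F'(x_0)$ replaced by $F'(x)$ for $x\in V$, by continuity. Since $\eta_j\in\Xi$, since $\Xi$ may be taken an arbitrarily thin conic neighbourhood of $\Sigma$, and since $F'(x_0)^T$ is zero-free on $\Sigma$ while $\xi_0\notin\R_{>0}F'(x_0)^T\Sigma$, all these fibres avoid a fixed conic neighbourhood of $\xi_0$ once $\Xi$ and the $\Lambda^j$ are thin enough; the $f$-term and the absorbed pieces contribute nothing near $(x_0,\xi_0)$ by \Cref{AnalyticClosed}. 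Hence $(x_0,\xi_0)\notin\WF_{[\fM]}F^\ast u$, which yields $\WF_{[\fM]}F^\ast u\subseteq F^\ast\WF_{[\fM]}u$. The main obstacle is exactly this last bookkeeping — organizing the conic covers and carrying out the dual-cone arithmetic so that the directions produced by the pulled-back boundary values are controlled exactly by $F^\ast\WF_{[\fM]}u$ — but this is Hörmander's argument unchanged, and the only thing that needs checking is that it never invokes anything about $[\fM]$ beyond the [semiregular] facts already established above.
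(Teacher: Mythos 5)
Your proposal is correct and follows essentially the same route as the paper, which simply observes that one can repeat H\"ormander's proof of \cite[Theorem 8.5.1]{Hoermander83I} verbatim once $\cE^{[\fM]}$ is known to be stable under pullback by real analytic maps (\Cref{AnalyticClosed}) and the boundary-value machinery (\Cref{DecompCor}, \Cref{weakBound2}, \Cref{SemiBVChar}) is available under [semiregularity]. Your writeup is a faithful, correctly detailed expansion of exactly that argument.
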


\begin{remark}
If the map $F$ in \Cref{Invar1} is a real analytic diffeomorphism  
then for all distributions 
$u\in\D^\prime(\Omega_2)$
\begin{equation*}
\WF_{[\fM]}F^\ast u=F^\ast \WF_{[\fM]}u.
\end{equation*}
Hence the ultradifferentiable wave front set can be defined for distributions on real analytic manifolds.
\end{remark}

The following result can be proved in analogy to \cite[Theorems 8.5.4 and 8.5.4']{Hoermander83I}.

\begin{theorem}
Let $\fM$ be a $[$semiregular$]$ weight matrix.
Let $X\subseteq\R^n$ and $Y\subseteq\R^m$ be open sets and $K\in\D^\prime(X\times Y)$ be a distribution such that the projection $\supp K\rightarrow X$ is proper. If $u\in\cE^{[\fM]}(Y)$ then
\begin{equation*}
\WF_{[\fM]}\mathcal{K}u\subseteq\bigl\{(x,\xi)\in X\times\R^n\!\setminus\!\{0\} : (x,y,\xi,0)\in\WF_{[\fM]}(K)
\text{ for some }y\in \supp u\bigr\},
\end{equation*}
where $\mathcal{K}$ is the linear operator with kernel $K$.
\end{theorem}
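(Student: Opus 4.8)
The plan is to factor $\mathcal{K}$ through a multiplication and an integration along fibres and to track the wave front set through each step, in the spirit of \cite[Theorems 8.5.4 and 8.5.4']{Hoermander83I}. Let $\pi_1\colon X\times Y\to X$ and $\pi_2\colon X\times Y\to Y$ be the (real analytic) projections. Since $u\in\cE^{[\fM]}(Y)$, \Cref{AnalyticClosed} gives $\pi_2^\ast u\in\cE^{[\fM]}(X\times Y)$. Put $v:=(\pi_2^\ast u)\cdot K\in\D^\prime(X\times Y)$. Multiplication by a function in $\cE^{[\fM]}$ is a linear partial differential operator of order $0$ with $\cE^{[\fM]}$-coefficients, so \Cref{WFproperties}(7) yields $\WF_{[\fM]}v\subseteq\WF_{[\fM]}K$; moreover $\supp v\subseteq\supp K\cap(X\times\supp u)$, so every $(x,y,\xi,\eta)\in\WF_{[\fM]}v$ also satisfies $(x,y)\in\supp K$ and $y\in\supp u$. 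As $\supp K\to X$ is proper, so is $\supp v\to X$; hence the pushforward $(\pi_1)_\ast v$ (integration of $v$ along the $Y$-fibres) is a well defined element of $\D^\prime(X)$, and testing against $\varphi\in\D(X)$, using $(\pi_2^\ast u)(\varphi\otimes 1)=\varphi\otimes u$ and properness to make sense of $\langle K,\varphi\otimes u\rangle$, shows $(\pi_1)_\ast v=\mathcal{K}u$.

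It then remains to prove the ultradifferentiable integration-along-fibres estimate, the analogue of \cite[Theorem 8.2.12]{Hoermander83I}:
\[
  \WF_{[\fM]}\big((\pi_1)_\ast v\big)\subseteq\bigl\{(x,\xi) : (x,y,\xi,0)\in\WF_{[\fM]}v\ \text{for some}\ y\bigr\}.
\]
Combined with the description of $\WF_{[\fM]}v$ above this is exactly the asserted inclusion. To prove the estimate, fix $(x_0,\xi_0)$ in the complement of the right-hand side. By properness there is a compact $L_0\subseteq Y$ containing the $Y$-projection of $\supp v\cap(\ol V_0\times Y)$ for some compact neighbourhood $\ol V_0\subseteq X$ of $x_0$, and by hypothesis $(x_0,y,\xi_0,0)\notin\WF_{[\fM]}v$ for all $y\in L_0$. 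Since $\WF_{[\fM]}v$ is closed and conic (\Cref{WFproperties}(1)) and $L_0$ is compact, a standard finite-covering argument provides a neighbourhood $V\subseteq V_0$ of $x_0$, a neighbourhood $W\supseteq L_0$, and a closed cone $\Sigma\subseteq\R^{n+m}\setminus\{0\}$ that is a neighbourhood of the direction $(\xi_0,0)$, with $\WF_{[\fM]}v\cap\big((\ol V\times\ol W)\times\Sigma\big)=\emptyset$. Choose cutoff sequences $\chi_N\in\D(V)$ with $\chi_N\equiv1$ near $x_0$ and $\rho_N\in\D(W)$ with $\rho_N\equiv1$ on $L_0$ satisfying \eqref{TestfEst2} (such exist, as in \Cref{sec:WF}; cf.\ \eqref{eq:testchi}); then $\chi_N\otimes\rho_N$ satisfies \eqref{TestfEst2} as well (using the tensor structure and that $\fM$ is totally ordered), and $\supp v\cap(V\times Y)\subseteq V\times L_0$ gives $(\chi_N\otimes\rho_N)v=\chi_N(x)\,v$, so by the projection formula $(\pi_1)_\ast\big((\chi_N\otimes\rho_N)v\big)=\chi_N\cdot\mathcal{K}u$. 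Now \Cref{WF-M Charakterisierung}, applied to $v$ with compact set $\ol V\times\ol W$ and closed cone $\Sigma$, shows that $(\chi_N\otimes\rho_N)v$ is bounded in $\cE^\prime$ and provides $\seq M^\prime\in\fM$ and $C,Q>0$ (resp.\ works for all $\seq M\in\fM$, $Q>0$, in the Beurling case) with $|\zeta|^N\,|\widehat{(\chi_N\otimes\rho_N)v}(\zeta)|\le CQ^NM^\prime_N$ for $\zeta\in\Sigma$. Since $\widehat{(\pi_1)_\ast w}(\xi)=\widehat w(\xi,0)$ for compactly supported $w$, and $(\xi,0)\in\Sigma$ for $\xi$ in a conic neighbourhood $\Gamma$ of $\xi_0$, restricting to $\eta=0$ gives $|\xi|^N\,|\widehat{\chi_N\cdot\mathcal{K}u}(\xi)|\le CQ^NM^\prime_N$ on $\Gamma$; as $\chi_N\cdot\mathcal{K}u$ is bounded in $\cE^\prime(X)$ and equals $\mathcal{K}u$ near $x_0$, this is the defining estimate \eqref{WF-defining} for $\mathcal{K}u$ at $(x_0,\xi_0)$. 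Hence $(x_0,\xi_0)\notin\WF_{[\fM]}\mathcal{K}u$.

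The Beurling case runs in exact parallel, with all estimates quantified over every $\seq M\in\fM$ and $Q>0$, using part (2) of \Cref{WF-M Charakterisierung} and cutoff collections with $\{\seq M^\al\}\{\lhd)\fM$, as in the Beurling parts of \Cref{WFproperties} and \Cref{WF-Intersection}; semiregularity of $\fM$ is what makes \Cref{FourierChar} and \Cref{WF-M Charakterisierung}, hence the whole argument, available. I expect the fibre-integration estimate to be the main obstacle: the delicate point there is the passage from ``$(\xi_0,0)$ is non-characteristic for $v$ over the entire compact fibre above $x_0$'' to genuine decay of $\widehat{(\chi_N\otimes\rho_N)v}$ near the direction $(\xi_0,0)$, which is exactly where one invokes the Fourier characterisation of $\WF_{[\fM]}$ and the covering/cutoff bookkeeping carried out as in the proof of \cite[Lemma 8.4.4]{Hoermander83I} reproduced in \Cref{WF-M Charakterisierung}. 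Everything else is formal manipulation of supports together with one application each of \Cref{AnalyticClosed} and \Cref{WFproperties}(7).
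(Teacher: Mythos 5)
Your proof is correct and follows essentially the route the paper intends: the paper gives no details, only the remark that the statement ``can be proved in analogy to \cite[Theorems 8.5.4 and 8.5.4']{Hoermander83I}'', and your argument is precisely that analogy carried out, with the cutoff/covering step handled by \Cref{WF-M Charakterisierung} and the coefficient regularity by \Cref{AnalyticClosed} and \Cref{WFproperties}(7). The only (cosmetic) difference is that you factor through $v=(\pi_2^\ast u)\cdot K$ and a separate fibre-integration estimate, whereas H\"ormander applies the analogue of \Cref{WF-M Charakterisierung} directly to $K$ with the cutoffs $\chi_N\otimes(\rho_N u)$; the underlying computations are the same.
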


\subsection{Toward a quasianalytic Holmgren uniqueness theorem}

We want to close this section with the proof of a generalization of \cite[Theorem 7.1]{H_rmander_1993} which will be 
needed for a version of the Holmgren uniqueness theorem in \Cref{thm:Holmgren}.

\begin{proposition}\label{1dimUniq}
Let $\fM$ be a quasianalytic R-semiregular weight matrix.
Let $u\in\D^\prime(I)$ be a distribution on an open interval $I$ of $\R$.
If $x_0\in I$ is a boundary point of $\supp u$, then $(x_0,\pm 1)\in\WF_{\{\fM\}}u$.
\end{proposition}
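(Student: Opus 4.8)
The statement is a one-dimensional quasianalytic Holmgren-type uniqueness result: if $u \in \cD'(I)$ vanishes on one side of $x_0$ (near $x_0$) but not identically near $x_0$, then $(x_0,\pm 1)$ both lie in $\WF_{\{\fM\}}u$. The plan is to argue by contradiction, following the scheme of \cite[Theorem 7.1]{H_rmander_1993}: assume, say, $(x_0,1)\notin\WF_{\{\fM\}}u$ and derive that $u$ must vanish on a full neighborhood of $x_0$, contradicting that $x_0$ is a \emph{boundary} point of $\supp u$. (The case $(x_0,-1)\notin\WF_{\{\fM\}}u$ is symmetric, using $\WF_{\{\fM\}}\bar u$ via \Cref{WFproperties}(5), or simply by reflecting $x\mapsto -x$.) Without loss of generality we may assume $x_0 = 0$ and that $\supp u$ lies in $[0,\infty)$ near $0$, so $u$ vanishes on $(-\ep,0)$ for some $\ep>0$.

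First I would localize: choose $\psi\in\cD(I)$ with $\psi\equiv 1$ near $0$, replace $u$ by $\psi u\in\cE'(\R)$, and note this changes nothing near $0$. By \Cref{SemiBVChar} (applicable since $\fM$ is R-semiregular), the hypothesis $(0,1)\notin\WF_{\{\fM\}}u$ yields, on a neighborhood $V$ of $0$, a decomposition $u|_V = f + \sum_j b_{\Gamma^j}F_j$ with $f\in\cE^{\{\fM\}}(V)$ and holomorphic $F_j$ of slow growth on $V + i\Gamma^j_\delta$, where each open cone $\Gamma^j\subseteq\R\setminus\{0\}$ satisfies $\xi_0\Gamma^j < 0$, i.e. $\Gamma^j\subseteq(-\infty,0)$. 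In one dimension this forces every $\Gamma^j$ to be (a sub-cone of) $(-\infty,0)$, so all boundary values are taken from the lower half-plane $\{\Im z<0\}$. Thus near $0$ we can write $u = f + b_{-}F$ where $F$ is holomorphic and of slow growth in a half-disc $\{|z|<r,\ -\delta<\Im z<0\}$ and $f\in\cE^{\{\fM\}}$. Since $\cE^{\{\fM\}}$ is quasianalytic (as $\fM$ is a quasianalytic weight matrix) and $u$ vanishes on $(-\ep,0)$, the boundary value $b_{-}F$ agrees with $-f$ there, so $b_{-}F = -f|_{(-\ep,0)}$ extends to a function of class $\cE^{\{\fM\}}$ across a real interval; by \Cref{PetzscheVogtnew} / \Cref{thm:Rchar} (or directly by the edge-of-the-wedge / Schwarz-reflection-type argument in \cite{H_rmander_1993}) $F$ extends holomorphically across $(-\ep,0)$, hence to a full neighborhood of some point of $(-\ep,0)$, and then $F$ (being holomorphic and of slow growth on a region abutting the real line from below) extends holomorphically to a full complex neighborhood of $0$ by a connectedness/monodromy argument along the real axis combined with the slow-growth bound. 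Consequently $b_-F$ is real-analytic near $0$, so $u = f + b_-F\in\cE^{\{\fM\}}$ near $0$; being quasianalytic and vanishing on $(-\ep,0)$, it vanishes near $0$, contradicting that $0\in\supp u$ is a boundary point.

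The main obstacle is the step turning "the boundary value extends to a quasianalytic function across an interval" into "$F$ itself extends holomorphically across that interval, and then to a full neighborhood of the origin." In the classical analytic case this is exactly the content of \cite[Theorem 7.1]{H_rmander_1993}, resting on the reflection principle plus the fact that a slow-growth holomorphic function whose boundary value is real-analytic is itself holomorphically extendable. In our quasianalytic-ultradifferentiable setting the correct substitute is the almost-analytic-extension machinery of \Cref{thm:Rchar}: $f\in\cE^{\{\fM\}}$ has an $(h_{\seq m},\rho)$-almost analytic extension $\widetilde f$, and one glues $F$ (genuinely holomorphic below the axis) to $-\widetilde f$ across the interval, obtaining a $C^1$ function with $\ol\partial$ supported away from the real interval and satisfying the Dynkin-type bound; quasianalyticity of $\cE^{\{\fM\}}$ then upgrades this to a genuine holomorphic extension near the interval, which by the standard one-variable continuation along the reals reaches a neighborhood of $0$. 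I would carry this out by first proving the edge-gluing lemma (combine $F$ and $\ol\partial\widetilde f$, estimate $\ol\partial$ of the glued function, invoke \Cref{prop:restriction} to land back in $\cE^{\{\fM\}}$ and then quasianalyticity), then deduce the holomorphic extension, then finish with the quasianalytic vanishing argument as above.
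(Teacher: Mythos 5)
Your opening --- argue by contradiction and decompose $u$ near $x_0$ as $f + b_{\Gamma}F$ with $f\in\cE^{\{\fM\}}$ and $F$ holomorphic of slow growth in a half-disc, via \Cref{SemiBVChar} --- is exactly how the paper begins (it invokes \Cref{weakBound2}). The endgame, however, has a genuine gap, in fact two. First, from ``$b_\Gamma F=-f$ on $(-\ep,0)$ with $f\in\cE^{\{\fM\}}$'' you cannot conclude that $F$ extends holomorphically across $(-\ep,0)$: if it did, its boundary value would be real analytic there, which fails whenever $\fM$ is quasianalytic but strictly larger than $C^\om$. Gluing $F$ to an almost analytic extension of $-f$ only yields a function whose $\ol\p$ is small but nonzero, and such functions have no unique continuation, so quasianalyticity cannot ``upgrade'' the gluing to a holomorphic extension. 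Second, even granting an extension across $(-\ep,0)$, no connectedness or monodromy principle carries it past the endpoint $0$: the function $F(z)=1/z$ on the lower half-disc is holomorphic, of slow growth, and extends across $(-\ep,0)$, yet has no extension across $0$. In the classical analytic case the passage across $0$ is achieved differently --- one forms $G=F+\tilde f$ with $\tilde f$ the genuine holomorphic extension of $f$, notes that $G$ has zero boundary values on $(-\ep,0)$, hence vanishes identically on the connected half-disc by unique continuation --- and this is precisely the step that breaks down when $f$ is only quasianalytic.

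The paper's proof takes a different and, it seems, unavoidable route: after the decomposition $u=u_++u_-$ it composes with the real analytic diffeomorphism $x\mapsto \delta x/\sqrt{1+x^2}$ (using \Cref{AnalyticClosed}) to obtain global uniform bounds $|v_+^{(j)}|\le CQ^jM_j$ on all of $\R$, and then invokes the argument of \cite[Theorem 6.1]{H_rmander_1993}, a Fourier--Laplace/log-integrability estimate of Beurling type, which shows that the mere existence of such a configuration (a nonzero distribution supported in a half-line, equal to a globally bounded $\cE^{\{\seq M\}}$-function plus a boundary value from one complex half-plane) forces $\seq M$ to be non-quasianalytic --- the desired contradiction. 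The moral is that quasianalyticity must enter quantitatively through the weight sequence, not merely through a final Denjoy--Carleman vanishing argument applied to $u$; your plan defers it to the last step, and the intermediate extension claims it relies on are false.
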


Since $\WF_{\{\fM\}}u\subseteq \WF_{(\fM)}u $, by \Cref{WFproperties}, only the Roumieu case is interesting.

\begin{proof}
  By \Cref{weakBound2}, we have a decomposition $u = u_+ + u_-$, where $u_+ \in \cE^{\{\fM\}}$. 
      Set $v_\pm := u_\pm \circ f$ with $f(x):= \frac{\delta x}{\sqrt{1+x^2}}$ and $\de>0$. 
      By  
      \Cref{AnalyticClosed}, $v_{+} \in \cE^{\{\fM\}}$, i.e.
      \begin{gather}
      \label{vderest}
      |v^{(j)}_+(x)|\le C Q^j M_j, \quad \text{ for all } j \in \N, x \in \R, 
      \end{gather}
      for some $C,Q>0$ and some $\seq M \in \fM$. 
      Now it suffices to follow the arguments in the proof of \cite[Theorem 6.1]{H_rmander_1993} 
      which show that the weight sequence $\seq M$ is non-quasianalytic. 
      (These arguments do not require that $\seq M$ is derivation closed.) 
\end{proof}

A straightforward modification of the proof 
of \cite[Theorem 8.5.6]{Hoermander83I} yields the following version in several variables.

\begin{theorem}\label{ndimUniq}
Let $\fM$ be a quasianalytic R-semiregular weight matrix. 
Let $u\in\D^\prime(\Omega)$ 
and let $F : \Om \to \R$ be real analytic. If $x_0\in\supp u$ is
such that $dF(x_0)\neq 0$ and $F(x)\leq F(x_0)$ for all $x\in\supp u$, then
$(x_0,\pm dF(x_0))\in\WF_{\{\fM\}} u$.
\end{theorem}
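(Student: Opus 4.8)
The plan is to follow the proof of \cite[Theorem 8.5.6]{Hoermander83I}, feeding in the results assembled above in place of its classical ingredients. First I would normalise the geometry. Since $dF(x_0)\neq 0$ we may, after relabelling coordinates, assume $\p_n F(x_0)\neq 0$; then $\Phi(x):=(x_1-x_{0,1},\dots,x_{n-1}-x_{0,n-1},F(x)-F(x_0))$ is a real analytic diffeomorphism of a neighbourhood of $x_0$ onto a neighbourhood of $0$ which turns $F$ into $F(x_0)+y_n$ and carries $\pm dF(x_0)$ to $\pm e_n$. By the remark following \Cref{Invar1}, $\WF_{\{\fM\}}$ is equivariant under pullback by the diffeomorphism $\Phi^{-1}$, so replacing $u$ by $(\Phi^{-1})^\ast u$ I reduce to the situation where $u\in\D'(\Om')$ on a neighbourhood $\Om'$ of $0\in\R^n$, $0\in\supp u$, $\supp u\subseteq\{y_n\le 0\}$ near $0$, and the claim becomes $(0,\pm e_n)\in\WF_{\{\fM\}}u$, where now $e_n$ is the $n$-th unit covector.

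Next I would reduce to one variable. Fix one of the two signs, say $e_n$ — the argument for $-e_n$ is identical, since \Cref{1dimUniq} yields both conormals at once — and assume for contradiction $(0,e_n)\notin\WF_{\{\fM\}}u$. As $\WF_{\{\fM\}}u$ is closed and conic there are a neighbourhood $V$ of $0$ and an open conic neighbourhood of $e_n$ disjoint from $\WF_{\{\fM\}}u$ over $V$. For $\phi\in\cD(\R^{n-1})$ supported in a sufficiently small neighbourhood of the origin, set $w:=\phi(y')\,u$ and let $\pi(y',y_n):=y_n$, $v:=\pi_\ast w$, i.e.\ $\langle v,\theta\rangle=\langle u,\phi\otimes\theta\rangle$. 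The wave front estimate for proper fibre integration — which transcribes to $\WF_{\{\fM\}}$ verbatim, being dual to the pullback statement \Cref{Invar1}; alternatively one routes it through \Cref{SemiBVChar}, writing $w$ near $0$ as $g+\sum_j b_{\Gamma^j}F_j$ with $g\in\cE^{\{\fM\}}$ and $\Gamma^j\subseteq\{y_n<0\}$, and observing $\pi_\ast g\in\cE^{\{\fM\}}$ near $0$ while $\WF_{\{\fM\}}(\pi_\ast b_{\Gamma^j}F_j)\subseteq\R\times(-\infty,0]$ — gives
\[
\WF_{\{\fM\}}v\subseteq\{(y_n,\eta):(y',y_n;0',\eta)\in\WF_{\{\fM\}}w\text{ for some }y'\}.
\]
Since $\WF_{\{\fM\}}w\subseteq\WF_{\{\fM\}}u$ by \Cref{WFproperties}, for $\supp\phi$ small enough this misses the direction $(0,+1)$. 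Because $\supp u\subseteq\{y_n\le0\}$ near $0$ we also have $\supp v\subseteq(-\infty,0]$ near $0$. Hence $0$ cannot be a boundary point of $\supp v$, for otherwise \Cref{1dimUniq} would force $(0,+1)\in\WF_{\{\fM\}}v$; as $\supp v$ lies on one side of $0$ this means $v$ vanishes near $0$.

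Finally I would pass from the $v=v_\phi$ back to $u$. The previous step shows that for every $\phi\in\cD(\R^{n-1})$ with small enough support there is $\delta_\phi>0$ with $\langle u,\phi\otimes\theta\rangle=0$ for all $\theta\in\cD((-\delta_\phi,\delta_\phi))$. Shrinking once a closed ball $\ol{B}\subseteq\R^{n-1}$ about the origin so small that this holds for every $\phi\in\cD_{\ol B}$, the linear subspaces $A_m:=\{\phi\in\cD_{\ol B}:\langle u,\phi\otimes\theta\rangle=0\text{ for all }\theta\in\cD((-1/m,1/m))\}$ are closed with union $\cD_{\ol B}$, so by Baire's theorem some $A_{m_0}$ has interior points and therefore equals $\cD_{\ol B}$. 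Then $\langle u,\phi\otimes\theta\rangle=0$ for all $\phi\in\cD_{\ol B}$ and all $\theta\in\cD((-1/m_0,1/m_0))$, so $u$ vanishes near $0$, contradicting $0\in\supp u$. This contradiction yields $(0,e_n)\in\WF_{\{\fM\}}u$, and the same reasoning with $-e_n$ completes the proof.

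The step I expect to be the real obstacle is making this one-variable reduction watertight rather than any particular estimate: pushing $u$ forward along $\pi$ can expel the point $0$ from the support, since nothing controls the transverse behaviour of $u$ at $0$, which is precisely why one cannot get away with a single cut-off $\phi$ and must invoke Baire; and one has to be sure the push-forward behaviour of $\WF_{\{\fM\}}$ is available at this level of generality, but that is a routine transcription of the classical argument (or, as indicated, a consequence of \Cref{SemiBVChar}). Everything else — the straightening, the appeal to \Cref{1dimUniq}, and the bookkeeping — is as in \cite[proof of Theorem 8.5.6]{Hoermander83I}.
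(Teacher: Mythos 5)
Your overall skeleton (straighten $F$, reduce to one variable, invoke \Cref{1dimUniq}, conclude $u=0$ near $x_0$) is the right one and matches what the paper means by ``a straightforward modification of the proof of H\"ormander's Theorem 8.5.6''. But there is a genuine gap at the one-variable reduction, and it sits exactly where quasianalyticity bites. You set $w=\phi(y')u$ with $\phi\in\cD(\R^{n-1})$ and assert $\WF_{\{\fM\}}w\subseteq\WF_{\{\fM\}}u$ ``by \Cref{WFproperties}''. The relevant item of \Cref{WFproperties} requires the multiplier to have $\cE^{\{\fM\}}$-coefficients, and since $\fM$ is quasianalytic no nonzero $\phi\in\cD$ lies in $\cE^{\{\fM\}}$; multiplication by a merely smooth cut-off can create $\{\fM\}$-wave front (already $u\equiv 1$, $\phi u=\phi$ shows the inclusion fails). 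Consequently the fibre-integration estimate for $v=\pi_\ast w$ is not available either: its right-hand side involves $\WF_{\{\fM\}}(\phi u)$ at every $y'\in\supp\phi$, including the transition region of $\phi$, where you control neither $\phi$ nor $u$. The same obstruction defeats a direct verification of $(t,+1)\notin\WF_{\{\fM\}}v$ via \Cref{WF-M Charakterisierung}: a fixed $\phi\in\cD$ does not satisfy \eqref{TestfEst2}, and if one replaces it by a good sequence $\phi_N$ the resulting push-forwards no longer agree with one fixed distribution near $y_n=0$, because nothing localizes $\supp u$ in the $y'$-directions over the slab $\{|y_n|<\delta\}$. The Baire argument at the end addresses a different (and in fact avoidable) issue and does not touch this one.

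The missing idea is a convexification step preceding the straightening: replace $F$ by $F-\ep|x-x_0|^2$, which is still real analytic, has the same differential at $x_0$, and attains its maximum over $\supp u$ (within a fixed compact neighbourhood of $x_0$) only at $x_0$. After straightening this new function, $\supp u$ meets the slab $\{|y_n|<\delta\}$, inside that neighbourhood, only in an arbitrarily small ball around the origin. Then (i) a smooth cut-off equal to $1$ on that ball does not alter $u$, nor $\WF_{\{\fM\}}u$, over the slab; (ii) the projection of $\supp u$ to the $y_n$-axis is proper near $0$; and (iii) one may pair $u$ with $\phi\otimes\theta$ for $\phi\in\cE^{\{\fM\}}(\R^{n-1})$ --- for instance polynomials, which lie in $\cE^{\{\fM\}}$ by \ref{def:Rregular}(0) --- and control the $\{\fM\}$-wave front set of the resulting one-dimensional distribution by the kernel theorem of \Cref{sec:WF}, which is stated for $\cE^{[\fM]}$-arguments without compact support precisely so that it applies here. \Cref{1dimUniq} then gives $\langle u,\phi\otimes\theta\rangle=0$ for all such $\phi$ and all $\theta\in\cD((-\delta,\delta))$ with $\delta$ uniform in $\phi$, and density of polynomials in $C^\infty$ (the pairing against $\theta$ being a compactly supported distribution in $y'$) yields $u=0$ near $x_0$; no Baire category is needed.
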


\section{Characterization of the ultradifferentiable wave front set} \label{sec:WFreg}

In this section all weight matrices are [regular]. We need 
a microlocalized version of the almost analytic extension.
Now we say that a smooth function $F \in  \cE(\Omega\times\Gamma_r)$ is \emph{$(h,Q)$-almost analytic} 
if there is a constant $C\ge 1$ such that
\begin{equation}
\biggl|\frac{\partial F}{\partial \bar{z}_j}(x,y)\biggr| 
\leq C\,h (Q | y|) \qquad (x,y)\in \Omega\times\Gamma_{r},
\;j=1,\dotsc,n,
\end{equation}
where $z_j = x_j + i y_j$. Let $\Om \subseteq \R^n$ be a bounded open set. 

\begin{definition}
Let $\fM$ be a weight matrix. 
Let $u\in\D^\prime(\Omega)$ and $\Gamma \subseteq \R^n$ an open convex cone. We say that
\begin{enumerate}
\item $u$ is \emph{$\{\fM\}$-almost analytically extendable into $\Gamma$}
if there exist $\seq M\in\fM$, $Q>0$, $r>0$, and 
an $(h_{\seq m},Q)$-almost analytic function $F\in\cE(\Omega\times\Gamma_r)$ of slow growth 
such that $u=b_\Gamma F$.
\item $u$ is \emph{$(\fM)$-almost analytically extendable into $\Gamma$} 
if for all $\seq M\in\fM$ and all $Q>0$
there exist $r>0$ 
and 
an $(h_{\seq m},Q)$-almost analytic function $F\in\cE(\Omega\times\Gamma_r)$ of slow growth 
such that $u=b_\Gamma F$.              
\end{enumerate}
\end{definition}

\subsection{Almost analytic description of the ultradifferentiable wave front set}

\begin{theorem}\label{Theorem-M-BVWF}
Let $\fM$ be a $[$semiregular$]$ weight matrix.
If $u\in\D^\prime(\Omega)$ is 
$[\fM]$-almost analytically extendable into $\Gamma$, then
\begin{equation}
\WF_{[\fM]}u \subseteq \Omega\times\Gamma^\circ\!\setminus\{0\}.
\end{equation}
\end{theorem}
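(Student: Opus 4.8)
The plan is to read $\WF_{[\fM]}u$ off the Fourier-transform characterization of \Cref{WF-Definition} by a contour-deformation argument. Fix $(x_0,\xi_0)\in\COT$ with $\xi_0\notin\Ga^\circ$; it suffices to show $(x_0,\xi_0)\notin\WF_{[\fM]}u$. Since $\Ga$ is an open convex cone and $\xi_0\notin\Ga^\circ$ there is $\theta\in\Ga$, which we may take to be a unit vector, with $\langle\theta,\xi_0\rangle<0$; by continuity there are a closed conic neighbourhood $\tilde\Ga$ of $\xi_0$ and $c>0$ with $\langle\theta,\xi\rangle\le -c|\xi|$ for all $\xi\in\tilde\Ga$, and we fix $t\in(0,1]$ small enough that $s\theta\in\Ga_r$ for $0\le s\le t$ (here $r$, and in the Beurling case the extension $F$ itself, will depend on the sequence and constant chosen below, but the cut-offs will not). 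Choose relatively compact neighbourhoods $W\Subset V\Subset\Om$ of $x_0$ and cut-offs $\chi_N\in\cD(V)$ with $\chi_N\equiv 1$ on $W$ satisfying the derivative bounds \eqref{eq:testchi} in the Roumieu case, resp.\ \eqref{TestfEst2} with $\sqrt[N]{M^\al_N}$ replaced by $N$ in the Beurling case (legitimate since $(k!)_k\lhd\seq M$ for every $\seq M\in\fM$ by \ref{def:Rregular}(0)). Put $u_N:=\chi_Nu$; this is a bounded sequence in $\cE'(\Om)$ with $u_N|_W=u|_W$, so by \Cref{WF-Definition} it is enough to prove $\sup_{\xi\in\tilde\Ga,\,N}|\xi|^N|\widehat{u_N}(\xi)|/(Q^NM_N)<\infty$ for some (Roumieu) resp.\ all (Beurling) $\seq M\in\fM$ and $Q>0$.

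Next I would estimate $\widehat{u_N}(\xi)$ on $\tilde\Ga$. From $u=b_\Ga F$ we get $u_N=b_\Ga(\chi_NF)$, hence $\widehat{u_N}(\xi)=\lim_{s\to0^+}\int_{\R^n}\chi_N(x)F(x+is\theta)e^{-ix\xi}\,dx$ (each integrand is in $L^1$ by slow growth, and $\cD'$-convergence of $\chi_NF(\cdot+is\theta)$ to $u_N$ gives pointwise convergence of the transforms). Deforming the integration height from $0$ to $t$ by Stokes' formula — legitimate because $\chi_N$ has compact support in $x$ — and using $\p_{\ol z_j}\big(\chi_N(x)F(x+iy)e^{-i(x+iy)\xi}\big)=\big(\chi_N\p_{\ol z_j}F+\tfrac12(\p_{x_j}\chi_N)F\big)e^{-i(x+iy)\xi}$ together with $e^{-i(x+is\theta)\xi}=e^{-ix\xi}e^{s\langle\theta,\xi\rangle}$, one obtains, up to inessential constants,
\[
\widehat{u_N}(\xi)=e^{t\langle\theta,\xi\rangle}\int_{\R^n}\chi_N(x)F(x+it\theta)e^{-ix\xi}\,dx-\int_0^t e^{s\langle\theta,\xi\rangle}\,\widehat{E_s}(\xi)\,ds,
\]
where $E_s(x):=\sum_j\theta_j\big(\chi_N\,\p_{\ol z_j}F+\tfrac12(\p_{x_j}\chi_N)F\big)(x+is\theta)$. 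The first term is harmless: by the slow-growth bound $|F(x,y)|\le c|y|^{-b}$ with $|y|=t|\theta|$ fixed its modulus is $\le C_V t^{-b}e^{-ct|\xi|}$ on $\tilde\Ga$, which is rapidly decreasing in $|\xi|$ and hence dominated by $Q^NM_N|\xi|^{-N}$ for all $N$ (note $M_N\ge N!$ eventually, since $m_N^{1/N}\to\infty$). For the $\chi_N\p_{\ol z_j}F$ part of $E_s$ one uses $|\p_{\ol z_j}F(\cdot+is\theta)|\le C\,h_{\seq m}(Qs|\theta|)\le C\,h_{\seq m}(Qs)$, $\|\chi_N\|_{L^1}\le C_V$ and $\langle\theta,\xi\rangle\le -c|\xi|$ to bound its contribution by $C\int_0^\infty e^{-cs|\xi|}h_{\seq m}(Qs)\,ds$; inserting the defining inequality $h_{\seq m}(\tau)\le Q^Nm_N\tau^N$ and $\int_0^\infty e^{-\rh s}s^N\,ds=N!\,\rh^{-N-1}$ gives $\lesssim Q^NM_N\,(c|\xi|)^{-N-1}$, which is precisely \eqref{WF-defining} (in the Beurling case one lets $\seq M$ and $Q$ vary, choosing the extension $F$ accordingly — the sequence $(u_N)$ stays fixed). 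This is the mechanism converting the $h_{\seq m}$-decay of $\ol\p F$ into the weight-sequence decay $Q^NM_N|\xi|^{-N}$.

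The genuinely delicate point — and the main obstacle — is the cut-off contribution $\tfrac12(\p_{x_j}\chi_N)F(\cdot+is\theta)$: here $\p_{x_j}\chi_N$ is supported in the fixed compact set $L:=\ol V\setminus W$ with $x_0\notin L$, but on $L$ one only controls $F$ by the slow-growth bound $|y|^{-b}$ and has no estimate on its $x$-derivatives, so neither the naive $L^1$-bound (not integrable in $s$ near $0$) nor an $N$-fold integration by parts in $x$ works off the shelf. The way I would handle it is to use that $u=b_\Ga F$ has finite order near $L$: representing $u$ there via its order one reduces, modulo a holomorphic summand and after differentiating the end result finitely often (harmless, as $\cE^{[\fM]}$ is stable under derivation by $[$semiregularity$]$, i.e.\ \ref{def:Rregular}\eqref{R-Derivclosed1}/\eqref{B-Derivclosed1}), to the case where the relevant integral $\int_0^t e^{s\langle\theta,\xi\rangle}(\p_{x_j}\chi_N)F(\cdot+is\theta)\,ds$ is a well-defined compactly supported distribution of fixed order, whose Fourier transform on $\tilde\Ga$ is estimated by integrating by parts, the $N$ derivatives landing on $\chi_N$ being balanced by \eqref{eq:testchi}/\eqref{TestfEst2} exactly as in the proof of \Cref{WF-M Charakterisierung} (cf.\ \cite[Lemma 8.4.4]{Hoermander83I}); this contributes again a term $\lesssim Q^NM_N|\xi|^{-N}$. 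Combining the three estimates verifies \eqref{WF-defining} on $\tilde\Ga$ for the sequence $(u_N)$, whence $(x_0,\xi_0)\notin\WF_{[\fM]}u$ and $\WF_{[\fM]}u\subseteq\Om\times\Ga^\circ\setminus\{0\}$. I expect the bookkeeping around this last step — making the slow-growth boundary-value manipulations rigorous, and in the Beurling case keeping the families of sequences totally ordered — to be the only real work; everything else is the analytic deformation argument with $h_{\seq m}$ doing the conversion of exponential into weight-sequence decay.
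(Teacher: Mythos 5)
Your overall strategy --- contour deformation in the direction of some $\theta\in\Gamma$ with $\langle\theta,\xi_0\rangle<0$, converting the $h_{\seq m}$-decay of $\ol\p F$ into the bound $Q^N M_N|\xi|^{-N-1}$ via $\int_0^\infty e^{-cs|\xi|}h_{\seq m}(Qs)\,ds\le Q^N m_N\, N!\,(c|\xi|)^{-N-1}$ --- is exactly the mechanism of the paper's proof, and your treatment of the boundary term at height $t$ and of the $\chi_N\,\ol\p F$ term is correct. But the term you yourself flag as ``the genuinely delicate point'' is a real gap, and the fix you sketch does not work. The contribution $\int_0^t e^{s\langle\theta,\xi\rangle}\bigl((\p_{x_j}\chi_N)F\bigr)(\cdot+is\theta)\,ds$ carries only a \emph{single} derivative of the cut-off, so there is nothing to iterate: ``integrating by parts with the $N$ derivatives landing on $\chi_N$'' has no meaning for this term, and reducing to ``a compactly supported distribution of fixed order'' only yields polynomial growth of its Fourier transform, not the decay $Q^N M_N|\xi|^{-N}$. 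Indeed $b_\Gamma\bigl((\p_{x_j}\chi_N)F\bigr)=(\p_{x_j}\chi_N)\,u$, whose microlocal regularity in the directions of $\tilde\Gamma$ at the points of $L$ is precisely an instance of what is being proved, so the argument as sketched is circular; the non-integrable singularity $|F(x+is\theta)|\lesssim s^{-k}$ near $s=0$ cannot be removed by bookkeeping.

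The missing idea is to almost-analytically extend the cut-off itself. The paper replaces $\varphi_N(x)$ by its degree-$N$ Taylor extension $\Phi_N(x,y)=\sum_{|\al|\le N}\p^\al\varphi_N(x)\,(iy)^\al/\al!$ before applying Stokes. Then $\ol\p$ falls either on $F$ (your good term) or on $\Phi_N$; the latter telescopes so that only the top-order derivatives $\p^\al\varphi_N$ with $|\al|=N+1$ survive, multiplied by a factor $\tau^N$, and this factor absorbs the slow-growth blow-up $\tau^{-k}$ for $N>k$. The resulting integral is then estimated by $(N-k)!\,(-Y_0\xi)^{k-N-1}$ using the derivative bounds \eqref{Estimate}, and after reindexing $u_N=\varphi_{k+N-1}u$ one obtains \eqref{WF-defining}. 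Without this device (which goes back to H\"ormander's proof of \cite[Theorem 8.4.8]{Hoermander83I}) your decomposition does not close.
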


\begin{proof}
Assume that $u = b_\Ga F$, where $F\in\cE(\Omega\times\Gamma_r)$ is an $(h_{\seq m},Q)$-almost analytic function 
of slow growth, i.e., there exist $c,k>0$ such that
\begin{equation*}
| F(x,y)|\leq c| y|^{-k},\quad  x \in \Om, y \in \Ga_r. 
\end{equation*}
Let $Y_0 \in \Ga$ and
let $(x_0,\xi_0)\in T^\ast\Omega \setminus \{0\}$ with $Y_0\xi_0<0$.
Choose bounded neighborhoods $V_1$ and $V_2$ of $x_0$ such that $\overline{V_1}\subseteq V_2$
and a sequence $(\varphi_N)_N\subseteq\D(\Omega)$ such that $\supp\varphi_N\subseteq V_2$, 
$\varphi_N\vert_{V_1}= 1$,
and
\begin{equation}\label{Estimate}
\bigl|\partial^{\alpha}\varphi_N(x)|\leq Q_1^{|\alpha|}(N+1)^{|\alpha|},
\quad \text{ for } |\alpha|\leq N+1, 
\end{equation}
where $Q_1$ is a constant independent of $N$.
We set 
\begin{equation*}
\Phi_N(x,y)=\sum_{|\alpha|\leq N}\frac{\partial^\alpha\varphi_N}{\partial x^\alpha}(x)\frac{(iy)^\alpha}{\alpha !},
\quad \text{ for } N \ge k,
\end{equation*}
and recall from \cite[8.4.8]{Hoermander83I} that the estimate \eqref{Estimate} yields
\begin{equation}\label{EstimateBeta}
\left|\sum_{|\alpha|=\mu}\frac{\partial^\alpha\varphi_N}{\partial x^\alpha}\frac{(iY_0)^\alpha}{\alpha !}\right|
\leq Q_1^\mu\frac{| Y_0|_1^\mu}{\mu !}(N+1)^\mu,
\quad \text{ for } 0\leq\mu\leq N+1.
\end{equation}
Here $| Y_0|_1=\sum_{j=0}^n | Y_{0,j}|$.
For $N\geq k$ we have (see e.g.\ \cite{Fuerdoes1})
\begin{align*}
\widehat{\varphi_Nu}(\xi) 
&=\int_\Omega  F(x,Y_0)e^{-i\langle x+iY_0,\xi\rangle} \Phi_N (x,Y_0)\,dx
\\
&\quad + 2i  \int_{\Omega} \int_0^1  \bigl\langle \ol{\partial} F(x,\tau Y_0),Y_0\bigr\rangle e^{-i\langle x+i\tau Y_0,\xi\rangle}
\Phi_N(x,\tau Y_0)\,d\tau dx\\
&\quad+(N+1) \int_{\Omega} \int_0^1 F(x,\tau Y_0)\tau^Ne^{-i\langle x+i\tau Y_0,\xi\rangle}
\sum_{|\alpha|=N+1}\partial^\alpha \varphi_N(x)
\frac{(iY_0)^\alpha}{\alpha !}\,d\tau dx.
\end{align*}
If $Y_0\xi<0$ we know from \cite[p.\ 285]{Hoermander83I} that the first and third integral above can be estimated by 
\begin{equation*}
Q_1^{N+1}\Bigl(e^{Y_0\xi}+(N-k)! (-Y_0\xi)^{k-N-1}\Bigr).
\end{equation*}
Since $F$ is $(h_{\seq m},Q)$-almost analytic, the second integral is estimated by (cf.\ \cite{Fuerdoes1}) 
\begin{equation*}
Q_1^{N+1}Q^{N-k}\int_0^1 m_{N-k}\tau^{N-k}e^{\ta Y_0\xi}\,d\tau \le Q_1^{N+1}Q^{N-k}m_{N-k}(N-k)!(-Y_0\xi)^{k-N-1},
\end{equation*}
where $Q_1$ is a suitable constant.
We set $u_N=\varphi_{k+N-1}u$ and observe
that there are an open conic neighborhood  $V$ of $\xi_0$ and a constant $\gamma>0$ such that 
$Y_0\xi\leq -\gamma|\xi|$ for all $\xi\in V$. For such $\xi$ we conclude (using $e^{-\ga |\xi|} \le N!(\ga |\xi|)^{-N}$)
\begin{align*}
\bigl|\widehat{u}_N(\xi)\bigr|&\leq C\left(Q_1^{N}\big(e^{-\gamma|\xi|}+(N-1)!|\xi|^{-N}\big)+(Q_1Q)^NM_{N-1}|\xi|^{-N}\right)\\
&\leq CQ_1^N \bigl( N! \gamma^{-N}+(N-1)!+Q^NM_{N-1}\bigr)|\xi|^{-N}\\
&\leq C(Q_1Q)^NM_N|\xi|^{-N},
\end{align*}
by \eqref{strictInclusion2}. This shows that $(x_0,\xi_0)\notin\WF_{[\fM]}u$. 

Since $Y_0\in\Gamma$ was chosen arbitrarily the statement of the theorem follows.
\end{proof}

Combining \Cref{Theorem-M-BVWF} with \Cref{thm:Rchar}, \Cref{thm:Bchar}, and \Cref{SemiBVChar} 
we obtain the following characterization of the ultradifferentiable wave front set.

\begin{corollary}\label{RegWFLocalChar2}
Let $\fM$ be a $[$regular$]$ weight matrix.
Let $u\in\D^\prime(\Omega)$ and $(x_0,\xi_0)\in T^\ast\Omega\!\setminus\!\{0\}$. Then
$(x_0,\xi_0)\notin\WF_{[\fM]}u$ if and only if there are open convex cones $\Gamma^1,\dotsc,\Gamma^d$ 
with $\xi_0\Gamma^j<0$,
an open neighborhood $V$ of $x_0$ and distributions $u_j\in\D^\prime(V)$ such that $u_j$ 
is $[\fM]$-almost analytically extendable into $\Gamma^j$ for $j=1,\dotsc, d$ and 
\begin{equation*}
u\vert_V=\sum_{j=1}^du_j.
\end{equation*}
\end{corollary}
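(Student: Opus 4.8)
The plan is to derive \Cref{RegWFLocalChar2} by combining the boundary-value characterization \Cref{SemiBVChar} with the almost analytic extension theorems \Cref{thm:Rchar} and \Cref{thm:Bchar}, using \Cref{Theorem-M-BVWF} for the easy direction. First I would handle the \emph{if} direction: suppose $u|_V = \sum_{j=1}^d u_j$ with each $u_j$ $[\fM]$-almost analytically extendable into an open convex cone $\Gamma^j$ satisfying $\xi_0 \Gamma^j < 0$. Then \Cref{Theorem-M-BVWF} gives $\WF_{[\fM]} u_j \subseteq V \times (\Gamma^j)^\circ \setminus \{0\}$. The condition $\xi_0 \Gamma^j < 0$ means $\langle y, \xi_0\rangle < 0$ for all nonzero $y \in \Gamma^j$, hence $\xi_0 \notin (\Gamma^j)^\circ$; since $(\Gamma^j)^\circ$ is closed, $(x_0,\xi_0) \notin V \times (\Gamma^j)^\circ$, so $(x_0,\xi_0) \notin \WF_{[\fM]} u_j$ for each $j$. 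By subadditivity of the wave front set under sums (a formal consequence of \Cref{WFproperties}, or directly from the definition via $\widehat{(\chi_N u)} = \sum_j \widehat{(\chi_N u_j)}$), $(x_0,\xi_0) \notin \WF_{[\fM]} u$.

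For the \emph{only if} direction, assume $(x_0,\xi_0) \notin \WF_{[\fM]} u$. Apply \Cref{SemiBVChar} (valid since a [regular] weight matrix is in particular [semiregular]): there is a neighborhood $V$ of $x_0$, a function $f \in \cE^{[\fM]}(V)$, open cones $\Gamma^1,\dotsc,\Gamma^q$ with $\xi_0 \Gamma^j < 0$, and holomorphic functions $F_j \in \O(\Omega + i\Gamma^j_\delta)$ of slow growth with $u|_V = f + \sum_{j=1}^q b_{\Gamma^j} F_j$. Now each holomorphic $F_j$ is trivially an $(h_{\seq m}, Q)$-almost analytic function for every $\seq M \in \fM$ and every $Q>0$ (since $\ol\p F_j \equiv 0$), so each $b_{\Gamma^j} F_j$ is $[\fM]$-almost analytically extendable into $\Gamma^j$, in both the Roumieu and Beurling sense. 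It remains to absorb the term $f \in \cE^{[\fM]}(V)$ into the sum. Here I would use the almost analytic extension theorems: shrinking $V$ so that $V \in \sD(\R^n)$ (or covering $x_0$ by such a set and localizing), \Cref{thm:Rchar} in the Roumieu case (resp.\ \Cref{thm:Bchar} in the Beurling case) applied to $f|_V$ produces an extension $\tilde F \in C^1_c(\C^n)$ with $|\ol\p \tilde F(z)| \le C\, h_{\seq m}(\rho\, d(z,\ol V))$ for suitable $\seq M \in \fM$, $\rho>0$ (resp.\ for all $\seq M, \rho$). Restricting $\tilde F$ to $V \times \Gamma^1_{r}$ for small $r$ and noting $d(z, \ol V) \le |y|$ when $z = x+iy$ with $x \in V$, we see $\tilde F|_{V \times \Gamma^1_r}$ is $(h_{\seq m}, \rho)$-almost analytic of slow growth (it is bounded), and its boundary value as $y \to 0$ within $\Gamma^1$ is just $f$ (since $\tilde F$ is continuous up to the reals and equals $f$ there). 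Thus $f$ is itself $[\fM]$-almost analytically extendable into $\Gamma^1$; replacing $u_1 := b_{\Gamma^1} F_1 + f$ and $u_j := b_{\Gamma^j} F_j$ for $j \ge 2$, and adding $\ol\p$-estimates (which only worsens constants, and $h_{\seq m} \le h_{\seq n}$ for $\seq M \le \seq N$ lets one pass to a common sequence in the totally ordered matrix), gives the desired decomposition with $d = q$.

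The main obstacle I anticipate is the bookkeeping needed to realize the real-analytically-smooth term $f$ as the boundary value of an almost analytic extension restricted to a wedge: one must check that the compactly supported $C^1$ extension $\tilde F$ furnished by \Cref{thm:Rchar}/\Cref{thm:Bchar}, when restricted to $\Omega \times \Gamma^1_r$, genuinely has the "of slow growth" property (trivial, as it is bounded) and genuinely has boundary value $f$ on $V$ (clear from $\tilde F|_{\R^n} = f$ and continuity), and that one can coherently merge the $\ol\p$-bound for $\tilde F$ with that for the possibly-different weight sequence appearing for $F_j$ — but since the matrix $\fM$ is totally ordered and $F_j$ is actually holomorphic, one can always take the sequence coming from $f$. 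A secondary point is ensuring $V$ can be taken in $\sD(\R^n)$; this is handled exactly as in \Cref{cor:Rcharacterization}, by replacing $V$ with a connected finite union of small balls around $x_0$ contained in the original neighborhood, which lies in $\sD(\R^n)$, and noting that all the cited decomposition results localize freely.
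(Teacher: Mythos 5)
Your proposal is correct and follows essentially the same route as the paper, which proves the corollary precisely by combining \Cref{Theorem-M-BVWF} (for the sufficiency of the decomposition) with \Cref{SemiBVChar} and the extension theorems \Cref{thm:Rchar}/\Cref{thm:Bchar} (to realize the $\cE^{[\fM]}$ term as a boundary value of an almost analytic function on a wedge). Your fleshed-out details --- absorbing $f$ into one of the wedges via its compactly supported almost analytic extension, and using that the holomorphic $F_j$ are trivially $(h_{\seq m},Q)$-almost analytic for every $\seq M$ and $Q$ --- are exactly the intended argument.
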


\subsection{Invariance by pullback with ultradifferentiable mappings}

We are now ready to show that the ultradifferentiable wave front set is compatible with 
the pullback by ultradifferentiable mappings. As a consequence the ultradifferentiable wave front set
can be defined for distributions on ultradifferentiable manifolds.

\begin{theorem} \label{thm:pullback}
Let $\fM$ be a $[$regular$]$ weight matrix.
Let $F:\Omega_1\rightarrow \Omega_2$ be an $\cE^{[\fM]}$-mapping. 
If $u\in\D^\prime(\Omega_2)$ and $\WF_{[\fM]}u\cap N_F=\emptyset$ then
\begin{equation*}
\WF_{[\fM]}F^\ast u\subseteq F^\ast \WF_{[\fM]} u.
\end{equation*}
Here $N_F = \{ (F(x),\et) \in \Om_2 \times \R^{n_2} : F'(x)^T \et = 0 \}$ is the set of normals of $F$.
\end{theorem}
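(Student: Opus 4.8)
The plan is to follow the strategy of the proof of Theorem~8.5.1 in \cite{Hoermander83I} (reproduced in this excerpt as \Cref{Invar1} for real analytic $F$), but feeding in the ultradifferentiable machinery developed in \Cref{sec:WFreg} in place of the analytic tools. The point is that once the wave front set is described by almost analytic extensions (\Cref{RegWFLocalChar2}), the pullback of a boundary value of an almost analytic function is again such a boundary value, with the growth of the $\ol\p$-derivative controlled via the composition stability \Cref{prop:composition}. Here $F$ itself only needs to be $\cE^{[\fM]}$, not real analytic, precisely because \Cref{prop:composition} tells us that composing an $(h_{\seq m},\rh)$-almost analytic function with an $(h_{\seq m},\si)$-almost analytic function again lands in the scale.

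First I would reduce to a local statement: fix $(x_1,\xi_1)\in T^\ast\Omega_1\setminus\{0\}$ with $(x_1,\xi_1)\notin F^\ast\WF_{[\fM]}u$, so that for every $\eta$ with $F'(x_1)^T\eta=\xi_1$ we have $(F(x_1),\eta)\notin\WF_{[\fM]}u$; together with $\WF_{[\fM]}u\cap N_F=\emptyset$ (which handles the case $\xi_1=0$, i.e.\ $F'(x_1)^T\eta=0$) one checks by a standard compactness argument over the sphere $\{|\eta|=1,\, F'(x_1)^T\eta=\lambda\xi_1\}$ that there is a closed conic neighborhood $\Lambda$ of the relevant $\eta$'s with $\WF_{[\fM]}u\cap(\ol{V_2}\times\Lambda)=\emptyset$ for a small neighborhood $V_2$ of $F(x_1)$. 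By \Cref{RegWFLocalChar2}, after shrinking $V_2$ we may write $u|_{V_2}=\sum_{k=1}^d u_k$ where each $u_k$ is $[\fM]$-almost analytically extendable into an open convex cone $\Gamma^k\subseteq\R^{n_2}$ with $\Gamma^{k\circ}$ disjoint from $\Lambda$ (and $\xi_0\Gamma^k<0$ for the directions $\xi_0\in\Lambda$); so $u_k=b_{\Gamma^k}G_k$ with $G_k\in\cE(V_2\times\Gamma^k_r)$ an $(h_{\seq m_k},Q_k)$-almost analytic function of slow growth (Roumieu case; the Beurling case runs the same way with all $\seq M\in\fM$, all $Q$).

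Next, near $x_1$ the real analytic — here $\cE^{[\fM]}$ — map $F$ admits an almost analytic extension $\tilde F$ to a complex neighborhood (real analytic maps trivially admit holomorphic extensions; a general $\cE^{[\fM]}$-map admits an $(h_{\seq m},\si)$-almost analytic extension by \Cref{cor:Rcharacterization}, after possibly passing to a slightly larger sequence within $\fM$ using R-regularity). One then sets $H_k(x,y):=G_k(\tilde F(x+iy))$ on a set of the form $V_1\times(\Gamma_1^k)_{r'}$, where $\Gamma_1^k$ is a slightly smaller open convex cone chosen so that $\tilde F(V_1+i(\Gamma_1^k)_{r'})$ lands inside $V_2+i\Gamma^k_r$; concretely, $\tilde F(x+iy)=F(x)+iF'(x)y+O(|y|^2)$, so the image direction of $y$ is $F'(x_1)y+o(1)$, and one picks $\Gamma_1^k$ with $F'(x_1)\Gamma_1^k\Subset\Gamma^k\cup\{0\}$. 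Then $b_{\Gamma_1^k}H_k=(b_{\Gamma^k}G_k)\circ F=u_k\circ F$ on $V_1$ (this identity for boundary values under composition with real-analytic-type maps is the content used already in \Cref{AnalyticClosed}/\Cref{1dimUniq}), slow growth of $H_k$ is inherited since $|\im\tilde F(x+iy)|\gtrsim|y|$, and by the chain rule plus $|\ol\p\tilde F|\le C h_{\seq m}(\si|y|)$ and $|\ol\p G_k|\le C h_{\seq m_k}(Q_k\,d(\cdot))$ we get, exactly as in \Cref{prop:composition}, that $H_k$ is $(h_{\seq m_k'},Q_k')$-almost analytic for a suitable $\seq M_k'\in\fM$ (using that $h_{\seq s}\le h_{\seq t}$ for $\seq S\le\seq T$ and R-regularity to absorb $\seq M$ into $\fM$). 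Hence each $u_k\circ F$ is $[\fM]$-almost analytically extendable into $\Gamma_1^k$, and since $F'(x_1)^T(\Gamma^{k})^{\circ}$-type considerations give $\xi_1(\Gamma_1^k)^\circ<0$ — more precisely the dual cone $(\Gamma_1^k)^\circ$ contains $\xi_1$-directions only if $\Gamma^{k\circ}$ met $\Lambda$, which it does not — \Cref{Theorem-M-BVWF} applied to $F^\ast u|_{V_1}=\sum_k u_k\circ F$ gives $(x_1,\xi_1)\notin\WF_{[\fM]}F^\ast u$.

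The main obstacle, and the step to write carefully, is the bookkeeping of cones: one must verify that the cones $\Gamma_1^k$ into which the pulled-back pieces extend really do have the property $\xi_1\Gamma_1^k<0$, i.e.\ that $\xi_1=F'(x_1)^T\eta$ for $\eta\in\Gamma^{k\circ}$ forces $\langle\xi_1,y\rangle=\langle\eta,F'(x_1)y\rangle<0$ for $y\in\Gamma_1^k$, which is exactly why we chose $F'(x_1)\Gamma_1^k\subseteq\Gamma^k\setminus\{0\}$ together with $\eta\Gamma^k>0$; and that these finitely many $\Gamma_1^k$ jointly cover a neighborhood of all directions $y$ with $\langle\xi_1,y\rangle<0$ relevant for applying \Cref{Theorem-M-BVWF}, so that no direction of $\WF_{[\fM]}F^\ast u$ over $x_1$ survives. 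The other technical point — that a general $\cE^{[\fM]}$-map has an almost analytic extension with $|\im\tilde F(x+iy)|$ comparable to $|y|$ near a point where $F'$ is (say) injective on the relevant cone, or more generally that the boundary-value identity $b_{\Gamma_1}(G\circ\tilde F)=(b_\Gamma G)\circ F$ holds — is standard once $F$ is extended to a genuine $C^1$ map of the complex variables, and for the final wave front statement one does not even need $F'$ injective: the condition $\WF_{[\fM]}u\cap N_F=\emptyset$ is precisely what guarantees that the degenerate directions of $F'(x_1)$ are harmless, exactly as in the classical \Cref{Invar1}.
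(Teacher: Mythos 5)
Your proposal is correct and follows essentially the same route as the paper: decompose $u$ near $F(x_1)$ into boundary values of almost analytic functions via \Cref{RegWFLocalChar2}, extend $F$ almost analytically using \Cref{thm:Rchar} and \Cref{thm:Bchar}, compose via \Cref{prop:composition}, and read off the wave front set of each pulled-back piece from \Cref{Theorem-M-BVWF}. The only cosmetic difference is that the paper establishes the key containment $\WF_{[\fM]}(F^\ast u_k)\vert_{x_1}\subseteq \{(x_1,F'(x_1)^T\eta):\eta\in(\Gamma^k)^\circ\setminus\{0\}\}$ by letting $\ep\to 0$ along a single fixed direction $h$ with $F'(x_1)h\in\Gamma^k$ and then intersecting the resulting half-spaces $\{\langle h,\xi\rangle\ge 0\}$, rather than by fixing a full source cone $\Gamma_1^k$ at the outset, which sidesteps your bookkeeping about where the composite $G_k\circ\tilde F$ is defined.
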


\begin{proof} 
First assume that $u$ is $[\fM]$-almost analytically extendable into an open convex cone $\Gamma$.
By \Cref{Theorem-M-BVWF}, $\WF_{[\fM]} u \subseteq\Omega\times\Gamma^\circ \setminus \{0\}$. 
Since $\WF_{[\fM]}u\cap N_F=\emptyset$, we have 
$F'(x)^T\eta\neq 0$ for all $\et \in \Gamma^\circ \setminus \{0\}$.
Hence $F'(x)^T \Gamma^\circ$ is a closed convex cone for all $x\in\Omega_1$.
We claim that for $x_0\in\Omega_1$ we have
\begin{equation} \label{claim}
\WF_{[\fM]} (F^{\ast}u)\vert_{x_0}\subseteq \bigl\{(x_0,F'(x_0)^T\eta):\; \eta\in\Gamma^\circ\!\setminus\{0\}\!\bigr\}.
\end{equation}
We can write (see \cite[page 296]{Hoermander83I})
\begin{equation*}
F'(x_0)^T\Gamma^\circ =\bigl\{\xi\in\R^n :\langle h,\xi\rangle\geq 0 \text{ if } F^\prime(x_0)h\in\Gamma\bigr\}.
\end{equation*}
Let $\Phi \in \cE(\Omega_2\times\Gamma_r)$ be an $(h_{\seq m},Q)$-almost analytic function such that $u=b_\Gamma\Phi$.
Let $X_1\subseteq \Omega_1$ be a relatively compact quasiconvex neighborhood of $x_0$ and 
denote by $\widetilde{F}\in\cE(X_1\times\R^n,\Om_2 \times \R^n)$ an $(h_{\seq n},\rho)$-almost analytic extension of $F$, which exists by 
\Cref{thm:Rchar} and \Cref{thm:Bchar}.
Since $h_{\seq m} \le h_{\seq n}$ if ${\seq M} \le \seq N$ and since $h_{\seq m}$ is increasing,
we can assume that $\seq M=\seq N$ and $Q=\rho$.

Let $h \in \R^n$ and $F^\prime(x_0)h\in\Gamma$.
Then 
\begin{equation*}
\imag \widetilde{F}(x+i\ep h)\in \Gamma\quad \text{ for small } \ep >0 \text{ if } x \in X_0, 
\end{equation*}
where $X_0$ is a small neighborhood of $x_0$.

From the proof of the existence of the boundary value of an almost analytic function (see e.g.\ \cite{Fuerdoes1}, for the special case
of boundary values of holomorphic functions see \cite{Hoermander83I})
we observe that the map
\begin{equation*}
\R_{\geq 0}\times\bigl(\Gamma\cup\{0\}\bigr)\ni(\ep, y)\longmapsto \widetilde{\Phi}(\ep,y)
:=\Phi\bigl(\widetilde{F}(\cdot+i\ep h)+iy\bigr)\in\D^{\prime}(X_0)
\end{equation*}
is continuous. Now 
\begin{gather*}
  \widetilde{\Phi}(\ep,y)\stackrel{\ep \to 0}{\longrightarrow} \widetilde{\Phi}(0,y)=\Phi(\widetilde{F}(\cdot+0i)+iy)
  \stackrel{y \to 0}{\longrightarrow} F^* u \quad \text{ and } 
  \\
  \widetilde{\Phi}(\ep,y)\stackrel{y \to 0}{\longrightarrow} \widetilde{\Phi}(\ep,0)=\Phi(\widetilde{F}(\cdot+i\ep h))
  \quad \text{ in } \cD'(X_0). 
\end{gather*} 
 Hence by continuity
 \begin{equation*}
 F^\ast u=\lim_{\ep\rightarrow 0} \Phi\bigl(\widetilde{F}(\cdot+i\ep h)\bigr) \quad \text{ in } \cD'(X_0).
 \end{equation*}
 Now $\Phi\circ\widetilde{F}$ is $(h_{\seq m},CQ)$-almost analytic, where the composition is defined 
 and $C$ is the Lipschitz constant of $\widetilde{F}$ (cf.\ \Cref{prop:composition}).
Thus the proof of \Cref{Theorem-M-BVWF} 
implies
 \begin{equation*}
 \WF_{[\fM]} F^\ast u\vert_{x_0}\subseteq \{(x_0,\xi) : \langle h,\xi\rangle\geq 0\}.
 \end{equation*}
This proves \eqref{claim}.

Now suppose that $(F(x_0),\eta_0)\notin\WF_{[\fM]}u$.
By \Cref{RegWFLocalChar2} there are an open neighborhood $V$ of $x_0$, distributions $u_1,\dotsc, u_d\in\D^\prime (V)$
and open convex cones $\Gamma_1,\dotsc,\Gamma_d$ such that $\eta_0\Gamma_j<0$ and $u_j$ is 
$[\fM]$-almost analytically extendable into $\Gamma_j$ for all $j=1,\dotsc,d$ and
\begin{equation*}
u\vert_V=\sum_{j=1}^d u_j.
\end{equation*}
By assumption, $F'(x)^T\eta\neq 0$ when $(F(x),\eta)\in\WF_{[\fM]} u$ for $x\in F^{-1}(V)$.
Hence we can assume that $F'(x)^T\eta\neq 0$ for $\eta\in\Gamma_j^\circ \setminus \{0\}$ for all $j=1,\dotsc, d$ 
and $x\in F^{-1}(V)$, since in the proof of \Cref{RegWFLocalChar2} the cones 
$\Gamma_j$ can be chosen such that the set $\Gamma^\circ_j\cap S^{n-1}$ 
has small measure and $\Gamma^\circ_j\cap\WF_{[\fM]}u\vert_{F(x)}\neq\emptyset$ for $x\in V$.
By the arguments above we have for a smaller neighborhood $V_0$ of $x_0$ that
\begin{equation*}
F^{\ast}u\vert_{V_0}=\sum_{j=1}^N{F^\ast u_j}\vert_{V_0}
\end{equation*}
and 
$\WF_{[\fM]} (F^\ast u_j)\vert_{x_0}\subseteq \bigl\{(x_0,F'(x_0)^T\eta) : \eta\in\Gamma_j^\circ\!\setminus\!\{0\}\bigr\}$
for all $j=1,\dotsc,d$.
However, since $\eta_0\Gamma_j<0$ it follows that $(x_0,F'(x_0)^T\eta_0) \notin \WF_{[\fM]} (F^\ast u_j)$ 
and therefore $(x_0, F'(x_0)^T\eta_0) \notin \WF_{[\fM]}(F^\ast u)$.
\end{proof}

\begin{remark}\label{WeightDiffeoWF}
If the mapping $F$ in \Cref{thm:pullback}
is a diffeomorphism of class $\cE^{[\fM]}$, then
\begin{equation*}
\WF_{[\fM]} F^\ast u=F^\ast \WF_{[\fM]}u, \quad u \in \cD'(\Om).
\end{equation*}
Hence the ultradifferentiable wave front set $\WF_{[\fM]}u$ can be defined for distributions on
ultradifferentiable manifolds of 
class $\cE^{[\fM]}$.  
\end{remark}

\subsection{An ultradifferentiable version of Bony's theorem} \label{sec:Bony}
 
Bony \cite{MR0650834} showed that the analytic wave front can be described either by the Fourier transform, by holomorphic extensions,
or by the FBI transform. The latter can be viewed as a nonlinear version of the Fourier transform and was introduced by \cite{MR0399494}.

We use here the generalized FBI transform defined by \cite{Berhanu:2012aa} as
\begin{equation*}
\mathfrak{F}u(t,\xi)=c_p\bigl\langle u(x),e^{i\xi(t-x)}e^{-|\xi| p(t-x)}\bigr\rangle,
\quad u\in\cE^\prime(\Omega), 
\end{equation*}
where $p$ is a real homogeneous positive elliptic polynomial of degree $2k$ and $c_p^{-1}:=\int e^{-p(x)}dx$,
i.e., $c |x|^{2k} \le p(x) \le C |x|^{2k}$ for constants $0<c<C$.

\begin{theorem} \label{Bony}
Let $\fM$ be a $[$regular$]$ weight matrix.
Let $u\in\D^\prime(\Omega)$ and $(x_0,\xi_0)\in T^\ast\Omega\!\setminus\!\{0\}$. 
Then 
\begin{enumerate} 
\item $(x_0,\xi_0)\notin\WF_{\{\fM\}} u$ if and only if there exist a test function $\psi\in\D(\Omega)$ with $\psi\equiv 1$ 
near $x_0$, a conic neighborhood $U\times\Gamma$ of $(x_0,\xi_0)$, a weight sequence $\seq M\in\fM$, and a constant $\gamma>0$ such that 
\begin{equation}\label{M-FBIestimate}
\sup_{(t,\xi)\in U\times\Gamma}e^{\omega_{\seq M}(\gamma|\xi|)}
\bigl|\mathfrak{F}(\psi u)(t,\xi) \bigr|<\infty.
\end{equation}
\item $(x_0,\xi_0)\notin\WF_{(\fM)} u$ if and only if there exist a test function $\psi\in\D(\Omega)$ with $\psi\equiv 1$ near $x_0$, 
a conic neighborhood $U\times\Gamma$ of $(x_0,\xi_0)$ such that
\eqref{M-FBIestimate} is satisfied for all weight sequences $\seq M\in\fM$ and all $\gamma>0$.
\end{enumerate}
\end{theorem}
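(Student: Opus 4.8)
The plan is to adapt Bony's three‑way equivalence (\cite{MR0650834}, in the ultradifferentiable form of the first author \cite{Fuerdoes1}) to the weight‑matrix setting, the only new ingredient being the uniform bookkeeping over $\fM$: a single sequence $\seq M\in\fM$ in the Roumieu case, all of $\fM$ in the Beurling case. First I would reduce to $u$ compactly supported by passing to $\psi u$; since $t\mapsto e^{i\xi(t-x)}e^{-|\xi|p(t-x)}$ is entire, $\mathfrak{F}(\psi u)(\cdot,\xi)$ extends holomorphically to $\C^n$, and as $\psi u$ has some finite order $\mu$ one has the crude bound $|\mathfrak{F}(\psi u)(t,\xi)|\le C(1+|\xi|)^\mu$ on compacta in $t$, with polynomial growth of the extension in $\Im t$. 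I also record the identity $e^{-\omega_{\seq M}(t)}=\inf_k M_k t^{-k}$ and the auxiliary estimate $\int_0^1h_{\seq m}(Q\tau)e^{-\ga\tau|\xi|}\,d\tau\lesssim|\xi|^{-1}\inf_N M_N(Q/(\ga|\xi|))^N=|\xi|^{-1}e^{-\omega_{\seq M}(\ga'|\xi|)}$, which is exactly the computation underlying the proof of \Cref{Theorem-M-BVWF}; since $[\fM]$ is regular it satisfies \ref{def:Rregular}(0), hence $\omega_{\seq M}(t)=o(t)$ and an exponentially small factor $e^{-c|\xi|}$ is absorbed into $e^{-\omega_{\seq M}(\ga|\xi|)}$ for every $\ga>0$.

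\emph{Necessity.} Assuming $(x_0,\xi_0)\notin\WF_{[\fM]}u$, I would invoke \Cref{RegWFLocalChar2} to obtain a neighbourhood $V$ of $x_0$, open convex cones $\Gamma^1,\dots,\Gamma^d$ with $\xi_0\Gamma^j<0$, and distributions $u_j\in\D'(V)$ with $u|_V=\sum_ju_j$, where each $u_j=b_{\Gamma^j}F_j$ for an $(h_{\seq m},Q)$-almost analytic $F_j\in\cE(\Omega\times\Gamma^j_r)$ of slow growth. Choose $\psi\in\D(V)$ with $\psi\equiv1$ near $x_0$ and a conic neighbourhood $\Gamma$ of $\xi_0$ so small that $\xi\cdot Y\le-\ga|\xi|$ for $\xi\in\Gamma$, $Y\in\Gamma^j\cap S^{n-1}$. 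In $\mathfrak{F}(\psi u_j)(t,\xi)=c_p\lim_{y\to0}\int\psi(x)F_j(x,y)e^{i\xi(t-x)}e^{-|\xi|p(t-x)}\,dx$ one deforms the contour $x\mapsto x+i\tau Y$, $\tau\in[0,1]$, exactly as in the proof of \Cref{Theorem-M-BVWF}: the endpoint $\tau=1$ comes with $e^{\xi\cdot Y}\le e^{-\ga|\xi|}$; the $\bar\p\psi$‑contribution is supported where $t-x$ is bounded away from $0$, so $\Re p(t-x-i\tau Y)$ is bounded below and this term is $\lesssim e^{-c|\xi|}$; and the $\psi\,\bar\p F_j$‑contribution is $\lesssim\int_0^1h_{\seq m}(Q\tau)e^{-\ga\tau|\xi|}\,d\tau\lesssim|\xi|^{-1}e^{-\omega_{\seq M}(\ga'|\xi|)}$ by the auxiliary estimate. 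Summing over $j$ and absorbing $e^{-c|\xi|}\le Ce^{-\omega_{\seq M}(\ga''|\xi|)}$ yields \eqref{M-FBIestimate} on a set $U\times\Gamma$; in the Beurling case the same runs through for every $\seq M\in\fM$ and every $\ga>0$, since \Cref{RegWFLocalChar2} then provides, for each $\seq M$ and $Q$, a corresponding decomposition.

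\emph{Sufficiency.} Conversely, assume \eqref{M-FBIestimate} and replace $u$ by $\psi u$. Using the inversion formula for the generalized FBI transform with elliptic symbol $p$ (cf.\ \cite{Berhanu:2012aa,Fuerdoes1}) I would write $\psi u=\int\!\!\int\mathfrak{F}(\psi u)(t,\xi)\,\Psi(x,t,\xi)\,dt\,d\xi$, where the synthesis kernel is, up to elliptic factors, a wave packet concentrated at $t=x$ (so $|\Psi(x,t,\xi)|\lesssim e^{-c|\xi|}$ for $|x-t|\ge\de$) whose partial Fourier transform in $x$ is, modulo a super‑polynomially small remainder, supported near $\eta=\xi$ at scale $|\xi|^{1/(2k)}$. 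Fix nested open convex cones $\overline{\Gamma'}\setminus\{0\}\subseteq\Gamma''\subseteq\overline{\Gamma''}\setminus\{0\}\subseteq\Gamma$ about $\xi_0$ and split $\psi u=v_1+v_2$ according to $\xi\in\Gamma''$ and $\xi\notin\Gamma''$. For $v_2$, on the cone $\Gamma'$ the frequency $\eta$ and the $\xi$‑support are separated by a fixed angle, so $\widehat\Psi(\eta,t,\xi)$ is super‑polynomially small there; combined with $|\mathfrak{F}(\psi u)|\lesssim(1+|\xi|)^\mu$ this gives $|\widehat{v_2}(\eta)|\lesssim e^{-c|\eta|}\le C_\ga e^{-\omega_{\seq M}(\ga|\eta|)}$ on $\Gamma'$, hence $(x_0,\xi_0)\notin\WF_{[\fM]}v_2$. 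For $v_1$ one uses the holomorphy of $t\mapsto\mathfrak{F}(\psi u)(t,\xi)$ and its polynomial growth to deform the $t$‑integration and split off the contribution of $t$ far from $x_0$ as an $O(e^{-c|\xi|})$ term; the remaining near‑diagonal part is controlled by the FBI decay of $\mathfrak{F}(\psi u)$ for $\xi\in\Gamma''$, which after the Fourier transform in $x$ and the localization $\eta\approx\xi$ gives $|\widehat{v_1}(\eta)|\lesssim\int_{\xi\approx\eta}e^{-\omega_{\seq M}(\ga|\xi|)}(\cdots)\,d\xi\lesssim e^{-\omega_{\seq M}(\ga'|\eta|)}$ on $\Gamma'$, so $(x_0,\xi_0)\notin\WF_{[\fM]}v_1$ as well. (Alternatively one may, from the $v_1$‑estimate, build an $(h_{\seq m},Q)$-almost analytic extension of $v_1$ into a cone dual to $\Gamma'$ and conclude via \Cref{Theorem-M-BVWF} and \Cref{RegWFLocalChar2}.) In the Beurling case all of this holds for every $\seq M\in\fM$ and every $\ga>0$.

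\emph{Main obstacle.} The delicate point is the sufficiency direction, and within it the step of upgrading the \emph{local}-in-$t$ estimate \eqref{M-FBIestimate} to a genuine estimate for $\widehat{\psi u}$: the inversion formula integrates over all $t\in\R^n$, whereas \eqref{M-FBIestimate} is assumed only for $t\in U$. This is where the holomorphic extension of $\mathfrak{F}(\psi u)(\cdot,\xi)$ to $\C^n$ with at most polynomial growth is essential, allowing one (as in \cite{MR0650834,Fuerdoes1}) to deform the $t$‑contour and show that the ``far'' part contributes only $O(e^{-c|\xi|})$, which is harmless at the level of $[\fM]$‑regularity. Everything else is the same manipulation with $h_{\seq m}$ and $\omega_{\seq M}$ already carried out in the proofs of \Cref{Theorem-M-BVWF} and \Cref{prop:restriction}, now performed uniformly over $\fM$ (a single sequence suffices in the Roumieu case, all sequences are needed in the Beurling case, in parallel with \Cref{thm:Rchar} and \Cref{thm:Bchar}).
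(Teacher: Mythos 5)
Your necessity direction is essentially the paper's own argument: reduce via \Cref{RegWFLocalChar2} to boundary values $u_j=b_{\Gamma^j}F_j$ of almost analytic functions, deform $x\mapsto x+i\tau Y$ by Stokes' theorem, and estimate the endpoint term, the $\ol\p\psi$-term and the $\psi\,\ol\p F_j$-term exactly as in the proof of \Cref{Theorem-M-BVWF}. That half is correct.

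The sufficiency direction, however, breaks down at the very step you single out as essential. First, the entire extension of $t\mapsto\mathfrak{F}(\psi u)(t,\xi)$ to $\C^n$ does \emph{not} have polynomial growth in $\Im t$: writing $t=t'+is$ one has $|e^{-|\xi|p(t-x)}|=e^{-|\xi|\Re p(t'-x+is)}$, and since $p$ is homogeneous of degree $2k$ the quantity $\Re p(t'-x+is)$ becomes negative of order $|s|^{2k}$ as soon as $|s|\gtrsim|t'-x|$ (for $p(y)=|y|^2$ it equals $|t'-x|^2-|s|^2$). The extension therefore grows like $e^{C|\xi|\,|s|^{2k}}$, and the proposed deformation of the $t$-contour does not yield an $O(e^{-c|\xi|})$ bound for the far part; as written this step fails. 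It is also unnecessary: for real $t$ with $d(t,\supp(\psi u))\ge\delta$, ellipticity of $p$ gives directly $|\mathfrak{F}(\psi u)(t,\xi)|\le C(1+|\xi|)^{\mu}e^{-c\delta^{2k}|\xi|}$, and it is this pointwise decay (after taking $\supp\psi$ small relative to $U$) that controls the region $t\notin U$; this is how the paper, following \cite{MR2397326}, shows that the pieces of the inversion integral with $|t|\ge a$ or $|\xi|$ bounded converge to functions holomorphic near $x_0$. Second, the synthesis kernel of the inversion formula of \cite{Berhanu:2012aa} is $e^{i\xi(x-t)}e^{-\ep|\xi|^2}|\xi|^{n/(2k)}$, which is purely oscillatory; it is not a wave packet concentrated at $t=x$ and does not satisfy $|\Psi(x,t,\xi)|\lesssim e^{-c|\xi|}$ for $|x-t|\ge\delta$ — the concentration you describe belongs to the \emph{analysis} kernel. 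Hence the frequency-separation estimate for your $v_2$ and the near-diagonal localization for your $v_1$ are not available in the stated form. The paper's scheme avoids both problems: after discarding the far regions as above, it splits the remaining integral over $B_a\times\R^n_\xi$ into cones $\CC_j$, shows that the piece with $\xi\in\CC_1\subseteq\Gamma$ is an $\cE^{[\fM]}$ function by differentiating under the integral and using \eqref{M-FBIestimate} together with $e^{-\om_{\seq M}(\ga|\xi|)}\le \ga^{-N}M_N|\xi|^{-N}$, and realizes each piece with $\xi\in\CC_j$, $j\ge 2$, as the boundary value of a function holomorphic in a wedge $\R^n+i\Gamma_j$ with $\xi_0\Gamma_j<0$, concluding with \Cref{SemiBVChar}. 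You should replace the $v_1/v_2$ splitting and the kernel-localization arguments by this decomposition.
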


Note that \Cref{Bony:intro} is a direct consequence, since a weight function $\om$ 
and the associated weight matrix $\fW = \{\seq W^x\}_{x>0}$ satisfy
\[
   \A x>0 \E C_x>0 \A t >0 : x\, \om_{\seq W^x}(t) \le \om(t) \le 2 x\, \om_{\seq W^x}(t) + C_x,
\]  
see \cite[Lemma 2.5]{JSS19} and \cite[Lemma 5.7]{RainerSchindl12}, 
and $\om$ and all $\om_{\seq W^x}$ satisfy \eqref{om1}.

\begin{proof}
First let $(x_0,\xi_0)\notin\WF_{[\fM]}u$. 
W.l.o.g.\ we can assume that $x_0=0$. 

Suppose that $u$ is locally the boundary value of an $(h_{\seq m},\rho)$-almost
analytic function $F\in\cE(V\times\Gamma_\delta)$, i.e.\ $u\vert_V=b_\Gamma F$, where $V$ is a neighborhood of the origin and 
$\xi_0\Gamma<0$ is an open convex cone. 
We assume that this holds either for some $\seq M \in \fM$ and some $\rh>0$ or for all $\seq M \in \fM$ and all $\rh>0$, 
depending on the case we treat.
We will show that this implies \eqref{M-FBIestimate} for the same $\seq M$ and either some $\ga>0$ or all $\ga>0$, respectively. 
By \Cref{RegWFLocalChar2}, one direction of the theorem follows. 

Choose $r>0$ such that $B_{2r} = \{x : |x| <2r\} \Subset V$ and let $\psi\in\D(B_{2r})$ be such that $\psi\vert_{B_r}\equiv 1$. 
Take $v\in\Gamma_\delta$ and define
\begin{equation*}
Q(t,\xi,x):=i\xi(t-x)-|\xi| p(t-x).
\end{equation*}
Then
\begin{equation} \label{eq:FBI}
\mathfrak{F}(\psi u)(t,\xi)=\lim_{\tau\rightarrow 0+}\int_{B_{2r}} e^{Q(t,\xi,x+i\tau v)}\psi(x)F(x+i\tau v)\,dx.
\end{equation}
As in the proof of \cite[Theorem 4.2]{Berhanu:2012aa} we  put $z=x+iy$, $\psi(z)=\psi(x)$, and
\begin{equation*}
D_\tau :=\bigl\{ x+i\sigma v\in\C^n : x\in B_{2r},\; \tau\leq\sigma\leq\lambda\bigr\},
\end{equation*}
for some $\lambda>0$ to be determined later,
 and consider the $n$-form
\begin{equation*}
e^{Q(t,\xi,z)}\psi(z)F(z)\,dz_1\wedge \dots\wedge dz_n.
\end{equation*}
Stokes' theorem implies
\begin{align}\label{FBI-neccesity1}
&\int_{B_{2r}} e^{Q(t,\xi,x+i\tau v)}\psi(x)F(x+i\tau v)\,dx 
\nonumber \\
&=\int_{B_{2r}} e^{Q(t,\xi,x+i\lambda v)}\psi(x)F(x+i\lambda v)\,dx 
\nonumber \\
&\quad +\sum_{j=1}^n\int_{D_\tau}  e^{Q(t,\xi,z)}\frac{\partial}{\partial \bar{z}_j}\bigl(\psi(z)F(z)\bigr)
\,d\bar{z}_j\wedge dz_1\wedge\dots\wedge dz_n 
\nonumber \\
&=\int_{B_{2r}} e^{Q(t,\xi,x+i\lambda v)}\psi(x)F(x+i\lambda v)\,dx
\nonumber \\
&\quad+\sum_{j=1}^n\int_{B_{2r}} \int_\tau^\lambda  e^{Q(t,\xi,x+i\sigma v)}
\frac{\partial\psi}{\partial \bar{z}_j}(x+i\sigma v)F(x+i\sigma v)\,d\sigma dx
\nonumber \\
&\quad+\sum_{j=1}^n\int_{B_{2r}} \int_\tau^\lambda  e^{Q(t,\xi,x+i\sigma v)}\psi(x+i\sigma v)
\frac{\partial F}{\partial \bar{z}_j}(x+i\sigma v)\,d\sigma dx.
\nonumber \\
&=: I_1 + I_2 + I_3.
\end{align}
Since $\xi_0 v<0$ there is an open cone $\Gamma_1$ containing $\xi_0$ such that 
$\xi v\leq -c_0|\xi|| v|$ for all $\xi\in \Gamma_1$ and some constant $c_0>0$.
For $\xi\in \Gamma_1$ and $t$ in some bounded neighborhood $W$ of the origin we have
\begin{align*}
\real Q(t,\xi,x+i\lambda v)&=\lambda (\xi v)-|\xi|\real p(t-x-i\lambda v)\\
&=\lambda (\xi v)-|\xi|\bigl(\real p(t-x) +O(\lambda^2)| v|^2\bigr)\\
&\leq \lambda(\xi v)-c|\xi|\bigl(| t-x|^{2k}+O(\lambda^2)| v|^2\bigr)\\
&\leq -c_0\lambda| v||\xi| +O\bigl(\lambda^2\bigr)|\xi|.
\end{align*}
Hence for $\lambda$ small enough
\begin{equation}\label{exponentialdecay1}
\real Q(t,\xi,x+i\lambda v)\leq -\frac{c_0}{2}\lambda | v||\xi|,
\quad  \xi\in \Gamma_1, x\in B_{2r}, t \in W.  
\end{equation}
 We conclude that there are constants $\gamma_1,C_1>0$ such that
\begin{equation*}
|I_1| \leq 
C_1e^{-\gamma_1|\xi|}, \quad \xi\in \Gamma_1,  t \in W.  
\end{equation*}
We recall that \Cref{def:Rregular}(0) implies that $\omega_{\seq M}(t)=O(t)$ as $t\rightarrow \infty$ 
(cf.\ e.g.\ \cite{Komatsu73}, \cite{BMM07}, or \cite{RainerSchindl12}). Hence
there are constants $\gamma_1,C_1>0$ such that, for all $\rh >0$,  
\begin{equation*}
|I_1| \leq C_1
e^{-\omega_{\seq M}(\ga_1 \rho|\xi|)}, \quad \xi\in \Gamma_1,  t \in W.
\end{equation*}

For $I_2$ we estimate
\begin{align*}
\real Q(t,\xi,x+i\sigma v)&\leq \sigma (\xi v)- c| t-x|^{2k}|\xi|
+ O\bigl(\lambda^2\bigr)|\xi|\\
&\leq -c| t -x|^{2k}|\xi|+ O\bigl(\lambda^2\bigr)|\xi|.
\end{align*} 
If $x\in\supp (\partial \psi /\partial\bar{z}_j)$ then $| x|\geq r$.
Therefore, for $| t|\leq r/2$ and $\lambda$ small enough,  there is a 
constant $\gamma_2>0$ such that
\begin{equation*}
\real Q(t,\xi,x+i\sigma v)\leq -\gamma_2|\xi|, \quad \xi\in\Gamma_1.
\end{equation*}
Hence, for all $\rh >0$,
\begin{equation*}
|I_2|
\leq C_2e^{-\gamma_2|\xi|}\leq C_2e^{-\omega_{\seq M}(\gamma_2\rho| \xi|)},
\quad  \xi\in\Gamma_1,  
| t|\leq r/2, 0<\tau<\lambda.
\end{equation*}

By 
\eqref{exponentialdecay1}, we have for a generic constant $C_3>0$ and all $k\in\N$
\begin{align*}
|I_3|
&\leq C_3\int_0^\infty\! e^{-c_0\sigma | v||\xi|}h_{\seq m}(\rho\sigma | v|)\,
d\sigma 
\leq C_3\int_0^\infty\!e^{-c_0\sigma | v||\xi|} \rho^k\sigma^k| v|^k m_k\, 
d\sigma\\
&\le C_3\rho^k c_0^{-k}|\xi|^{-k}k! m_k\
=C_3\bigl(c_0^{-1}\rho\bigr)^kM_k|\xi|^{-k}
\end{align*}
and thus
\begin{align*}
|I_3|
&\leq C_3h_{\seq M} \bigl(\rho c_0^{-1}|\xi|^{-1}\bigr)
\leq C_3 e^{-\omega_{\seq M}(c_0\rho^{-1} |\xi|)}.
\end{align*}
In the Roumieu case this holds for some $\seq M \in \fM$ and some $\rh>0$, in the Beurling case for all 
$\seq M \in \fM$ and all $\rh>0$. 
Since the appearing constants do not depend on $\ta$, we may conclude \eqref{M-FBIestimate} 
in view of \eqref{eq:FBI} and \eqref{FBI-neccesity1}.

Let us now prove the converse implication. Fix $(x_0 = 0,\xi_0)$ and
assume that \eqref{M-FBIestimate} holds 
either for some $\seq M \in \fM$ and some $\ga>0$ or for all $\seq M \in \fM$ and all $\ga>0$.
We will prove that $(0,\xi_0)\notin \WF_{[\fM]} v$ where $v=\psi u$. We invoke the inversion formula 
for the FBI transform \cite{Berhanu:2012aa}
\begin{equation*}
v =\lim_{\ep\rightarrow\infty}\int_{\R^n\times\R^n}\!\!e^{i\xi(x-t)}e^{-\ep|\xi|^2}
\mathfrak{F}v(t,\xi)|\xi|^{\tfrac{n}{2k}}\,dtd\xi.
\end{equation*}
Let $v^\ep(z)$ denote the above integral for $x$ replaced by $z \in \C^n$. 
Then $v^\ep(z)$ is an entire function which we split as $v^\ep(z) = v^\ep_1(z) + v^\ep_2(z) +v^\ep_3(z) +v^\ep_4(z)$, where
\begin{align*}
v_1^\ep(z)&= \text{ the integral over } \{ \xi \in \R^n , |t| \le a\},
\\
v_2^\ep(z)&= \text{ the integral over } \{ |\xi| \le B, a \le |t| \le A\}, 
\\
v_3^\ep(z)&= \text{ the integral over } \{ \xi \in \R^n , |t| \ge A\},
\\
v_4^\ep(z)&= \text{ the integral over } \{ |\xi| \ge B, a \le |t| \le A\}
\end{align*}
for certain constants $a$, $A$ and $B$ to be determined. 
Following \cite{MR2397326} or \cite{Berhanu:2012aa} we see that $v_2^\ep$, $v_3^\ep$, and $v_4^\ep$ converge to holomorphic functions in a neighborhood 
of the origin as $\ep\rightarrow 0$.

It remains to look at $v_1^\ep$. Suppose that $a$ is small enough such that
$B_a\subseteq U$.
Let $\CC_j$, $1\leq j\leq N$, be open acute cones such that
\begin{equation*}
\R^n=\bigcup_{j=1}^N\overline{\CC}_j
\end{equation*}
and the intersection $\overline{\CC}_j\cap\overline{\CC}_k$ has measure zero for $j\neq k$. 
We may assume that
$\xi_0\in\CC_1$, 
$\CC_1\subseteq\Gamma$, and $\xi_0\notin\overline{\CC}_j$ for $j> 1$.
In particular, by \eqref{M-FBIestimate} we have
\begin{equation}\label{M-FBIestimate3}
\bigl|\mathfrak{F}(v)(x,\xi)\bigr|\leq C e^{-\omega_{\seq M}(\gamma|\xi|)}
\qquad x\in B_a, \xi\in \CC_1.
\end{equation}
For $j=2,\dotsc, N$ we can choose open cones $\Gamma_j$ such that $\xi_0\Gamma_j<0$ and
\begin{equation}\label{dualconeFBI}
\langle y,\xi\rangle\geq c | y | |\xi|\quad \text{ for }  y \in\Gamma_j, \xi\in\CC_j, 
\end{equation}
for some constant $c>0$.
For $j\in\{2,\dots, N\}$ and $\ep >0$
we set
\begin{equation*}
f_j^\ep(x+iy)=\int_{\CC_j}\int_{B_a} e^{i\xi(x+iy -t)-\ep|\xi|^2}
\mathfrak{F}v(t,\xi)|\xi|^{\tfrac{n}{2k}}\,dtd\xi.
\end{equation*}
Each $f_j^\ep$ is entire and for $\ep \to 0$
the functions $f_j^\ep$ converge uniformly on compact subsets of the wedge $\R^n+i\Gamma_j$
to the holomorphic function
\begin{equation*}
f_j(x+iy)=\int_{\CC_j}\int_{B_a} e^{i\xi(x+iy -t)}
\mathfrak{F}v(t,\xi)|\xi|^{\tfrac{n}{2k}}\,dtd\xi
\end{equation*}
on $\R^n\times i\Gamma_j$ thanks to \eqref{dualconeFBI}.
Similarly we define
\begin{align*}
f_1^\ep(x)&=\int_{\CC_1}\int_{B_a} e^{i\xi(x -t)-\ep|\xi|^2}
\mathfrak{F}v(t,\xi)|\xi|^{\tfrac{n}{2k}}\,dtd\xi\\
\end{align*}
and
\begin{align*}
f_1(x)&=\int_{\CC_1}\int_{B_a} e^{i\xi(x -t)}\mathfrak{F}v(t,\xi)|\xi|^{\tfrac{n}{2k}}\,dtd\xi.
\end{align*}
The functions $f_1^\ep$, $\ep>0$, extend to entire functions, 
whereas $f_1$ is smooth, by \eqref{M-FBIestimate3}, since $e^{-\omega_{\seq M}}$ is rapidly decreasing.
This decrease also shows that $f_1^\ep$ converges uniformly to $f_1$ in a neighborhood of $0$, since
\begin{align*}
\bigl| f_1(x)-f_1^\ep(x)\bigr|
&\leq C\int_{\CC_1}\!|\xi|^{\tfrac{n}{2k}}e^{-\omega_{\seq M}(\gamma|\xi|)}
\bigl| 1-e^{-\ep|\xi|^2}\bigr|\,d\xi \to 0
\end{align*}
by the monotone convergence theorem. Moreover,
\begin{align*}
\bigl| D^\alpha f_1(x)\bigr| 
&\leq\int_{\CC_1} \int_{B_a} |\xi|^{\tfrac{n}{2k}}
\bigl|\xi^{\alpha}\mathfrak{F}v(t,\xi)\bigr|\,dtd\xi\\
&\leq C\int_{\CC_1}|\xi|^{\tfrac{n}{2k}+
|\alpha|} e^{-\omega_{\seq M}(\gamma |\xi|)}\,d\xi 
= C\int_{\CC_1} |\xi|^{\tfrac{n}{2k}+
|\alpha|} h_{\seq M}(\tfrac1{\gamma |\xi|})\,d\xi 
\\
&\leq C\gamma^{-2n+|\alpha|}M_{2n+|\alpha|}
\int_{\CC_1}|\xi|^{\tfrac{n}{2k}-2n}\,d\xi
\leq C'\gamma^{|\alpha|}M^\prime_{|\alpha|},
\end{align*}
for a suitable $\seq M' \in \fM$. Here we use the [semiregularity] of $\fM$. 
Thus $f_1 \in \cE^{[\fM]}$.

So we have shown that on an open neighborhood $V$ of the origin and some open cones $\Gamma_j$, 
$j=2,\dotsc, N$ that satisfy $\xi_0\Gamma_j<0$ we can write
\begin{equation*}
v\vert_V= v_0+\sum_{j=2}^N b_{\Gamma_j} f_j
\end{equation*}
with $v_0\in\cE^{[\seq M]}(V)$ and $f_j$ holomorphic on $V +i\Gamma_j$ for $j=2,\dotsc,N$.
This completes the proof, by \Cref{SemiBVChar}.
\end{proof}

\section{Elliptic regularity} \label{sec:elliptic}

The smooth elliptic regularity theorem, cf.\ \cite[Theorem 8.3.1]{Hoermander83I}, states that 
a linear differential operator $P$ with smooth coefficients satisfies
\begin{equation*}
\WF u\subseteq\WF Pu\cup \Char P, \quad   u \in \cD'. 
\end{equation*}
In particular, if $P$ is elliptic then it is microhypoelliptic, i.e., $\WF Pu=\WF u$. 
Analogous results hold in the analytic category (see \cite{MR0650826}).
Recall that $$\Char P=\{(x,\xi) \in\COT : P_m(x,\xi)=0\}$$ 
is the characteristic set of $P=\sum_{|\alpha|\leq m} a_{\alpha}(x) D^\alpha$
with principal symbol $P_m(x,\xi)=\sum_{|\alpha|=m} a_{\alpha}(x)\xi^\alpha$.

In the ultradifferentiable case an elliptic regularity theorem was proven in \cite{H_rmander_1971} for Roumieu classes given by weight sequences 
and  operators with real analytic coefficients.
In \cite{Albanese:2010vj} an elliptic regularity theorem was obtained for operators with ultradifferentiable coefficients of type $\cE^{[\om]}$. 

In this section we prove an elliptic regularity theorem in the general setting of ultradifferentiable classes defined by weight matrices.
As \cite{Albanese:2010vj} 
we follow the pattern of proof of \cite{H_rmander_1971} and we try to find the weakest possible conditions on the weights.
The results of \cite{H_rmander_1971} and \cite{Albanese:2010vj} follow as special cases of our theorem.

\subsection{The ultradifferentiable elliptic regularity theorem}

We will need a condition with generalizes \emph{moderate growth} of a sequence:
\begin{equation}
  \A \seq M\in\fM \E \seq N \in\fM \E C>0 \A j,k \in\N :   M_{j+k} \leq C^{j+k} N_j N_k.\label{R-mg}
\end{equation}
Note that this is the ``Roumieu variant'' which will be sufficient for our purpose. 

Recall that for an R-semiregular weight matrix condition \Cref{rem:regular}\eqref{aiR} 
is equivalent to
\begin{equation}
\A \seq M \in \fM \E \seq N \in \fM \E C>0 \A k \in \N :   \max_{\substack{\alpha_1+\cdots +\alpha_j=k\\ 
\alpha_\ell >0}} m_jm_{\alpha_1}\dots m_{\alpha_j} \le C^{k+1} n_k. \label{R-FdB}
\end{equation}
Let us point out that the weight matrix $\fW$ associated with a weight function $\om$ always satisfies 
\eqref{R-mg} (see \Cref{lemma4}), and $\fW$ fulfills \Cref{rem:regular}\eqref{aiR}
if and only if $\om$ is equivalent to a concave weight function (see \Cref{thm:omegachar}).

\begin{theorem}\label{elliptic-regThm}
Let $\fM$ be an R-semiregular weight matrix that satisfies \eqref{R-mg} and \eqref{R-FdB} 
and $P(x,D) = \sum_{|\al| \le m} a_\al(x) D^\al$ a linear partial differential operator with $\cE^{\{\fM\}}(\Omega)$-coefficients.
Then we have the following statements.
\begin{enumerate}
\item If $\fL$ is a R-semiregular weight matrix such that $\fM \{\preceq\}\fL$ then
\begin{equation}\label{EllipticReg}
\WF_{\{\fL\}}u \subseteq \WF_{\{\fL\}} Pu\cup\Char P
\end{equation}
for all $u\in\D^\prime(\Omega)$. If $P$ is elliptic, then $\WF_{\{\fL\}}u = \WF_{\{\fL\}} Pu$.
\item If $\fL$ is B-semiregular and $\fM \{\lhd)\fL$ then
\begin{equation}
\WF_{(\fL)} u \subseteq \WF_{(\fL)} Pu\cup \Char P
\end{equation}
for all $u\in\D^\prime(\Omega)$. If $P$ is elliptic, then $\WF_{(\fL)}u = \WF_{(\fL)} Pu$.
\end{enumerate}
\end{theorem}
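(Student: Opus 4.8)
Since $\WF_{[\fL]}$ is closed and conic and the statement is local (\Cref{WFproperties}), it suffices to fix $(x_0,\xi_0)\in T^\ast\Omega\setminus\{0\}$ with $(x_0,\xi_0)\notin\WF_{[\fL]}Pu$ and $P_m(x_0,\xi_0)\neq 0$ and to prove $(x_0,\xi_0)\notin\WF_{[\fL]}u$; this yields the first inclusion in each item. When $P$ is elliptic, $\Char P=\emptyset$, and the reverse inclusion $\WF_{[\fL]}Pu\subseteq\WF_{[\fL]}u$ is \Cref{WFproperties}(7): the coefficients lie in $\cE^{\{\fM\}}$, which is contained in $\cE^{\{\fL\}}$ when $\fM\{\preceq\}\fL$ and in $\cE^{(\fL)}$ when $\fM\{\lhd)\fL$, and $\fL$ is $[$semiregular$]$, so (7) applies with $\fL$ in place of $\fM$.

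\textbf{Microlocal parametrix.} I would follow the scheme of \cite{H_rmander_1971} (as \cite{Albanese:2010vj} does in the weight function case). Since $P$ is non-characteristic at $(x_0,\xi_0)$, choose a bounded neighborhood $U$ of $x_0$, an open conic neighborhood $\Ga$ of $\xi_0$, and $R\ge 1$ with $|P(x,\xi)|\ge c|\xi|^m$ for $x\in U$, $\xi\in\Ga$, $|\xi|\ge R$; then $(x,\xi)\mapsto(1-\theta(\xi))/P(x,\xi)$, with $\theta$ cutting off $|\xi|\le R$, is a symbol of order $-m$ on the cone, and the crucial observation is that its $x$-derivatives again satisfy $\cE^{\{\fM\}}$-bounds, locally uniformly in $\xi$. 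This is exactly where \eqref{R-FdB} is used: via the Fa\`a di Bruno formula it turns the $\cE^{\{\fM\}}$-bounds of the coefficients of $P$ into bounds for the composition with $t\mapsto 1/t$ (equivalently, $\cE^{\{\fM\}}$ is stable under composition, cf.\ \Cref{rem:regular}), while \eqref{R-mg} controls the products that occur. From this one constructs, for each $N$, a cutoff $\chi_N\in\D(U)$ with $\chi_N\equiv1$ near $x_0$ satisfying estimates of type \eqref{TestfEst2} — with a single $\seq M^\al\in\fL$ in the Roumieu case, and with a totally ordered collection satisfying $\{\seq M^\al\}\{\lhd)\fL$ (as provided by \Cref{lem:Bchi}) in the Beurling case — together with an identity of the schematic form expressing $\chi_Nu$, modulo a term which after Fourier transform carries a factor $|\xi|^{-N}$, in terms of the truncated Neumann parametrix applied to $\chi_N'Pu$, at the expense of $N$-th order derivatives of the cutoffs and of $1/P$.

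\textbf{The Fourier estimate.} Taking Fourier transforms and restricting to $\Ga$: the parametrix term is controlled because $(x_0,\xi_0)\notin\WF_{[\fL]}Pu$, by \Cref{WF-M Charakterisierung} applied to the cutoffs $\chi_N'$, gaining $|\xi|^{-m}$ from $|P|\gtrsim|\xi|^m$; the remainder is estimated by feeding the cutoff bounds, the $\cE^{\{\fM\}}$-bounds of the $a_\al$, and the derived $\cE^{\{\fM\}}$-bounds of $1/P$ into \eqref{R-mg}, which absorbs both the products of these factors and the number of multi-indices occurring in the Neumann expansion. The outcome is $|\xi|^N|\widehat{\chi_Nu}(\xi)|\le CQ^N N_N$ on $\Ga$ for a single $\seq N\in\fL$ in the Roumieu case, and for all $\seq N\in\fL$ and all $Q>0$ in the Beurling case (using $\fM\{\lhd)\fL$ to swallow the constants accumulated along the way). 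By \Cref{WF-Definition} this gives $(x_0,\xi_0)\notin\WF_{[\fL]}u$. The Beurling case is organized exactly as in the proof of \Cref{WFproperties}(7): single sequences are replaced throughout by totally ordered collections that are $\{\lhd)\fL$, furnished by \Cref{lem:Bchi}.

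\textbf{Main obstacle.} The technical core is the remainder estimate. One must verify that after the $N$ Neumann steps the worst term has lost exactly $N$ $x$-derivatives of $u$, so that the $|\xi|^{-N}$ gained from the order $-m$ symbol compensates the $N$-th powers in \eqref{TestfEst2}, and that the sum over the terms of the expansion — each a product of $\cE^{\{\fM\}}$-controlled factors — still collapses to the shape $CQ^NN_N$. This is where \eqref{R-mg} and \eqref{R-FdB} are indispensable, and the delicate point is to arrange the bookkeeping so that the number of passages $\seq M\rightsquigarrow\seq N$ (one per product, one per composition with $1/t$) stays bounded independently of $N$, so that a single $\seq N\in\fL$ (respectively, the $\{\lhd)\fL$ property in the Beurling case) suffices.
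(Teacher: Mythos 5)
Your proposal follows essentially the same route as the paper: H\"ormander's scheme with controlled cutoffs, a Neumann-type parametrix truncated at step $N$, the splitting of $\widehat{u}_N$ into a term controlled by the hypothesis on $\WF_{[\fL]}Pu$ (via \Cref{WF-M Charakterisierung}) and a remainder carrying $|\xi|^{-N}$, with \eqref{R-FdB} and \eqref{R-mg} absorbing the combinatorics and \Cref{lem:Bchi} organizing the Beurling case. The ``main obstacle'' you defer is exactly the content of the paper's \Cref{Lemma1} and \Cref{Lemma2}, which bound $D^{\beta}\bigl(R_{j_1}\cdots R_{j_k}\lambda_{2N}\bigr)$ by $Ch^{j+|\beta|}M_N^{(j+|\beta|)/N}|\xi|^{-j}$; the only cosmetic difference is that the paper inverts the principal symbol $P_m$ and iterates the lower-order operators $R_j$, rather than cutting off and inverting the full symbol $P(x,\xi)$.
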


\begin{proof}
It suffices to show that $(x_0,\xi_0)\notin\WF_{[\fL]} Pu\cup\Char P$ for $\xi_0 \ne 0$ implies $(x_0,\xi_0)\notin\WF_{[\fL]}u$.
Therefore we can assume that there are a compact neighborhood $K$ of $x_0$ and a closed conic neighborhood $V$ of $\xi_0$
such that the principal symbol $P_m(x,\xi) = \sum_{|\al| = m} a_\al(x) \xi^\al$ is non-zero in $K \times V$  
and 
\begin{gather*}
(K\times V)\cap\WF_{[\fL]} Pu=\emptyset.
\end{gather*}

By \cite[Theorem 1.4.2]{Hoermander83II} there is a sequence $(\lambda_N)\subseteq\D(K)$ with $\lambda_N\vert_U\equiv 1$
on some fixed neighborhood $U$ of $x_0$ such that for all $\alpha\in\N^n$ there are constants
$C_\alpha,\,h_\alpha>0$ such that
\begin{equation}\label{testfunctestimate}
\bigl| D^{\alpha+\beta}\lambda_N\bigr|\leq C_\alpha \bigl(h_\alpha N\bigr)^{|\beta|} \quad \text{ for } |\be|\le N = 1,2, \ldots 
\end{equation}
Now the sequence $u_N=\lambda_{2N} u$ is bounded in $\cE^\prime (K)$ and each of its elements
is equal to $u$ on $U$. Hence it suffices to show that the sequence $(u_N)_N$ satisfies \eqref{WF-defining} 
\begin{itemize}
  \item for some $Q>0$ and some $\seq L\in\mathfrak{L}$ in the Roumieu case,
  \item for all $Q>0$ and all $\seq L \in \fL$ in the Beurling case.  
\end{itemize}
The first part of the proof is valid in both cases.

Following the approach of H\"ormander \cite[Theorem 8.6.1]{Hoermander83II} we first want to solve the equation $Qg=e^{-ix\xi}\lambda_{2N}$, 
where $Q g = \sum (-D)^\al( a_\al g)$ is the formal adjoint of $P$. 
The ansatz $g=e^{-ix\xi}P_m(x,\xi)^{-1}w$ leads to the equation
\begin{equation}\label{w-eq}
w-Rw=\lambda_{2N}
\end{equation}
where $R=R_1+\dots +R_m$ and $R_j|\xi|^j$ is a differential operator of order $\leq j$ with $\cE^{\{\fM\}}$-coefficients  
which are homogeneous of degree $0$ in $\xi$ if $x\in K$ and $\xi\in V$.
A formal solution of \eqref{w-eq} would be
$w=\sum_{k=0}^\infty R^k\lambda_{2N}$, but this series may diverge in general and we cannot consider
derivatives of $\lambda_{2N}$ of arbitrary high order. Hence we set
\begin{equation*}
w_N=\sum_{j_1+\dots +j_k\leq N-m} R_{j_1}\dots R_{j_k}\lambda_{2N}
\end{equation*}
and calculate
\begin{equation*}
w_N-Rw_N=\lambda_{2N}-\sum_{\sum_{s=1}^kj_s>N-m\geq\sum_{s=2}^kj_s}R_{j_1}\dots R_{j_k}\lambda_{2N}
=:\lambda_{2N}-\rho_N.
\end{equation*}
Therefore 
\begin{equation*}
Q\left(e^{-ix\xi} P_m(x,\xi)^{-1}w_N(x,\xi)\right)=e^{-ix\xi}\left(\lambda_{2N}(x)-\rho_N(x,\xi)\right).
\end{equation*}
We obtain 
\begin{align}
\widehat{u}_N(\xi) &=
\bigl\langle u , e^{-i\langle\cdot,\xi\rangle}\lambda_{2N}\bigr\rangle  
\notag\\
&=\bigl\langle Pu, e^{-i\langle \cdot,\xi\rangle} P_m^{-1}(\cdot , \xi ) w_N(\cdot , \xi)\bigr\rangle
+\bigl\langle u,e^{-i\langle \cdot ,\xi\rangle} \rho_N (\cdot ,\xi)\bigr\rangle, \quad \xi\in V. 
\label{WF-M-EllipticRegEq1}
\end{align}

In order to proceed we make the following claim which will be proved in \Cref{Lemma1} below:
\emph{There exist $\seq M\in\fM$, $h>0$, and constant $C>0$ (only depending on $R$, $\fM$, $h$ and the sequence 
$(\lambda_N)_N$) such that, if $j=j_1+\dots +j_k$ and $j+|\beta|\leq 2N$, then
\begin{equation}\label{WF-M-RegEst1}
\left| D^{\beta}\bigl(R_{j_1}\dots R_{j_k}\lambda_{2N}\bigr)\right| 
\leq Ch^{j+|\beta|} M^{\tfrac{j+|\beta|}{N}}_{N}|\xi|^{-j},
\qquad \xi\in V.
\end{equation}}

We use this to estimate the terms on the right-hand side of \eqref{WF-M-EllipticRegEq1} for $\xi\in V$,
where $|\xi|$ is large. We begin with the second term $II:=\bigl\langle u,e^{-i\langle \cdot,\xi\rangle} \rho_N (\cdot ,\xi)\bigr\rangle$.

Since $u$ is of finite order, say $\mu$, near $K$, there is a constant $C_u$ that only depends on $K$ and $u$ such that for all
$\psi\in\D(\Omega)$ with $\supp \psi\subseteq K$ we have
\begin{equation*}
\bigl|\langle u,\psi\rangle\bigr|
\leq C_u\sum_{|\alpha|\leq \mu}\sup_K\bigl| D^{\alpha}\psi\bigr|.
\end{equation*}
Note that $\supp_x\rho_N(\cdot,\xi)\subseteq K$ for all $\xi\in V$ and $N\in\N$. Thence
\begin{equation*}
II 
\leq C \sum_{|\alpha| \leq \mu} \sum_{\beta\leq\alpha}
|\xi|^{|\alpha|-|\beta|} 
\sup_{x\in K}\bigl| D^{\beta}_x\rho_N(x,\xi)\bigr|\\
\leq C\sum_{|\alpha|\leq \mu}
|\xi|
^{\mu-|\alpha|} \sup_{x\in K}\bigl| D_x^{\alpha}\rho_N(x,\xi)\bigr|,
\end{equation*}
for $\xi\in V$ with $|\xi|\geq 1$ and $N\in \N$.
There are at most $2^N$ terms 
in  $\rho_N$ and each term can be estimated by
\eqref{WF-M-RegEst1} (since $N\geq j>N-m$), whence
\begin{equation*}
\bigl| D_x^{\alpha}\rho_N(x,\xi)\bigr|\leq Ch^{N}2^N 
| \xi|^{m-N}M_{N}^{\tfrac{N+\mu}{N}}
\end{equation*}
for $x\in K$ and $\xi\in V$ with $|\xi|>1$.
Thus, by \Cref{def:Rregular}\eqref{R-Derivclosed1}, there exists $\seq M \in \fM$ and $h_1 >0$ such that
\begin{equation}\label{EllipticREGULAR1}
II
\leq C h_1^N |\xi|^{\mu+m-N} M_N.
\end{equation}

(1) Let us consider the Roumieu case and assume that $\fM\{\preceq\}\fL$. Then, by \eqref{EllipticREGULAR1}, 
there exists $\seq L \in \fL$ and $h >0$ such that
\begin{equation}\label{EllipticREGULAR2}
II
\leq C h^N |\xi|^{\mu+m-N} L_N.
\end{equation}
The first term $I := \bigl\langle Pu, e^{-i\langle \cdot,\xi\rangle} P_m^{-1}(\cdot , \xi ) w_N(\cdot , \xi)\bigr\rangle$ 
in \eqref{WF-M-EllipticRegEq1} is more difficult to estimate.
For $N>m$, $|\beta|\leq N$ and $\xi\in V$ with $|\xi| > M_{N}^{1/N}$,
\eqref{WF-M-RegEst1} gives
\begin{equation*}
\bigl| D^{\beta}w_N(x,\xi)\bigr|
\leq C\sum_{j=0}^{N-m}h^{j+|\beta|}M_{N}^{\tfrac{j+|\beta|}{N}}|\xi|^{-j}
\leq Ch^{|\beta|}M_{N}^{\tfrac{|\beta|}{N}}\sum_{j=0}^{N-m}h^j
\le C_1 h_1^N M_{N}^{\tfrac{|\beta|}{N}}
\end{equation*}
for suitable $C_1$ and $h_1$.
Analogously, one obtains a similar bound for 
$\widetilde{w}_N(x,\xi)=w_N(x,\xi)|\xi|^{m} P_m^{-1}(x,\xi)$. 
Let 
\begin{equation*}
\widehat{\widetilde{w}}_N(\eta,\xi)=\int_\Omega \! e^{-ix\eta}w_N(x,\xi)\,dx
\end{equation*}
be the partial Fourier transform of $\tilde{w}_N(\cdot,\xi)$. 
Then, by the above, there exist $\seq M\in\fM$ and $h>0$ such that
\begin{equation} \label{ellipticHelp1}
\bigl|\eta^\beta\widehat{\tilde{w}}_N(\eta,\xi)\bigr|\leq C h^N M_{N}^{\tfrac{|\beta|}{N}}
\end{equation}
for all $N>m$, $|\beta|\leq N$, $\xi\in V$ with
$|\xi|> M_{N}^{1/N}$ and $\eta\in\R^n$.
So, for some $q>0$, 
\begin{equation}\label{ellipticHelp2}
 \Big(|\eta|+M_{N}^{\tfrac{1}{N}}\Big)^N\left|\widehat{\tilde{w}}_N(\eta,\xi)\right|
 \leq C(\sqrt{n}h)^N\sum_{k=0}^N\binom{N}{k}M_{N}^{\tfrac{k}{N}}M_{N}^{\tfrac{N-k}{N}} 
 \leq C q^N M_{N}.
 \end{equation}

Now set $f=Pu$ and recall that by assumption $\WF_{\{\fL\}}f\cap (K\times V)=\emptyset$.
By \Cref{WF-M Charakterisierung}, we find a sequence $(f_N)_N$ which is bounded in 
 $\cE^{\prime ,\mu}$, equals $f$ in some neighborhood of $K$, and there exist $\seq L \in \fL$ and $Q>0$ such that  
 \begin{equation}\label{ellipticHelp0}
 \bigl|\widehat{f}_N(\eta)\bigr|\leq C \frac{Q^NL_N}{| \eta|^N}, \quad \text{ for } N \in \N, \eta\in W,
 \end{equation}
 where $W$ is a conic neighborhood of $V$. 
Then $\tilde{w}_Nf=\tilde{w}_Nf_{N^\prime}$ for
 $N^{\prime}=N-\mu -n$. 
 In analogy with \eqref{eq:convolution} we find, for $0 < c< 1$,
 \begin{align*}
(2\pi)^n\Bigl| \widehat{\tilde{w}_Nf}(\xi)\Bigr| 
&\leq (1-c)^{-N'} \left\lVert \widehat{\tilde{w}}_N(\cdot,\xi)\right\rVert_{L^1}
\sup_{\et \in W}\bigl| \widehat{f}_{N^\prime}(\eta)\bigr| |\xi|^{-N'} |\et|^{N'} 
\\
&\quad
+
C\int\limits_{|\eta|>c|\xi|}\!
\left| \widehat{\tilde{w}}_{N}(\eta,\xi)\right|\bigl(1+c^{-1}\bigr)^\mu \bigl(1+|\eta|)^\mu\,d\eta.
 \end{align*}
By \eqref{ellipticHelp2}, if $N>n+\mu+m$, then
\begin{align*}
\left\lVert \widehat{\tilde{w}}_N(\cdot ,\xi)\right\rVert_{L_1}
&\leq Cq^N M_{N}\int_{\R^n}\Bigl(|\eta|+\sqrt[N]{M_{N}}\Bigr)^{-N}\,d\eta\\
&\le  C_1 q^N M_{N}\int_0^\infty\! \Bigl(r+\sqrt[N]{M_{N}}\Bigr)^{-N}r^{n-1}\,dr\\
&\leq C_1 q^N M_{N}\int_0^\infty\! \Bigl(r+\sqrt[N]{M_{N}}\Bigr)^{-N^\prime -1}\,dr\\
&= C_1q^N M_{N}\int_{\sqrt[N]{M_{N}}}^\infty s^{-N^\prime-1}\,ds\\
&= C_1q^N M_{N}^{1-N'/N}/N'\\
&\leq C_1q^N M_{N}^{\tfrac{\mu +n}{N}}.
\end{align*} 
Together with \eqref{ellipticHelp0} and \eqref{ellipticHelp1}, and
since $\sqrt[N]{M_N}$ is increasing,  
we conclude that for $\xi \in V$ with $|\xi| > \sqrt[N]{M_N}$,
\begin{align}\label{EllipticOuterEst1}
\Bigl|\widehat{\tilde{w}_Nf}(\xi)\Bigr| 
&\leq C_1 \left(\frac{q}{1-c}\right)^N M_{N}^{\tfrac{n+\mu}{N}} Q^{N^\prime}L_{N^\prime}|\xi|^{-N^{\prime}}
\notag \\
&\quad
+C_2(\sqrt{n}h)^N M_{N}\int_{|\eta|>c|\xi|}\negthickspace
|\eta|^{-N^\prime-n}\,d\eta \notag \\
&\leq C_1 q_0\left(\frac{qQ}{1-c}\right)^NL_{N}^{\tfrac{n+\mu}{N}}L_{N}^{\tfrac{N^\prime}{N}}|\xi|^{-N^\prime}
+C_2(\sqrt{n}q_0h)^N
 L_{N}c^{-N^\prime}|\xi|^{-N^\prime}
\notag \\
&\leq C_3 q_3^N L_{N}
|\xi|^{-N^\prime},
\end{align}
where we used the fact that there is a constant
$q_0$ such that $M_N^{1/N}\leq q_0L_N^{1/N}$. 

Now setting $N^\ast=N+n+\mu+m$ and $v_N=u_{N^\ast}$ 
we may conclude from \eqref{EllipticREGULAR2} and \eqref{EllipticOuterEst1} that there exist $\seq L \in \fL$ and $h>0$ such that 
\begin{equation*}
| \xi|^N\bigl|\widehat{v}_N(\xi)\bigr|\leq C h^N L_N, \quad \text{ for } \xi \in V \text{ with } |\xi| > M_{N^\ast}^{1/N^\ast}. 
\end{equation*}
The boundedness of the sequence $(v_N)_N$ in 
$\cE^{\prime,\mu}$ implies an estimate analogous to \eqref{BanachSteinhaus} and hence   
we have
\begin{equation}\label{EllipticRegular}
|\xi|^N\bigl| \widehat{v}_N(\xi)\bigr|\leq CM_{N^\ast}^{\tfrac{N+\mu}{N^\ast}},
\quad \text{ for } |\xi|\leq M_{N^\ast}^{1/N^\ast}. 
\end{equation}
This completes the proof of (1).

(2) Let us treat the Beurling case. The assumption $\fM \{\lhd) \fL$ and \eqref{EllipticREGULAR1} yield that 
\eqref{EllipticREGULAR2} holds for all $\seq L \in \fL$ and all $h>0$. 
Moreover, $f = Pu$ now 
satisfies $\WF_{(\fL)}f\cap (K\times V)=\emptyset$, by assumption, and hence \eqref{ellipticHelp0} holds 
for all $\seq L \in \fL$ and all $Q>0$. 
Together with $\fM \{\lhd) \fL$ this allows us to finish the proof in analogy to the Roumieu case in (1).  
\end{proof}

It remains to establish the claim \eqref{WF-M-RegEst1}:

\begin{lemma}\label{Lemma1}
There exist $\seq M\in\fM$, $h>0$, and constant $C>0$ (only depending on $R$, $\fM$, $h$ and the sequence 
$(\lambda_N)_N$) such that, if $j=j_1+\dots +j_k$ and $j+|\beta|\leq 2N$, then
\begin{equation}\label{WF-M-RegEst1again}
\left| D^{\beta}\bigl(R_{j_1}\dots R_{j_k}\lambda_{2N}\bigr)\right| 
\leq Ch^{j+|\beta|} M^{\tfrac{j+|\beta|}{N}}_{N}|\xi|^{-j},
\qquad \xi\in V.
\end{equation}
\end{lemma}

\begin{proof}
  Since both sides of \eqref{WF-M-RegEst1again} are homogeneous of degree $-j$ in $\xi\in V$ 
  it suffices to prove the lemma for $|\xi|=1$.
  The set $\mathcal{R}\subseteq \cE^{\{\fM\}}(K)$ of all coefficients of the operators $R_1,\dotsc,R_m$ is finite.
Hence there are constants $h$ and $C$ and a weight sequence
$\seq M\in\fM$ such that
\begin{equation}\label{Rcoeff}
| D^\alpha a(x)|\leq Ch^{|\alpha|}M_{|\alpha|}, 
\quad \text{ for } a \in \cR, x \in K, \al \in \N_0^n.  
\end{equation}
Thus the assertion is a consequence of the next lemma.
\end{proof}

\begin{lemma}\label{Lemma2}
  Let $K\subseteq \Omega$ be compact, $(\lambda_N)_N\subseteq\D (K)$ a sequence satisfying
  \eqref{testfunctestimate} and $a_1,\dotsc,a_{j-1}\in\mathcal{R}$. 
  Then there exist $\seq M\in\fM$ and $C,h>0$ (independent of $N$)
  such that
  \begin{equation}\label{Est26}
  \bigl| D_{i_1}(a_1D_{i_2}(a_2 \cdots D_{i_{j-1}}(a_{j-1}D_{i_j}\lambda_{2N})\cdots))\bigr|\leq C h^j M_N^{\tfrac{j}{N}},
  \quad \text{ for } j \le 2N.
  \end{equation} 
\end{lemma}

\begin{proof}
By \eqref{testfunctestimate} and \eqref{strictInclusion2},
for each $q>0$ and each 
$\seq M \in \fM$ there exists $C' \ge 1$ such that 
\begin{align}
  | D^{\beta}\lambda_{2N}| 
  \le 
  C' q^{|\beta|}M_N^{\tfrac{|\beta|}{N}}  
  \quad \text{ for }  |\be| \le 2 N.
\end{align}

  The left-hand side of \eqref{Est26}  
  is a sum of terms of the form
  $(D^{\alpha_1}a_1)\dotsb (D^{\alpha_{j-1}}a_{j-1})D^{\alpha_j}\lambda_{2N}$ for
  $|\alpha_1|+\dots +|\alpha_j|=j$.
  If $C_{k_1,\dots,k_j}$ is the number of terms with $|\alpha_1| =k_1,\dots,|\alpha_j|=k_j$, then, 
  thanks to \eqref{testfunctestimate}, \eqref{Rcoeff}, and \eqref{R-FdB}, there exists $\seq M' \in \fM$ such that 
  the left-hand side of \eqref{Est26} is bounded by
  \begin{align}\label{Est28}
  C_0&\sum C^{j-1} h^{j-k_j}C_{k_1,\dots,k_j}
  m_{k_1}\cdots m_{k_{j-1}}k_1!\cdots k_{j-1}!h_0^{k_j}N^{k_j} \notag \\
  &\leq C_0 C^{j-1} \sum h^{j-k_j}m^\prime_{j-k_j}C_{k_1,\dotsc,k_j}k_1!\cdots k_{j-1}!
   h_0^{k_j}N^{k_j} \notag\\
  &\leq C_0 C^j\sum h^{j-k_j}h_0^{k_j}C_{k_1,\dotsc,k_j}\frac{k_1!\cdots k_{j-1}!}{(j-k_j)!}M^\prime_{j-k_j}N^{k_j}.
  \end{align}
    By \eqref{R-mg}, there exist $\seq M'' \in \fM$ and a constant $q_2>0$ such that
    $M'_{j-k_j}\leq q_2^{j-k_j}M''_{\sigma_1}M''_{\sigma_2}$
    if $\sigma_1+\sigma_2=j-k_j$.
  By \eqref{strictInclusion2}, 
  there exists $C_2 >0$ such that
  \begin{equation*}
h^{j-k_j}h_0^{k_j}  M^\prime_{j-k_j}N^{k_j}
\leq C_2^{\tfrac{k_j}{N}} (h q_2)^j
 M''_{\sigma_1}M''_{\sigma_2}(M''_N)^{\tfrac{k_j}{N}}\\
\leq C_2^{\tfrac{k_j}{N}} (hq_2)^j
(M''_N)^{\tfrac{j}{N}}
  \end{equation*}
  since $\sqrt[N]{M_N}$ is increasing. 
  As noted in \cite{Albanese:2010vj} and \cite[p. 308]{Hoermander83I} one has
  \begin{equation*}
  \sum C_{k_1,\dotsc,k_j} \frac{k_1!\cdots k_{j-1}!}{(j-k_j)!} 
  \le \frac{2^j}{j!} \sum C_{k_1,\dotsc,k_j}k_1!\cdots k_j!=\frac{2^j(2j-1)!!}{j!} \le 4^j.
  \end{equation*}
The lemma follows.
\end{proof}

\subsection{Stronger versions in special cases} \label{sec:openproblem}

As a special case of \eqref{EllipticReg} we obtain 
\[
	\WF_{\{\fM\}} u \subseteq \WF_{\{\fM\}} Pu \cup \Char P, \quad u \in \cD', 
\]
for any $P$ with $\cE^{\{\fM\}}$-coefficients, where $\fM$ satisfies the assumptions of \Cref{elliptic-regThm}.
We do not know if an analogous statement holds in this generality in the Beurling case, 
but we have two important partial results \Cref{cor:elliptic1} and \Cref{cor:strong}.

\begin{theorem} \label{cor:elliptic1}
	Let $\seq M$ be a strongly log-convex weight sequence of moderate growth with $m_k^{1/k} \to \infty$ and 
	$P(x,D) = \sum_{|\al| \le m} a_\al(x)  D^\al$ a linear partial differential operator with 
   $\cE^{(\seq M)}$-coefficients. 
	Then
	\[
		\WF_{(\seq M)} u \subseteq \WF_{(\seq M)} Pu \cup \Char P, \quad u \in D'.   
	\]
	If $P$ is elliptic, then $\WF_{(\seq M)} u = \WF_{(\seq M)} Pu$.
\end{theorem}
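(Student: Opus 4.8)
The plan is to reduce the single-weight-sequence Beurling statement in \Cref{cor:elliptic1} to the matrix version \Cref{elliptic-regThm}(2). The key observation is that a single strongly log-convex weight sequence $\seq M$ of moderate growth with $m_k^{1/k}\to\infty$ can be approximated from above by a weight matrix to which \Cref{elliptic-regThm} applies. Concretely, one constructs an R-semiregular weight matrix $\fM$ with $\cE^{[\fM]}=\cE^{[\seq M]}$ (in both the Roumieu and Beurling sense) such that $\fM$ satisfies \eqref{R-mg} and \eqref{R-FdB}, and a B-semiregular weight matrix $\fL$ with $\cE^{(\fL)}=\cE^{(\seq M)}$ and $\fM\{\lhd)\fL$. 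Then \Cref{elliptic-regThm}(2) gives $\WF_{(\fL)}u\subseteq\WF_{(\fL)}Pu\cup\Char P$, and unwinding the identifications $\WF_{(\fL)}=\WF_{(\seq M)}$ yields the claim, with the elliptic case following as there.

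First I would exhibit the matrix $\fM$. A natural choice is $\fM=\{\seq M^{(\ell)}\}_{\ell\ge 1}$ where $M^{(\ell)}_k:=\ell^k M_k$, equivalently $\mu^{(\ell)}_k=\ell\mu_k$; this is totally ordered, consists of weight sequences (since $m_k^{1/k}\to\infty$ forces $(m^{(\ell)}_k)^{1/k}\to\infty$), each $\seq M^{(\ell)}$ is strongly log-convex because $\mu^{(\ell)}_k/k=\ell\mu_k/k$ is increasing, and all define the same Denjoy--Carleman class, so $\cB^{[\fM]}=\cB^{[\seq M]}$. Moderate growth of $\seq M$ gives moderate growth of each $\seq M^{(\ell)}$ uniformly, which yields \eqref{R-mg}: given $\seq M^{(\ell)}$, the constant $C$ from $M_{j+k}\le C^{j+k}M_jM_k$ lets one take $\seq N=\seq M^{(\ell')}$ with $\ell'$ absorbing $C$. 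For \eqref{R-FdB}, strong log-convexity of the sequences is exactly what makes the Faà di Bruno estimate work: since $m^{(\ell)}$ is log-convex, $m^{(\ell)}_{\al_1}\cdots m^{(\ell)}_{\al_j}\le m^{(\ell)}_k$ when $\al_1+\cdots+\al_j=k$, and the factor $m_j$ together with the counting $\sum C_{k_1,\dots}\le 4^k$ (as used in \Cref{Lemma2}) is absorbed into a larger $\ell'$; this is essentially \ref{rem:regular}\eqref{aiR} for $\fM$, which holds because strongly log-convex sequences trivially satisfy $m_j^{1/j}\le m_k^{1/k}$ for $j\le k$. R-semiregularity of $\fM$ (condition (0) and \ref{def:Rregular}\eqref{R-Derivclosed1}) follows from $m_k^{1/k}\to\infty$ and moderate growth, cf.\ \Cref{rem:regular}.

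Next I would produce $\fL$ with $\cE^{(\fL)}=\cE^{(\seq M)}$ and $\fM\{\lhd)\fL$. One cannot use $\fM$ itself here since $\fM\{\lhd)\fM$ fails. Instead take a sequence $\seq N^{(\ell)}$ with $\seq M^{(\ell)}\lhd\seq N^{(\ell)}\lhd\seq M^{(\ell+1)}$ for each $\ell$ — for instance $N^{(\ell)}_k:=\sqrt{M^{(\ell)}_k M^{(\ell+1)}_k}$, which is again strongly log-convex and of moderate growth, has $(n^{(\ell)}_k)^{1/k}\to\infty$, and satisfies the interlacing. Set $\fL:=\{\seq N^{(\ell)}\}_\ell$. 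Then $\cB^{(\fL)}=\cB^{(\seq M)}$ because the two projective limits are cofinal in each other, and $\fM\{\lhd)\fL$ holds by the interlacing (for any $\seq M^{(\ell)}\in\fM$ and any $\seq N^{(\ell')}\in\fL$ with $\ell'\ge\ell$ we have $\seq M^{(\ell)}\le\seq M^{(\ell')}\lhd\seq N^{(\ell')}$; and also $\seq M^{(\ell)}\lhd\seq N^{(\ell)}$, so in fact $\seq M^{(\ell)}\lhd\seq N^{(\ell')}$ for all $\ell,\ell'$). B-semiregularity of $\fL$ is again automatic from $n^{(\ell)}_k{}^{1/k}\to\infty$ and moderate growth. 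Finally $\WF_{(\fL)}=\WF_{(\fW)}$-type identifications: since $\cE^{(\fL)}=\cE^{(\seq M)}$ and the wave front set depends only on the class, $\WF_{(\fL)}u=\WF_{(\seq M)}u$ for all $u$ (here one uses \Cref{WFproperties}(3) together with the equivalence of the matrices $\fL$ and $\{\seq M\}$ as a trivial one-element matrix, noting $\cE^{(\seq M)}=\cE^{(\{\seq M\})}$). Applying \Cref{elliptic-regThm}(2) with this $\fM$ and $\fL$ closes the argument; the elliptic case follows by applying the inclusion to $P$ and to a parametrix-type argument exactly as in that theorem, since $\Char P=\emptyset$.

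The main obstacle I anticipate is verifying that $\fM$ satisfies \eqref{R-FdB} cleanly — i.e.\ turning strong log-convexity plus moderate growth into the precise Faà di Bruno inequality $\max_{\al_1+\cdots+\al_j=k}m_j m_{\al_1}\cdots m_{\al_j}\le C^{k+1}n_k$ with $\seq N\in\fM$ — and, relatedly, checking that dilating by $\ell$ genuinely controls all the constants ($C$ from moderate growth, the $4^j$ from the combinatorics, and the extra $m_j$ factor) simultaneously and uniformly enough that a single shift $\ell\mapsto\ell'$ suffices for each instance of the quantifier. This is a bookkeeping issue rather than a conceptual one: \Cref{rem:regular} already records that \ref{rem:regular}\eqref{aiR} is equivalent to \eqref{R-FdB} for R-semiregular matrices and that strongly log-convex sequences give \eqref{R-mg}-type moderate growth, so the work is to assemble these into the hypotheses of \Cref{elliptic-regThm} and confirm that the $\{\lhd)$ relation between $\fM$ and $\fL$ survives. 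One should also double-check that the construction of $\fL$ does not accidentally enlarge or shrink the Beurling class, which is why the interlacing $\seq M^{(\ell)}\lhd\seq N^{(\ell)}\lhd\seq M^{(\ell+1)}$ (hence mutual cofinality of the projective systems) is the crucial point.
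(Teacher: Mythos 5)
Your reduction to \Cref{elliptic-regThm}(2) fails at the construction of $\fL$, and the failure is structural, not a matter of bookkeeping. You need a B\nobreakdash-semiregular $\fL$ with $\cB^{(\fL)}=\cB^{(\seq M)}$ and $\fM\{\lhd)\fL$, where your $\fM=\{(\ell^k M_k)_k\}_\ell$ satisfies $\cB^{\{\fM\}}=\cB^{\{\seq M\}}$. But $\fM\{\lhd)\fL$ implies $\cB^{\{\fM\}}\subseteq\cB^{(\fL)}$, i.e.\ $\cB^{\{\seq M\}}\subseteq\cB^{(\seq M)}$, which is false for any nontrivial weight sequence; so no such $\fL$ exists. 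Concretely, your proposed interlacing $\seq M^{(\ell)}\lhd\seq N^{(\ell)}\lhd\seq M^{(\ell+1)}$ is impossible: $(M^{(\ell)}_k)^{1/k}/(M^{(\ell+1)}_k)^{1/k}=\ell/(\ell+1)$ is a nonzero constant, so $\seq M^{(\ell)}$ and $\seq M^{(\ell+1)}$ are \emph{equivalent}, and $N^{(\ell)}_k=\sqrt{M^{(\ell)}_kM^{(\ell+1)}_k}=(\ell(\ell+1))^{k/2}M_k$ is again equivalent to $\seq M$ rather than strictly larger in the $\lhd$ sense (which requires the ratio of $k$-th roots to tend to $0$). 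The geometric-mean trick interpolates between sequences already related by $\lhd$; it cannot create a gap where none exists.

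The real difficulty, which your proposal sidesteps, is that \Cref{elliptic-regThm}(2) requires the \emph{coefficients} of $P$ to be strictly more regular than the class measuring the wave front set, while the hypothesis of \Cref{cor:elliptic1} places them only in the same Beurling class. The paper resolves this by working with the individual functions rather than the classes: fixing a compact $K$ and setting $L_k:=\max\bigl\{\max_{|\be|=k}\max_{|\al|\le m}\sup_{x\in K}|\p^\be a_\al(x)|,\,k!\bigr\}$, one has $\seq L\lhd\seq M$ because the $a_\al$ are of Beurling type; then \Cref{Komatsu} produces a strongly log-convex weight sequence $\seq M'$ of moderate growth with $\seq L\le\seq M'\lhd\seq M$, so the coefficients lie in the genuinely smaller Roumieu class $\cE^{\{\seq M'\}}$ on $K$, and \Cref{elliptic-regThm}(2) applies with the one-element matrices $\fM=\{\seq M'\}$ and $\fL=\{\seq M\}$. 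The Komatsu interpolation step, which preserves strong log-convexity and moderate growth so that \eqref{R-mg} and \eqref{R-FdB} hold for $\{\seq M'\}$, is the essential ingredient missing from your argument.
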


\begin{proof}
	As in the proof of \Cref{elliptic-regThm} we fix a compact $K \subseteq \Om$.
	Let 
	\[
	L_k := \max \Big\{ \max_{|\be| = k} \max_{|\al| \le m} \sup_{x \in K} |\p^\be a_\al(x)|, k!\Big \}. 
	\]
	Then $\seq L \lhd \seq M$. By \Cref{Komatsu} below, there exists a strongly log-convex weight sequence of moderate growth 
   $\seq M'$ 
	such that $\seq L \le \seq M' \lhd \seq M$. Thus we may apply (the proof of) \Cref{elliptic-regThm}(2) and the statement follows. 
\end{proof}

\begin{lemma} \label{Komatsu}
   Let $\seq L,\seq M$ be positive sequences satisfying $\seq L \lhd \seq M$ and $L_0 = M_0 =1$. 
   Suppose that $\seq M$ is strongly log-convex and satisfies $m_k^{1/k} \to \infty$.
   Then there exists a strongly log-convex sequence $\dot {\seq M}$ with $\dot m_k^{1/k} \to \infty$ such that 
   $\seq L\leq \dot {\seq M}\lhd  \seq M$. 
   If $\seq M$ has moderate growth, then so does $\dot {\seq M}$.  
\end{lemma}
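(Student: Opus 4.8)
The plan is to strip the statement down to a purely sequential interpolation problem and then invoke (a mild variant of) the construction in \cite[Lemma 6]{Komatsu79b}.

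\emph{Reduction.} Put $\ell_k := \max\{L_k/k!,\,1\}$, so that $\ell_0 = 1$, $\ell_k \ge 1$ for all $k$, $L_k \le k!\,\ell_k$, and, since $\seq L \lhd \seq M$, the quantity $\ve_k := (\ell_k/m_k)^{1/k}$ tends to $0$ (the two competing terms $(L_k/M_k)^{1/k}$ and $m_k^{-1/k}$ both do). Hence $\seq L \le \dot{\seq M}$ follows as soon as $\ell_k \le \dot m_k$, and $\dot{\seq M} \lhd \seq M$ is exactly $(\dot m_k/m_k)^{1/k} \to 0$. So it suffices to build a log-convex sequence $\dot{\seq m}$ with $\dot m_0 = 1$ such that (i) $\ell_k \le \dot m_k$, (ii) $\dot m_k^{1/k} \to \infty$, (iii) $(\dot m_k/m_k)^{1/k} \to 0$, and, assuming moderate growth of $\seq M$, (iv) $\dot{\seq M} = (k!\,\dot m_k)_k$ has moderate growth.

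\emph{Construction.} Since $\seq m$ is log-convex with $m_0=1$ and $m_k^{1/k}\to\infty$, the sequence $\rh_k := k^{-1}\log m_k = \log(m_k^{1/k})$ is increasing and tends to $\infty$ (cf.\ \Cref{lem:basicM}). Set $n_k := \max\{\ell_k,\sqrt{m_k}\}$; then $n_k \ge \ell_k$, $n_k^{1/k} \ge (m_k^{1/k})^{1/2} \to \infty$, and $(n_k/m_k)^{1/k} = \max\{\ve_k,(m_k^{1/k})^{-1/2}\} \to 0$. Writing $n_k = m_k^{\theta_k}$ with $\theta_k := \log(n_k^{1/k})/\log(m_k^{1/k}) = \max\{1+\log\ve_k/\rh_k,\tfrac12\}$ for $k$ large, one has $\theta_k \in [\tfrac12,1)$ and $(1-\theta_k)\rh_k = \min\{-\log\ve_k,\tfrac12\rh_k\} \to \infty$. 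The sequence $n_k$ itself need not be log-convex; the point of Komatsu's argument is to perturb the exponent upward to $\tilde\theta_k \ge \theta_k$ in such a way that $\tilde\theta_k\log m_k$ is convex while still $(1-\tilde\theta_k)\rh_k \to \infty$. Then $\dot m_k := m_k^{\tilde\theta_k}$ (with the finitely many indices where $m_k\le 1$ fixed up from the top down, which preserves log-convexity) is log-convex, $\dot m_k \ge n_k \ge \ell_k$, $\dot m_k^{1/k} \ge n_k^{1/k} \to \infty$, and $(\dot m_k/m_k)^{1/k} = e^{-(1-\tilde\theta_k)\rh_k} \to 0$, giving (i)--(iii). (Equivalently one may take $\dot{\seq m}$ to be the least log-convex sequence dominating $\seq n$ — which is finite, since the least convex majorant of any real sequence is obtained by integrating the running maximum of its difference sequence — and bound it above by the $\tilde\theta_k$-interpolant to obtain (iii).)

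\emph{Moderate growth and the main obstacle.} If $\seq M$ has moderate growth, then by \eqref{Char:ModGrowth} $\mu_k \lesssim M_k^{1/k}$, equivalently $m_{j+k} \le C^{j+k} m_j m_k$ for all $j,k$. Using $\tfrac12\le\tilde\theta_k\le 1$, $\binom{j+k}{j}\le 2^{j+k}$, and that the interpolating exponent can be taken to vary slowly enough that $m_j^{\,\tilde\theta_{j+k}-\tilde\theta_j}$ and $m_k^{\,\tilde\theta_{j+k}-\tilde\theta_k}$ remain geometrically bounded — which is exactly where moderate growth of $\seq M$ is used, as it makes $k\mapsto\log m_k$ essentially scale-invariant — one estimates $\dot M_{j+k} = (j+k)!\,m_{j+k}^{\tilde\theta_{j+k}} \le 2^{j+k}j!\,k!\,(C^{j+k}m_jm_k)^{\tilde\theta_{j+k}} \le D^{j+k}\dot M_j\dot M_k$, which is (iv). The genuinely delicate step, and the one \cite[Lemma 6]{Komatsu79b} settles, is this simultaneous balancing: choosing $\tilde\theta_k$ large enough (and slowly varying enough) that $\dot{\seq m}$ stays strongly log-convex — and, in the last clause, of moderate growth — yet small enough that $\dot{\seq M}$ is still $\lhd\seq M$; the remaining verifications are routine.
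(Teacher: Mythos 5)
Your reduction to the sequential interpolation problem is sound, and deferring the \emph{existence} of a strongly log-convex $\dot{\seq M}$ with $\seq L \le \dot{\seq M} \lhd \seq M$ to \cite[Lemma 6]{Komatsu79b} is legitimate — the paper does exactly the same for that part. The genuine gap is in the moderate-growth clause. You write that the interpolating exponent "can be taken to vary slowly enough that $m_j^{\tilde\theta_{j+k}-\tilde\theta_j}$ and $m_k^{\tilde\theta_{j+k}-\tilde\theta_k}$ remain geometrically bounded," and then concede that this "simultaneous balancing" is "the one \cite[Lemma 6]{Komatsu79b} settles." It does not: Komatsu's lemma says nothing about moderate growth, and preserving it is precisely the new content this lemma adds to Komatsu's statement, so it must be verified for the concrete construction rather than asserted. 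The required bound is quantitatively delicate in your parametrization: since $\log m_j = j\rh_j$ with $\rh_j\to\infty$, geometric boundedness of $m_j^{\tilde\theta_{2j}-\tilde\theta_j}$ forces $\tilde\theta_{2j}-\tilde\theta_j \lesssim 1/\rh_j$, while $\dot{\seq M}\lhd\seq M$ forces $1-\tilde\theta_k \ge \de_k/\rh_k$ with $\de_k\to\infty$; nothing in your argument (nor in the "least log-convex majorant of $\seq n$" alternative, for which even $\dot{\seq M}\lhd\seq M$ is left unverified) shows these constraints are simultaneously met.

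The paper sidesteps this by working with the quotients $\mu_k = M_k/M_{k-1}$ rather than with exponents on $m_k$. It regularizes $\seq L$ to $\bar L_k := \inf_{\rh>0} C_\rh\, \rh^k M_k$, observes that $c_k := \mu_k/\bar\la_k$ is then increasing, and sets
\[
   \frac{\dot\mu_k}{k} := \max\Big\{\sqrt{\tfrac{\mu_k}{k}},\ \max_{1\le j\le k}\tfrac{\bar\la_j}{j}\Big\}.
\]
In this parametrization strong log-convexity ($\dot\mu_k/k$ increasing) is immediate, and moderate growth reduces via \eqref{Char:ModGrowth} to $\dot\mu_{2k}\lesssim\dot\mu_k$, which follows in two lines from $\mu_{2k}\lesssim\mu_k$ together with the monotonicity of $c_k$ and of $\mu_k/k$. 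If you want to salvage your write-up, the cleanest fix is to replace the exponent-perturbation discussion by this explicit construction and the direct check of $\dot\mu_{2k}\lesssim\dot\mu_k$.
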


\begin{proof}
The first assertion simply follows from \cite[Lemma 6]{Komatsu79b}. 
Since $\seq L \lhd \seq M$, for all $\rh >0$ there is $C >0$ such that
\begin{equation} \label{eq:Komatsu1}
   L_k \le  C \rh^k M_k \quad \text{ for all } k.
\end{equation}
Let $C_\rh$ be the infimum of all $C>0$ such that \eqref{eq:Komatsu1} holds.
Consider the sequence $\bar {\seq L} =(\bar L_k)$ defined by 
   \begin{equation*} 
      \bar L_k := \inf_{\rh>0} C_\rh \rh^k M_k.  
   \end{equation*}
Notice that $c_k := \mu_k/\bar \la_k$, where  $\mu_k := M_k/M_{k-1}$, $\bar \la_k := \bar L_k/\bar L_{k-1}$, 
is increasing.  
Then $\dot M_k = \dot \mu_1 \dot \mu_2 \cdots \dot \mu_k$, $\dot M_0 :=1$ with 
\[
   \frac{\dot \mu_k}k := \max\Big\{\sqrt{\frac{\mu_k}k}, \max_{1 \le j \le k} \frac{\bar \la_j}j \Big\}, 
\] 
satisfies the first part of the assertion; for details see \cite[Lemma 6]{Komatsu79b}.
Let us check that $\dot {\seq M}$ has moderate growth if that is true for $\seq M$.    
   By \cite[Lemma 2.2]{RainerSchindl16a}, $\seq M$ has moderate growth if and only if $\mu_{2k}\lesssim \mu_k$. In that case
   \begin{align*}
   \frac{\dot \mu_{2k}}{2k} = \max\Big\{\sqrt{\frac{\mu_{2k}}{2k}}, \max_{1 \le j \le 2k} \frac{\bar \la_j}j \Big\} \lesssim 
   \frac{\dot \mu_k}{k},
   \end{align*}
   because for $k < j \le 2k$ we have 
   \begin{align*}
   \frac{\bar \la_j}j = \frac{\mu_j}{j c_j} \le \frac{\mu_{2k}}{2k c_k} \lesssim \frac{\mu_k}{k c_k} =  \frac{\bar \la_k}k 
   \end{align*}
   since $c_k$ and $\mu_k/k$ are increasing.
   It follows that $\dot {\seq M}$ has moderate growth. 
\end{proof}

We get a similar result for concave weight functions
which is a strengthened version of 
\cite[Theorem 4.1]{Albanese:2010vj} with operator and 
   wave front set of the same Beurling class.
It depends crucially on the following lemma.

We recall that a weight function $\om$ is equivalent to a concave weight function 
if and only if
\begin{equation} \label{eq:concavecond}
      \E C\ge 1 \E t_1>0 \A t\ge t_1 \A \la\ge 1 : \frac{\om(\la t)}{\la t} \le C \frac{\om(t)}{t};
   \end{equation}
see \Cref{thm:omegachar}.    

\begin{lemma} \label{lem:concavedescend}
   Let $\om : [0,\infty) \to [0,\infty)$ be continuous, increasing, surjective and such that $\om(t) = o(t)$ as $t\to \infty$.
   Assume that $\om$ satisfies \eqref{eq:concavecond}.
   Let $h : [0,\infty) \to [0,\infty)$ be a function such that $\om(t) = o(h(t))$ as $t \to \infty$.
   Then there exists a continuous, increasing, surjective function $\si : [0,\infty) \to [0,\infty)$  
   such that $\om(t) = o(t)$ as $t\to \infty$ and 
   \begin{enumerate}
      \item $\om(t) = o(\si(t))$ as $t \to \infty$,
      \item $\si(t) = o(h(t))$ as $t \to \infty$,
      \item $\si(\la t) \le \la  \si(t)$ for all $\la \ge 1$ and $t\ge t_1$ (with the same $t_1$ as above).
   \end{enumerate}
\end{lemma}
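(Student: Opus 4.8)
The plan is to build $\sigma$ by a ``staircase with plateaus'' construction that interpolates between (an equivalent version of) $\omega$ and $h$. First I would reduce to the case where $t\mapsto\omega(t)/t$ is non-increasing on $[t_1,\infty)$. Set $\phi(t):=\sup_{s\ge t}\omega(s)/s$ for $t\ge t_1$ and $\omega_0(t):=t\,\phi(t)$. Then $\phi$ is non-increasing, $\phi(t)\to 0$ (since $\omega(t)=o(t)$), and \eqref{eq:concavecond} gives $\omega(t)\le\omega_0(t)\le C\,\omega(t)$ for $t\ge t_1$; hence $\omega_0(t)=o(t)$, $\omega_0(t)=o(h(t))$, and $\omega(t)=o(\sigma(t))$ is equivalent to $\omega_0(t)=o(\sigma(t))$. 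So it suffices to find $\sigma$ with $\omega_0(t)=o(\sigma(t))$, $\sigma(t)=o(h(t))$, $\sigma(t)=o(t)$, and $\sigma(t)/t$ non-increasing on $[t_1,\infty)$ (which is exactly (3)).

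Next I would fix a sequence $1\le c_1<c_2<\cdots$ with $c_j\to\infty$ (say $c_j=j$; put $c_0:=c_1$) and choose inductively points $t_1\le r_1<r_2<\cdots$, $r_j\to\infty$, such that (i) $\omega(t)\le c_j^{-2}h(t)$ for all $t\ge r_j$ (possible since $\omega(t)=o(h(t))$), (ii) $\phi(r_j)\le c_j^{-2}$ (possible since $\phi(t)\to 0$), and (iii) $r_{j+1}>r_j'$, where $r_j':=\inf\{\,t\ge r_j:\phi(t)\le(c_{j-1}/c_j)\,\phi(r_j)\,\}$. On the ``era'' $[r_j,r_{j+1})$ I would then set
\[
  \rho(t):=\min\{\,c_{j-1}\phi(r_j),\ c_j\,\phi(t)\,\},\qquad \sigma_0(t):=t\,\rho(t),
\]
with $\rho(t):=c_1\phi(r_1)$ on $[t_1,r_1)$. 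By construction $\rho$ is non-increasing (and continuous, once $\phi$ is regularized — see below), so $\sigma_0(t)/t=\rho(t)$ is non-increasing on $[t_1,\infty)$; on each era $\rho$ starts at the plateau level $c_{j-1}\phi(r_j)$, stays there until $c_j\phi(t)$ descends to it at $r_j'$, then follows $c_j\phi(t)$ down, consecutive eras matching continuously by (iii).

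Then I would verify the asymptotics on a generic era $[r_j,r_{j+1})$. For (1): on the decreasing part $\rho(t)=c_j\phi(t)$ so $\rho(t)/\phi(t)=c_j$, while on the plateau $\rho(t)=c_{j-1}\phi(r_j)\ge c_{j-1}\phi(t)$; hence $\sigma_0(t)/\omega_0(t)=\rho(t)/\phi(t)\to\infty$. For (2): $\sigma_0(t)\le c_j\,\omega_0(t)$, with equality on the decreasing part and, on the plateau, $\omega_0(t)=t\phi(t)\ge t\,\phi(r_j')=(c_{j-1}/c_j)\,t\,\phi(r_j)$ versus $\sigma_0(t)=c_{j-1}\phi(r_j)\,t$; combining with (i) gives $\sigma_0(t)/h(t)\le c_jC\,\omega(t)/h(t)\le C/c_j\to 0$. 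Since also $\rho(t)\le c_{j-1}\phi(r_j)\le 1/c_j\to 0$, we get $\sigma_0(t)=o(t)$. Finally, $\sigma_0$ may fail to be monotone or (if $\phi$ is discontinuous) continuous; I would first replace $\omega$ by an equivalent continuous weight so that $\phi$ is continuous, then put $\sigma(t):=\sup_{t_1\le s\le t}\sigma_0(s)$ (extended linearly on $[0,t_1]$): this is continuous and non-decreasing, $\sigma\ge\sigma_0$, its plateaus only sit at levels previously attained by $\sigma_0$, and a short check shows $\sigma(t)/t$ stays non-increasing on $[t_1,\infty)$ and that $\sigma(t)\le c_jC\,\omega(t)$ on era $j$ persists; thus (1), (2), (3) and $\sigma(t)=o(t)$ all survive, and surjectivity onto $[0,\infty)$ follows from $\sigma(0)=0$, continuity, monotonicity, and $\sigma(t)\ge\omega(t)\to\infty$. (This parallels \cite[Lemma 4.4]{BBMT91}.)

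The main obstacle is the tension between (1) and (2): $\sigma$ must dominate $\omega$ by an arbitrarily large factor yet remain $o(h)$, all while satisfying the ``strong-weight'' constraint (3). The role of the plateaus is exactly to absorb the upward jumps of the multiplier $c_j$ so that, within each era, $\sigma_0$ stays within the \emph{bounded} factor $c_j$ of $\omega$; choosing the era-breaks $r_j$ far enough out that $\omega/h\le c_j^{-2}$ there then forces $\sigma/h\lesssim c_j^{-1}\to 0$, while $c_j\to\infty$ simultaneously yields $\omega=o(\sigma)$.
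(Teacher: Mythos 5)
Your construction is correct and is essentially the paper's own proof in different notation: both first regularize $\om$ to $\om_1(t)=t\sup_{s\ge t}\om(s)/s$ so that $\om_1(t)/t$ is non-increasing, and both then build $\si$ by alternating segments where $\si$ is a constant multiple $c_j\om_1$ with linear segments (plateaus of $\si(t)/t$) that ramp the multiplier up, pushing the breakpoints far enough out that $\om_1(t)/t$ and $\om_1(t)/h(t)$ beat the current multiplier. (Your worries in the last paragraph are moot: $\phi$ is automatically continuous and $t\phi(t)$ automatically non-decreasing, so $\si_0$ already is the desired $\si$.)
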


\begin{proof}
   Note that \eqref{eq:concavecond} can be reformulated as follows
   \begin{equation} \label{eq:concavecond2}
      \E C\ge 1 \E t_1>0 \A s \ge t\ge t_1  : \frac{\om(s)}{s} \le C \frac{\om(t)}{t}.
   \end{equation}
   Let us define
   \[
      \om_1(t) := t \sup_{s \ge t} \frac{\om(s)}{s}, \quad t \ge t_1, 
   \]
   and extend $\om_1$ to $[0,t_1]$ in such a way that $\om_1 : [0,\infty) \to [0,\infty)$ is
   continuous, increasing, surjective and such that $\om(t) = o(t)$ as $t\to \infty$; that this 
   is possible follows from the fact that $\om(t) = O(\om_1(t))$ and $\om_1(t) = O(\om(t))$ as $t \to \infty$
   which is a consequence of \eqref{eq:concavecond2}. 
   By definition $\om_1(t)/t$ is decreasing for $t \ge t_1$. 
   Moreover, $\om_1(t) = o(h(t))$ as $t \to \infty$. 

   We define $\si : [0,\infty) \to [0,\infty)$ and $0=t_0 < t_1 <t_2 < \cdots \to \infty$ as follows:
   If $t_j$ with odd $j$ is already chosen, take $t_{j+1} > t_j$ to be the smallest solution of $t j \om_1(t_j) = (j+2) t_1  \om_1(t)$
   which exists since $\om_1(t)/t \to 0$ as $t \to \infty$.
   If $t_j$ with even $j$ is already chosen, choose $t_{j+1}>t_j$ such that 
   \begin{equation} \label{eq:concavecond3}
      \max \Big\{\frac{\om_1(t)}{t}, \frac{\om_1(t)}{h(t)} \Big\}  \le \frac{1}{(j+1)(j+3)} \quad \text{ for all } t\ge t_{j+1}.
   \end{equation} 
   This is possible since $\om(t) = o(t)$  and $\om_1(t) = o(h(t))$ as $t \to \infty$. Now set
   \begin{align*}
      \si(t) := \begin{cases}
         j \om_1(t) & \text{ if } t \in [t_{j-1},t_j) \text{ and $j\ge 1$ is odd,}  
         \\ 
         (j-1) t \om_1(t_{j-1})/t_{j-1} & \text{ if } t \in [t_{j-1},t_j) \text{ and $j\ge 1$ is even.} 
      \end{cases}
   \end{align*}
   Then $\si$ is continuous, increasing, and surjective. 

   That $\om_1(t) = o(\si(t))$ as $t \to \infty$ follows easily from the fact that $\om_1(t)/t$ is decreasing for $t \ge t_1$.

   Observe that for each odd $j$ we have $\si(t) \le (j+2) \om_1(t)$ for all $t \in [t_j, t_{j+2}]$, by the choice of $t_{j+1}$.
   Together with \eqref{eq:concavecond3} this implies
   $\si(t) = o(t)$ and $\si(t) = o(h(t))$ as $t \to \infty$.

   By construction $\si(t)/t$ is decreasing for $t \ge t_1$. This completes the proof. 
\end{proof}

\begin{theorem} \label{cor:strong}
  Let $\om$ be a concave weight function
  and let $P(x,D) = \sum_{|\al| \le m} a_\al(x)  D^\al$ be a linear partial differential operator with $\cE^{(\om)}$-coefficients.
  Then
  \[
    \WF_{(\om)} u \subseteq \WF_{(\om)} Pu \cup \Char P, \quad u \in \cD'.   
  \]
  If $P$ is elliptic, then $\WF_{(\om)} u = \WF_{(\om)} Pu$.
\end{theorem}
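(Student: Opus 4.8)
The plan is to localize the problem and reduce it to the general Beurling elliptic regularity result \Cref{elliptic-regThm}(2), applied not to the weight matrix $\fW$ associated with $\om$ directly, but to an auxiliary weight matrix $\fS$ coming from a slightly larger (faster growing) concave weight function $\si$. Since $\fW$ is B-semiregular by \Cref{cor:strongmatrix} and the coefficients $a_\al$ lie in $\cE^{(\om)}=\cE^{(\fW)}$, \Cref{WFproperties}(7) already yields $\WF_{(\om)}Pu\subseteq\WF_{(\om)}u$, so only the inclusion $\WF_{(\om)}u\subseteq\WF_{(\om)}Pu\cup\Char P$ needs proof. As in the proof of \Cref{elliptic-regThm}, it suffices to fix $(x_0,\xi_0)\notin\WF_{(\om)}Pu\cup\Char P$ with $\xi_0\ne0$, a compact neighborhood $K$ of $x_0$ and a closed conic neighborhood $V$ of $\xi_0$ with $P_m\ne0$ on $K\times V$ and $(K\times V)\cap\WF_{(\om)}Pu=\emptyset$, and to show $(x_0,\xi_0)\notin\WF_{(\om)}u$. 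This is the scheme of the proof of \Cref{cor:elliptic1}, with \Cref{lem:concavedescend} taking over the role played there by \Cref{Komatsu}.

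The central step is the construction of $\si$. Put $L_k:=\max\{\max_{|\al|\le m}\max_{|\be|=k}\sup_K|\p^\be a_\al|,\,k!\}$. Since $a_\al\in\cE^{(\om)}$, the profile $\seq L$ is a Beurling-type majorant relative to $\fW$, i.e.\ $\seq L\lhd\seq W^x$ for every $x>0$; equivalently, with $g(t):=\log L_k$ for $k\le t<k+1$, arguing as in the proof of \cite[Theorem 4.5]{BBMT91} (compare the proof of \Cref{thm:strongerconcave}) there is a convex function $h_0:[0,\infty)\to[0,\infty)$ with $g\le h_0$ such that $h:=h_0^*(\max\{0,\log t\})$ satisfies $\om(t)=o(h(t))$ as $t\to\infty$, where one uses $\om(t)=o(t)$. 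Since $\om$ is concave it satisfies \eqref{eq:concavecond}, so \Cref{lem:concavedescend} applies with this $h$ and yields a continuous, increasing, surjective $\si:[0,\infty)\to[0,\infty)$ with $\si(t)=o(t)$, $\om(t)=o(\si(t))$, $\si(t)=o(h(t))$, and $\si(\la t)\le\la\,\si(t)$ for all $\la\ge1$ and all large $t$; by \eqref{char2}$\Rightarrow$\eqref{char1} of \Cref{thm:omegachar}, $\si$ is equivalent to a concave weight function. From $\si(t)=o(h(t))$ one reads off $g\le h_0=h_0^{**}\le(\si(e^t))^*+A$ for some $A>0$, which, just as in the proof of \Cref{thm:strongerconcave}, gives $a_\al|_K\in\cB^{\{\si\}}(K)$ — so the coefficients are of class $\cE^{\{\si\}}$ near $x_0$, which is strictly more regular than $\cE^{(\om)}$ because $\om(t)=o(\si(t))$.

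Finally, let $\fS=\{\seq S^x\}_{x>0}$ be the weight matrix associated with $\si$; then $\cE^{\{\si\}}=\cE^{\{\fS\}}$ by \Cref{representation}. The matrix $\fS$ is R-semiregular by \Cref{cor:strongmatrix}, it satisfies \eqref{R-mg} by \Cref{lemma4}(3) (take $\seq N=\seq S^{2x}$ for $\seq M=\seq S^x$), and it satisfies \eqref{R-FdB}, since $\si$ is equivalent to a concave weight function and hence $\fS$ fulfills condition \eqref{aiR} in \Cref{rem:regular}, which for an R-semiregular matrix is equivalent to \eqref{R-FdB} (see the discussion preceding \Cref{elliptic-regThm}). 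Moreover $\om(t)=o(\si(t))$ gives $\fS\{\lhd)\fW$ by \cite[Lemma 5.16]{RainerSchindl12}. Thus $\fM:=\fS$ and $\fL:=\fW$ meet all hypotheses of \Cref{elliptic-regThm}(2), except that the coefficients of $P$ are only of class $\cE^{\{\fS\}}$ on a neighborhood of $K$; but the proof of \Cref{elliptic-regThm}(2) is local around $x_0$, so, exactly as in \Cref{cor:elliptic1}, we may run it verbatim with $\fM=\fS$ and conclude $(x_0,\xi_0)\notin\WF_{(\fW)}u=\WF_{(\om)}u$. This establishes the asserted inclusion, and together with the reverse inclusion from \Cref{WFproperties}(7) it gives $\WF_{(\om)}u=\WF_{(\om)}Pu$ when $P$ is elliptic. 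The main obstacle is precisely the construction of $\si$: one has to turn the Beurling growth of the coefficients ($\seq L\lhd\seq W^x$ for all $x$) into a single \emph{concave} weight function $\si$ that is still large enough to bound those derivatives, yet satisfies $\om(t)=o(\si(t))$ and $\si(t)=o(t)$, so that its associated weight matrix is R-semiregular and obeys \eqref{R-mg} and \eqref{R-FdB}. This is where \Cref{lem:concavedescend} replaces \cite[Lemma 4.4]{BBMT91} and where the non-quasianalyticity of $\om$ assumed in \cite{Albanese:2010vj,BBMT91} is no longer needed; along the way one also uses the two-sided comparison $x\,\om_{\seq S^x}(t)\le\si(t)\le2x\,\om_{\seq S^x}(t)+C_x$ of \cite[Lemma 2.5]{JSS19} and \cite[Lemma 5.7]{RainerSchindl12} to pass between $\cB^{\{\si\}}$-membership and the function $\si$.
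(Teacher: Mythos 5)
Your proof is correct and follows essentially the same route as the paper: define $\seq L$ from the coefficients on $K$, produce $h$ as in the proof of \Cref{thm:strongerconcave} via \cite[Theorem 4.5]{BBMT91}, descend to $\si$ with \Cref{lem:concavedescend}, and run the proof of \Cref{elliptic-regThm}(2) with the matrix associated to $\si$ against $\fL=\fW$. The one point to be careful about --- which the paper flags in \Cref{rem:notconvex} --- is that $\si$ need not satisfy \eqref{om4}, so \Cref{thm:omegachar} and \Cref{lemma4} cannot be invoked verbatim for $\si$; instead one checks directly (cf.\ \cite[Section 3.1]{Schindl14}) that the properties of $\si$ guaranteed by \Cref{lem:concavedescend} already yield \eqref{R-mg} and \eqref{R-FdB} for its associated weight matrix.
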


\begin{proof}
  Let $\seq L$ be the sequence defined in the proof of \Cref{cor:elliptic1}. 
  We may proceed as in the proof of \Cref{thm:strongerconcave} which is based on \cite[Theorem 4.5]{BBMT91}
  and obtain a function $h : [0,\infty) \to [0,\infty)$ such that 
  $\om(t) = o(h(t))$ as $t \to \infty$. 
  Then \Cref{lem:concavedescend} provides a 
  `weight' function $\si$ such that $\om(t) = o(\si(t))$ and 
  $\si(t) = o(h(t))$ as $t \to \infty$. 
  As in the proof of \Cref{thm:strongerconcave} we conclude that $a_\al|_K \in \cB^{\{\si\}}(K)$. 
  Since $\si$ is equivalent to a concave `weight' function, 
  we may apply (the proof of) \Cref{elliptic-regThm}(2) and \Cref{thm:omegachar}.
\end{proof}

\begin{remark} \label{rem:notconvex}
   We remark that formally $\si$ is not a weight function, since it is not clear that $t \mapsto \si(e^t)$ is convex 
  (see \eqref{om4}). But this is not needed in this context, 
  since the properties of $\si$ suffice to guarantee that the associated weight matrix satisfies  
  \eqref{R-mg} and \eqref{R-FdB}; cf.\ \cite[Section 3.1]{Schindl14}.

  In contrast, the proof of \Cref{prop:strongmatrix} depends crucially on \eqref{om4}; see \cite[Proposition 3]{Rainer:aa}
  and \cite[Lemmas 2.5 \& 3.6]{JSS19}. 
  Therefore, we cannot use \Cref{lem:concavedescend} in the proof of \Cref{thm:strongerconcave}.
\end{remark}

Let $\seq G^s$, $s>1$, be the Gevrey sequence defined by $G^s_k := k!^s$.
It is immediate from \Cref{elliptic-regThm} that  
\[
  \WF_{\{\seq G^s\}} u \subseteq \WF_{\{\seq G^s\}} Pu \cup \Char P, \quad u \in \cD',
\]
if $P$ has $\cE^{\{\seq G^s\}}$-coefficients,
and from \Cref{cor:elliptic1} that 
\[
  \WF_{(\seq G^s)} u \subseteq \WF_{(\seq G^s)} Pu \cup \Char P, \quad u \in \cD',
\]
if $P$ has $\cE^{(\seq G^s)}$-coefficients. 
Now \Cref{Gevrey} follows easily in view of \Cref{WF-Intersection}(3).

\begin{remark}
If we modify the proof of \Cref{elliptic-regThm} following the lines of \cite{Fuerdoes1}, 
then we obtain \eqref{EllipticReg} for 
distributions $u\in\D^\prime(\Omega,\C^\nu)$, where $P$ is a square matrix of 
partial differential operators with ultradifferentiable coefficients.
\end{remark}

\subsection{Holmgren's uniqueness theorem}

Kawai \cite{MR0420735} and H\"ormander \cite{H_rmander_1971} separately showed 
that the elliptic regularity theorem can be used to prove Holmgren's uniqueness 
theorem \cite{zbMATH02662678}.
This scheme of proof was applied by the first author \cite{Fuerdoes1}
to extend Holmgren's uniqueness theorem to operators with coefficients in quasianalytic 
Roumieu classes defined by regular weight sequences of moderate growth.
The only other ingredient necessary for the proof was an appropriate version
of \Cref{ndimUniq}.

The same proof gives the following.

\begin{theorem} \label{thm:Holmgren}
Let $\fM$ be a quasianalytic R-semiregular weight matrix 
that satisfies \eqref{R-mg} and \eqref{R-FdB}.
Let $P$ be a linear partial 
differential operator with coefficients in $\cE^{\{\fM\}}(\Omega)$.
If $X$ is a $C^1$-hypersurface in $\Omega$ that is non-characteristic at $x_0$ and 
$u\in\D^\prime(\Omega)$ a solution of $Pu=0$ that vanishes on one side of $X$ near $x_0$, then
$u\equiv 0$ in a full neighborhood of $x_0$.
\end{theorem}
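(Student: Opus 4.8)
\textbf{Proof proposal for \Cref{thm:Holmgren}.}

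The plan is to follow exactly the classical Kawai--H\"ormander scheme for deducing Holmgren's uniqueness theorem from the (ultradifferentiable) elliptic regularity theorem, as already carried out in the weight-sequence case by the first author in \cite{Fuerdoes1}. The two ingredients needed are: first, \Cref{elliptic-regThm}(1), which under the stated hypotheses \eqref{R-mg} and \eqref{R-FdB} on the R-semiregular matrix $\fM$ gives $\WF_{\{\fM\}} v \subseteq \WF_{\{\fM\}} Pv \cup \Char P$ for all $v\in\cD'$; and second, \Cref{ndimUniq}, which holds precisely because $\fM$ is quasianalytic and R-semiregular and which says that if $F$ is a real analytic function with $dF(x_0)\ne 0$ and $x_0 \in \supp u$ is a point at which $F$ attains its maximum over $\supp u$, then $(x_0,\pm dF(x_0)) \in \WF_{\{\fM\}} u$. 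Since both ingredients are now available under the hypotheses of the theorem, the argument is essentially a transcription of the classical proof.

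First I would reduce to a normal-coordinate situation: after a real analytic change of coordinates (permissible since $\cE^{\{\fM\}}$ is stable under real analytic pullbacks, \Cref{AnalyticClosed}) one may assume $x_0=0$, that $X$ is given near $0$ by $\{x_n = 0\}$ with $u$ vanishing in $\{x_n<0\}$ near $0$, and that $P$ is non-characteristic with respect to $dx_n$ at $0$. Next, by a standard convexity trick one introduces a family of (real analytic, here simply quadratic) hypersurfaces $X_\lambda = \{\phi(x) = \lambda\}$, where $\phi(x) = x_n - c|x'|^2$ for suitable $c>0$, which are all non-characteristic for $P$ in a neighborhood of $0$ and which foliate a lens-shaped region so that $X_0$ passes through $0$ and, for small $\lambda>0$, the part of $\{\phi \le \lambda\}$ lying in the half-space $\{x_n \ge 0\}$ is a compact set shrinking to $0$. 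One then argues by contradiction: suppose $0 \in \supp u$. Let $\lambda_0 = \sup\{\lambda : u \equiv 0 \text{ in } \{\phi < \lambda\} \cap W\}$ for a small fixed neighborhood $W$; by hypothesis $\lambda_0$ is well-defined (since $u$ vanishes for $x_n<0$) and finite, and there is a point $y_0$ with $\phi(y_0) = \lambda_0$, $y_0 \in \supp u$, at which $\phi$ attains its maximum over $\supp u \cap W$. By \Cref{ndimUniq} applied to $F = \phi$ at $y_0$, we get $(y_0, \pm d\phi(y_0)) \in \WF_{\{\fM\}} u$. On the other hand, since $Pu = 0$ we have $\WF_{\{\fM\}} Pu = \emptyset$, so \Cref{elliptic-regThm}(1) forces $\WF_{\{\fM\}} u \subseteq \Char P$; but $d\phi(y_0)$ is a non-characteristic codirection for $P$ at $y_0$ by construction, so $(y_0, \pm d\phi(y_0)) \notin \Char P$, a contradiction. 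Hence $u \equiv 0$ near $0$.

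The step I expect to require the most care is the geometric setup of the sweeping family $X_\lambda$: one must choose $c$ and the neighborhood $W$ so that simultaneously (i) every $X_\lambda$ meeting $W$ is non-characteristic for $P$ there — which uses continuity of the principal symbol and the non-characteristic condition at $x_0$, shrinking $W$ as needed; (ii) the region $\{\phi \le \lambda_0\} \cap \overline{W}$ is compact and its only boundary point in the open half-space $\{x_n>0\}$ side is interior to $W$, so that the maximum of $\phi$ over $\supp u \cap \overline W$ is attained at an \emph{interior} point of $W$ where $d\phi \ne 0$ (here $d\phi(y_0) = (-2cy_0', 1) \ne 0$ automatically); and (iii) the initial vanishing of $u$ on $\{x_n<0\}$ near $0$ indeed guarantees $u\equiv 0$ on $\{\phi<\lambda\}\cap W$ for all sufficiently small $\lambda$, so that $\lambda_0 > $ (the value of $\phi$ at points with $x_n$ slightly negative). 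These are the same routine but slightly delicate point-set and convexity verifications as in \cite[Theorem 8.5.6 and Theorem 8.6.8]{Hoermander83I} and in \cite{Fuerdoes1}, and I would simply cite that the argument there goes through verbatim once \Cref{elliptic-regThm} and \Cref{ndimUniq} are in place, since neither of those two inputs needs anything beyond quasianalyticity, R-semiregularity, \eqref{R-mg}, and \eqref{R-FdB}.
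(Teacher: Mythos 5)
Your proposal is correct and follows exactly the route the paper takes: the paper's proof of \Cref{thm:Holmgren} consists precisely of invoking the Kawai--H\"ormander scheme from \cite{Fuerdoes1}, with \Cref{elliptic-regThm} (applied with $\fL=\fM$, using $Pu=0$ to get $\WF_{\{\fM\}}u\subseteq\Char P$) and \Cref{ndimUniq} as the two inputs. Your write-up merely spells out the sweeping-hypersurface argument that the paper leaves to the cited references, and does so correctly.
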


In particular, this theorem applies to operators with $\cE^{\{\om\}}$-coefficients
for concave quasianalytic weight functions $\om$.  
(Note that the Beurling version of the theorem follows trivially but is of no interest, since 
we always have $\cE^{(\fM)} \subseteq \cE^{\{\fM\}}$).
In \Cref{sec:QtransverseG} we give an example of a concave weight function
	$\om_0$ such that
	$\cE^{\{\om_0\}}$ is not included in $\cE^{\{\fG\}}$. 
	Hence \Cref{thm:Holmgren} applies to a wider class of operators than the
	quasianalytic Holmgren theorem given in \cite{Fuerdoes1}
   (in fact a class $\cE^{\{\seq M\}}$ with regular $\seq M$ of moderate growth is contained in some 
   Gevrey class, see \cite{Matsumoto84}).
	
	Therefore we can also extend the quasianalytic versions given in \cite{Fuerdoes1} of the 
   generalizations and applications of the analytic Holmgren theorem given by
	Bony \cite{MR0474426}, H\"ormander \cite{MR0320486}, Sj\"ostrand \cite{MR699623} and Zachmanoglou \cite{MR0299925}; 
	in fact, the assumption \eqref{R-FdB} guarantees that the classes are stable by solving ordinary 
	differential equations (with parameters), see \cite{RainerSchindl14}.

\subsection{Quasianalytic classes transversal to all Gevrey classes} \label{sec:QtransverseG}

We give here examples of quasianalytic classes that are not contained in $\cE^{\{\fG\}}$, 
but satisfy many of the regularity properties discussed before.
More precisely: 
\begin{enumerate}
   \item We will construct a quasianalytic strongly log-convex weight sequence $\seq Q$
which is derivation-closed and satisfies $q_k^{1/k} \to \infty$ such that $\cE^{\{\seq Q\}}\nsubseteq\cE^{\{\fG\}}$.
   \item We will show that $\omega_{\seq Q}$ is a weight function equivalent to a concave quasianalytic
 weight function and $\cE^{\{\omega_{\seq Q}\}}\nsubseteq\cE^{\{\fG\}}$.
\end{enumerate}
Note that $\seq Q$ cannot be of moderate growth (cf.\ \cite{Matsumoto84}).

We are going to define $\seq Q$ by $Q_0=1$ and
$Q_k=k!\prod_{j=1}^k\rho_j$
for $k\geq 1$ and a suitable sequence $\rho=(\rho_k)_k$ to be constructed.
In order to define $\rho$ accordingly we need three more auxiliary sequences
$(\alpha_j)_j$, $(\beta_j)_j\subseteq \N$ and $(\tau_j)_j\subseteq \R$ which will be chosen iteratively.
Let $\al_1 = \ta_1 = 1$. If $\al_j$ and $\ta_j$, $j\ge 1$, are already chosen, we pick $\beta_j\in\N$ such 
that
\begin{equation}\label{Appendix1}
\be_j \ge e^{\ta_j} \al_j,
\end{equation}
and set 
\begin{equation}\label{Appendix3}
\alpha_{j+1}:= \frac{(\beta_j-1)\beta_{j}}{2} \quad \text{ and }\quad  \ta_{j+1} := e^{\al_{j+1}}. 
\end{equation}
Clearly, $\al_1< \be_1<\al_2<\be_2 <\al_3 <  \cdots$.
We define
\[
   \rh_k:= \begin{cases}
\tau_j,  \quad\text{ if } k \in A_j:= \{k \in \N : \alpha_j\leq k< \beta_j\},\\
e^k,  \quad\text{ if } k \in B_j := \{k \in \N : \beta_j\leq k< \alpha_{j+1}\}.
\end{cases}
\]
By construction, $\rh_k$ is increasing and hence $\seq Q$ is strongly log-convex.
We also have $\rho_k \to \infty$ and hence $q_k^{1/k} \to \infty$, by the arguments in 
\cite[p.~104]{RainerSchindl12}. 
The sequence $\seq Q$ is derivation-closed, 
since $\rho_k\leq e^k$ for all $k$.

In order to see that $\seq Q$ is quasianalytic we have to show that 
\begin{equation}\label{Appendix4}
\sum_{k=1}^\infty\frac{1}{k\rho_k}\geq 
\sum_{j=0}^\infty\sum_{k\in A_j}\frac{1}{k\rho_k}
=\sum_{j=1}^\infty \frac{1}{\tau_j}\sum_{k\in A_j}\frac{1}{k}
\end{equation}
diverges.
Recall that, if $\gamma$ is the Euler constant, we have
\begin{equation*}
\sum_{k=1}^p\frac{1}{k}=\log p+\gamma+\ve_p
\end{equation*}
and $\ve_p\rightarrow 0$ for $p\rightarrow\infty$.
Thus, for $j \ge 2$,
\begin{equation*}
\sum_{k\in A_j}\frac{1}{k}
=\log\left(\frac{\beta_j-1}{\alpha_j-1}\right)+\ve_{\beta_j-1}-\ve_{\alpha_j-1}.
\end{equation*}
By \eqref{Appendix1}, 
$\log\left(\frac{\beta_j-1}{\alpha_j-1}\right)\geq \tau_j$,
for $j\geq 2$, which implies that 
\eqref{Appendix4} diverges.

Finally, we note that 
$\cE^{\{\seq Q\}}\subseteq\cE^{\{\fG\}}$ if and only if there exists $s> 0$
such that 
\begin{equation*}
\sup_{k\geq 1}\frac{\sqrt[k]{q_k}}{k^s}<\infty.
\end{equation*}
However, by \eqref{Appendix3},
\begin{align*}
q_{\alpha_{j+1}}^{1/\alpha_{j+1}}&\geq \Big(\prod_{k\in B_j}
\rho_k\Big)^{1/\alpha_{j+1}}=\exp\Big(\frac{1}{\alpha_{j+1}}\sum_{k=\beta_j}^{\alpha_{j+1}-1}k\Big)
= \exp\Big(\frac{\alpha_{j+1}-1}{2}-1\Big),
\end{align*}
and hence $q_{\alpha_{j+1}}^{1/\alpha_{j+1}}/ a_{j+1}^s$ is unbounded for all $s$.
This ends the proof of $(1)$.

The function $\omega_{\seq Q}(t)=\sup_k\log (t^k/Q_k)$ satisfies $\omega_{\seq Q}|_{[0,1]} =0$, since $Q_0=Q_1=1$. 
Furthermore, cf.\ \cite[Chapitre~I]{Mandelbrojt52}, $\omega_{\seq Q}$ is increasing and satisfies \eqref{om3} and \eqref{om4}.
The arguments in the proof of the implication $(4)\Rightarrow(5)$ 
in \cite[Lemma~12]{BMM07} show that also  $\eqref{om2}$ holds  
(in fact, $\omega_{\seq Q}(t)=o(t)$ as $t \to \infty$).
By \cite[Lemma~3.4]{JSS19}, $\omega_{\seq Q}$ is equivalent to a concave weight function. 
Hence $\omega_{\seq Q}$ is a weight function that is equivalent to a concave weight function.
By \cite[Lemma~4.1]{Komatsu73}, $\omega_{\seq Q}$ is quasianalytic, since $\seq Q$ is quasianalytic.
We have $\cE^{\{\omega_{\seq Q}\}}\nsubseteq\cE^{\{\fG\}}$, since    
$\cB^{\{\seq Q\}}(K)\subseteq\cB^{\{\omega_{\seq Q}\}} (K)$ for
any compact set $K \subseteq \R^n$.


\def\cprime{$'$}
\providecommand{\bysame}{\leavevmode\hbox to3em{\hrulefill}\thinspace}
\providecommand{\MR}{\relax\ifhmode\unskip\space\fi MR }
\providecommand{\MRhref}[2]{%
  \href{http://www.ams.org/mathscinet-getitem?mr=#1}{#2}
}
\providecommand{\href}[2]{#2}

\end{document}